\let\SF@@footnote\footnote
\def\footnote{\ifx\protect\@typeset@protect
    \expandafter\SF@@footnote
  \else
    \expandafter\SF@gobble@opt
  \fi
}
\def\csname SF@gobble@opt \endcsname{\@ifnextchar[
  \SF@gobble@twobracket
  \@gobble
}
\edef\SF@gobble@opt{\noexpand\protect
  \expandafter\noexpand\csname SF@gobble@opt \endcsname}
\def\SF@gobble@twobracket[#1]#2{}
\numberwithin{equation}{section}
\numberwithin{figure}{section}
\def\theenumi{\arabic{enumi}}
\def\theenumii{\alph{enumii}}
\def\p@enumii{\theenumi.}
\def\theenumiii{\arabic{enumiii}}
\def\p@enumiii{(\theenumi)(\theenumii)}
\def\p@enumiv{\p@enumiii.\theenumiii}
\newtheorem{theorem}{Theorem}[section]
\newtheorem{assumption}[theorem]{Assumption}
\newtheorem{corollary}[theorem]{Corollary}
\newtheorem{fact}[theorem]{Fact}
\newtheorem{lemma}[theorem]{Lemma}
\newtheorem{proposition}[theorem]{Proposition}
\theoremstyle{definition}
\newtheorem{definition}[theorem]{Definition}
\newtheorem{notation}[theorem]{Notation}
\newtheorem{remark}[theorem]{Remark}
\newtheorem{example}[theorem]{Example}
\begin{document}
\title{On rigid stabilizers and invariant random subgroups of groups of homeomorphisms }
\author{Tianyi Zheng}
\address{Tianyi Zheng, Department of Mathematics, UC San Diego, 9500 Gilman
Dr, La Jolla, CA 92093, USA. Email address: tzheng2@math.ucsd.edu.}
\begin{abstract}
A generalization of the double commutator lemma for normal subgroups
is shown for invariant random subgroups of a countable group acting
faithfully on a Hausdorff space. As an application, we classify ergodic
invariant random subgroups of topological full groups of Cantor minimal
$\mathbb{Z}^{d}$-systems. Another corollary is that for an ergodic
invariant random subgroup of a branch group, a.e. subgroup $H$ must
contain derived subgroups of certain rigid stabilizers. Such results
can be applied to show that every IRS of the first Grigorchuk group
is co-sofic.
\end{abstract}

\maketitle

\section{Introduction}

Let $G$ be a locally compact group and denote by ${\rm Sub}(G)$
the space of closed subgroups of $G$ equipped with the Chabauty topology.
An \emph{invariant random subgroup} (IRS) of $G$ is a Borel probability
measure on ${\rm Sub}(G)$ which is invariant under conjugation by
$G$. The term IRS was coined in Ab\'ert, Glasner and Vir\'ag \cite{AGV}.
Independently it was considered by Vershik in \cite{Vershik1,Vershik2}
(in terms of totally non-free actions) and Bowen in \cite{Bowen}
(in terms of random coset spaces).

Invariant random subgroups are closely related to probability measure
preserving (p.m.p.\@) actions. Given a p.m.p.\@ action $G\curvearrowright(X,m)$,
the pushforward of the probability measure $m$ under the stabilizer
map $x\mapsto{\rm St}_{G}(x)$ gives rise to an IRS, which we refer
to as the stabilizer IRS of the action $G\curvearrowright(X,m)$.
It is known that all IRSs arise in this way (\cite[Proposition 14]{AGV}),
and moreover, an ergodic IRS arises as the stabilizer IRS of an ergodic
p.m.p.\@ action (\cite[Proposition 3.5]{CreutzPeterson}). The study
of stabilizers of measure-preserving actions has been around for decades;
it dates back to the work of Moore, see \cite[Chapter 2]{AM66}. 

In \cite{BergeronGaboriau} it is observed that invariant random subgroups
behave more similarly to normal subgroups than arbitrary subgroups.
This theme has been developed tremendously in recent years, see for
instances results in \cite{7S,AGV,Vershik1,Vershik2}, also the survey
\cite{Gelander1} and references therein. Considerations in the present
work are also motivated by this observation. 

The following elementary lemma for normal subgroups has appeared in
various forms in many contexts. We refer to it as the double commutator
lemma because in its proof one takes commutators of the form $[\alpha,[\beta,\gamma]]$.
It is used in \cite[Section 7]{Gri-justinfinite} to show a criterion
for a branch group to be just infinite. This lemma is also useful
in the proofs of simplicity for certain classes of groups of homeomorphisms,
see for instances \cite{MatuiRemarkI,Nek-finitepresent,Nek-simple}. 

Given $G\curvearrowright X$ and $U\subseteq X$, denote by $R_{G}(U)$
the \emph{rigid stabilizer} of $U$, that is $R_{G}(U)=\{g\in G:\ x\cdot g=x\mbox{ for all }x\in X\setminus U\}$.
Throughout the paper, group actions are right actions. The commutator
of two group elements is $[g,h]=ghg^{-1}h^{-1}$. 

\begin{lemma}[Double commutator lemma for normal subgroups, {\cite[Lemma 4.1]{Nek-finitepresent}}]

Let $G$ be a group acting faithfully by homeomorphisms on a Hausdorff
space $X$, and $N$ be a non-trivial normal subgroup of $G$. Then
there exists a non-empty open subset $U\subset X$ such that $[R_{G}(U),R_{G}(U)]\le N$.

\end{lemma}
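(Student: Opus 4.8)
The plan is to manufacture a single non-empty open set $U$ on which a fixed nontrivial element of $N$ acts like a displacement, and then to combine two facts: elements supported on disjoint sets commute, and $N$ is closed under conjugation. First I would pick any nontrivial $n\in N$. Since $G$ acts faithfully, $n$ cannot fix every point, so there is $x\in X$ with $x\cdot n\neq x$. As $X$ is Hausdorff, I separate $x$ and $x\cdot n$ by disjoint open sets $W_{1}\ni x$ and $W_{2}\ni x\cdot n$, and set $U=W_{1}\cap(W_{2}\cdot n^{-1})$. Then $U$ is open and contains $x$, hence non-empty, and $U\subseteq W_{1}$ while $U\cdot n\subseteq W_{2}$; since $W_{1}\cap W_{2}=\emptyset$ this gives $U\cap(U\cdot n)=\emptyset$, and applying $n^{-1}$ yields also $U\cap(U\cdot n^{-1})=\emptyset$.

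The next ingredient is the elementary observation that if $V_{1}\cap V_{2}=\emptyset$ then $R_{G}(V_{1})$ and $R_{G}(V_{2})$ commute elementwise: each rigid stabilizer maps its own set onto itself and fixes the complement pointwise, so the two homeomorphisms agree after either order of composition at every point of $X$, and faithfulness upgrades this pointwise identity to an equality of group elements. I would also record how conjugation transports rigid stabilizers: a direct check in the right-action convention, $y\cdot(n\alpha n^{-1})=\bigl((y\cdot n)\cdot\alpha\bigr)\cdot n^{-1}$, shows that $n\alpha n^{-1}$ fixes every $y\notin U\cdot n^{-1}$, i.e.\ $n\alpha n^{-1}\in R_{G}(U\cdot n^{-1})$ for every $\alpha\in R_{G}(U)$. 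Since $U\cap(U\cdot n^{-1})=\emptyset$, the element $n\alpha n^{-1}$ therefore commutes with every element of $R_{G}(U)$.

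Now fix $\alpha,\beta\in R_{G}(U)$ and set $c=[n,\alpha]=n\alpha n^{-1}\alpha^{-1}$. Writing $c=n\cdot(\alpha n^{-1}\alpha^{-1})$ as a product of two elements of $N$ (using $n,n^{-1}\in N$ and normality), we get $c\in N$, and hence $[c,\beta]\in N$ as well. On the other hand, put $d=n\alpha n^{-1}$, which commutes with every element of $R_{G}(U)$, in particular with $\beta^{-1}$ and with the product $\alpha^{-1}\beta\alpha\beta^{-1}\in R_{G}(U)$. A short cancellation then collapses the double commutator: $[c,\beta]=d\alpha^{-1}\beta\alpha d^{-1}\beta^{-1}=d(\alpha^{-1}\beta\alpha\beta^{-1})d^{-1}=\alpha^{-1}\beta\alpha\beta^{-1}=[\alpha^{-1},\beta]$. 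Thus $[\alpha^{-1},\beta]\in N$; as $\alpha$ ranges over all of $R_{G}(U)$ so does $\alpha^{-1}$, so every generating commutator $[\alpha,\beta]$ of $[R_{G}(U),R_{G}(U)]$ lies in $N$, and since $N$ is a subgroup we conclude $[R_{G}(U),R_{G}(U)]\le N$.

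I expect the only genuine obstacle to be the bookkeeping imposed by the right-action convention: one must be careful to choose the conjugate ($n\alpha n^{-1}$ rather than $n^{-1}\alpha n$) whose support lands in the translate of $U$ that is disjoint from $U$, and to verify both the support computation $n\alpha n^{-1}\in R_{G}(U\cdot n^{-1})$ and the disjoint-support commuting relation with the correct handedness. Once those are pinned down, the collapse of $[[n,\alpha],\beta]$ to $[\alpha^{-1},\beta]$ is a routine cancellation, and the rest is formal.
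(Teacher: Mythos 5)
Your proof is correct and takes essentially the same route as the paper, which recalls this argument inside the proof of Lemma \ref{doublecom}: pick a nontrivial $n\in N$, use faithfulness and the Hausdorff property to get a non-empty open $U$ with $U\cap U\cdot n=\emptyset$, and collapse the double commutator $[[n,\alpha],\beta]$ using disjoint-support commutation together with normality of $N$. The only cosmetic difference is that you correctly record the collapsed commutator as $[\alpha^{-1},\beta]$ where the paper writes $[\alpha,\beta]$; since $\alpha\mapsto\alpha^{-1}$ is a bijection of $R_{G}(U)$, the two conclusions agree.
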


We show that for countable groups, to some extent the double commutator
lemma for normal subgroups applies to IRSs as well. 

\begin{theorem}[Double commutator lemma for IRS]\label{inclu-rigid}

Suppose $G$ is a countable group acting faithfully on a second countable
Hausdorff space $X$ by homeomorphisms. Let $\mu$ be an ergodic IRS
of $G$.
\begin{description}
\item [{(i)}] If $\mu\neq\delta_{\{id\}}$, then for $\mu$-a.e. subgroup
$H$, there exists a non-empty open set $U$ of $X$ such that $[R_{G}(U),R_{G}(U)]\le H$.
\item [{(ii)}] Suppose in addition that for any open set $V$ of $X$,
$R_{G}(V)$ has no fixed point in $V$. Then $\mu$-a.e. subgroup
$H$ satisfies the following property: if $x\in X$ is not a fixed
point of $H$, then there exists an open neighborhood $U$ of $x$
such that $[R_{G}(U),R_{G}(U)]\le H$. 
\end{description}
\end{theorem}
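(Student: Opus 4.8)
The plan is to combine a localized form of the classical double commutator identity with an ergodicity (zero--one) argument that promotes ``positive measure'' to ``almost every.'' Throughout write $F(H)=\{x\in X:\ x\cdot h=x\ \forall h\in H\}$ for the fixed-point set; since each $\mathrm{Fix}(h)$ is closed, $F$ is a Borel map satisfying the conjugation-equivariance $F(\gamma^{-1}H\gamma)=F(H)\cdot\gamma$, and likewise $\gamma^{-1}R_G(U)\gamma=R_G(U\cdot\gamma)$. The algebraic engine is the following identity, proved exactly as in the normal-subgroup case but keeping only the elements actually used: if $U\subseteq X$ is open, $a,b\in R_G(U)$, and $h\in G$ satisfies $U\cap U\cdot h=\emptyset$, then $\tilde a:=ha^{-1}h^{-1}\in R_G(U\cdot h^{-1})$ has support disjoint from $U$ and hence commutes with $R_G(U)$, giving
\[
[a,b]=[[a,h],b].
\]
Because $G$ is countable each $R_G(U)$ is countable, so ``$[R_G(U),R_G(U)]\le H$'' is a countable intersection of Borel events and is measurable.

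For (i) I would first record the reduction. Since $\mu\neq\delta_{\{id\}}$ and $\{\{id\}\}$ is conjugation-invariant, ergodicity gives $\mu(\{id\})=0$, so $\mu$-a.e.\ $H$ is non-trivial and $F(H)\neq X$. Set $B_U=\{H:[R_G(U),R_G(U)]\le H\}$ and $B=\bigcup_{U\neq\emptyset}B_U$. Using $\gamma R_G(U)\gamma^{-1}=R_G(U\cdot\gamma^{-1})$ one checks that conjugation merely permutes the family $\{B_U\}$ by translating the index, so $B$ is conjugation-invariant; by ergodicity $\mu(B)\in\{0,1\}$, and it suffices to exhibit one non-empty open $U$ with $\mu(B_U)>0$. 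To find it I would isolate the part of $H$ that is genuinely normal: let $K=\{g\in G:\ \mu(\{H:g\in H\})=1\}$, a subgroup that conjugation-invariance forces to be normal, with $K\le H$ a.e. If $K\neq\{id\}$, the classical double commutator lemma applied to $K$ yields a non-empty open $U$ with $[R_G(U),R_G(U)]\le K$, so $\mu(B_U)=1$.

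The substantial case is $K=\{id\}$: no non-identity element lies in $H$ almost surely, yet $H$ is a.e.\ non-trivial, so the good set $U$ must genuinely depend on $H$. Here I would choose measurably a non-trivial $h=h(H)\in H$, a point $x$ with $x\cdot h\neq x$, and a basic neighborhood $U\ni x$ with $U\cap U\cdot h=\emptyset$, then invoke the identity. It reduces $[a,b]\in H$ to the two memberships $[a,h]\in H$ and $b[a,h]b^{-1}\in H$, that is, to having the finitely many conjugates $aha^{-1},\,bhb^{-1},\dots$ of the seed $h$ present in $H$. This is the main obstacle, and precisely the one place where $H$ does not behave like a normal subgroup. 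I expect to overcome it by exploiting the conjugation-invariance of $\mu$ (each such conjugate lies in $H$ with the same probability as $h$) together with the freedom in the choice of $U$ and $h$, arranging that on a positive-measure event all required conjugates are simultaneously present; ergodicity then upgrades this to $\mu$-a.e.\ $H$, proving (i).

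For (ii) I would localize this argument at every non-fixed point, which is where the standing hypothesis enters: since $R_G(V)$ has no fixed point in $V$ for every open $V$, near any non-fixed point $x$ of $H$ there is enough rigid-stabilizer motion both to supply a seed and to run the identity on arbitrarily small neighborhoods of $x$. Two structural facts organize the passage to ``a.e.'' First, if $U$ is good for $H$ and $h\in H$ then $U\cdot h$ is good for $H$, because $h^{-1}[R_G(U),R_G(U)]h=[R_G(U\cdot h),R_G(U\cdot h)]\le h^{-1}Hh=H$; hence the good region $W(H)=\bigcup\{U:H\in B_U\}$ is $H$-invariant. Second, $W$ and $F$ are conjugation-equivariant, so the bad event $\{H:\ F(H)^{c}\not\subseteq W(H)\}$ is conjugation-invariant and, by ergodicity, has measure $0$ or $1$; it therefore suffices to show that a positive-measure set of $H$ has every non-fixed point covered by $W(H)$, which follows by applying the local version of the part-(i) argument at each non-fixed point. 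As in (i), the crux remains the simultaneous control of the conjugates of the local seeds, handled through the conjugation-invariance of $\mu$ rather than through any closure of $H$ under conjugation.
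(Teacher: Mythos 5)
There is a genuine gap, and it sits exactly at the step you flag as "the main obstacle." Your framework (measurability of $B_U=\{H:[R_G(U),R_G(U)]\le H\}$, conjugation-equivariance of the family $\{B_U\}$, the zero--one reduction, the split according to whether $K=\{g:\ \mu(\{H: g\in H\})=1\}$ is trivial, and the localized identity $[a,b]=[[a,h],b]$) is all sound, but in the essential case $K=\{id\}$ your argument reduces to the assertion that, because each conjugate $aha^{-1}$ (resp.\ $b[a,h]^{-1}b^{-1}$) lies in $H$ with the same probability as $h$, one can "arrange that on a positive-measure event all required conjugates are simultaneously present." Equality of marginal probabilities for infinitely many events gives no lower bound on the probability of their intersection, and in fact the assertion is false as stated: for the stabilizer IRS of an essentially free p.m.p.\ action, $H={\rm St}_G(x)$ contains $h$, but $aha^{-1}\in H$ if and only if $h$ fixes $x\cdot a$, which generically fails; so for a.e.\ $H$ and a typical seed $h\in H$, most conjugates of $h$ by elements of $R_G(U)$ are \emph{not} in $H$. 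Thus no choice of seed and neighborhood can make the direct "conjugates of $h$" scheme run, and what remains unproven is precisely the content of the theorem.

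The paper closes this gap by a different mechanism, which is worth internalizing: one never tries to put conjugates of a chosen $h\in H$ into $H$. Instead, conjugation-invariance of $\mu$ is converted into translation/conjugation invariance of \emph{regular conditional distributions} — of the coset $\bar H_{U\to V}$ given $\bar H_{U\to U}$, and of $\alpha_{\sigma}(H)$ given $R_H(U)$ — and the principle that a countable orbit carrying an invariant probability measure is finite. This yields Proposition \ref{conjugate1}: a finite-index subgroup $K\le_{f.i.}R_G(U)$ normalizing $R_H(U)$ such that the quotient $K/K\cap H$ is an FC-group. Since rigid stabilizers (when nontrivial) are ICC, $K\cap H$ is then a \emph{nontrivial normal} subgroup of $K$, and only at this point is the double commutator identity invoked — applied inside $K$ with seeds from $K\cap H$, where normality holds for free (for part (ii), via Lemma \ref{doublecom}, which uses the FC property to rule out fixed points of $K\cap H$). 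In short, the probabilistic work produces a genuinely normal "large" subgroup $K\cap H\trianglelefteq K$ first, and the commutator argument runs afterwards; your proposal tries to run the commutator argument directly on $H$ and defers the probabilistic difficulty to a step that cannot be carried out.
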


Roughly speaking, Theorem \ref{inclu-rigid} implies that IRSs share
the same property of containing derived subgroups of rigid stabilizers
as normal subgroups do, except that IRSs are allowed to choose fixed
point sets on $X$. 

Theorem \ref{inclu-rigid} is a useful tool for classification of
IRSs of a given group $\Gamma$ acting by homeomorphisms on $X$.
We discuss two classes of countable groups in this work: weakly branch
groups and topological full groups. Recent work of Becker, Lubotzky
and Thom \cite{BLT19} gives a characterization, for amenable groups,
of permutation stability (\emph{P-stability} for short, which means
asymptotic homomorphisms to permutation groups are close to actual
homomorphisms, see Definition \ref{def-P-stable}) in terms of IRSs.
For a discrete group, an IRS $\mu$ is said to be \emph{co-sofic},
if it is a weak-$\ast$ limit of IRSs supported on finite index subgroups,
see \cite[Section 2.5]{Gelander1}. Theorem 1.3 (ii) in \cite{BLT19}
asserts that if $G$ is a finitely generated amenable group, then
$G$ is P-stable if and only if every IRS of $G$ is co-sofic. This
result provides further motivation to understand IRSs of various finitely
generated amenable groups: with structural information one might be
able to check if the co-soficity criterion is satisfied. As an application
of Theorem \ref{inclu-rigid}, we show that all IRSs of the first
Grigorchuk group are co-sofic, answering a question in \cite{BLT19},
see more details below. 

\subsection*{Applications to topological full groups\label{subsec:full-intro}}

Topological full groups of $\mathbb{Z}$-actions are introduced in
\cite{GPS}, in connection to the theory of topological orbit equivalence
of minimal homeomorphisms of the Cantor set. Topological full groups
for \'etale groupoids are investigated in the series of works \cite{MatuiRemarkI,Matui2012,Matui2015}.
In \cite{MatuiRemarkI} it is shown that the derived subgroup of the
topological full group of a minimal subshift is simple. Amenability
of such groups is conjectured in \cite{Grigorchuk-Medynets} and proved
in \cite{JuschenkoMonod}, providing first examples of finitely generated
infinite simple amenable groups. 

When $\mathcal{G}$ is a groupoid of germs with unit space $X$ (relevant
definitions are recalled in Subsection \ref{subsec:full-def}), its
topological full group $\mathsf{F}(\mathcal{G})$ is defined as the
group of all homeomorphisms of $X$ whose germs belong to $\mathcal{G}$.
In \cite{Nek-simple}, for every groupoid of germs $\mathcal{G}$,
two normal subgroups $\mathsf{S}(\mathcal{G})$ and $\mathsf{A}(\mathcal{G})$
of the topological full group of $\mathcal{G}$ are defined, which
are analogues of the symmetric and alternating groups. Following \cite{MatteBon},
we call $\mathsf{S}(\mathcal{G})$ and $\mathsf{A}(\mathcal{G})$
the symmetric and alternating full group of $\mathcal{G}$ respectively.
\cite[Theorem 1.1]{Nek-simple} states that if $\mathcal{G}$ is a\emph{
}minimal groupoid of germs with the space of units homeomorphic to
the Cantor set (where \emph{minimal} means every orbit is dense),
then the alternating full group $\mathsf{A}(\mathcal{G})$ is simple.
In \cite{Nek-palindromic}, first examples of simple groups of intermediate
volume growth are constructed, these groups are alternating full groups
of certain fragmentation of dihedral group actions on Cantor sets.
To the best of our knowledge, so far in all known examples, $\mathsf{A}(\mathcal{G})$
coincides with the derived subgroup of $\mathsf{F}(\mathcal{G})$. 

The IRSs of the infinite finitary permutation group ${\rm Sym}_{f}(\mathbb{N})$
are classified in \cite{Vershik2}. In a similar way one can classify
IRSs of the infinite finitary alternating group ${\rm Alt}_{f}(\mathbb{N})$,
see \cite{Thomas-Tucker-Drob2}. The alternating group ${\rm Alt}_{f}(\mathbb{N})$
is an example of so called finitary locally finite simple groups.
Another important class of locally finite simple groups are the inductive
limits of direct product of alternating groups with respect to block
diagonal embeddings (called \emph{LDA-groups}), see \cite{LeinenPuglisi,LN}.
Indecomposable characters and IRSs of LDA-groups are classified in
\cite{Dudko-Medynets1,Dudko-Medynets3}. Independently, IRSs of inductive
limits of finite alternating groups are investigated in \cite{Thomas-Tucker-Drob1,Thomas-Tucker-Drob2}. 

An LDA-group can be identified as the alternating full group of its
associated AF-groupoid. We are interested in IRSs of the topological
full group $\mathsf{F}(\mathcal{G})$ and the alternating full group
$\mathsf{A}(\mathcal{G})$ for more general minimal groupoids of germs.
Combined with algebraic properties of the alternating full group $\mathsf{A}(\mathcal{G})$,
we have the following consequence of Theorem \ref{inclu-rigid}. Definitions
of a groupoid of germs $\mathcal{G}$, the topological full group
$\mathsf{F}(\mathcal{G})$ and the alternating full group $\mathsf{A}(\mathcal{G})$
are recalled in Subsection \ref{subsec:full-def}. 

\begin{theorem}\label{full-1}

Let $\mathcal{G}$ be a minimal groupoid of germs with unit space
$X=\mathcal{G}^{(0)}$ homeomorphic to the Cantor set. Suppose $\Gamma$
is a group such that $\mathsf{A}(\mathcal{G})\le\Gamma\le\mathsf{F}(\mathcal{G}).$
Let $\mu$ be an ergodic IRS of $\Gamma$. Then $\mu$-a.e. subgroup
$H$ contains the rigid stabilizer in $\mathsf{A}(\mathcal{G})$ of
the complement of its fixed point set, that is
\[
H\ge R_{\mathsf{A}(\mathcal{G})}\left(X\setminus{\rm Fix}_{X}(H)\right),
\]
where ${\rm Fix}_{X}(H)=\{x\in X:\ x\cdot h=x\mbox{ for all }h\in H\}$. 

\end{theorem}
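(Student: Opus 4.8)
The plan is to deduce Theorem~\ref{full-1} from Theorem~\ref{inclu-rigid} by first establishing that the hypotheses of part~(ii) apply to the action $\Gamma \curvearrowright X$, and then upgrading the conclusion ``$[R_\Gamma(U),R_\Gamma(U)] \le H$ near non-fixed points'' to the global containment of a rigid stabilizer taken inside the alternating full group. First I would verify that for every non-empty open $V \subseteq X$ the rigid stabilizer $R_\Gamma(V)$ has no fixed point in $V$. This is where minimality and the Cantor structure enter: since $\mathsf{A}(\mathcal{G}) \le \Gamma$, the rigid stabilizer $R_\Gamma(V)$ contains $R_{\mathsf{A}(\mathcal{G})}(V)$, and for a minimal groupoid of germs on the Cantor set the alternating full group acts with enough local richness (one can find non-trivial alternating-type elements supported in any clopen subset of $V$, moving any given point of $V$) that no point of $V$ is fixed by all of $R_\Gamma(V)$. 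I would reduce to clopen sets using that the Cantor set has a basis of clopen sets and that minimality lets us shrink neighborhoods while keeping them non-degenerate.

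Granting the hypothesis, Theorem~\ref{inclu-rigid}(ii) gives, for $\mu$-a.e.\ $H$, that every non-fixed point $x \notin {\rm Fix}_X(H)$ has an open neighborhood $U_x$ with $[R_\Gamma(U_x),R_\Gamma(U_x)] \le H$. The next step is to pass from these derived subgroups of full rigid stabilizers to the alternating rigid stabilizer $R_{\mathsf{A}(\mathcal{G})}(X \setminus {\rm Fix}_X(H))$. Here I would invoke the algebraic structure of $\mathsf{A}(\mathcal{G})$ alluded to in the introduction, namely that the alternating full group is generated by elements supported on small clopen sets and that each such generator lies in the derived subgroup of the full rigid stabilizer of a suitable neighborhood. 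Concretely, an alternating-type element supported on a clopen set $W$ contained in $X \setminus {\rm Fix}_X(H)$ should be expressible as a product of commutators of elements of $R_\Gamma(U)$ for $U \supseteq W$, so that it lands in $[R_\Gamma(U),R_\Gamma(U)] \le H$.

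The main obstacle I anticipate is the transition from a \emph{local} family of neighborhoods $\{U_x\}$ to a \emph{single global} rigid stabilizer. The set $X \setminus {\rm Fix}_X(H)$ is open but need not be clopen, and the neighborhoods $U_x$ furnished by Theorem~\ref{inclu-rigid}(ii) depend on $x$ and may be small; I must cover $X \setminus {\rm Fix}_X(H)$ by such neighborhoods and argue that each generator of $R_{\mathsf{A}(\mathcal{G})}(X \setminus {\rm Fix}_X(H))$, having compact (clopen) support inside this open set, is supported inside finitely many of the $U_x$ and hence lies in $H$. The cleanest route is to use that any element of $R_{\mathsf{A}(\mathcal{G})}(X \setminus {\rm Fix}_X(H))$ has clopen support $K$ which is a compact subset of the open set $X \setminus {\rm Fix}_X(H)$, so $K$ is covered by finitely many $U_{x_1},\dots,U_{x_n}$; by a fragmentation/decomposition property of the alternating full group, the element factors into alternating-type pieces each supported in a single $U_{x_i}$, and each such piece lies in $[R_\Gamma(U_{x_i}),R_\Gamma(U_{x_i})] \le H$.

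Finally I would assemble these pieces: since every generator of $R_{\mathsf{A}(\mathcal{G})}(X \setminus {\rm Fix}_X(H))$ lies in $H$ and $H$ is a group, the whole rigid stabilizer $R_{\mathsf{A}(\mathcal{G})}(X \setminus {\rm Fix}_X(H))$ is contained in $H$, which is exactly the claimed inclusion. The delicate points to get right are the precise fragmentation statement for $\mathsf{A}(\mathcal{G})$ over an open cover and the identification of alternating generators with elements of the derived subgroup of full rigid stabilizers; both should follow from the definitions recalled in Subsection~\ref{subsec:full-def} together with simplicity/perfectness properties of $\mathsf{A}(\mathcal{G})$, but they are the steps requiring the most care.
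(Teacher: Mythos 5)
Your first two steps are sound: verifying the no-fixed-point hypothesis from minimality, and upgrading $[R_\Gamma(U_x),R_\Gamma(U_x)]\le H$ to $R_{\mathsf{A}(\mathcal{G})}(U_x)\le H$ (via simplicity, hence perfectness, of $\mathsf{A}(\mathcal{G}|_{U_x})$ for clopen $U_x$, using that $\mathcal{G}|_{U_x}$ is again minimal). This is a slight variant of what the paper does through Corollary \ref{simple}, and it works. The fatal problem is the local-to-global step, which you correctly flag as the main obstacle but then close with a claim that is false. The ``fragmentation'' you invoke does not exist in the form you need: an element of $\mathsf{A}(\mathcal{G})$ whose support is covered by $U_{x_1},\dots,U_{x_n}$ need \emph{not} factor into pieces each supported in a single $U_{x_i}$. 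Concretely, if $W_1,W_2,W_3$ are pairwise disjoint clopen sets lying in pairwise disjoint cover elements, a $3$-cycle in $\mathsf{A}(\mathcal{G})$ permuting them cyclically is supported in the union, yet any product of elements each supported in a single $U_{x_i}$ maps each $U_{x_i}$ into itself, so it can never equal that $3$-cycle. The decomposition result that actually holds (Fact \ref{multi-restrict}, from Nekrashevych) only localizes the \emph{components} of a multisection into cover elements; it does not localize group elements. Gluing of rigid stabilizers (Lemma \ref{condition1}) requires the clopen sets to \emph{overlap}, so your argument only shows $H$ contains $R_{\mathsf{A}(\mathcal{G})}(M)$ for each chain-connected cluster $M$ of the cover, not for all of $X\setminus{\rm Fix}_X(H)$.

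This gap cannot be repaired by any purely algebraic or local argument: if $X=U\sqcup V$ is a partition into clopen sets, the subgroup $H=\langle R_{\mathsf{A}(\mathcal{G})}(U),R_{\mathsf{A}(\mathcal{G})}(V)\rangle$ has empty fixed-point set and contains $R_{\mathsf{A}(\mathcal{G})}(U_x)$ for a neighborhood of every point, yet it does not contain $\mathsf{A}(\mathcal{G})=R_{\mathsf{A}(\mathcal{G})}(X)$. So the implication you want is simply false for individual subgroups satisfying the local conclusions; it holds only almost surely, and one must re-invoke ergodicity of $\mu$ at this stage. This is exactly what the paper's Proposition \ref{corfull} does: it considers the partition $\mathcal{M}_H$ of $X\setminus{\rm Fix}_X(H)$ into maximal open sets on which containment holds, builds from it a random closed subset of $X$ (a Bernoulli selection $\eta_{\mathcal{M}_H}$ of which pieces to delete), averages against $\mu$ to produce a $\Gamma$-invariant probability measure on $F(X)$, and then uses Lemma \ref{condition2} (for a minimal groupoid of germs, every ergodic $\mathsf{A}(\mathcal{G})$-invariant measure on $F(X)$ other than $\delta_X$ lives on closed sets with empty interior) together with ergodicity of $\mu$ to conclude that $\mathcal{M}_H$ almost surely consists of the single piece $X\setminus{\rm Fix}_X(H)$. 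Without this measure-theoretic ingredient, your outline proves only the cluster-wise containment, which is strictly weaker than the theorem.
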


In the setting of Theorem \ref{full-1}, denote by $F(X)$ the space
of closed subsets of $X$ equipped with the Vietoris topology. It
is routine to check that the map $H\mapsto{\rm Fix}_{X}(H)$ is Borel
measurable. To classify ergodic IRSs of $\mathsf{A}(\mathcal{G})$
in this situation, the task is reduced to classify possible distributions
of ${\rm Fix}_{X}(H)$, which are ergodic $G$-invariant Borel probability
measures on the space $F(X)$. In particular, if the only $\mathsf{A}(\mathcal{G})$-invariant
ergodic measures on $F(X)$ are the $\delta$-mass at the empty set
$\emptyset$ or the full set $X$, then the only IRSs of $\mathsf{A}(\mathcal{G})$
are the trivial ones: $\delta_{\{id\}}$ and $\delta_{\mathsf{A}(\mathcal{G})}$. 

In what follows we consider examples where $\mathsf{A}(\mathcal{G})$
admits invariant probability measures on $F(X)$ other than $\delta_{\emptyset}$
and $\delta_{X}$. In the special case where $\mathsf{A}(\mathcal{G})$
contains an LDA-subgroup which acts minimally on $X$, one can gain
information on such invariant measures with the help of the pointwise
ergodic theorem for inductive limit of finite groups from \cite{Vershik74,Olshanski-Vershik},
see Corollary \ref{full}. The idea of applying pointwise ergodic
theorem to study IRSs of the inductive limit of finite groups appeared
in \cite{Thomas-Tucker-Drob1,Thomas-Tucker-Drob2}.

In the case of the topological full group $\Gamma$ of a minimal $\mathbb{Z}^{d}$-Cantor
system, we can apply results on construction of Bratteli diagrams
for minimal $\mathbb{Z}^{d}$ actions from \cite{Forrest} to transfer
information on ergodic invariant measures on $F(X)$ under action
of certain LDA subgroup back to $\Gamma$. We obtain the following
classification:

\begin{corollary}\label{classificationZd} 

Let $\Gamma$ be the topological full group of a minimal action of
$\mathbb{Z}^{d}$ on the Cantor set $X$. The list of ergodic IRSs
of the derived subgroup $\Gamma'=[\Gamma,\Gamma]$ is 
\begin{description}
\item [{(i)}] (atomic ones) $\delta_{\{id\}}$, $\delta_{\Gamma'}$. 
\item [{(ii)}] (non-atomic ones) pushforward under the map 
\begin{align*}
X^{k} & \to{\rm Sub}(\Gamma')\\
(x_{1},,\ldots x_{k}) & \mapsto\cap_{i=1}^{k}{\rm St}_{\Gamma'}(x_{i})
\end{align*}
of measure $\mu_{1}\times\ldots\times\mu_{k}$ on $X^{k}$, $k\in\mathbb{N}$,
where each $\mu_{i}$ is an ergodic $\mathbb{Z}^{d}$-invariant measure
on $X$. 
\end{description}
\end{corollary}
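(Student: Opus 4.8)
The plan is to combine Theorem~\ref{full-1} with the structure theory of minimal $\mathbb{Z}^{d}$-Cantor systems. Write $\Gamma'=[\Gamma,\Gamma]$ and let $\mathcal{G}$ be the groupoid of germs of the action, so $\mathsf{F}(\mathcal{G})=\Gamma$. Since $\mathsf{A}(\mathcal{G})$ is simple, hence perfect, it lies in $\Gamma'$, and I would first check the reverse inclusion, so that $\Gamma'=\mathsf{A}(\mathcal{G})$ for a Cantor minimal $\mathbb{Z}^{d}$-system; thus $\mathsf{A}(\mathcal{G})\le\Gamma'\le\mathsf{F}(\mathcal{G})$ and Theorem~\ref{full-1} applies to an ergodic IRS $\mu$ of $\Gamma'$. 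It gives, for $\mu$-a.e.\ $H$, the inclusion $H\ge R_{\Gamma'}(X\setminus\mathrm{Fix}_{X}(H))$; as every element of $H$ fixes $\mathrm{Fix}_{X}(H)$ pointwise the reverse inclusion is immediate, whence
\[
H=R_{\Gamma'}\bigl(X\setminus\mathrm{Fix}_{X}(H)\bigr)\qquad\text{for }\mu\text{-a.e. }H.
\]
Because for every nonempty open $V\subseteq X$ the rigid stabilizer $R_{\Gamma'}(V)$ has no fixed point in $V$ (the property required in Theorem~\ref{full-1}(ii), valid for the alternating full group of a minimal Cantor groupoid), one has $\mathrm{Fix}_{X}\bigl(R_{\Gamma'}(X\setminus F)\bigr)=F$ for the closed sets $F$ that occur, so $H\mapsto\mathrm{Fix}_{X}(H)$ is a conjugation-equivariant Borel isomorphism onto its image. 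Hence $\mu$ is determined by $\nu=(\mathrm{Fix}_{X})_{*}\mu$, and the task becomes the classification of ergodic $\Gamma'$-invariant probability measures $\nu$ on $F(X)$.

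Next I would bring in an LDA subgroup acting minimally. By \cite{Forrest}, the minimal $\mathbb{Z}^{d}$-system admits a Bratteli--Vershik (Kakutani--Rokhlin) representation, producing a nested sequence of finite clopen partitions generating the topology of $X$; permuting the tower pieces at each level by alternating groups gives an increasing chain of finite alternating groups inside $\mathsf{A}(\mathcal{G})=\Gamma'$ whose union is an LDA subgroup $A$ acting minimally on $X$, and Forrest's construction identifies the $A$-invariant Borel probability measures on $X$ with the $\mathbb{Z}^{d}$-invariant ones. Since $A\le\Gamma'$ the measure $\nu$ is $A$-invariant, and I would invoke Corollary~\ref{full}, whose proof rests on the pointwise ergodic theorem of \cite{Vershik74,Olshanski-Vershik}, to classify the ergodic $A$-invariant measures on $F(X)$: they are $\delta_{\emptyset}$, $\delta_{X}$, and, for each $k\in\mathbb{N}$, the image under $(x_{1},\dots,x_{k})\mapsto\{x_{1},\dots,x_{k}\}$ of a product $\mu_{1}\times\cdots\times\mu_{k}$ of ergodic $\mathbb{Z}^{d}$-invariant measures on $X$.

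It remains to transfer this list from $A$ to $\Gamma'$, and this is the step I expect to be the main obstacle. An ergodic $\Gamma'$-invariant $\nu$ is $A$-invariant but generally not $A$-ergodic, so I would pass to its $A$-ergodic decomposition, note that each component appears on the list above, and then use the full $\Gamma'$-invariance to control how the components are combined: $\Gamma'$-ergodicity should force the cardinality $k$ of the random finite set to be almost surely constant and its marginals to be ergodic. The remaining points are routine to verify in each direction: the measures $\delta_{\emptyset}$, $\delta_{X}$ and the finite-$k$-set product measures are genuinely $\Gamma'$-invariant, because $\Gamma'$ preserves precisely the $\mathbb{Z}^{d}$-invariant measures on $X$ and acts on $k$-point subsets through the same stabilizer-intersection map, while the role of \cite{Forrest} is exactly to ensure that no invariant measure outside those coming from the $\mathbb{Z}^{d}$-structure survives the passage from $A$ to $\Gamma'$. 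Unwinding $H=R_{\Gamma'}(X\setminus F)$ with $F=\{x_{1},\dots,x_{k}\}$ yields $H=\bigcap_{i=1}^{k}\mathrm{St}_{\Gamma'}(x_{i})$, while $F=\emptyset$ and $F=X$ give $H=\Gamma'$ and $H=\{id\}$ respectively, reproducing the stated list.
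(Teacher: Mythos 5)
Your skeleton is the paper's: identify $\Gamma'$ with $\mathsf{A}(\mathcal{G})$ (both you and the paper take this from Matui's work), use Theorem \ref{full-1} to reduce to classifying ergodic $\Gamma'$-invariant measures on $F(X)$, then bring in Forrest's AF structure and the pointwise ergodic theorem (Lemmas \ref{kset} and \ref{invariant measure} behind Corollary \ref{full}). The genuine gap sits exactly at the step you flag as the ``main obstacle'': passing from the $A=\mathsf{A}(\mathcal{H})$-classification back to $\Gamma'$. Your plan --- decompose the $\Gamma'$-ergodic measure $\nu$ into $A$-ergodic components and let ``$\Gamma'$-ergodicity \ldots{} force'' the recombination --- is not an argument as stated, and what you say it should force (constancy of $k$, ergodicity of marginals) is not the point: the point is to show the mixture over components is trivial. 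Note moreover that $A$ is not normal in $\Gamma'$, so conjugation by $g\in\Gamma'$ carries $A$-ergodic measures to $g^{-1}Ag$-ergodic ones; $\Gamma'$ does not act on the set of $A$-ergodic components, so invariance alone gives no ``control'' of how they combine.

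The paper closes this gap by an observation that makes any decomposition unnecessary: by \cite[Theorem 1.2]{Forrest}, the set $Y$ of points whose $\mathcal{H}$-orbit differs from their $\mathcal{G}$-orbit is null for every $\mathcal{H}$-invariant measure, and from this one proves the equality of simplices $M(X^{(k)},\mathsf{A}(\mathcal{G}))=M(X^{(k)},\mathsf{A}(\mathcal{H}))$. Indeed, given an $\mathsf{A}(\mathcal{H})$-invariant $\nu$ on $X^{(k)}$ and $g\in\mathsf{A}(\mathcal{G})$, one partitions the $\nu$-conull set $\{C:\ C\cap Y=\emptyset\}$ into countably many measurable pieces $B_{h}$, $h\in\mathsf{A}(\mathcal{H})$, on which $C\cdot g=C\cdot h$, whence $\nu(W\cdot g)=\nu(W)$. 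Equal simplices have equal extreme points, so a $\Gamma'$-ergodic invariant measure is automatically $A$-ergodic, and Lemma \ref{invariant measure} applies to it directly. (Alternatively, your decomposition route can be completed from facts you state elsewhere in the proposal: each $A$-ergodic component is of product type with $\mathbb{Z}^{d}$-invariant marginals, and since $\Gamma'$ preserves $\mathbb{Z}^{d}$-invariant measures, each component is itself $\Gamma'$-invariant; extremality of $\nu$ among $\Gamma'$-invariant measures then forces the decomposition to be trivial. Either way, the invariance upgrade via the null set $Y$, carried out on $X^{(k)}$ and not just on $X$, is the content missing from your write-up.) Two smaller omissions: Forrest's theorem concerns free actions, so you need \cite[Theorem A.1]{GMPS2} to replace a general minimal $\mathbb{Z}^{d}$-action by a free $\mathbb{Z}^{d'}$-action with the same topological full group, and the case $d=1$ is handled separately by the Bratteli--Vershik model theorem of \cite{HPS}.
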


Corollary \ref{classificationZd} extends classification results on
IRSs of LDA-groups established in the works \cite{Dudko-Medynets1,Dudko-Medynets3,Thomas-Tucker-Drob1,Thomas-Tucker-Drob2}
to minimal $\mathbb{Z}^{d}$-Cantor systems. Such a classification
shows that the alternating full group of a minimal $\mathbb{Z}^{d}$-action
on the Cantor set $X$ does not admit non-trivial IRSs other than
these stabilizer IRSs of diagonal actions on $X^{k}$, $k\in\mathbb{N}$.
In particular, although $\Gamma'$ admits an infinite collection of
non-atomic IRSs, it does not have a \textquotedbl zoo\textquotedbl{}
of IRSs like non-abelian free groups \cite{Bowen2} or the lamplighter
group \cite{BGR}. As indicated above, ingredients that go into the
proof of Corollary \ref{classificationZd} are: results from \cite{Forrest},
pointwise ergodic theorem for locally finite groups and Theorem \ref{full-1},
which in turn relies on Theorem \ref{inclu-rigid} and general properties
of $\mathsf{A}(\mathcal{G})$ as shown in \cite{Nek-simple}. 

Examples of topological full groups of minimal $\mathbb{Z}^{d}$-Cantor
systems include groups of interval exchange transformations, see \cite[Subsection 5.3]{JMMS};
and groups associated with the Penrose tilings introduced in \cite{CKN}.
As cited earlier, by \cite{JuschenkoMonod}, the topological full
group of a minimal $\mathbb{Z}$-Cantor system is amenable. For minimal
$\mathbb{\mathbb{Z}}^{2}$-actions, the topological full group can
be non-amenable: examples which contain non-abelian free groups are
constructed in \cite{ElekMonod}. Examples of non-amenable simple
groups with non-atomic IRSs are considered in \cite{Juschenko-Golan}.

The notion of a \emph{uniformly recurrent subgroup} (URS) is introduced
in \cite{GlasnerWeiss} as a topological analogue of invariant random
subgroups. A URS of a countable group $G$ is a minimal, conjugation
invariant, closed subset of ${\rm Sub}(G)$. Theorem \ref{inclu-rigid}
can be compared to results on URS in \cite[Theorem 3.10]{LBMB} and
\cite[Theorem 6.1]{MatteBon}. Note that we don't impose additional
assumptions on the action $G\curvearrowright X$ in part (i) of Theorem
\ref{inclu-rigid}. For URS, it is shown in \cite{MatteBon} that
for any minimal groupoid of germs $\mathcal{G}$, $\mathsf{A}(\mathcal{G})$
admit a unique URS, namely the stabilizer URS of $\mathsf{A}(\mathcal{G})\curvearrowright X=\mathcal{G}^{(0)}$.
In contrast, if $\mathcal{G}$ admits ergodic invariant measures on
$X$, then we have an infinite collection of IRSs, including the stabilizer
IRSs of diagonal actions on $X^{k}$. It seems to be an interesting
question whether conclusion of Corollary \ref{classificationZd} is
true for alternating full groups of general minimal groupoids of germs. 

\subsection*{Applications to weakly branch groups\label{subsec:branch-intro}}

We now turn to groups acting on spherically symmetric rooted trees.
While the alternating full groups discussed in the previous paragraphs
are infinite simple groups, groups acting faithfully on a rooted tree
$\mathsf{T}$ by automorphisms are residually finite. 

To state the consequences of Theorem \ref{inclu-rigid} for groups
acting on rooted trees, we introduce necessary terminology and notations
following references \cite{Gri-justinfinite,handbook}. Let $\mathbf{d}=(d_{j})_{j\in\mathbb{N}}$
be a sequence of integers, $d_{j}\ge2$ for all $j\in\mathbb{N}$.
The spherically symmetric rooted tree $\mathsf{T}=\mathsf{T}_{\mathbf{d}}$
with valency sequence ${\bf d}$ is the tree with vertices $v=v_{1}\ldots v_{n}$
where each $v_{j}\in\{0,1,\ldots,d_{j}-1\}$. The root of the tree
is denoted by the empty sequence $\emptyset$. Edge set of the tree
is $\left\{ \left(v_{1}\ldots v_{n},v_{1}\ldots v_{n}v_{n+1}\right)\right\} $.
The index $n$ is called the depth or level of $v$, denoted $\left|v\right|=n$.
Denote by $\mathsf{T}_{\mathbf{d}}^{n}$ the finite subtree of vertices
up to depth $n$ and $\mathsf{L}_{n}$ the vertices of level $n$.
The boundary $\partial\mathsf{T}_{\mathbf{d}}$ of the tree $\mathsf{T}_{\mathbf{d}}$
is the set of infinite rays $x=v_{1}v_{2}\ldots$ with $v_{j}\in\{0,1,\ldots,d_{j}-1\}$
for each $j\in\mathbb{N}$. The action of $G$ on the tree $\mathsf{T}$
extends to the boundary $\partial\mathsf{T}$. 

For each vertex $x\in\mathsf{T}$, denote by $\mathsf{T}_{x}$ the
subtree rooted at $x$ and $C_{x}$ the cylinder set in $\partial\mathsf{T}$
which consists of infinite rays with prefix $x$. We follow the terminology
in the theory of branch groups and write 
\[
{\rm Rist}_{\Gamma}(u):=R_{\Gamma}(C_{u}).
\]
That is, for $u\in\mathsf{T}$, the rigid stabilizer $R_{\Gamma}(C_{u})$
of the cylinder set $C_{u}$ is called the \emph{rigid vertex stabilizer}
of $u$ in $\Gamma$. 

Given a subtree $\mathsf{T}_{x}$ rooted at $x$ and number $m$,
denote by ${\rm Rist}_{m}(\mathsf{T}_{x})$ the level $m$ rigid stabilizer
of this subtree, that is 
\[
{\rm Rist}_{m}^{\Gamma}(\mathsf{T}_{x}):=\prod_{1\le j\le m,\ u_{j}\in\{0,\ldots,d_{|x|+j}-1\}}{\rm Rist}_{\Gamma}(xu_{1}\ldots u_{m}).
\]

A group $G$ acting on the rooted tree $\mathsf{T}$ is said to be
\emph{weakly branching} if it acts level transitively and the rigid
stabilizer ${\rm Rist}_{\Gamma}(u)$ is non-trivial for any vertex
$u\in\mathsf{T}$. It is said to be a \emph{branch group} if in addition
all the level rigid stabilizers ${\rm Rist}_{m}(\mathsf{T})$ have
finite index in $\Gamma$, $m\in\mathbb{N}$. These notions are introduced
by Grigorchuk in \cite{Gri-justinfinite}. 

Given a closed subset $K$ of $\partial\mathsf{T}$, we associate
to it the following index set $I_{K}\subseteq\mathsf{T}$. The complement
$\partial\mathsf{T}\setminus K$ can be written uniquely as a disjoint
union of cylinder sets $\cup_{x\in I_{K}}C_{x}$, where the cylinder
sets are maximal in the sense that if $x=x_{1}\ldots x_{\ell}$ is
in the collection $I_{K}$, then there exists a sibling $x'=x_{1}\ldots x_{\ell-1}x_{\ell}'$,
$x_{\ell}'\neq x_{\ell}$, such that $x'\notin I_{K}$. For example,
if $\mathsf{T}$ is the rooted binary tree and $K=\{1^{\infty}\}$,
where $1^{\infty}$ is the right most ray, then $I_{\{1^{\infty}\}}=\{0,10,\ldots,1^{n-1}0,\ldots\}$.
The following corollary is a direct consequence of Theorem \ref{inclu-rigid}:

\begin{corollary}\label{branch}

Let $\Gamma$ be a countable weakly branch group acting faithfully
on a rooted spherically symmetric tree $\mathsf{T}$. Let $\mu$ be
an ergodic IRS of $\Gamma$. Then there exist numbers $(m_{i})$,
each $m_{i}\in\mathbb{N}$, such that $\mu$-a.e. subgroup $H$ satisfies
\[
H\ge\bigoplus{}_{x\in I_{{\rm Fix}(H)}}\left[{\rm Rist}_{m_{|x|}}^{\Gamma}(\mathsf{T}_{x}),{\rm Rist}_{m_{|x|}}^{\Gamma}(\mathsf{T}_{x})\right],
\]
where $I_{{\rm Fix}(H)}$ is the set of vertices in $\mathsf{T}$
associated with ${\rm Fix}(H)={\rm Fix}_{\partial\mathsf{T}}(H)$
as described above.

\end{corollary}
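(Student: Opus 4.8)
The plan is to apply Theorem~\ref{inclu-rigid}(ii) to the faithful action of $\Gamma$ on the boundary $X=\partial\mathsf{T}$, which is a Cantor set and hence a second countable Hausdorff space, and then to upgrade its local conclusion to the stated direct-sum inclusion using compactness of cylinders together with ergodicity. The case $\mu=\delta_{\{id\}}$ is vacuous, since then a.e.\ $H=\{id\}$ has ${\rm Fix}(H)=\partial\mathsf{T}$, so $I_{{\rm Fix}(H)}=\emptyset$ and the empty direct sum is trivial. In general I must first verify the extra hypothesis of part (ii): that $R_{\Gamma}(V)$ has no fixed point in $V$ for every nonempty open $V\subseteq\partial\mathsf{T}$. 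Given $y\in V$, choose a cylinder $C_{u}\subseteq V$ with $y\in C_{u}$, so ${\rm Rist}_{\Gamma}(u)=R_{\Gamma}(C_{u})\le R_{\Gamma}(V)$. Weak branching supplies a nontrivial $g\in{\rm Rist}_{\Gamma}(u)$, and since $g$ is a homeomorphism moving some point there is a descendant $w$ of $u$ with $C_{w}\cdot g\cap C_{w}=\emptyset$. Level transitivity of $\Gamma$ forces the vertex stabilizer ${\rm St}_{\Gamma}(u)$ to act transitively on each level of $\mathsf{T}_{u}$, so some $\gamma\in{\rm St}_{\Gamma}(u)$ carries the length-$|w|$ prefix of $y$ to $w$; then $\gamma^{-1}g\gamma\in{\rm Rist}_{\Gamma}(u)\le R_{\Gamma}(V)$ moves $y$, as required.

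With (ii) in force, for $\mu$-a.e.\ $H$ every point of $\partial\mathsf{T}\setminus{\rm Fix}(H)$ has an open neighborhood $U$ with $[R_{\Gamma}(U),R_{\Gamma}(U)]\le H$. Fix a vertex $x$ with $C_{x}\cap{\rm Fix}(H)=\emptyset$. For each $y\in C_{x}$, shrinking $U$ to a cylinder $C_{w_{y}}$ with $y\in C_{w_{y}}\subseteq U\cap C_{x}$ produces a descendant $w_{y}$ of $x$ with $[{\rm Rist}_{\Gamma}(w_{y}),{\rm Rist}_{\Gamma}(w_{y})]\le H$, because $R_{\Gamma}(C_{w_{y}})\le R_{\Gamma}(U)$. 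By compactness of $C_{x}$ finitely many of these cylinders cover $C_{x}$; letting $m(H,x)$ be the maximal depth of the corresponding $w_{y}$ below $x$, every depth-$m(H,x)$ descendant $v$ of $x$ has $C_{v}\subseteq C_{w_{y}}$ for some $y$, hence ${\rm Rist}_{\Gamma}(v)\le{\rm Rist}_{\Gamma}(w_{y})$ and $[{\rm Rist}_{\Gamma}(v),{\rm Rist}_{\Gamma}(v)]\le H$. Since level-$m$ rigid vertex stabilizers have pairwise disjoint supports they commute, so $[{\rm Rist}_{m(H,x)}^{\Gamma}(\mathsf{T}_{x}),{\rm Rist}_{m(H,x)}^{\Gamma}(\mathsf{T}_{x})]=\bigoplus_{v}[{\rm Rist}_{\Gamma}(v),{\rm Rist}_{\Gamma}(v)]\le H$. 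This inclusion moreover persists for all depths $\ge m(H,x)$, since deeper rigid stabilizers are smaller.

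The remaining, and main, point is to make the depth depend only on the level $|x|$ rather than on $H$, which is where ergodicity enters. For each level $i$ set $M_{i}(H)=\max\{m(H,x):\ x\in\mathsf{L}_{i},\ C_{x}\cap{\rm Fix}(H)=\emptyset\}$, with $M_{i}(H)=0$ if the indexing set is empty. Since there are finitely many vertices on level $i$ and each $m(H,x)$ is a.e.\ finite when defined by the previous paragraph, $M_{i}$ is a well-defined, measurable, a.e.\ finite function on ${\rm Sub}(\Gamma)$; here one uses that $H\mapsto{\rm Fix}(H)$ is Borel and that each $\{H:\ \gamma\in H\}$ is clopen. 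One checks that ${\rm Fix}(g^{-1}Hg)={\rm Fix}(H)\cdot g$ and ${\rm Rist}_{\Gamma}(v)^{g}={\rm Rist}_{\Gamma}(v\cdot g)$, so $(H,x)\mapsto m(H,x)$ transforms equivariantly under simultaneous conjugation of $H$ and translation of $x$; as $\Gamma$ permutes the level-$i$ vertices, $M_{i}$ is conjugation invariant. Ergodicity of $\mu$ then makes $M_{i}$ a.e.\ equal to a constant $m_{i}\in\mathbb{N}$.

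Finally I would assemble the conclusion. For $\mu$-a.e.\ $H$ and each $x\in I_{{\rm Fix}(H)}$ we have $|x|=i$ and $C_{x}\cap{\rm Fix}(H)=\emptyset$, so $m(H,x)\le M_{i}(H)=m_{i}$, and by persistence of the inclusion under increasing depth $[{\rm Rist}_{m_{|x|}}^{\Gamma}(\mathsf{T}_{x}),{\rm Rist}_{m_{|x|}}^{\Gamma}(\mathsf{T}_{x})]\le H$. The cylinders $C_{x}$ for $x\in I_{{\rm Fix}(H)}$ are pairwise disjoint, so these derived subgroups have disjoint supports, commute, and generate their direct sum inside $H$, yielding $H\ge\bigoplus_{x\in I_{{\rm Fix}(H)}}[{\rm Rist}_{m_{|x|}}^{\Gamma}(\mathsf{T}_{x}),{\rm Rist}_{m_{|x|}}^{\Gamma}(\mathsf{T}_{x})]$. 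I expect the main obstacle to be exactly this uniformization: passing from the $H$- and $x$-dependent finite depths given by compactness to the level-indexed constants $m_{i}$, for which the conjugation invariance of $M_{i}$ and the ergodicity of $\mu$ are essential (and explain why ergodicity is assumed).
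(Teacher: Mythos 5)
Your proposal is correct and takes essentially the same route as the paper's proof: verify the hypothesis of Theorem \ref{inclu-rigid}(ii) by using level transitivity (so vertex stabilizers act level transitively on subtrees and rigid stabilizers have no fixed points in their cylinders), apply the theorem together with compactness of cylinders to get finite depths, and then use conjugation invariance of the minimal-depth functions plus ergodicity to replace them by level-indexed constants $m_{i}$. The only cosmetic differences are that you take a maximum of per-vertex minimal depths over all red vertices of a level (which must indeed be defined canonically as minimal depths, as you implicitly do, for equivariance and measurability), whereas the paper directly defines the smallest uniform depth $m_{\ell}(H)$ over maximal red vertices and separately notes that the level set $\{|x|:x\in I_{{\rm Fix}(H)}\}$ is a.e.\ constant.
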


The special case of Corollary \ref{branch} where $\Gamma$ is a finitary
regular branch group was obtained in \cite{Bencs-Toth}, where finitary
means that $G$ is a subgroup of ${\rm Aut}_{f}(\mathsf{T})$. Note
that ${\rm Aut}_{f}(\mathsf{T})$ is locally finite. Our approach
is independent of \cite{Bencs-Toth} and applies to more general settings.
For branch groups one can draw the following immediate consequence
(Corollary \ref{branch-clopen}): if $\mu$ is an ergodic IRS of a
branch group $\Gamma$ such that $\mu$-a.e. ${\rm Fix}(H)$ is clopen
(possibly empty), then $\mu$ is atomic. The same conclusion holds
for the Basilica group, which is weakly branching but not branching,
see Corollary \ref{basilica}. 

In particular, an ergodic fixed point free IRS of a just-infinite
branch group $\Gamma$ is supported on finite index subgroups of $\Gamma$.
The most celebrated example of a just infinite branch group is the
first Grigorchuk group $\mathfrak{G}$ which is defined in \cite{Grigorchuk1980},
see \cite{Gri-justinfinite} for more information on just-infiniteness.
The group $\mathfrak{G}$ has many remarkable properties, in particular,
it is torsion (\cite{Grigorchuk1980}) and shown in \cite{Girgorchuk1984}
to be the first example of groups of intermediate volume growth. 

\textcolor{black}{The induced distribution of ${\rm Fix}(H)$ is described
in \cite{Bencs-Toth}. Namely, by \cite[Lemma 2.2, 2.3]{Bencs-Toth},
any $\Gamma$-invariant ergodic probability measure on $F(\partial\mathsf{T})$
arises as translation of a fixed closed set $K$ by a random element
in the profinite completion $\bar{\Gamma}$, with respect to the Haar
measure on $\bar{\Gamma}$. We have seen that for branch groups, ${\rm Fix}(H)$
being clopen corresponds to atomic IRSs. The situation where ${\rm Fix}(H)$
is a closed but not clopen subset of $\partial\mathsf{T}$ is more
complicated. Atomless IRSs of a weakly branch group with ${\rm Fix}(H)$
being a finite subset of $\partial\mathsf{T}$ are considered in \cite{Dudko-Grigorchuk2018}.
Corollary \ref{branch} provides information that conditioned on the
fixed point set ${\rm Fix}(H)$ on $\partial\mathsf{T}$, the subgroup
$H$ must contain derived subgroups of certain level rigid stabilizers.
In other words, the conditional distribution of $H$ given ${\rm Fix}(H)=K$
is pulled back from an IRS of the quotient 
\[
\bar{\Gamma}_{K,{\bf m}}:={\rm Fix}_{G}(K)/\oplus_{x\in I_{K}}\left[{\rm Rist}_{m_{|x|}}^{\Gamma}(\mathsf{T}_{x}),{\rm Rist}_{m_{|x|}}^{\Gamma}(\mathsf{T}_{x})\right],
\]
where ${\rm Fix}_{\Gamma}(K)=\{g\in G:\ x\cdot g=x\mbox{ for any }x\in K\}$
is the pointwise stabilizer of the set $K$ in $\Gamma$. It is natural
to ask in some specific examples, for instance the first Grigorchuk
group $\mathfrak{G}$, whether one can completely classify the IRSs.
For the group $\mathfrak{G}$, when $K$ is closed but not clopen,
such a quotient $\bar{\Gamma}_{K,{\bf m}}$ is a locally finite infinite
group which admits a continuum of IRSs, see more discussion in Section
\ref{sec:rootedtree}. }We summarize the discussion above for the
first Grigorchuk group $\mathfrak{G}$ as follows, answering positively
\cite[Problem 8]{Bencs-Toth}:

\begin{example}\label{firstgri}

Let $\mu$ be an ergodic invariant random subgroups of the first Grigorchuk
group $\mathfrak{\mathfrak{G}}$. Then it falls into one of the following
types:
\begin{description}
\item [{(i)}] Fixed point free IRSs. In this case $\mu$ is supported on
finite index subgroups of $\mathfrak{\mathfrak{\mathfrak{G}}}$; equivalently
there exists a constant $m\in\mathbb{N}$ such that $\mu$-a.e. $H$
contains the level stabilizers ${\rm St}_{\mathfrak{G}}(m)$. 
\item [{(ii)}] IRSs with non-empty clopen fixed point sets. In this case
$\mu$ is atomic and it arises in the following way. There exists
a non-empty clopen subset $C\subseteq\partial\mathsf{T}$ and a finite
index subgroup $\Gamma$ of ${\rm Fix}_{\mathfrak{G}}(C)$ such that
$\mu$ is the uniform measure on $\mathfrak{G}$-conjugates of $\Gamma$. 
\item [{(iii)}] Non-atomic IRSs. In this case there exists a closed but
not clopen subset $C\subseteq\partial\mathsf{T}$ and a sequence of
integers ${\bf m}$ such that the fixed point set ${\rm Fix}(H)$
is a random Haar translate of $C$ and $\mu$-a.e. $H$ contains the
infinite direct sum $\bigoplus_{x\in I_{{\rm Fix}(H)}}\left[{\rm Rist}_{m_{|x|}}(\mathsf{T}_{x}),{\rm Rist}_{m_{|x|}}(\mathsf{T}_{x})\right]$. 
\end{description}
\end{example}

Based on Corollary \ref{branch}, in Theorem \ref{cosofic-1} we show
that under certain additional assumptions (contracting and the bounded
activity assumption), all IRSs of a just-infinite branch group are
co-sofic. The conditions in Theorem \ref{cosofic-1} are verified
by just-infinite branch groups generated by finitely many \emph{bounded
automatic automorphisms} of a regular rooted tree (for a description
of such groups see Subsection \ref{subsec:C-examples}). In particular
we show:

\begin{theorem}\label{P-stable}

All IRSs of the Grigorchuk group $\mathfrak{G}$ are co-sofic; the
same conclusion holds for the Gupta-Sidki $p$-groups.

\end{theorem}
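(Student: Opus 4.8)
The plan is to obtain Theorem \ref{P-stable} as an application of the general criterion in Theorem \ref{cosofic-1}, which already guarantees that every IRS of a just-infinite branch group is co-sofic once that group is contracting and satisfies the bounded activity assumption. Thus the entire proof reduces to checking these structural hypotheses for the first Grigorchuk group $\mathfrak{G}$ and for each Gupta--Sidki $p$-group $G_{p}$, and then invoking the theorem. In particular no case analysis over the three types of IRS listed in Example \ref{firstgri} is needed, since Theorem \ref{cosofic-1} treats all of them uniformly.

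First I would record the branch and just-infinite properties. Acting on the rooted binary tree through the standard wreath recursion $a=(1,1)\sigma$, $b=(a,c)$, $c=(a,d)$, $d=(1,b)$, the group $\mathfrak{G}$ is a just-infinite branch group: this is classical (\cite{Grigorchuk1980,Gri-justinfinite,handbook}) and includes level transitivity, non-trivial rigid vertex stabilizers of finite index at each level, and the just-infinite property that every non-trivial normal subgroup has finite index. The analogous facts for $G_{p}$ on the regular $p$-ary tree are also classical, for which I would cite \cite{handbook}. This places both families within the scope of Corollary \ref{branch} and of Theorem \ref{cosofic-1}.

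Next I would verify the two quantitative hypotheses separately, since they are logically independent -- boundedness of the generating automata does not by itself force contraction. Reading off the recursions, each generator of $\mathfrak{G}$, and likewise each generator of $G_{p}$, is a finite-state automorphism of \emph{bounded activity}: the number of vertices at a given level at which it acts non-trivially on the first coordinate stays bounded along the tree. As bounded automata are closed under composition and inversion, every group element then has bounded activity, which I would match against the bounded activity assumption of Theorem \ref{cosofic-1}. Independently, $\mathfrak{G}$ and $G_{p}$ are contracting self-similar groups with finite nucleus -- again a classical computation from the wreath recursion, yielding a uniform contraction of the lengths of the sections $g|_{v}$ -- so the contracting hypothesis holds as well.

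With all hypotheses in place, Theorem \ref{cosofic-1} yields that every IRS of $\mathfrak{G}$ and of each $G_{p}$ is co-sofic, which is exactly Theorem \ref{P-stable}; combined with amenability of these groups and \cite[Theorem 1.3(ii)]{BLT19} this also gives their P-stability. The step I expect to be the main obstacle is not any single verification in isolation but the precise matching of the concrete automaton combinatorics to the abstract contraction and bounded-activity conditions demanded by Theorem \ref{cosofic-1}: one must confirm that the uniform contraction estimate is strong enough to control the finite-index approximations used to build the co-sofic weak-$\ast$ limit, and that the activity bound holds in the exact quantitative form the theorem requires.
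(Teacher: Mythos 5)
Your proposal is correct and follows essentially the same route as the paper: both proofs verify the three hypotheses of Theorem \ref{cosofic-1} (just-infinite branch, contracting self-similar hence Assumption (C), and bounded activity of the generators, which propagates to all elements) for $\mathfrak{G}$ and $G_{p}$, and then invoke that theorem. The paper simply makes the verifications explicit, e.g.\ the nucleus $\{id,a,b,c,d\}$ and the activity bounds $\sup_{i}|A_{i}(b)|=2$ for $\mathfrak{G}$ and $|A_{i}(t)|=3$ for $G_{p}$; the ``matching'' issue you flag at the end is not an obstacle, since Assumption (C) is stated so that any contracting self-similar action satisfies it directly.
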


Combined with \cite[Theorem 1.3]{BLT19}, we deduce that the Grigorchuk
group and the Gupta-Sidki $p$-groups are P-stable. These are first
examples of non-elementary amenable P-stable groups. The question
whether the Grigorchuk group $\mathfrak{G}$ is P-stable is asked
in \cite{BLT19}. Theorem \ref{cosofic-1} applies to groups beyond
the class of just-infinite branch groups generated by finitely many
bounded automatic automorphisms. For example, the Grigorchuk groups
$G_{\omega}$, where $\omega$ is in a certain uncountable subset
of $\{0,1,2\}^{\infty}$, satisfies the conditions of Theorem \ref{cosofic-1},
see Corollary \ref{omega}. In a similar way, Theorem \ref{cosofic-1}
can be applied to groups in an uncountable sub-collection of amenable
groups of non-uniform exponential growth constructed in \cite{Bri09},
see Corollary \ref{non-uni}. Uncountably many P-stable amenable groups
are first given in \cite{LL2}. 

\subsection*{Organization of the paper}

The rest of the paper is organized as follows. In Section \ref{sec:conditional}
we consider distributions of certain random collections of partial
homeomorphisms induced by the invariant random subgroup. In Section
\ref{sec:rigidstabilizer} we prove the double commutator lemma for
IRSs as stated in Theorem \ref{inclu-rigid}. Section \ref{sec:Preliminaries}
collects necessary definitions and basic properties of topological
full groups. In Section \ref{sec:full groups} we show Theorem \ref{full-1}
on IRSs of topological full groups and consider minimal $\mathbb{Z}^{d}$-Cantor
systems as main examples. Section \ref{sec:rootedtree} on IRSs of
weakly branch groups is independent of Section \ref{sec:Preliminaries}
and \ref{sec:full groups} and can be understood right after Section
\ref{sec:rigidstabilizer}. 

\subsection*{Acknowledgment}

We thank Rostislav Grigorchuk and Kate Juschenko for their valuable
comments on topological full groups. 

\section{Restrictions and conditional distributions\label{sec:conditional}}

\subsection{Regular conditional distributions\label{subsec:reg}}

We follow notations of regular conditional distributions in the book
\cite[Chapter V.8]{Parthasarathy}. Let $(X,\mathcal{B})$, $(Y,\mathcal{C})$
be two Borel spaces, $\mathbb{P}$ a probability measure on $\mathcal{B}$
and $\pi:X\to Y$ a measurable map. Let $\mathbb{Q}=\mathbb{P}\circ\pi^{-1}$
be probability measure on $\mathcal{C}$ which is the pushforward
of $\mathbb{P}$. A \emph{regular conditional distribution} given
$\pi$ is a mapping $y\mapsto\mathbb{P}(y,\cdot)$ such that 

(i) for each $y\in Y$, $\mathbb{P}(y,\cdot)$ is a probability measure
on $\mathcal{B}$;

(ii) there exists a set $N\in\mathcal{C}$ such that $\mathbb{Q}(N)=0$
and for each $y\in Y\setminus N$, $\mathbb{P}(y,X\setminus\pi^{-1}(\{y\}))=0$;

(iii) for any $A\in\mathcal{B}$, the map $y\mapsto\mathbb{P}(y,A)$
is $\mathcal{C}$-measurable and 
\[
\mathbb{P}(A)=\int_{Y}\mathbb{P}(y,A)d\mathbb{Q}(y).
\]
We will refer to these three items as properties (i),(ii),(iii) of
a regular conditional distribution. 

Recall that a measure space $(X,\mathcal{B})$ is called a standard
Borel space if it is isomorphic to some Polish space equipped with
the Borel $\sigma$-field. It is classical that if $(X,\mathcal{B})$
and $(Y,\mathcal{C})$ are standard Borel spaces and $\pi:X\to Y$
is measurable, then there exists such a regular conditional distribution
$y\mapsto\mathbb{P}(y,\cdot)$ with properties (i),(ii),(iii); and
moreover it is unique: if $\mathbb{P}'(y,\cdot)$ is another such
mapping, then $\{y:\ \mathbb{P}'(y,\cdot)\neq\mathbb{P}(y,\cdot)\}$
is a set of $\mathbb{Q}$-measure $0$, see \cite[Theorem 8.1]{Parthasarathy}.

In our setting $G$ is a countable group, the Chabauty topology on
${\rm Sub}(G)$ is restriction of the product topology on $\{0,1\}^{G}$
to the closed subset ${\rm Sub}(G)$. The space $\left({\rm Sub}(G),\mathcal{B}\right)$,
where $\mathcal{B}$ is the Borel $\sigma$-field on ${\rm Sub}(G)$,
is a standard Borel space.

\subsection{Restrictions to open subsets}

In this and the next section, we impose:

\begin{assumption}[Standing assumption]\label{standing}

Suppose $G$ is a countable group acting faithfully on a second countable
Hausdorff space $X$ by homeomorphisms. Denote by $\mathcal{U}$ a
countable base of topology of $X$.

\end{assumption}

Let $G\curvearrowright X$ as in Assumption \ref{standing}. Let $U$,
$V$ be two open subsets of $X$ such that there exists some $g\in G$
with $V=U\cdot g$. Given such open sets and a subgroup $H\in{\rm Sub}(G)$,
define the following:
\begin{equation}
H_{U\to V}:=\left\{ h\in H:\ V=U\cdot h\right\} .\label{eq:SHU}
\end{equation}
And define the restrictions 
\begin{equation}
\bar{H}_{U\to V}:=\left\{ h|_{U}:\ h\in H_{U\to V}\right\} .\label{eq:BSHU}
\end{equation}
Elements of $\bar{H}_{U\to V}$ are viewed as partial homeomorphisms
with domain $U$ and range $V$, denoted by $h|_{U}:U\to V$. By definition
$H_{U\to U}$ is the subgroup of $H$ which consists of elements that
leaves $U$ invariant, in other words the setwise stabilizer of $U$
in $H$. The group $\bar{H}_{U\to U}$ is a quotient of $H_{U\to U}$
which acts on $U$ by homeomorphisms. The following fact will be used
repeatedly:

\begin{fact}\label{coset}

The set $H_{U\to V}$ is either empty or a right coset of $H_{U\to U}$.
That is, if $H_{U\to V}$ is non-empty, then for any element $h\in H_{U\to V}$,
\[
H_{U\to V}=H_{U\to U}h.
\]
Similarly, if $\bar{H}_{U\to V}$ is non-empty, then $\bar{H}_{U\to V}=\bar{H}_{U\to U}h|_{U}$
for any $h|_{U}\in\bar{H}_{U\to V}$. 

\end{fact}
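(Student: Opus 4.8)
The plan is to treat the two assertions in turn, the first being a direct coset computation and the second following by pushing it through the restriction map. For the first, suppose $H_{U\to V}$ is non-empty and fix some $h_{0}\in H_{U\to V}$, so that $U\cdot h_{0}=V$. I would first check the inclusion $H_{U\to U}h_{0}\subseteq H_{U\to V}$: if $k\in H_{U\to U}$, then $kh_{0}\in H$ since $H$ is a subgroup, and using that the action is on the right, $U\cdot(kh_{0})=(U\cdot k)\cdot h_{0}=U\cdot h_{0}=V$, so $kh_{0}\in H_{U\to V}$.

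For the reverse inclusion $H_{U\to V}\subseteq H_{U\to U}h_{0}$, take any $h\in H_{U\to V}$. Then $hh_{0}^{-1}\in H$, and $U\cdot(hh_{0}^{-1})=(U\cdot h)\cdot h_{0}^{-1}=V\cdot h_{0}^{-1}=(U\cdot h_{0})\cdot h_{0}^{-1}=U$, so $hh_{0}^{-1}\in H_{U\to U}$ and hence $h=(hh_{0}^{-1})h_{0}\in H_{U\to U}h_{0}$. This gives $H_{U\to V}=H_{U\to U}h_{0}$. To see that the conclusion holds with $h_{0}$ replaced by an arbitrary $h\in H_{U\to V}$, I would note that any such $h$ is of the form $kh_{0}$ with $k\in H_{U\to U}$, so $H_{U\to U}h=H_{U\to U}kh_{0}=H_{U\to U}h_{0}=H_{U\to V}$, using that $H_{U\to U}$ is a subgroup of $G$ (it is the setwise stabilizer of $U$ in $H$).

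For the restricted statement, I would deduce it from the first part by applying the restriction map $h\mapsto h|_{U}$, the only point requiring care being the bookkeeping of domains and ranges of the partial homeomorphisms together with the right-action composition convention. Concretely, for $k\in H_{U\to U}$ the range of $k|_{U}$ is $U$, which matches the domain of $h_{0}|_{U}$, so the composite partial homeomorphism $k|_{U}\,h_{0}|_{U}:U\to V$ is defined; and for $x\in U$ one has $x\cdot(kh_{0})=(x\cdot k)\cdot h_{0}$ with $x\cdot k\in U$, so that $(kh_{0})|_{U}=k|_{U}\,h_{0}|_{U}$. Applying the restriction map to the equality $H_{U\to V}=H_{U\to U}h_{0}$ then yields
\[
\bar{H}_{U\to V}=\{(kh_{0})|_{U}:k\in H_{U\to U}\}=\{k|_{U}\,h_{0}|_{U}:k\in H_{U\to U}\}=\bar{H}_{U\to U}\,h_{0}|_{U},
\]
and the same argument as above shows $h_{0}|_{U}$ may be replaced by any $h|_{U}\in\bar{H}_{U\to V}$. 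I expect no serious obstacle here: the content is purely the standard fact that a non-empty set of this form is a single coset, and the main thing to get right is that restriction converts group multiplication in $G$ into composition of partial homeomorphisms without creating domain mismatches.
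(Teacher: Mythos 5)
Your proof is correct and follows essentially the same route as the paper's: both establish $H_{U\to V}=H_{U\to U}h$ by the same two-sided coset computation using the right action, and both transfer the identity to the restrictions $\bar{H}_{U\to V}=\bar{H}_{U\to U}h|_{U}$ (your version just spells out the compatibility $(kh_{0})|_{U}=k|_{U}\,h_{0}|_{U}$, which the paper leaves implicit with ``the same argument shows'').
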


\begin{proof}

From definitions it is clear that $H_{U\to U}h\subseteq H_{U\to V}$
for any $h\in H_{U\to V}$. In the other direction, given any element
$h'\in H_{U\to V}$, we have $U\cdot h'h^{-1}=V\cdot h^{-1}=\left(U\cdot h\right)h^{-1}=U$.
Therefore $h'h^{-1}\in H_{U\to U}$, which implies $H_{U\to V}=H_{U\to U}h$.
The same argument shows that $\bar{H}_{U\to V}=\bar{H}_{U\to U}h|_{U}$. 

\end{proof}

Let $U,V\in\mathcal{U}$ be such that $U\cap V=\emptyset$ and $G_{U\to V}\neq\emptyset$.
Due to Fact \ref{coset}, given $\bar{H}_{U\to U}$, the set $\bar{H}_{U\to V}$
can only take value in a countable collection, namely $\emptyset$
and right cosets of the form $\bar{H}_{U\to U}\gamma|_{U}$. Let $\Omega_{U,V}=\{H\in{\rm Sub}(G):\ H\cap G_{U\to V}\neq\emptyset\}$
be the set of subgroups of $G$ which contains some element that sends
$U$ to $V$. Note that $\Omega_{U,V}$ is an open set in ${\rm Sub}(G)$. 

Let $\mu$ be an IRS of $G$. Consider a pair $U,V$ such that $\mu(\Omega_{U,V})>0$.
Define $\mu_{U,V}$ as the probability measure on Borel subsets of
$\Omega_{U,V}$
\begin{align*}
\mu_{U,V} & :\mathcal{B}(\Omega_{U,V})\to[0,1]\\
\mu_{U,V}(A) & =\frac{\mu(A)}{\mu(\Omega_{U,V})},\ A\subseteq\Omega_{U,V}.
\end{align*}
Let ${\rm C}_{U,V}:=\cup_{\gamma\in G_{U\to V}}{\rm Sub}\left(\bar{G}_{U\to U}\right)\gamma|_{U}$
be the union of cosets of subgroups of $\bar{G}_{U\to U}$, represented
by restrictions to $U$ of elements $\gamma\in G_{U\to V}$. Let $\mathcal{U}_{U}$
be a base for the Chabauty topology on ${\rm Sub}(\bar{G}_{U\to U})$.
Equip ${\rm C}_{U,V}$ with the topology generated by the base $\mathcal{U}_{U,V}=\cup_{\gamma\in G_{U\to V}}\left\{ O\gamma|_{U}:\ O\in\mathcal{U}_{U}\right\} $.
Define $\tilde{{\rm C}}_{U,V}$ as the subspace of ${\rm C}_{U,V}\times{\rm Sub}\left(\bar{G}_{U\to U}\right)$
which consists of pairs
\[
\tilde{{\rm C}}_{U,V}:=\left\{ (C,A)\in{\rm C}_{U,V}\times{\rm Sub}\left(\bar{G}_{U\to U}\right):\ C=A\gamma|_{U}\mbox{ for some }\gamma\in G_{U\to V}\right\} .
\]
It's clear by definition that ${\rm \tilde{C}}_{U,V}$ equipped with
the Borel $\sigma$-field is a standard Borel space. By Fact \ref{coset},
the map $H\mapsto\left(\bar{H}_{U\to V},\bar{H}_{U\to U}\right)$
is a map from $\Omega_{U,V}$ to $\tilde{{\rm C}}_{U,V}$. It is routine
to check that this map is measurable. 

Let $\pi:\tilde{{\rm C}}_{U,V}\to{\rm Sub}\left(\bar{G}_{U\to U}\right)$
be the projection to the second coordinate, that is $\pi(C,A)=A$.
Denote by $\mathbb{P}_{U,V}^{\mu}:{\rm Sub}\left(\bar{G}_{U\to U}\right)\times\mathcal{B}\left({\rm \tilde{C}}_{U,V}\right)\to[0,1]$
the regular conditional distribution of $\left(\bar{H}_{U\to V},\bar{H}_{U\to U}\right)$
given $\pi$, where the distribution of $\left(\bar{H}_{U\to V},\bar{H}_{U\to U}\right)$
is the pushforward of $\mu_{U,V}$ under this map $H\mapsto\left(\bar{H}_{U\to V},\bar{H}_{U\to U}\right)$.
Recall that such regular conditional distribution exists since $\pi$
is a measurable map between standard Borel spaces. 

We introduce one more piece of notation that will be also be used
in the next section. For $U,V$ open sets of $X$, let $W_{V}^{U}$
be the subgroup of $G$ which consists of elements that fix $U$ pointwise
and leaves $V$ invariant, that is 
\begin{equation}
W_{V}^{U}:=\{g\in G:\ x\cdot g=x\mbox{ for any }x\in U\mbox{ and }V=V\cdot g\}.\label{eq:WUV}
\end{equation}
Note that the set $\Omega_{U,V}$ is invariant under conjugation by
$W_{V}^{U}$. Since $\mu$ is invariant under conjugation by $G$,
it follows that $\mu_{U,V}$ is invariant under conjugation by $W_{V}^{U}$.

Let $\gamma\in W_{V}^{U}$. Then for such $\gamma$, it is clear from
definitions that under conjugation by such an element $\gamma$, $\overline{\left(\gamma^{-1}H\gamma\right)}_{U\to U}$
remains the same as $\bar{H}_{U\to U}$ since $\gamma$ acts trivially
on $U$; while $\overline{\left(\gamma^{-1}H\gamma\right)}_{U\to V}$
is the right translate of $\bar{H}_{U\to V}$ by the restriction of
$\gamma$ to $V$. That is, for $\gamma\in W_{V}^{U}$,
\begin{align*}
\overline{\left(\gamma^{-1}H\gamma\right)}_{U\to U} & =\bar{H}_{U\to U},\\
\overline{\left(\gamma^{-1}H\gamma\right)}_{U\to V} & =\bar{H}_{U\to V}\gamma|_{V}
\end{align*}
The assumption that $\mu$ is invariant under conjugation implies
translation invariance properties of $\mathbb{P}_{U,V}^{\mu}(\bar{H}_{U\to U},\cdot)$
as stated in the following lemma.

\begin{lemma}\label{invariant}

Let $\mu$ be an IRS of $G$. Let $U,V$ be open subsets of $X$ such
that $\mu(\Omega_{U,V})>0$. Then for $\mu$-a.e. $H\in\Omega_{U,V}$,
$\mathbb{P}_{U,V}^{\mu}(\bar{H}_{U\to U},\cdot)$ is a probability
measure supported on $\left\{ \left(\bar{H}_{U\to U}g|_{U},\bar{H}_{U\to U}\right):g\in G_{U\to V}\right\} $
and $\mathbb{P}_{U,V}^{\mu}(\bar{H}_{U\to U},\cdot)$ is invariant
under right multiplication of restrictions $\gamma|_{V}$ for $\gamma\in W_{V}^{U}$:
for any $g\in G_{U\to V}$ and $\gamma\in W_{V}^{U}$,
\[
\mathbb{P}_{U,V}^{\mu}\left(\bar{H}_{U\to U},\left\{ \left(\bar{H}_{U\to U}g|_{U},\bar{H}_{U\to U}\right)\right\} \right)=\mathbb{P}_{U,V}^{\mu}\left(\bar{H}_{U\to U},\left\{ \left(\bar{H}_{U\to U}g|_{U}\gamma|_{V},\bar{H}_{U\to U}\right)\right\} \right).
\]

\end{lemma}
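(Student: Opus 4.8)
The plan is to realize the regular conditional distribution $\mathbb{P}_{U,V}^{\mu}$ as the disintegration of a pushforward measure and then to transport the conjugation invariance of $\mu_{U,V}$ through this disintegration using the a.e.-uniqueness of regular conditional distributions. Write $\Psi:\Omega_{U,V}\to\tilde{{\rm C}}_{U,V}$ for the measurable map $H\mapsto(\bar{H}_{U\to V},\bar{H}_{U\to U})$, and let $\nu=\mu_{U,V}\circ\Psi^{-1}$ be its pushforward, so that $\{\mathbb{P}_{U,V}^{\mu}(A,\cdot)\}_{A}$ is by definition the disintegration of $\nu$ over the projection $\pi(C,A)=A$, and $\mathbb{Q}=\nu\circ\pi^{-1}$ is the law of $\bar{H}_{U\to U}$. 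The support statement is then immediate from property (ii) of a regular conditional distribution: for $\mathbb{Q}$-a.e.\ $A$ the measure $\mathbb{P}_{U,V}^{\mu}(A,\cdot)$ is carried by the fiber $\pi^{-1}(\{A\})=\{(A\gamma|_{U},A):\gamma\in G_{U\to V}\}$, which upon reading $A=\bar{H}_{U\to U}$ is precisely the claimed support; and since $\mathbb{Q}$ is the pushforward of $\mu_{U,V}$ under $H\mapsto\bar{H}_{U\to U}$, a $\mathbb{Q}$-null set of $A$'s corresponds to a $\mu$-null set of $H\in\Omega_{U,V}$.

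For the invariance, fix $\gamma\in W_{V}^{U}$ and set $\Phi_{\gamma}(H)=\gamma^{-1}H\gamma$. By the identities recorded just before the lemma, $\overline{(\gamma^{-1}H\gamma)}_{U\to U}=\bar{H}_{U\to U}$ and $\overline{(\gamma^{-1}H\gamma)}_{U\to V}=\bar{H}_{U\to V}\gamma|_{V}$; introducing the fiber-preserving map $T_{\gamma}(C,A)=(C\gamma|_{V},A)$ on $\tilde{{\rm C}}_{U,V}$, this is exactly the relation $\Psi\circ\Phi_{\gamma}=T_{\gamma}\circ\Psi$. One checks $T_{\gamma}$ does land in $\tilde{{\rm C}}_{U,V}$: if $C=Ag|_{U}$ with $g\in G_{U\to V}$ then, since $\gamma$ leaves $V$ invariant, $g\gamma\in G_{U\to V}$ and $C\gamma|_{V}=A(g\gamma)|_{U}$. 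Because $\mu_{U,V}$ is invariant under conjugation by $W_{V}^{U}$, we have $(\Phi_{\gamma})_{*}\mu_{U,V}=\mu_{U,V}$, and hence $(T_{\gamma})_{*}\nu=(T_{\gamma})_{*}\Psi_{*}\mu_{U,V}=\Psi_{*}(\Phi_{\gamma})_{*}\mu_{U,V}=\Psi_{*}\mu_{U,V}=\nu$. Moreover $\pi\circ T_{\gamma}=\pi$, so $T_{\gamma}$ preserves the fibers of $\pi$.

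I would then invoke uniqueness of the disintegration. Since $T_{\gamma}$ preserves both $\nu$ and the fibers of $\pi$, the family $A\mapsto(T_{\gamma})_{*}\mathbb{P}_{U,V}^{\mu}(A,\cdot)$ again satisfies properties (i)--(iii) of a regular conditional distribution of $\nu$ given $\pi$: (i) is clear, (ii) holds because $T_{\gamma}$ maps $\pi^{-1}(\{A\})$ into itself, and (iii) follows from $\int(T_{\gamma})_{*}\mathbb{P}_{U,V}^{\mu}(A,B)\,d\mathbb{Q}(A)=\int\mathbb{P}_{U,V}^{\mu}(A,T_{\gamma}^{-1}B)\,d\mathbb{Q}(A)=\nu(T_{\gamma}^{-1}B)=(T_{\gamma})_{*}\nu(B)=\nu(B)$. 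By the a.e.-uniqueness of regular conditional distributions on standard Borel spaces, $(T_{\gamma})_{*}\mathbb{P}_{U,V}^{\mu}(A,\cdot)=\mathbb{P}_{U,V}^{\mu}(A,\cdot)$ for $\mathbb{Q}$-a.e.\ $A$, i.e.\ $\mathbb{P}_{U,V}^{\mu}(A,T_{\gamma}^{-1}B)=\mathbb{P}_{U,V}^{\mu}(A,B)$ for all $B$. Evaluating on the singleton $B=\{(Ag|_{U}\gamma|_{V},A)\}$, whose $T_{\gamma}$-preimage is $\{(Ag|_{U},A)\}$ (right multiplication by $\gamma|_{V}$ being injective), yields precisely the displayed identity of the lemma with $A=\bar{H}_{U\to U}$.

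Finally, since $W_{V}^{U}\le G$ is countable, the union over $\gamma\in W_{V}^{U}$ of the $\mathbb{Q}$-null exceptional sets is still $\mathbb{Q}$-null, so one $\mu$-conull set of $H\in\Omega_{U,V}$ serves all $\gamma$ simultaneously. The one point demanding care is this measure-theoretic bookkeeping — coupling the uniqueness of the disintegration with the translation between ``$\mathbb{Q}$-a.e.\ $A$'' and ``$\mu$-a.e.\ $H$'' and the countable exhaustion of $W_{V}^{U}$ — rather than any single computation, each of which is routine once the intertwining $\Psi\circ\Phi_{\gamma}=T_{\gamma}\circ\Psi$ is in hand.
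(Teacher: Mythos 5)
Your proof is correct and takes essentially the same approach as the paper: both derive the support statement from properties (i)--(ii) of the regular conditional distribution, and both obtain the invariance by showing that conjugation by $\gamma\in W_{V}^{U}$ leaves the law of $\left(\bar{H}_{U\to V},\bar{H}_{U\to U}\right)$ unchanged (your intertwining $\Psi\circ\Phi_{\gamma}=T_{\gamma}\circ\Psi$ is exactly the paper's computation with bounded test functions) and then invoking a.e.-uniqueness of regular conditional distributions. The only difference is that you make explicit the bookkeeping the paper leaves implicit: the verification of properties (i)--(iii) for the pushed-forward disintegration, the translation between $\mathbb{Q}$-null and $\mu$-null sets, and the countable union of exceptional sets over $\gamma\in W_{V}^{U}$.
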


\begin{proof}

The first half of the claim follows directly from property (i) and
(ii) of the regular conditional distribution $\mathbb{P}_{U,V}^{\mu}$
as reviewed in Subsection \ref{subsec:reg}.

To show the second half, note that as a consequence of $\mu_{U,V}$
being invariant under conjugation by $\gamma$, $\gamma\in W_{V}^{U}$,
we have that $\left(\bar{H}_{U\to V},\bar{H}_{U\to U}\right)$ and
$\left(\bar{H}_{U\to V}\gamma|_{V},\bar{H}_{U\to U}\right)$ have
the same distribution. Indeed, for any measurable bounded function
$f:\tilde{{\rm C}}_{U,V}\to\mathbb{R}$, we have
\begin{align*}
 & \mathbb{E}_{\mu_{U,V}}\left[f\left(\bar{H}_{U\to V},\bar{H}_{U\to U}\right)\right]\\
= & \mathbb{E}_{\mu_{U,V}}\left[f\left(\overline{\left(\gamma^{-1}H\gamma\right)}_{U\to U},\overline{\left(\gamma^{-1}H\gamma\right)}_{U\to U}\right)\right]\\
= & \mathbb{E}_{\mu_{U,V}}\left[\left(f\left(\bar{H}_{U\to V}\gamma|_{V},\bar{H}_{U\to U}\right)\right)\right].
\end{align*}
In the last line the identities $\overline{\left(\gamma^{-1}H\gamma\right)}_{U\to U}=\bar{H}_{U\to U}$
and $\overline{\left(\gamma^{-1}H\gamma\right)}_{U\to V}=\bar{H}_{U\to V}\gamma|_{V}$
explained above are used. The claim follows from uniqueness of regular
conditional distribution. 

\end{proof}

\section{Properties of IRS in connection to rigid stabilizers\label{sec:rigidstabilizer}}

In this section we prove Theorem \ref{inclu-rigid}. As explained
in Lemma \ref{invariant}, conjugation invariance of the IRS distribution
$\mu$ results in translation invariance properties of certain conditional
distributions. We will repeatedly use the fact that a countable orbit
supporting an invariant probability measure must be finite. 

\subsection{From IRS to rigid stabilizers}

Recall the notation $H_{U\to V}$ and $\bar{H}_{U\to V}$ as defined
in (\ref{eq:SHU}) and (\ref{eq:BSHU}). Recall that $W_{V}^{U}$,
as defined in (\ref{eq:WUV}), is the subgroup of $G$ which consists
of elements that fix $U$ pointwise and leave $V$ invariant. For
a subgroup $H<G$, denote by $N_{G}(H)$ its normalizer in $G$, that
is $N_{G}(H)=\{g\in G:g^{-1}Hg=H\}$. A group is called an FC-group
if all of its conjugacy classes are finite. 

\begin{proposition}\label{conjugate1}

Let $G\curvearrowright X$ be as in Assumption \ref{standing}. Let
$\mu$ be an IRS of $G$. Then $\mu$-a.e. $H$ satisfies the following
property: if $U,V\in\mathcal{U}$ are such that $U\cap V=\emptyset$
and $H\cap G_{U\to V}\neq\emptyset$, then:
\begin{description}
\item [{(i)}] There exists an element $\sigma\in G_{U\to V}$ and a finite
index subgroup $\Gamma$ of $W_{U}^{U\cdot\sigma^{-1}}$ such that
$\pi_{U}(\Gamma)\le\bar{H}_{U\to U}$, where $\pi_{U}$ is the projection
$G_{U\to U}\to\bar{G}_{U\to U}$. 
\item [{(ii)}] The subgroup $K=\left\{ g\in N_{R_{G}(U)}(R_{H}(U)):\ \pi_{U}(g)\in\bar{H}_{U\to U}\right\} $
is of finite index in $R_{G}(U)$. Moreover, the quotient group $K/K\cap H$
is an FC-group.
\end{description}
\end{proposition}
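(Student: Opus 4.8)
The plan is to fix a pair $U,V\in\mathcal U$ with $U\cap V=\emptyset$ and $\mu(\Omega_{U,V})>0$, prove the implication for $\mu_{U,V}$-a.e.\ $H$, and then union over the countably many such pairs (for pairs with $\mu(\Omega_{U,V})=0$ the hypothesis $H\in\Omega_{U,V}$ fails a.e.\ and the implication is vacuous). For (i) I apply Lemma~\ref{invariant}: the conditional law $\mathbb P^\mu_{U,V}(\bar H_{U\to U},\cdot)$ is a probability measure on the countable set of cosets $\{\bar H_{U\to U}\,g|_U:g\in G_{U\to V}\}$, invariant under right multiplication by $\gamma|_V$ for $\gamma\in W_V^U$. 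Since an invariant probability measure on a countable set is supported on finite orbits, for a.e.\ $H$ the realized coset $\bar H_{U\to V}$ has finite $W_V^U$-orbit, so its stabilizer has finite index in $W_V^U$. Choosing $\sigma\in H_{U\to V}$ and writing $\bar H_{U\to V}=\bar H_{U\to U}\sigma|_U$, one computes $\bar H_{U\to V}\gamma|_V=\bar H_{U\to U}\,\phi(\gamma)\,\sigma|_U$ with $\phi(\gamma)=\pi_U(\sigma\gamma\sigma^{-1})$ (note $\sigma\gamma\sigma^{-1}\in G_{U\to U}$), where $\phi\colon W_V^U\to\bar G_{U\to U}$ is a homomorphism; hence the stabilizer is $\phi^{-1}(\bar H_{U\to U})$. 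Setting $\Gamma:=\sigma\,\phi^{-1}(\bar H_{U\to U})\,\sigma^{-1}$ and using $\sigma W_V^U\sigma^{-1}=W_U^{U\sigma^{-1}}$ gives a finite-index subgroup of $W_U^{U\sigma^{-1}}$ with $\pi_U(\Gamma)=\phi\big(\phi^{-1}(\bar H_{U\to U})\big)\le\bar H_{U\to U}$, which is (i).

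For (ii) I first show $K=L$, where $L:=\{g\in R_G(U):\pi_U(g)\in\bar H_{U\to U}\}$. The inclusion $K\subseteq L$ is immediate. Conversely, given $a\in L$ pick $h_a\in H_{U\to U}$ with $h_a|_U=a|_U$; for $r\in R_H(U)$ both $a^{-1}ra$ and $h_a^{-1}rh_a$ are supported on $U$ and have the same restriction to $U$, namely $(a|_U)^{-1}(r|_U)(a|_U)$ (using $h_a|_U=a|_U$ and that $h_a$ leaves $U$ invariant). A homeomorphism that is the identity off $U$ is determined by its restriction to $U$, so $a^{-1}ra=h_a^{-1}rh_a\in H\cap R_G(U)=R_H(U)$. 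Thus $a\in N_{R_G(U)}(R_H(U))$, whence $L\subseteq K$ and $K=L$. The same reasoning records $R_H(U)\le K$ and $K\cap H=R_H(U)$.

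The finite-index claim then follows from (i). As $U\cap V=\emptyset$ we have $R_G(V)\le W_V^U$, so $\Gamma'':=\phi^{-1}(\bar H_{U\to U})\cap R_G(V)$ has finite index in $R_G(V)$. For $\gamma\in\Gamma''$ the element $\sigma\gamma\sigma^{-1}$ lies in $\sigma R_G(V)\sigma^{-1}=R_G(V\sigma^{-1})=R_G(U)$ and satisfies $\pi_U(\sigma\gamma\sigma^{-1})=\phi(\gamma)\in\bar H_{U\to U}$, i.e.\ $\sigma\gamma\sigma^{-1}\in L=K$. Hence $\sigma\Gamma''\sigma^{-1}$ is a finite-index subgroup of $R_G(U)$ contained in $K$, so $[R_G(U):K]<\infty$.

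It remains to prove that $K/R_H(U)$ is an FC-group, which I expect to be the main obstacle. I would reduce it to showing, for each fixed $s\in R_G(U)$, that on the event $\{s\in K\}$ the conjugacy class of $sR_H(U)$ in $K/R_H(U)$ is a.s.\ finite (then intersect over the countably many $s$). Disintegrating $\mu_{U,V}$ over $\bar H_{U\to U}$ and using that, for fixed $a\in R_G(U)$, conjugation $c_a$ preserves $\mu$ and on $\{a\in K\}$ fixes $\bar H_{U\to U}$ while sending $R_H(U)\mapsto a^{-1}R_H(U)a$, one sees the conditional law of $R_H(U)$ given $\bar H_{U\to U}$ is an invariant random subgroup of the now-fixed group $K$, and $R_H(U)$ is a.s.\ normal in $K$ by the computation above. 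The delicate point is to upgrade this to finiteness of conjugacy classes in $K/R_H(U)$: a normal random subgroup of $K$ alone does not force an FC quotient, so the hypothesis that $H$ contains $\sigma$ carrying $U$ onto the disjoint set $V$ must be used. Concretely, I would express $ksk^{-1}$ through $H$-representatives $h_k,h_s\in H_{U\to U}$ agreeing with $k,s$ on $U$, push the resulting ``off-$U$'' ambiguity $h_s s^{-1}$ (which fixes $U$ pointwise) by $\sigma$ into the disjoint region $V$, where it commutes with $R_G(U)$, and apply the finite-orbit principle to the resulting conjugation-invariant family of cosets. Producing the requisite invariant probability measure on the countable conjugacy class is exactly the step I expect to require the most care.
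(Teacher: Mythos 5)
Parts (i) and the finite-index half of (ii) in your proposal are correct and follow essentially the paper's route. Your part (i) is the paper's argument in a cleaner packaging: Lemma \ref{invariant}, the principle that an invariant probability measure on a countable set only charges points with finite orbit, and then orbit--stabilizer applied via the identity $\bar{H}_{U\to V}\gamma|_{V}=\bar{H}_{U\to U}\,\pi_{U}(\sigma\gamma\sigma^{-1})\,\sigma|_{U}$ (the paper instead extracts finitely many coset representatives $\gamma_{1},\ldots,\gamma_{\ell}$ and takes the subgroup generated by the elements $\gamma\gamma_{k}^{-1}$, which is the same computation). Your observation that the normalizer condition in the definition of $K$ is automatic, so that $K=\{g\in R_{G}(U):\pi_{U}(g)\in\bar{H}_{U\to U}\}$ and $K\cap H=R_{H}(U)$, is sound and is implicitly the same support argument the paper uses; your finite-index proof via $\sigma\Gamma''\sigma^{-1}\le K$ with $\Gamma''\le_{f.i.}R_{G}(V)$ is a harmless variant of the paper's $\Gamma\cap R_{G}(U)$.

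The genuine gap is the FC claim, which you acknowledge you have not proved, and your own diagnosis of the dead end is accurate: conditioning on $\bar{H}_{U\to U}$ is vacuous, because once $R_{H}(U)$ is a.s. normal in $K$, invariance of its conditional law under $K$-conjugation carries no information. The missing idea is to condition the other way around and to randomize a \emph{coset} rather than the subgroup. For fixed $\sigma\in\pi_{U^{c}}(G_{U\to U})$, on the event $\sigma\in\pi_{U^{c}}(H_{U\to U})$ the paper considers the random coset $\alpha_{\sigma}(H)=\{f\in\pi_{U}(G_{U\to U}):\ (f,\sigma)\in H_{U\to U}\}$ of $R_{H}(U)$, and takes its regular conditional distribution given $R_{H}(U)$. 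Conjugating $H$ by $\gamma\in N=N_{R_{G}(U)}(R_{H}(U))$ fixes $R_{H}(U)$ and fixes the $U^{c}$-coordinate $\sigma$ (because $\gamma$ acts trivially on $U^{c}$), while it conjugates the coset, $\alpha_{\sigma}(\gamma^{-1}H\gamma)=\gamma^{-1}\alpha_{\sigma}(H)\gamma$; hence the conditional law is $N$-conjugation invariant (Lemma \ref{map-inv}), and the finite-orbit principle now applies to the countable set of cosets, giving that the $N$-conjugacy class of $\alpha_{\sigma}(H)$ is a.s. finite. The last ingredient is Fact \ref{isomorphism}: the map $\sigma\mapsto\alpha_{\sigma}(H)$ is onto $\bar{H}_{U\to U}/R_{H}(U)$, so a union over the countably many $\sigma$ shows every element of $\bar{H}_{U\to U}/R_{H}(U)$, in particular of $K/K\cap H$, has finite conjugacy class. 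Note also that your guiding intuition --- that the element $\sigma\in H_{U\to V}$ mapping $U$ off itself ``must be used'' for FC --- is misplaced: that hypothesis enters only in the finite-index half (through part (i)); the FC property is a consequence of conjugation invariance of $\mu$ alone, valid for every $U\in\mathcal{U}$, driven by the fact that the normalizer $N$ acts trivially off $U$.
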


To prove part (i) we consider regular conditional distribution of
$(\bar{H}_{U\to V},\bar{H}_{U\to U})$ given $\bar{H}_{U\to U}$.
The invariance property stated in Lemma \ref{invariant} forces the
number of cosets under consideration to be finite, in order to support
an invariant probability measure. 

\begin{proof}[Proof of Proposition \ref{conjugate1}(i)]

If $\mu$ is $\delta$-mass at $\{id\}$, then the claim is trivially
true. We may assume $\mu$ is not $\delta$-mass at $\{id\}$. In
what follows we use notations introduced in Section \ref{sec:conditional}.

Take a pair of $U,V$ such that $\mu(\Omega_{U,V})>0$ and consider
the random variables $H_{U\to V}$, $\bar{H}_{U\to V}$ and $\bar{H}_{U\to U}$
as defined in (\ref{eq:SHU}), (\ref{eq:BSHU}). Denote by $\mathbb{P}_{U,V}^{\mu}(\bar{H}_{U\to U},\cdot)$
the regular conditional distribution of $\left(\bar{H}_{U\to V},\bar{H}_{U\to U}\right)$
given $\bar{H}_{U\to U}$, where $H$ has distribution $\mu_{U,V}$
on $\Omega_{U,V}$. 

Recall that by Lemma \ref{invariant}, $\mathbb{P}_{U,V}^{\mu}(\bar{H}_{U\to U},\cdot)$
is a probability measure on a countable set. For $\mu$-a.e. $H\in\Omega_{U,V}$,
we can find one coset $\bar{H}_{U\to U}\sigma|_{U}$, $\sigma\in G_{U\to V}$
depending on $\bar{H}_{U\to U}$, such that $\mathbb{P}_{U,V}^{\mu}\left(\bar{H}_{U\to U},\left\{ \left(\bar{H}_{U\to U}\sigma|_{U},\bar{H}_{U\to U}\right)\right\} \right)>0$.
If the number of right cosets $\bar{H}_{U\to U}\sigma|_{U}\gamma|_{V}$,
where $\gamma$ is taken over elements of $W_{V}^{U}$, is infinite,
then the probability measure $\mathbb{P}_{U,V}^{\mu}(\bar{H}_{U\to U},\cdot)$
cannot be invariant under right multiplication as stated in Lemma
\ref{invariant}. Therefore there are only finitely many cosets of
$\bar{H}_{U\to U}\sigma|_{U}$ in this collection. In other words,
there are finitely many representatives $\gamma_{1},\ldots,\gamma_{\ell}$
in $W_{V}^{U}$ such that for any $\gamma\in W_{V}^{U}$, we have
$\bar{H}_{U\to U}\sigma|_{U}\gamma|_{V}=\bar{H}_{U\to U}\sigma|_{U}\gamma_{k}|_{V}$
for some $k\in\{1,\ldots,\ell\}$. It follows that for any $\gamma\in W_{V}^{U}$,
there is a representative $\gamma_{k}$, $k\in\{1,\ldots,\ell\}$,
such that $\bar{H}_{U\to U}$ contains $\sigma|_{U}(\gamma\gamma_{k}^{-1})|_{V}\sigma|_{U}^{-1}$.
Consider the subgroup $W_{1}$ of $W_{V}^{U}$ generated by the collection
$\gamma\gamma_{k}^{-1}$, where $\gamma\in W_{V}^{U}$ and $\gamma_{k}$
is its corresponding representative. It's clear by definition of $W_{1}$
that $\cup_{j=1}^{\ell}W_{1}\gamma_{j}=W_{V}^{U}$, therefore $W_{1}$
is a finite index subgroup of $W_{V}^{U}$. Recall that $\sigma$
maps $U$ to $V$, therefore $\sigma W_{V}^{U}\sigma^{-1}=W_{U}^{U\cdot\sigma^{-1}}$.
Let $\Gamma=\sigma W_{1}\sigma^{-1}$, it is a finite index subgroup
of $W_{U}^{U\cdot\sigma^{-1}}$. Elements of $\Gamma$ satisfy the
property that $\bar{H}_{U\to U}=\bar{H}_{U\to U}\gamma|_{U}$, in
other words, $\pi_{U}(\Gamma)\le\bar{H}_{U\to U}$. Let $\Omega_{U,V}'$
be the subset of $\Omega_{U,V}$ which consists of subgroups $H$
of $G$ such that statement (i) is satisfied for some $\sigma\in G_{U\to V}$
and some finite index subgroup $\Gamma\le_{f.i.}W_{U}^{U\cdot\sigma^{-1}}$.
We have proved that $\mu(\Omega_{U,V}')=\mu(\Omega_{U,V})$. 

Finally, take the union of the measure $0$ sets we want to discard.
Let $\Lambda_{0}=\left\{ (U,V)\in\mathcal{U}^{2}:\ \mu\left(\Omega_{U,V}\right)=0\right\} $
and $E=\left(\cup_{(U,V)\in\Lambda_{0}}\Omega_{U,V}\right)\cup\left(\cup_{(U,V)\notin\Lambda_{0}}\Omega_{U,V}\setminus\Omega_{U,V}'\right)$.
This is a countable union of $\mu$-measure $0$ sets, and ${\rm Sub}(G)-E$
gives a full measure set in the statement of part (i).

\end{proof}

We now turn to the proof of Proposition \ref{conjugate1} (ii). Let
$U$ be an open subset of $X$ and consider the subgroup $H_{U\to U}$
of $H$ as in (\ref{eq:SHU}). Elements of the subgroup $H_{U\to U}$
preserves the partition $U\sqcup U^{c}$ of $X$, thus $h\in H_{U\to U}$
can be recorded as a pair $(f_{1},f_{2})$, where $f_{1}=\pi_{U}\left(h\right)$
and $f_{2}=\pi_{U^{c}}\left(h\right)$. View $\bar{H}_{U\to U}=\pi_{U}(H_{U\to U})$
as a group of homeomorphisms on $U$, then the rigid stabilizer $R_{H}(U)$
is a normal subgroup of $\bar{H}_{U\to U}$. Note the following elementary
fact about the corresponding quotient group:

\begin{fact}\label{isomorphism}

There is an isomorphism $\phi:\pi_{U^{c}}(H_{U\to U})/R_{H}(U^{c})\to\pi_{U}(H_{U\to U})/R_{H}(U)$. 

\end{fact}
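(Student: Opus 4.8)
The plan is to recognize Fact~\ref{isomorphism} as an instance of Goursat's lemma. Since $U$ and $U^{c}=X\setminus U$ partition $X$ and every $h\in H_{U\to U}$ preserves this partition, the assignment $h\mapsto\bigl(\pi_{U}(h),\pi_{U^{c}}(h)\bigr)$ is a group homomorphism from $H_{U\to U}$ into $\mathrm{Homeo}(U)\times\mathrm{Homeo}(U^{c})$. Faithfulness of $G\curvearrowright X$ makes this homomorphism injective: an element acting trivially on both $U$ and $U^{c}$ acts trivially on all of $X$, hence equals the identity. I would therefore identify $H_{U\to U}$ with its image, a subgroup $G_{0}$ that by construction projects onto $A:=\pi_{U}(H_{U\to U})=\bar{H}_{U\to U}$ and onto $B:=\pi_{U^{c}}(H_{U\to U})$; that is, $G_{0}$ is a subdirect product of $A$ and $B$.

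Next I would pin down the two ``kernels'' appearing in Goursat's lemma. Set $A_{0}=\{f_{1}\in A:\ (f_{1},\mathrm{id})\in G_{0}\}$ and $B_{0}=\{f_{2}\in B:\ (\mathrm{id},f_{2})\in G_{0}\}$. A pair $(f_{1},\mathrm{id})$ lies in $G_{0}$ exactly when some $h\in H_{U\to U}$ with $\pi_{U}(h)=f_{1}$ fixes $U^{c}$ pointwise, i.e.\ $h\in R_{H}(U)$; and since $\pi_{U}$ is injective on $R_{H}(U)$ (again by faithfulness, as such an $h$ already fixes $U^{c}$), this identifies $A_{0}$ with $R_{H}(U)$ viewed inside $\bar{H}_{U\to U}$. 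Symmetrically $B_{0}$ is identified with $R_{H}(U^{c})\le\pi_{U^{c}}(H_{U\to U})$. The normality of $A_{0}$ in $A$ (already recorded in the text as $R_{H}(U)\trianglelefteq\bar{H}_{U\to U}$) and of $B_{0}$ in $B$ is the standard normality of the Goursat kernels, and it also follows directly from the fact that $R_{G}(U)$ is normalized by every element preserving $U$ setwise.

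Finally I would invoke Goursat's lemma: for a subdirect product $G_{0}\le A\times B$, the image of $G_{0}$ in $(A/A_{0})\times(B/B_{0})$ is the graph of an isomorphism $A/A_{0}\to B/B_{0}$. Concretely, the desired map is $\phi\bigl(f_{2}R_{H}(U^{c})\bigr)=\pi_{U}(h)R_{H}(U)$ for any $h\in H_{U\to U}$ with $\pi_{U^{c}}(h)=f_{2}$; well-definedness, the homomorphism property, and bijectivity are precisely the content of Goursat's lemma applied to $G_{0}$. This yields $\pi_{U^{c}}(H_{U\to U})/R_{H}(U^{c})\cong\pi_{U}(H_{U\to U})/R_{H}(U)$, as claimed.

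The only genuinely delicate point is the bookkeeping in the identification step: one uses faithfulness twice, once to embed $H_{U\to U}$ in the product $\mathrm{Homeo}(U)\times\mathrm{Homeo}(U^{c})$, and once to justify that $R_{H}(U)$ and $R_{H}(U^{c})$ may be treated as subgroups of $\mathrm{Homeo}(U)$ and $\mathrm{Homeo}(U^{c})$ respectively, so that the Goursat kernels $A_{0},B_{0}$ literally coincide with these rigid stabilizers. Everything else is the routine verification underlying Goursat's lemma, which I would either cite or reproduce in a couple of lines.
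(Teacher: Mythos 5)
Your proof is correct and is essentially the paper's own argument: the paper establishes Fact \ref{isomorphism} by constructing exactly the Goursat map (it records elements of $L=H_{U\to U}$ as pairs $(f_{1},f_{2})$, sets $B_{L}(f_{2})=\{f_{1}\in\pi_{U}(L):\ (f_{1},f_{2})\in L\}$, and checks directly that $f_{2}\mapsto B_{L}(f_{2})$ induces a well-defined, injective, surjective homomorphism between the two quotients), which is precisely the inline verification of Goursat's lemma for the subdirect product $H_{U\to U}\le\pi_{U}(H_{U\to U})\times\pi_{U^{c}}(H_{U\to U})$ that you describe. The only difference is cosmetic: you cite Goursat's lemma as a black box and carefully justify the identification of the kernels $A_{0},B_{0}$ with $R_{H}(U),R_{H}(U^{c})$ via faithfulness, whereas the paper reproves the lemma in this special case.
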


\begin{proof}

Write $L=H_{U\to U}$. Record elements of $L$ as $(f_{1},f_{2})$,
where $f_{1}\in\pi_{U}\left(L\right)$ and $f_{2}\in\pi_{U^{c}}\left(L\right)$.
Let $B_{L}(f_{2})=\{f_{1}\in\pi_{U}(L):\ (f_{1},f_{2})\in L\}$. Note
that $B_{L}(f_{2})$ is a right coset of $L\cap R_{G}(U)$. Moreover,
if $f_{2}$ and $f_{2}'$ are in the same coset of $L\cap R_{G}(U^{c})$
in $\pi_{U^{c}}\left(L\right)$, then $B_{L}(f_{2})=B_{L}(f_{2}')$.
Therefore we have a map $\phi:\pi_{U^{c}}(L)/L\cap G_{U^{c}}\to\pi_{U}(L)/L\cap G_{U}$
induced by $f_{2}\mapsto B_{L}(f_{2})$.

The map $\phi$ is a homomorphism because $L\cap R_{G}(U)$ is normal
in $\pi_{U}(L)$. If $B_{L}(f_{2})=B_{L}(f_{2}')$, then there exists
$f_{1}\in\pi_{U}(L)$ such that $(f_{1},f_{2})\in L$ and $(f_{1},f_{2}')\in L$.
It follows that $(id,f_{2}^{-1}f_{2}')=(f_{1},f_{2})^{-1}(f_{1},f_{2}')\in L$,
that is $f_{2}$ and $f_{2}'$ are in the same coset of $L\cap R_{G}(U^{c})$.
In other words $\phi$ is injective. For any $f_{1}\in\pi_{U}(L)$,
the preimage of $\bar{f}_{1}$ under $\phi$ is the coset $\{f_{2}\in\pi_{U^{c}}(L):\ (f_{1},f_{2})\in L\}$.
We conclude $\phi$ is an isomorphism. 

\end{proof}

By part (i) of the proposition we have that $\pi_{U}(\Gamma)$ is
a subgroup of $\bar{H}_{U\to U}$ for some $\Gamma\le_{f.i.}W_{U}^{U\cdot\sigma^{-1}}$
where $\sigma$ is some element in $G_{U\to V}$. Note that $U\cap U\cdot\sigma^{-1}=\emptyset$.
Our next step is to gain some information on the intersection $H\cap R_{G}(U)=R_{H}(U)$.
Note that $R_{H}(U)$ is a normal subgroup of $H_{U\to U}$, it can
also be viewed as a normal subgroup of $\bar{H}_{U\to U}$, where
$R_{H}(U)$ and $\bar{H}_{U\to U}$ are regarded as groups acting
by homomorphisms on $U$. We look for properties of the quotient group
$\bar{H}_{U\to U}/R_{H}(U)$ that can be derived from conjugation
invariance of the distribution $\mu$.

\begin{proof}[Proof of Proposition \ref{conjugate1} (ii)]

Let $U,V\in\mathcal{U}$ be such that $U\cap V=\emptyset$ and $\mu(\Omega_{U,V})>0$. 

We first show that for $\mu$-a.e. $H$ in $\Omega_{U,V}$, the subgroup
$K$ defined in the statement is of finite index in $R_{G}(U)$. For
$\mu$-a.e. $H$ in $\Omega_{U,V}$, let $\Gamma<_{f.i.}W_{U}^{U\cdot\sigma^{-1}}$
be a subgroup provided by part (i) such that $\pi_{U}(\Gamma)\le\bar{H}_{U\to U}$.
Note that this property implies that the rigid stabilizer $R_{H}(U)$
is invariant under conjugation by $\Gamma$, that is $\Gamma\le N_{G}(R_{H}(U))$.
Indeed, for $\gamma\in\Gamma$, let $h\in H$ be an element with $h|_{U}=\gamma|_{U}$.
Since $\Gamma<W_{U}^{U\cdot\sigma^{-1}}$, we have that such $h$
leaves $U$ invariant, that is $h\in H_{U\to U}$. Let $g\in R_{H}(U)$.
Since $g$ acts trivially on $U^{c}$, we have $\gamma^{-1}g\gamma=h^{-1}gh$.
Since $h^{-1}gh\in R_{H}(U)$, it follows that $R_{H}(U)$ is invariant
under conjugation by $\Gamma$. Then $\Gamma\cap R_{G}(U)$ is contained
in the normalizer $N_{R_{G}(U)}(R_{H}(U))=N_{G}(R_{H}(U))\cap R_{G}(U)$.
Since $\Gamma\le_{f.i.}W_{U}^{U\cdot\sigma^{-1}}$, $R_{G}(U)\le W_{U}^{U\cdot\sigma^{-1}}$,
and $K$ contains $\Gamma\cap R_{G}(U)$, we conclude that $K$ is
of finite index in $R_{G}(U)$. 

Next we show the claim on finite conjugacy classes. Take an element
$\sigma\in\pi_{U^{c}}(G_{U\to U})$, as in the proof of Fact \ref{isomorphism},
let $B_{H_{U\to U}}(\sigma)=\{g\in\pi_{U}(G_{U\to U}):(g,\sigma)\in H_{U\to U}\}$.
Recall that $B_{H_{U\to U}}(\sigma)$ is either empty or a coset of
$R_{H}(U)$ in $\pi_{U}(H_{U\to U})$. Write
\[
A_{\sigma}=\left\{ H\in{\rm Sub}(G):\ \sigma\in\pi_{U^{c}}\left(H_{U\to U}\right)\right\} .
\]
Note that the set $A_{\sigma}$ is open in ${\rm Sub}(G)$ and invariant
under conjugation by the rigid stabilizer $R_{G}(U)$. 

Take an element $\sigma\in\pi_{U^{c}}(G_{U\to U})$ such that $\mu(A_{\sigma})>0$.
Define a map $\alpha_{\sigma}$ by 
\begin{align*}
\alpha_{\sigma}:A_{\sigma} & \to\{0,1\}^{\bar{G}_{U\to U}},\\
\alpha_{\sigma} & (H)=B_{H_{U\to U}}(\sigma).
\end{align*}
Similar as in Subsection \ref{sec:conditional}, define the probability
$\mu_{A_{\sigma}}:\mathcal{B}(A_{\sigma})\to[0,1]$ by setting $\mu_{A_{\sigma}}(C)=\mu(C)/\mu(A_{\sigma})$
for measurable set $C\subseteq A_{\sigma}$. Since $\mu$ is invariant
under conjugation by $G$ and $A_{\sigma}$ is set-wise invariant
under conjugation by $R_{G}(U)$, it follows that $\mu_{A_{\sigma}}$
is invariant under conjugation by $R_{G}(U)$.

Consider the map 
\begin{align*}
A_{\sigma} & \to\{0,1\}^{\bar{G}_{U\to U}}\times{\rm Sub}(R_{G}(U))\\
H & \mapsto\left(\alpha_{\sigma}(H),R_{H}(U)\right).
\end{align*}
The pair $\left(\alpha_{\sigma}(H),R_{H}(U)\right)$ takes value in
the subspace ${\rm D}_{U}$ of $\{0,1\}^{\bar{G}_{U\to U}}\times{\rm Sub}(R_{G}(U))$,
\[
{\rm D}_{U}=\left\{ \left(C,A\right)\in\{0,1\}^{\bar{G}_{U\to U}}\times{\rm Sub}(G_{U}):\ C=Ag|_{U},\ g\in G_{U\to U}\right\} .
\]
It is clear that ${\rm D}_{U}$ is a standard Borel space. Denote
by $\mathbb{P}_{\sigma,U}^{\mu}:{\rm Sub}(R_{G}(U))\times\mathcal{B}({\rm D}_{U})\to[0,1]$
the regular conditional distribution of $\left(\alpha_{\sigma}(H),R_{H}(U)\right)$
given the random variable $R_{H}(U)$, where the distribution of the
pair $\left(\alpha_{\sigma}(H),R_{H}(U)\right)$ is the pushforward
of $\mu_{A_{\sigma}}$ under the map $H\mapsto\left(\alpha_{\sigma}(H),R_{H}(U)\right)$. 

Denote by $N=N_{R_{G}(U)}(R_{H}(U))$ the normalizer of $R_{H}(U)$
in $R_{G}(U)$. Then for $\gamma\in N$, we have 
\begin{equation}
R_{\gamma^{-1}H\gamma}\left(U\right)=R_{H}(U).\label{eq:R-con}
\end{equation}
Since $N$ acts trivially on $U^{c}$, for an element $h$ of $H_{U\to U}$
recorded as $(f_{1},f_{2})$, where $f_{1}=\pi_{U}(h)$, $f_{2}=\pi_{U^{c}}(h)$,
we have that $\gamma^{-1}(f_{1},f_{2})\gamma=(\gamma^{-1}f_{1}\gamma,f_{2})$.
Therefore for $\gamma\in N$,
\begin{equation}
\alpha_{\sigma}(\gamma^{-1}H\gamma)=\gamma^{-1}\alpha_{\sigma}(H)\gamma.\label{eq:alpha-con}
\end{equation}
Similar to Lemma \ref{invariant}, we have the following invariance
property of $\mathbb{P}_{\sigma,U}^{\mu}$ under conjugation by $N$. 

\begin{lemma}\label{map-inv}

For $\mu$-a.e. $H\in A_{\sigma}$, the regular conditional distribution
$\mathbb{P}_{\sigma,U}^{\mu}\left(R_{H}(U),\cdot\right)$ of $\left(\alpha_{\sigma}(H),R_{H}(U)\right)$
given $R_{H}(U)$ is supported on the countable set $\{(R_{H}(U)g|_{U},R_{H}(U)):\ g\in G_{U\to U}\}$;
moreover, for any $\gamma\in N=N_{R_{G}(U)}(R_{H}(U))$ and $g\in G_{U\to U}$,
we have 

\[
\mathbb{P}_{\sigma,U}^{\mu}\left(R_{H}(U),\left\{ \left(R_{H}(U)g|_{U},R_{H}(U)\right)\right\} \right)=\mathbb{P}_{\sigma,U}^{\mu}\left(R_{H}(U),\left\{ \left(R_{H}(U)\left(\gamma^{-1}g\gamma\right)|_{U},R_{H}(U)\right)\right\} \right).
\]

\end{lemma}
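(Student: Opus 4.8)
The plan is to follow the proof of Lemma~\ref{invariant} closely, with one essential new difficulty that must be handled carefully: here the relevant invariance group $N=N_{R_{G}(U)}(R_{H}(U))$ depends on $H$, whereas in Lemma~\ref{invariant} the conjugating elements ranged over the \emph{fixed} subgroup $W_{V}^{U}$ that acts trivially on $U$. This $H$-dependence is precisely where the argument needs a new device, and I resolve it by conjugating by a single fixed element of $R_{G}(U)$ and only afterwards restricting to those $H$ for which that element actually normalizes $R_{H}(U)$.

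First I would dispose of the support claim. Let $\rho$ denote the law of $R_{H}(U)$ under $\mu_{A_{\sigma}}$, i.e.\ the pushforward of $\mu_{A_{\sigma}}$ under $H\mapsto R_{H}(U)$. By properties (i) and (ii) of the regular conditional distribution recalled in Subsection~\ref{subsec:reg}, for $\rho$-a.e.\ $A$ the measure $\mathbb{P}_{\sigma,U}^{\mu}(A,\cdot)$ is a probability measure concentrated on the fiber $\pi^{-1}(A)\cap\mathrm{D}_{U}$. By the defining description of $\mathrm{D}_{U}$ this fiber equals $\{(Ag|_{U},A):g\in G_{U\to U}\}$, and since $G$ is countable so is $G_{U\to U}$; hence the support is the asserted countable set, and specializing $A=R_{H}(U)$ gives the first half.

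For the invariance, fix a single element $\gamma\in R_{G}(U)$; note there are only countably many. Since $\mu$ is invariant under conjugation by $G$ and $A_{\sigma}$ is set-wise invariant under conjugation by $R_{G}(U)$, the measure $\mu_{A_{\sigma}}$ is invariant under conjugation by $\gamma$. Because $\gamma$ acts trivially on $U^{c}$, the computation leading to (\ref{eq:R-con}) and (\ref{eq:alpha-con}) applies to this general $\gamma$ and yields, for every $H\in A_{\sigma}$, the identities $R_{\gamma^{-1}H\gamma}(U)=\gamma^{-1}R_{H}(U)\gamma$ and $\alpha_{\sigma}(\gamma^{-1}H\gamma)=\gamma^{-1}\alpha_{\sigma}(H)\gamma$, with all conjugation by $\gamma|_{U}$. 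Writing $\nu$ for the pushforward of $\mu_{A_{\sigma}}$ under $H\mapsto(\alpha_{\sigma}(H),R_{H}(U))$ and $T_{\gamma}$ for the conjugation map $(C,A)\mapsto(\gamma^{-1}C\gamma,\gamma^{-1}A\gamma)$ on $\mathrm{D}_{U}$, these two identities combined with conjugation invariance of $\mu_{A_{\sigma}}$ show that $\nu$ is $T_{\gamma}$-invariant.

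The final step is a disintegration argument. Setting $S_{\gamma}(A)=\gamma^{-1}A\gamma$ we have $\pi\circ T_{\gamma}=S_{\gamma}\circ\pi$, so the $T_{\gamma}$-invariance of $\nu$ forces $\rho=\pi_{*}\nu$ to be $S_{\gamma}$-invariant. Rewriting $(T_{\gamma})_{*}\nu$ via the change of variables $A\mapsto S_{\gamma}(A)$ exhibits a disintegration of $(T_{\gamma})_{*}\nu$ over $\rho$ whose fiber measure over $A$ is $(T_{\gamma})_{*}\mathbb{P}_{\sigma,U}^{\mu}(\gamma A\gamma^{-1},\cdot)$; since $(T_{\gamma})_{*}\nu=\nu$, uniqueness of the regular conditional distribution gives, for $\rho$-a.e.\ $A$, the identity $(T_{\gamma})_{*}\mathbb{P}_{\sigma,U}^{\mu}(A,\cdot)=\mathbb{P}_{\sigma,U}^{\mu}(\gamma^{-1}A\gamma,\cdot)$. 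Restricting to those $A$ with $\gamma^{-1}A\gamma=A$, that is $\gamma\in N_{R_{G}(U)}(A)$, this says exactly that $\mathbb{P}_{\sigma,U}^{\mu}(A,\cdot)$ is $T_{\gamma}$-invariant; unwinding the action of $T_{\gamma}$ on the fiber, namely $(Ag|_{U},A)\mapsto(A(\gamma^{-1}g\gamma)|_{U},A)$, produces the stated equality. Discarding the $\rho$-null exceptional set for each $\gamma$ and taking the countable union over $\gamma\in R_{G}(U)$ yields a set of full $\mu$-measure of subgroups $H\in A_{\sigma}$ on which the identity holds simultaneously for all $\gamma\in N_{R_{G}(U)}(R_{H}(U))$, with $A=R_{H}(U)$. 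I expect the main obstacle to be exactly the $H$-dependence of $N$; the device of fixing $\gamma$ first, relating the fiber over $A$ to the fiber over $\gamma^{-1}A\gamma$, and only then intersecting with the event $\{\gamma\in N_{R_{G}(U)}(R_{H}(U))\}$ — legitimate because $R_{G}(U)$ is countable — is what circumvents it.
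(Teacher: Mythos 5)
Your proof is correct, and it rests on the same three pillars as the paper's own argument: conjugation invariance of $\mu_{A_{\sigma}}$ under $R_{G}(U)$, the equivariance identities (\ref{eq:R-con}) and (\ref{eq:alpha-con}), and uniqueness of regular conditional distributions. The difference is in how the invariance step is organized. The paper's proof is a direct test-function computation: for $\gamma\in N=N_{R_{G}(U)}(R_{H}(U))$ it equates $\mathbb{E}_{\mu_{A_{\sigma}}}\left[f(\alpha_{\sigma}(H),R_{H}(U))\right]$ with $\mathbb{E}_{\mu_{A_{\sigma}}}\left[f(\gamma^{-1}\alpha_{\sigma}(H)\gamma,R_{H}(U))\right]$ and then quotes uniqueness, treating $\gamma$ as a fixed group element even though the group $N$ over which it ranges depends on $H$. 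Your two-step device---fix $\gamma\in R_{G}(U)$, prove the covariance relation $(T_{\gamma})_{*}\mathbb{P}_{\sigma,U}^{\mu}(A,\cdot)=\mathbb{P}_{\sigma,U}^{\mu}\left(\gamma^{-1}A\gamma,\cdot\right)$ for $\rho$-a.e. $A$, and only afterwards restrict to the event $\gamma^{-1}A\gamma=A$, taking a countable union over $\gamma\in R_{G}(U)$ at the end---is precisely the formalization that the paper's compressed computation presupposes, and it buys something: a cleaner intermediate statement (the conditional distributions are covariant under all of $R_{G}(U)$, with the lemma's invariance appearing as the special case of a normalizing $\gamma$) and an explicit justification of why the exceptional null sets can be discarded uniformly, which hinges on countability of $R_{G}(U)$. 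So the route is the same; your write-up is the more careful one on exactly the point you flagged, namely the $H$-dependence of $N$.
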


\begin{proof}[Proof of Lemma \ref{map-inv}]

The first claim follows from property (i),(ii) of regular conditional
distribution. For any $\gamma\in N$ and bounded measurable function
$f$ on ${\rm D}_{U}$, we have 
\begin{align*}
 & \mathbb{E}_{\mu_{A_{\sigma}}}\left[f(\alpha_{\sigma}(H),R_{H}(U))\right]\\
= & \mathbb{E}_{\mu_{A_{\sigma}}}\left[f\left(\alpha_{\sigma}(\gamma^{-1}H\gamma),R_{\gamma H\gamma^{-1}}(U)\right)\right]\mbox{ (by invariance of \ensuremath{\mu_{A_{\sigma}}} under conjugation by }N)\\
= & \mathbb{E}_{\mu_{A_{\sigma}}}\left[f(\gamma^{-1}\alpha_{\sigma}(H)\gamma,R_{H}(U))\right],
\end{align*}
where in the last line we plugged in (\ref{eq:R-con}) and (\ref{eq:alpha-con}).
To rewrite into the form as stated, note that $\alpha_{\sigma}(H)$
is a coset of the form $\alpha_{\sigma}(H)=R_{H}(U)g|_{U}$, and $\gamma^{-1}\alpha_{\sigma}(H)\gamma=\gamma^{-1}R_{H}(U)\gamma(\gamma^{-1}g\gamma)|_{U}=R_{H}(U)(\gamma^{-1}g\gamma)|_{U}$.
The claim follows from uniqueness of regular conditional distribution. 

\end{proof}

With the lemma we return to the proof of the second claim of Proposition
\ref{conjugate1} part (ii). Given $\sigma\in\pi_{U^{c}}(G_{U\to U})$
with $\mu(A_{\sigma})>0$, by Lemma \ref{map-inv}, for $\mu$-a.e.
$H\in A_{\sigma}$, the conditional distribution $\mathbb{P}_{\sigma,U}^{\mu}\left(R_{H}(U),\cdot\right)$
is invariant under conjugation by $N=N_{R_{G}(U)}\left(R_{H}(U)\right)$.
Since the support of $\mathbb{P}_{\sigma,U}^{\mu}\left(R_{H}(U),\cdot\right)$
is countable, we have that the collection of cosets $\left\{ R_{H}(U)\left(\gamma^{-1}g\gamma\right)|_{U},\ \gamma\in N\right\} $
under conjugation by $N$ must be finite if $\mathbb{P}_{\sigma,U}^{\mu}\left(R_{H}(U),\left\{ \left(R_{H}(U)g|_{U},R_{H}(U)\right)\right\} \right)>0$.
In other words, write 
\begin{align*}
{\rm IC}\left(R_{H}(U)\right) & :=\left\{ \left(R_{H}(U)g,R_{H}(U)\right):\ g\in G_{U\to U},\ \left|{\rm Cl}_{N}\left(R_{H}(U)g\right)\right|=\infty\right\} ,\\
\mbox{where } & {\rm Cl}_{N}\left(R_{H}(U)g\right)=\left\{ R_{H}(U)\gamma^{-1}g\gamma:\ \gamma\in N\right\} .
\end{align*}
Then the invariance property in Lemma \ref{map-inv} implies that
$\mathbb{P}_{\sigma,U}^{\mu}\left(R_{H}(U),{\rm IC}\left(R_{H}(U)\right)\right)=0$
for $\mu_{A_{\sigma}}$-a.e. $H$. 

Let 
\[
E_{\sigma}=\{H\in A_{\sigma}:\mbox{the }N_{R_{G}(U)}\left(R_{H}(U)\right)\mbox{-conjugacy class of }\alpha_{\sigma}(H)\mbox{ is infinite}\}.
\]
That is, $E_{\sigma}$ is the event that in the quotient group $\bar{H}_{U\to U}/R_{H}(U)$,
the element $\alpha_{\sigma}(H)$ has infinite $N$-conjugacy class
$\left\{ \gamma^{-1}\alpha_{\sigma}(H)\gamma,\ \gamma\in N\right\} $.
The reasoning above implies that $E_{\sigma}$ is a $\mu$-null set.
Indeed, for $\sigma$ such that $\mu(A_{\sigma})>0$, by property
(iii) of regular conditional probability, we have
\[
\mu_{A_{\sigma}}(E_{\sigma})=\mathbb{E}_{\mu_{A_{\sigma}}}\left[\mathbb{P}_{\sigma,U}^{\mu}\left(R_{H}(U),{\rm IC}\left(R_{H}(U)\right)\right)\right]=0.
\]
For $\sigma$ such that $\mu(A_{\sigma})=0$, we have $\mu(E_{\sigma})=\mu(A_{\sigma})=0$. 

Thus if $H\in{\rm Sub}(G)-\cup_{\sigma\in\pi_{U^{c}}(G_{U\to U})}E_{\sigma}$,
we have that for any $\sigma\in\pi_{U^{c}}(G_{U\to U})$, either $\sigma\notin\pi_{U^{c}}(H_{U\to U})$
or the $N$-conjugacy orbit $\left\{ \gamma^{-1}\alpha_{\sigma}(H)\gamma\right\} _{\gamma\in N}$
is finite, where $N=N_{R_{G}(U)}\left(R_{H}(U)\right)$. Recall that
by Fact \ref{isomorphism}, there is an isomorphism $\phi:\pi_{U^{c}}(H_{U\to U})/H_{U\to U}\cap R_{G}(U^{c})\to\pi_{U}(H_{U\to U})/R_{H}(U)$
where $\phi(\bar{\sigma})=\alpha_{\sigma}(H)$ for $\sigma\in\pi_{U^{c}}(H_{U\to U})$.
In particular, since $\phi$ is onto, as sets we have
\[
\cup_{\sigma\in\pi_{U^{c}}(G_{U\to U})}\alpha_{\sigma}(H)=\bar{H}_{U\to U}/R_{H}(U).
\]
It follows that for $H\in{\rm Sub}(G)-\cup_{\sigma\in\pi_{U^{c}}(G_{U\to U})}E_{\sigma}$,
in the quotient group $\bar{H}_{U\to U}/R_{H}(U)$, every element
has finite $N$-conjugacy orbit. Recall that by definition of $K$
we have that $K=N\cap\bar{H}_{U\to U}$ and $R_{H}(U)=H\cap K$. Since
the $N$-conjugacy orbits in $\bar{H}_{U\to U}/R_{H}(U)$ are finite,
it follows that the quotient group $K/H\cap K$ is FC. We have proved
that the statement of part (ii) holds for any $H$ not in the $\mu$-null
set $\cup_{\sigma\in\pi_{U^{c}}(G_{U\to U})}E_{\sigma}$.

\end{proof}

\subsection{Containment of derived subgroups of rigid stabilizers}

Proposition \ref{conjugate1} allows us to draw conclusions on subgroups
that $H$ contains, provided some additional information on the rigid
stabilizers $R_{G}(U)$. 

We first deduce from Proposition \ref{conjugate1} the following immediate
corollary when every finite index subgroup of $G_{U}$ contains an
infinite simple group. This statement will be used in applications
to classify IRS of certain topological full groups. 

\begin{corollary}\label{simple}

Suppose $G\curvearrowright X$ as in Assumption \ref{standing}. Suppose
an open set $U\subset X$ has the property that there is an infinite
simple group $A_{U}$ such $A_{U}\le R_{G}(U)$ and every finite index
subgroup of $R_{G}(U)$ contains $A_{U}$. Then $\mu$-a.e. $H$ has
the following property: if $H$ contains an element $g$ such that
$U\cap U\cdot g=\emptyset$, then $H$ contains $A_{U}$ as a subgroup.

\end{corollary}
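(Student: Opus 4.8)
The plan is to feed the hypothesis on $U$ into Proposition \ref{conjugate1}(ii) and then exploit the tension between the simplicity of $A_U$ and the FC conclusion, which will force the relevant intersection to be all of $A_U$.

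First I would set up the application of Proposition \ref{conjugate1}. Fix the open set $U$ as in the statement, and suppose $H$ contains $g$ with $U\cap U\cdot g=\emptyset$; put $V=U\cdot g$, so that $g\in H_{U\to V}$, hence $H\cap G_{U\to V}\neq\emptyset$ and $U\cap V=\emptyset$. The only point needing attention is that $U$ need not lie in the fixed base $\mathcal{U}$ and that $V=U\cdot g$ depends on $H$. This is harmless: since $G$ is countable, $V$ ranges over the countable family $\{U\cdot g:\ g\in G,\ U\cap U\cdot g=\emptyset\}$, and the proof of Proposition \ref{conjugate1} is valid verbatim for an arbitrary open pair $(U,V)$, the countable base being used there only to render the discarded set a countable union of $\mu$-null sets. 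Because the corollary permits the exceptional null set to depend on $U$, I may discard one $\mu$-null set for each $V$ in this countable family and work on the resulting full-measure set. On it, Proposition \ref{conjugate1}(ii) applies: the subgroup $K=\{g\in N_{R_{G}(U)}(R_{H}(U)):\ \pi_{U}(g)\in\bar{H}_{U\to U}\}$ has finite index in $R_{G}(U)$, $K\cap H=R_{H}(U)$ is normal in $K$, and $K/(K\cap H)$ is an FC-group.

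Next I would run the simplicity argument. Since $K$ has finite index in $R_{G}(U)$, the hypothesis on $U$ gives $A_{U}\le K$. Because $R_{H}(U)=K\cap H$ is normal in $K$ and $A_{U}\le K$, the intersection $A_{U}\cap H=A_{U}\cap R_{H}(U)$ is a normal subgroup of $A_{U}$; by simplicity it is either $A_{U}$ or $\{id\}$. In the first case $A_{U}\le H$ and we are done. In the second case the composite $A_{U}\hookrightarrow K\twoheadrightarrow K/(K\cap H)$ has trivial kernel $A_{U}\cap(K\cap H)=A_{U}\cap H=\{id\}$, so $A_{U}$ embeds into the FC-group $K/(K\cap H)$.

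Finally I would rule out the second case. Subgroups of FC-groups are again FC, so $A_{U}$ would be an infinite simple FC-group, which cannot exist: in an infinite simple group $S$ one has $Z(S)=\{id\}$ (an infinite abelian group is not simple), so any $x\neq id$ has proper centralizer $C_{S}(x)$; were $S$ an FC-group, $C_{S}(x)$ would have finite index, whence its normal core, a finite intersection of finite-index conjugates, would be a proper normal subgroup of finite index, forcing $S$ to be finite by simplicity, a contradiction. Hence only the first case survives and $A_{U}\le H$. I expect the main obstacle to be this last clash between simplicity and the FC property; the application of Proposition \ref{conjugate1} to the fixed (non-base) set $U$ is the only other point requiring a word of justification.
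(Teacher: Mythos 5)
Your proof is correct and follows essentially the same route as the paper: apply Proposition \ref{conjugate1}(ii) to the pair $(U,U\cdot g)$ to obtain a finite-index subgroup $K\le R_{G}(U)$ with $K/(K\cap H)$ an FC-group, note $A_{U}\le K$ by hypothesis, and deduce $A_{U}\le H$ from the incompatibility of infinite simplicity with the FC property. You are in fact more careful than the paper on two points it leaves implicit: the justification for invoking Proposition \ref{conjugate1} when $U$ and $V=U\cdot g$ need not lie in the countable base $\mathcal{U}$ (handled by countability of $G$ and a countable union of null sets), and the elementary argument that an infinite simple group cannot be FC.
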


\begin{proof}

By part (ii) of Proposition \ref{conjugate1}, for $\mu$-a.e. $H$
in $\Omega_{g}=\{H:H\ni g\}$ where $g$ is an element such that $U\cap U\cdot g=\emptyset,$
there is a finite index subgroup $K\le_{f.i.}R_{G}(U)$ such that
$K/K\cap H$ is an FC group. By the assumption on $G_{U}$ we have
that $A_{U}\le K$. Then $A_{U}/A_{U}\cap H$ is isomorphic to $A_{U}(K\cap H)/K\cap H$,
in particular it is an FC group. Since $A_{U}$ is assumed to be an
infinite simple group, we conclude that $A_{U}\cap H=A_{U}$, in other
words, $A_{U}$ is contained in $H$. 

\end{proof}

More generally we have the following lemma, which is a variation of
the standard double commutator lemma mentioned in the Introduction.

\begin{lemma}\label{doublecom}

Let $G\curvearrowright X$ be as in Assumption \ref{standing}. Let
$U\in\mathcal{U}$, $K\le_{f.i.}R_{G}(U)$ and $N\vartriangleleft K$
be such that the quotient $K/N$ is an FC group. Then if $x\in U$
is a point such that the orbit $x\cdot R_{G}(V)$ is infinite for
any $V\in\mathcal{U}$ with $x\in V\subseteq U$, then there exists
a neighborhood $W$ of $x$ such that $\left[R_{G}(W),R_{G}(W)\right]\le N$. 

\end{lemma}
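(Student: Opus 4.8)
The plan is to reduce the desired inclusion to the algebraic identity behind the classical double commutator lemma, and then, for each group element, to manufacture a displacement that pushes the relevant inner commutator into $N$. Throughout write $\bar{(\cdot)}$ for images in $K/N$ (and later in $R_{G}(U)/N$).

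\emph{The identity and the reduction.} First I would record the computation underlying the classical lemma: for any $a,b\in R_{G}(W)$ and any $g\in G$ with $W\cdot g\cap W=\emptyset$ one has $[a,[b,g]]=[a,b]$. Indeed $[b,g]$ restricts to $b$ on $W$ and to a homeomorphism supported on $W\cdot g^{-1}$ (which is disjoint from $W$), and conjugating by $a$ (supported on $W$) and multiplying by $[b,g]^{-1}$ cancels the $W\cdot g^{-1}$ part and leaves $[a,b]$ on $W$. Thus to prove $[a,b]\in N$ it suffices to find one $g$ displacing $W$ with $[b,g]\in N$, \emph{provided} $a$ normalizes $N$, since then $a[b,g]a^{-1}\in N$.

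\emph{Normalizing $K$ and $N$.} Next I would replace $K$ by its normal core in $R_{G}(U)$ and then replace $N$ by its $R_{G}(U)$-core $\tilde{N}=\bigcap_{i}g_{i}Ng_{i}^{-1}$; this is a finite intersection because $N_{R_{G}(U)}(N)\supseteq K$ has finite index, and $K/\tilde{N}$ embeds into a finite product of copies of $K/N$, hence is again an FC-group. Since $\tilde{N}\le N$, proving $[R_{G}(W),R_{G}(W)]\le\tilde{N}$ suffices, so I may assume $K,N\triangleleft R_{G}(U)$ with $K/N$ an FC-group and $K\le_{f.i.}R_{G}(U)$. Two consequences: (1) $R_{G}(U)$ acts by conjugation on $K/N$ with \emph{finite orbits} (each orbit is a finite union of $K$-conjugacy classes), so every stabilizer $\mathrm{Stab}_{R_{G}(U)}(\bar{k})$ has finite index; (2) the infinite-orbit hypothesis passes to finite-index subgroups, i.e. if $x\cdot R_{G}(V)$ is infinite and $C\le R_{G}(V)$ has finite index then $x\cdot C$ is infinite (otherwise $\mathrm{Stab}_{C}(x)$, hence $\mathrm{Stab}_{R_{G}(V)}(x)$, would have finite index). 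Note also that $x$ cannot be isolated, for otherwise $V=\{x\}$ would give a one-point orbit, contradicting the hypothesis.

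\emph{Main step.} Fix a small $W\ni x$. For $b\in K\cap R_{G}(W)$ put $C_{b}=\mathrm{Stab}_{R_{G}(U)}(\bar{b})$, a finite-index subgroup; by consequence (2), $x\cdot\bigl(C_{b}\cap R_{G}(V)\bigr)$ is infinite for small $V$, and using non-isolation together with the Hausdorff property one can pick $s\in C_{b}$ with $W\cdot s\cap W=\emptyset$. Since $s\in C_{b}$ we get $\overline{[b,s]}=\bar{b}\cdot({}^{s}\bar{b})^{-1}=1$, i.e. $[b,s]\in N$; as every $a\in R_{G}(W)$ normalizes $N$, the identity above yields $[a,b]=[a,[b,s]]\in N$ for all $a\in R_{G}(W)$. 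Hence $[R_{G}(W),K\cap R_{G}(W)]\le N$, which says that $\overline{K\cap R_{G}(W)}$ is a finite-index \emph{central} subgroup of $P:=R_{G}(W)N/N$. By Schur's theorem $P'=[R_{G}(W),R_{G}(W)]N/N$ is \emph{finite}.

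\emph{The main obstacle.} Two points remain, and this is where the real work lies. First, a single $W$ must provide a displacement $s$ for \emph{every} $b$ simultaneously; I expect this to follow from the full strength of the for-all-$V$ hypothesis and the non-isolation of $x$, but it is a genuine topological input rather than a formality. Second, and harder, one must upgrade ``$P'$ finite'' to ``$P'=1$'', i.e. remove the residual finite derived image. Here the plan is to run the main step along a decreasing neighborhood basis $W_{1}\supseteq W_{2}\supseteq\cdots$ at $x$: the finite groups $[R_{G}(W_{k}),R_{G}(W_{k})]N/N$ are nested, hence stabilize, and any class in the stable value admits representatives supported in arbitrarily small neighborhoods of $x$ while lying in a single coset of $N$; faithfulness of the action (with $x$ non-isolated, so $X\setminus\{x\}$ dense) should then force such a class to be trivial. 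Making this last elimination rigorous is the crux of the proof.
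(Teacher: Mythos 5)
Your scaffolding (the identity $[a,[b,g]]=[a,b]$, the reduction via normal cores to $K,N\vartriangleleft R_{G}(U)$, the finite-index stabilizers extracted from the FC hypothesis) is fine, but the proof breaks at the Main step, and the break is more basic than the uniformity issue you flag: for a neighborhood $W$ that has been \emph{fixed in advance}, a displacing element $s$ with $W\cdot s\cap W=\emptyset$ need not exist at all. The elements you actually exhibit provably cannot work: any $s\in C_{b}\cap R_{G}(V)$ with $V\subseteq W$ fixes $X\setminus V\supseteq X\setminus W$ pointwise, hence maps $W$ \emph{onto} $W$, so $W\cdot s=W$; the infinitude of the orbit $x\cdot\bigl(C_{b}\cap R_{G}(V)\bigr)$ is therefore irrelevant to displacing $W$. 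The hypotheses of the lemma are local at $x$: they give elements moving the point $x$, and such an element displaces all \emph{sufficiently small} neighborhoods of $x$ --- but which neighborhoods are small enough depends on the element, whereas in your argument the family of elements $b\in K\cap R_{G}(W)$ needing displacers is itself defined by $W$, so shrinking $W$ to suit a displacer changes the set of $b$'s and the argument becomes circular. (Nothing prevents, say, a point $y\in W\setminus\{x\}$ from being fixed by all of $R_{G}(U)$, in which case no element of $R_{G}(U)$, let alone of $C_{b}$, displaces that $W$.) The missing idea, and the paper's route, is to take the displacer from $N$ itself: one first shows that $N$ cannot fix $x$ --- if it did, choose $\gamma\in K$ with $x\cdot\gamma\neq x$ and $V\ni x$ with $V\cap V\cdot\gamma=\emptyset$; for $g\in K\cap R_{G}(V)$ the conjugate $g\gamma g^{-1}$ agrees with $g\gamma$ on $V$, so the infinitely many distinct points $x\cdot g$ give infinitely many distinct cosets $Ng\gamma g^{-1}$ (this is where ``$N$ fixes $x$'' enters), making the class of $\bar{\gamma}$ in $K/N$ infinite and contradicting FC. Then one picks $\gamma\in N$ moving $x$ and chooses $W$ \emph{after} $\gamma$ so that $W\cap W\cdot\gamma=\emptyset$; normality of $N$ in $K$ plus your identity give $[\alpha,\beta]=[[\gamma,\alpha],\beta]\in N$ simultaneously for all $\alpha,\beta\in K\cap R_{G}(W)$ --- one displacer serves every pair, with no stabilizer subgroups and no Schur argument.

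Your endgame is a second genuine gap. Even granting the Main step, you only get that $P'=[R_{G}(W),R_{G}(W)]N/N$ is finite, and the proposed elimination does not close the case $P'\neq 1$: the fact that a coset $dN\neq N$ admits representatives supported in arbitrarily small neighborhoods of $x$ contradicts nothing, since faithfulness constrains group elements, not cosets modulo $N$ --- indeed, if the lemma is true then $N$ itself contains nontrivial elements supported arbitrarily close to $x$, so ``supported near $x$'' cannot certify triviality mod $N$. The repair is group-theoretic rather than topological: a group with finite derived subgroup is FC (the conjugacy class of $p$ lies in the coset $pP'$), so $R_{G}(W)/\bigl(N\cap R_{G}(W)\bigr)$ is FC, and one reruns the displacement argument above with the pair $\bigl(R_{G}(W),\,N\cap R_{G}(W)\bigr)$ to get $W'\ni x$ with $[R_{G}(W'),R_{G}(W')]\le N$. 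This two-pass structure --- FC quotient forces $N$ to move $x$, double commutator, then repeat once more to pass from rigid stabilizers of $K$ to those of $G$ --- is exactly the paper's proof, so once both gaps are repaired along these lines your argument collapses onto it, with the Schur detour adding nothing.
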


\begin{proof}

We will use a few times the double commutator lemma (\cite[Lemma 4.1]{Nek-finitepresent}).
We recall the argument here for the convenience of the reader: suppose
$\Gamma\curvearrowright X$ and let $N$ be a non-trivial normal subgroup
of $\Gamma$. If there is an element $\gamma\in N$ and a set $U$
such that $U\cap U\cdot\gamma=\emptyset$, then $[R_{\Gamma}(U),R_{\Gamma}(U)]\le N$.
Indeed, given such $\gamma,U$, take any elements $\alpha,\beta\in\Gamma_{U}$,
then $\left[\left[\gamma,\alpha\right],\beta\right]=[\alpha,\beta]$
and since $N$ is normal, we have $\left[\left[\gamma,\alpha\right],\beta\right]\in N$.
It follows that $[R_{\Gamma}(U),R_{\Gamma}(U)]\le N$. 

Replacing $K$ by a its normal core in $R_{G}(U)$ if necessary, we
may assume $K$ is a normal subgroup of finite index in $R_{G}(U)$.
We first verify that $K/N$ being an FC-group implies that $x$ is
not a fixed point of $N$. Suppose on the contrary that $x$ is fixed
by $N$. Since the orbit $x\cdot R_{G}(U)$ is assumed to be infinite,
$x$ cannot be fixed by $K.$ Choose a $\gamma\in K$ such that $x\cdot\gamma\neq x$.
Let $V$ be a neighborhood of $x$ such that $V\cap V\cdot\gamma=\emptyset$.
Then for any $g\in R_{G}(V)$, the conjugation $g\gamma g^{-1}$ coincides
with $g\gamma$ on $V$. In particular the set $\{x\cdot g\gamma g^{-1}:\ g\in R_{K}(V)\}$
has the same cardinality as $\{x\cdot g:\ g\in R_{K}(V)\}$, where
the latter is infinite by assumption. It follows that in $K/N$ the
element $N\gamma$ has infinite conjugacy class: \textcolor{black}{since
$x$ is fixed by $N$, $g_{1}\gamma g_{1}^{-1}$ and $g_{2}\gamma g_{2}^{-1}$
are in different right cosets of $N$ if $x\cdot g_{1}\neq x\cdot g_{2}$.
Thus it contradicts with $K/N$ being FC. }

Since $x$ is not fixed by $N$, by the double commutator lemma for
normal subgroups, we conclude that there exists a neighborhood $W$
of $x$ such that $N\ge[R_{K}(W),R_{K}(W)]$. Finally, note that since
$K$ is normal in $R_{G}(U)$, we have that $[R_{K}(W),R_{K}(W)]$
is normal in $R_{G}(W)$. Since $R_{G}(W)/[R_{K}(W),R_{K}(W)]$ is
virtually abelian, thus FC, the argument in the paragraph above implies
that there exists a neighborhood $W'$ of $x$ such that $[R_{G}(W'),R_{G}(W')]\le[R_{K}(W),R_{K}(W)]\le N$.

\end{proof}

We are now ready to prove Theorem \ref{inclu-rigid}. 

\begin{proof}[Proof of Theorem \ref{inclu-rigid}]

Let $\Omega'\subseteq{\rm Sub}(G)$ be a subset of full $\mu$-measure
such that for any $H\in\Omega'$, the two statements of Proposition
\ref{conjugate1} are satisfied. Let $H\in\Omega'$ and $x\in X$
be a point such that $x$ is not a fixed point of $H$. Find a neighborhood
$x\in U\in\mathcal{U}$ and $h\in H$ such that $U\cap U\cdot h=\emptyset$.
Then as in (ii) of Proposition \ref{conjugate1}, we have that $K=N_{R_{G}(U)}(R_{H}(U))$
is of finite index in $R_{G}(U)$ and $K/K\cap H$ is FC. 

(i) If there exists an open set $U$ such that $R_{G}(U)=\{id\}$,
then the statement is trivially true. We may assume that for any $U\in\mathcal{U}$,
$R_{G}(U)$ is non-trivial. Then in this case the rigid stabilizer
$R_{G}(U)$ is ICC for any $U$. To see this, we follow the argument
in \cite[Theorem 9.17]{Grigorchuk2011} which shows that a weakly
branch group is ICC. Take $\gamma\in R_{G}(U)$, $\gamma\neq id$,
and choose an open subset $V\subseteq U$ such that $V\cap V\cdot\gamma=\emptyset$.
Next choose an infinite sequence of mutually disjoint open subsets
$V_{1},V_{2},\ldots$ of $V$ (this is possible because under our
assumptions $X$ is Hausdorff and has no isolated points). Take $g_{i}\in R_{G}(V_{i})\setminus\{id\}$
for each $i$ and consider the collection of conjugates $\left\{ g_{i}\gamma g_{i}^{-1}\right\} _{i\in\mathbb{N}}$.
The restriction of $g_{i}\gamma g_{i}^{-1}\gamma^{-1}$ to $V$ acts
as $g_{i}$ in $V_{i}$ and as $id$ on $V\setminus V_{i}$. Thus
the set of elements $\left\{ g_{i}\gamma g_{i}^{-1}\right\} _{i\in\mathbb{N}}$
are all distinct. We conclude that $R_{G}(U)$ is ICC. 

Since $K$ is a finite index subgroup of $R_{G}(U)$, we have that
it is ICC as well, thus $K\cap H$ must be a non-trivial normal subgroup
of $K$. Thus the double commutator lemma implies that there exists
a non-empty open subset $V\subseteq U$ such that $\left[R_{K}(V),R_{K}(V)\right]\le K\cap H$.
Since $K$ is of finite index in $R_{G}(U)$, we have $\left[R_{K}(V),R_{K}(V)\right]$
contains a non-trivial normal subgroup of $R_{G}(V)$. Therefore by
the double commutator lemma again, there exists a non-empty open set
$W\subseteq V$ such that 
\[
[R_{G}(W),R_{G}(W)]\le\left[R_{K}(V),R_{K}(V)\right]\le H\cap K\le H.
\]

(ii) Under the additional assumption that $R_{G}(V)$ has no fixed
point in $V$ for any $V\in\mathcal{U}$, we have that the orbit $x\cdot R_{G}(V)$
is infinite for any $V\in\mathcal{U}$ and $x\in V$. Indeed, first
this assumption implies that $X$ has no isolated point. For any $x\in V$,
first find $g_{1}\in R_{G}(V)$ such that $x\neq x\cdot g_{1}$. Since
$X$ is Hausdorff, we can find a neighborhood $V_{1}$ of $x$ such
that $V_{1}\cap V_{1}\cdot g=\emptyset$ . Continue this with choosing
a $g_{2}\in R_{G}(V_{1})$ such that $x\neq x\cdot g_{2}$ and $V_{2}$
a neighborhood of $x$ such that $V_{2}\cap V_{2}\cdot g_{2}=\emptyset$.
In this way we find a sequence of group elements $g_{n}\in R_{G}(V)$
such that $x\cdot g_{n}$ are all distinct. 

Thus we can apply Lemma \ref{doublecom} to $N=K\cap H$, which implies
that these exists an open cover $V_{i}$ of $U$ such that $[R_{G}(V_{i}),R_{G}(V_{i})]\le H$
for all $V_{i}$. Take $W$ to be a $V_{i}$ such that $x\in V_{i}$,
then we obtain the statement. 

\end{proof} 

\begin{remark}\label{nofixed}

The condition that $R_{G}(U)$ has no fixed point in $U$ for any
$U\in\mathcal{U}$ is verified in many situations. Note that if $x\in U$
is a fixed point of $R_{G}(U)$, then for any $g\in R_{G}(U)$, $\gamma\in N_{G}(R_{G}(U))$,
we have $x\cdot\gamma g=x\cdot\gamma$. That is, the orbit of $x$
under $N_{G}(R_{G}(U))$ is contained in the fixed point set of $R_{G}(U)$.
Therefore if the orbit $x\cdot G_{U\to U}$ is dense in $U$ and $R_{G}(U)\neq\{id\}$,
then $x$ cannot be a fixed point of $R_{G}(U)$. 

\end{remark}

\section{Preliminaries on topological full groups\label{sec:Preliminaries}}

This section serves as preparation for the next section where we derive
consequences of Theorem \ref{inclu-rigid} for topological full groups.
Most of the materials are drawn from \cite{Nek-simple}. The reader
may also consult the survey \cite{MatuiSurvey} and references therein
to have a more complete view of recent development in the study of
topological full groups. 

\subsection{Definitions \label{subsec:full-def}}

A groupoid consists of a unit space $\mathcal{\mathcal{G}}^{(0)}$,
a set of morphisms $\mathcal{G}$ and maps $\mathsf{s,r}:\mathcal{G}\to\mathcal{G}^{(0)}$
called the \emph{source} and \emph{range} that specify the initial
(source) and terminal (range) of a morphism. Multiplication in $\mathcal{G}$
is partially defined: product $\gamma\delta$ of $\gamma,\delta\in\mathcal{G}$
is defined if and only if $\mathsf{r}(\gamma)=\mathsf{s}(\delta)$.
A topological groupoid is a groupoid equipped with a topology on it
such that the operations of multiplication and taking inverse are
continuous.

An important class of examples of groupoid is the \emph{transformation
groupoid of an action} $G\curvearrowright X$:

\begin{example}\label{germs}

Let $G$ be a countable group acting by homeomorphisms on a topological
space $X$. Define an equivalence relation on $G\times X$ by $(g_{1},x_{1})\sim(g_{2},x_{2})$
if $x_{1}=x_{2}$ and there exists a neighborhood $U$ of $x_{1}$
such that $g_{1}|_{U}=g_{2}|_{U}$. On the quotient space $G\times X/\sim$
(the space of germs), multiplication of equivalence classes $(g_{1},x_{1})$,
$(g_{2},x_{2})$ is defined if and only if $x_{2}=x_{1}\cdot g_{1}$
and the rule of multiplication is $(g_{1},x_{1})(g_{2},x_{2})=(g_{1}g_{2},x_{1})$.
The groupoid $\mathcal{G}=G\times X/\sim$ is called \emph{the groupoid
of germs of the action} $G\curvearrowright X$, it is also called
the\emph{ transformation groupoid }of the action. For an open set
$U\subseteq X$, the set of germs $\{(g,y),y\in U\}$ is an open set
of $\mathcal{G}.$ The collection of all such open sets forms a basis
of topology of $\mathcal{G}$.

\end{example}

More generally one can consider groupoid of germs (also called effective
\'etale groupoids), for which we now recall the definition. 

\begin{definition}

Let $\mathcal{G}$ be a topological groupoid. 
\begin{itemize}
\item A \emph{$\mathcal{G}$-bisection} is a compact open subset $F\subseteq\mathcal{G}$
such that $\mathsf{s}:F\to\mathsf{s}(F)$ and $\mathsf{r}:F\to\mathsf{r}(F)$
are homeomorphisms. 
\item The groupoid $\mathcal{G}$ is said to be \emph{\'etale} if it has
a basis of topology consisting of $\mathcal{G}$-bisections.
\item An \'etale groupoid $\mathcal{G}$ is called a \emph{groupoid of
germs }(or \emph{effective}) if for any non-unit $\gamma\in\mathcal{G}$
and any bisection $F$ that contains $\gamma$, there exists $\delta\in F$
such that $\mathsf{s}(\delta)\neq\mathsf{r}(\delta)$. 
\end{itemize}
\end{definition}

A bisection $F$ defines a homeomorphism from $\mathsf{s}(F)$ to
$\mathsf{r}(F)$, denote by $\tau_{F}$ this homeomorphism. Multiplication
of bisections corresponds to composition of the associated homeomorphisms.
For an \'etale groupoid $\mathcal{G}$, its topological full group
is defined as:

\begin{definition}

Let $\mathcal{G}$ be an \'etale groupoid. Its \emph{topological
full group} $\mathsf{F}(\mathcal{G})$ is the set of bisections $F\subset\mathcal{G}$
such that $\mathsf{s}(F)=\mathsf{r}(F)=\mathcal{G}^{(0)}$ with respect
to multiplication of bisections. 

\end{definition}

\begin{example}\label{transform}

Let $\mathcal{G}$ be the transformation groupoid of the action $G\curvearrowright X$,
where $G$ is countable and $X$ is homeomorphic to the Cantor set.
Then the topological full group $\mathsf{F}(\mathcal{G})$ can be
described as consisting of all homeomorphisms $\gamma:X\to X$ such
that for every point $x\in X$, there is a neighborhood $U$ and $g\in G$
such that $\gamma|_{U}=g|_{U}$. In other words, $\mathsf{F}(\mathcal{G})$
consists of homeomorphisms $\gamma:X\to X$ such that there exists
a continuous map $c:X\to G$ such that $x\cdot\gamma=x\cdot c(\gamma)$
for all $x$.

More generally, when $\mathcal{G}$ is a groupoid of germs, $\mathsf{F}(\mathcal{G})$
agrees with the definition of topological full group in \cite{Matui2012,Matui2015}:
it consists of all homeomorphisms $\gamma:\mathcal{G}^{(0)}\to\mathcal{G}^{(0)}$
such that there exists a bisection $F\subset\mathcal{G}$ such that
$\gamma=\tau_{F}$. 

\end{example}

The notion of multisections is introduced in \cite{Nek-simple}. A
\emph{multisection of degree} $d$ is a collection of $d^{2}$ bisections
$F=\left\{ F_{i,j}\right\} _{i,j=1}^{d}$ such that 
\begin{description}
\item [{(i)}] $F_{i_{1},i_{2}}F_{i_{2},i_{3}}=F_{i_{1},i_{3}}$ for all
$1\le i_{1},i_{2},i_{3}\le d$,
\item [{(ii)}] the bisections $F_{i.i}$ are disjoint subsets of the unit
space $X=\mathcal{G}^{(0)}$.
\end{description}
The union $\cup_{i=1}^{d}F_{i,i}$ is called the \emph{domain} of
the multisection and the sets $F_{i,i}$ the \emph{components} of
the domain. It follows from properties (i), (ii) that $\mathsf{s}(F_{i,j})=F_{i,i}$
and $\mathsf{r}(F_{i,j})=F_{j,j}$ for $1\le i,j\le d$. 

Given a multisection $F=\left\{ F_{i,j}\right\} _{i,j=1}^{d}$ and
a permutation $\pi\in{\rm Sym}(d)$, let 
\begin{equation}
F_{\pi}:=\cup_{i=1}^{d}F_{i,\pi(i)}\cup(X\setminus U),\label{eq:defF}
\end{equation}
where $U$ is the domain of $F$. Then $\pi\mapsto F_{\pi}$ gives
an embedding of ${\rm Sym}(d)$ into $\mathsf{F}(\mathcal{G})$. Denote
by $\mathsf{S}(F)$ the image of ${\rm Sym}(d)$ under this embedding,
and $\mathsf{A}(F)$ the image of the subgroup ${\rm Alt}(d)$. 

Now we have introduced enough notations to state the definitions of
the groups $\mathsf{S}(\mathcal{G})$ and $\mathsf{A}(\mathcal{G})$. 

\begin{definition}

Let $\mathcal{G}$ be an \'etale groupoid. The group $\mathsf{S}_{d}(\mathcal{G})$
($\mathsf{A}_{d}(\mathcal{G})$ resp.) is defined as the subgroup
of $\mathsf{F}(\mathcal{G})$ which is generated by the union of all
subgroups $\mathsf{S}(F)$, all multisections of degree $d$. Denote
by $\mathsf{S}(\mathcal{G})=\mathsf{S}_{2}(\mathcal{G})$ and $\mathsf{A}(\mathcal{G})=\mathsf{A}_{3}(\mathcal{G})$.

\end{definition}

From the definitions it is clear that if $\mathcal{H}$ is an open
sub-groupoid of an \'etale groupoid $\mathcal{G}$ then $\mathsf{S}(\mathcal{H})\le\mathsf{S}(\mathcal{G})$
and $\mathsf{A}(\mathcal{H})\le\mathsf{A}(\mathcal{G})$. 

A Borel probability measure $\mu$ on the unit space $\mathcal{G}^{(0)}$
is said to be $\mathcal{G}$-invariant if $\mu(\mathsf{r}(F))=\mu(\mathsf{s}(F))$
for any bisection $F$. 

We will only consider the special case where $\mathcal{G}$ is a minimal
groupoid of germs. Recall that a groupoid $\mathcal{G}$ is \emph{minimal}
if all orbits are dense in $\mathcal{G}^{(0)}$. By \cite[Theorem 1.1]{Nek-simple},
if $\mathcal{G}$ is a minimal groupoid of germs with $\mathcal{G}^{(0)}$
homeomorphic to the Canter set, then $\mathsf{A}(\mathcal{G})$ is
simple and contained in every non-trivial normal subgroup of $\mathsf{F}(\mathcal{G})$. 

When $\mathcal{G}$ is an AF-groupoid (AF is abbreviation for approximately
finite), the topological full group $\mathsf{F}(\mathcal{G})$ and
the alternating full group $\mathsf{A}(\mathcal{G})$ are well studied,
see \cite{Matui2012}. AF-groupoids are associated with Bratteli diagrams.
In the following example we review necessary terminologies and introduce
notations for later use. 

\begin{example}\label{AF}

A \emph{Bratteli diagram }is an infinite graph $B=(V,E)$ such that
the vertex set $V=\cup_{i=0}^{\infty}V_{i}$ and the edge set $E=\cup_{i=1}^{\infty}E_{i}$
are partitions such that $V_{0}=\{v_{0}\}$ is a single point, $V_{i}$
and $E_{i}$ are finite sets, and moreover there are range map $r$
and source map $s$ from $E$ to $V$ such that $r(E_{i})=V_{i}$
and $s(E_{i})=V_{i-1}$. That is an edge $e\in E$ connects a vertex
in $V_{i-1}$ to a vertex in $V_{i}$ for some $i$. The set $V_{i}$
is referred to as the $i$-th level vertices of the diagram $B$. 

The space of infinite paths of $B$, denoted by $X_{B}$, consists
of sequences $(e_{1},e_{2},\ldots)$ of edges such that $r(e_{i})=s(e_{i+1})$
for all $i$. A finite path is a finite sequence $(e_{1},\ldots,e_{n})$,
such that $r(e_{i})=s(e_{i+1})$ for all $1\le i\le n$. The vertex
$s(e_{1})$ is called the beginning of the path and $r(e_{n})$ the
end of the path. Endow $X_{B}$ with the product topology generated
by cylinder sets $U(e_{1},\ldots,e_{n})=\{x\in X_{B}:x_{i}=e_{i},\ 1\le i\le n\}$. 

If two finite paths $(e_{1},\ldots,e_{n})$ and $(f_{1},\ldots,f_{n})$
have the same end, $r(e_{n})=r(f_{n})$, then this pair defines a
homeomorphism between clopen subsets of $X_{B}$ mapping a path of
the form $\left(e_{1},\ldots,e_{n},x_{n+1},\ldots\right)$ to $(f_{1},\ldots,f_{n},x_{n+1},\ldots)$.
Let $\mathcal{G}$ be the groupoid of germs of the semigroup generated
by such local homeomorphisms. Such a groupoid associated with a Bratteli
diagram is called an \emph{AF groupoid}. By slight abuse of notation,
we write $\mathsf{F}(B)$ for the topological full group of the groupoid
associated with the Bratteli diagram $B$, similarly for $\mathsf{S}(B)$
and $\mathsf{A}(B)$. 

For a vertex $v\in V$, let $E(v_{0},v)$ be the space of finite paths
that start at $v_{0}$ and end at $v$. Let $d(v_{0},v)$ be the cardinality
of $E(v_{0},v)$. From definitions, the topological full group of
the Bratteli diagram $B$ is the direct limit of $G_{n}=\prod_{v\in V_{n}}{\rm Sym}(E(v_{0},v)))\simeq\prod_{v\in V_{n}}{\rm Sym}_{d(v_{0},v)}$.
The alternating full group $\mathsf{A}(B)$ is the direct limit of
$\prod_{v\in V_{n}}{\rm Alt}(E(v_{0},v)))\simeq\prod_{v\in V_{n}}{\rm Alt}_{d(v_{0},v)}$. 

For an infinite path $x\in X_{B}$, the $\mathcal{G}$-orbit of $x$
consists of all infinite paths cofinal to $x$. The groupoid $\mathcal{G}$
associated with the Bratteli diagram $B$ is minimal on $X_{B}$ if
and only if the diagram $B$ is \emph{simple}, which means that for
any level $V_{n}$, there exists a level $m>n$ such that every pair
of vertices $(v,w)\in V_{n}\times V_{m}$ is connected by a finite
path, see for example \cite[Theorem 3.11]{Putnam}.

\end{example}

\subsection{Rigid stabilizers }

For an open set $U\subseteq\mathcal{G}^{(0)}$, denote by $\mathcal{G}|_{U}$
the \emph{restriction }of $\mathcal{G}$ to $U$, which is the sub-groupoid
\[
\mathcal{G}|_{U}:=\{\gamma\in\mathcal{G}:\ \mathsf{s}(\gamma),\mathsf{r}(\gamma)\in U\}.
\]
Similarly one can consider the alternating full group $\mathsf{A}(\mathcal{G}|_{U})$,
which by definition is generated by the union of $\mathsf{A}(F)$,
where $F$ ranges over multisections of degree $d\ge3$ in the sub-groupoid
$\mathcal{G}|_{U}$. The group $\mathsf{A}(\mathcal{G}|_{U})$ is
naturally identified as a subgroup of $\mathsf{A}(\mathcal{G})$ by
extending elements of $\mathsf{A}(\mathcal{G}|_{U})$ identically
on the complement of $U$. We have the following identification:

\begin{fact}\label{rigidstab}

Let $\mathcal{G}$ be a groupoid of germs and $U$ be an open subset
of $X=\mathcal{G}^{(0)}$, then 
\[
\mathsf{A}(\mathcal{G}|_{U})=R_{\mathsf{A}(\mathcal{G})}(U).
\]

\end{fact}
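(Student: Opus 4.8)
The plan is to prove the two inclusions separately, the inclusion $\mathsf{A}(\mathcal{G}|_{U})\le R_{\mathsf{A}(\mathcal{G})}(U)$ being routine and the reverse inclusion carrying all the content. For the easy direction, I would note that any multisection $F=\{F_{i,j}\}$ of degree $d\ge 3$ lying in $\mathcal{G}|_{U}$ has all its components $F_{i,i}$ inside $\mathcal{G}|_{U}$, so its domain $\cup_{i}\mathsf{s}(F_{i,i})$ is a compact open subset of $U$. By the definition of $F_{\pi}$ in (\ref{eq:defF}), every element of $\mathsf{A}(F)$ acts as the identity off this domain, hence fixes $X\setminus U$ pointwise and lies in $R_{\mathsf{A}(\mathcal{G})}(U)$. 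Since such elements generate $\mathsf{A}(\mathcal{G}|_{U})$ and $R_{\mathsf{A}(\mathcal{G})}(U)$ is a subgroup, the inclusion follows.

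For the reverse inclusion, take $g\in R_{\mathsf{A}(\mathcal{G})}(U)$, so $g$ fixes every point of $X\setminus U$. The first step is to produce a single clopen set $C$ with ${\rm supp}(g)\subseteq C\subseteq U$. Writing $g=c_{1}\cdots c_{m}$ with $c_{k}\in\mathsf{A}(F^{(k)})$ for degree-$3$ multisections $F^{(k)}$ in $\mathcal{G}$, each $c_{k}$ is fixed-point-free on its clopen domain (its three components are disjoint, so every point of the domain is sent to a different component) and is the identity elsewhere. Taking a common clopen refinement of the finitely many bisections occurring in the $F^{(k)}$, I obtain a finite clopen partition $\mathcal{P}$ of $X$ on which $g$ permutes the parts, carrying each $P\in\mathcal{P}$ onto a part $Pg\in\mathcal{P}$ by a single germ-map. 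Parts with $Pg\neq P$ are moved entirely, hence disjoint from $X\setminus U$ and contained in $U$. On the parts with $Pg=P$, the restriction $g|_{P}$ is an isotropy bisection over a clopen set; under the standing assumption that $\mathcal{G}$ is a minimal groupoid of germs this contributes nothing to the support (effectiveness rules out a bisection lying in the isotropy over a nonempty open set, and minimality upgrades this to triviality of the germ-map on such parts). Discarding these parts, I let $C$ be the union of the genuinely moved parts, a clopen subset of $U$ with $g={\rm id}$ off $C$.

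With ${\rm supp}(g)\subseteq C$ clopen it remains to show $g\in\mathsf{A}(\mathcal{G}|_{C})$, which sits inside $\mathsf{A}(\mathcal{G}|_{U})$ because $\mathcal{G}|_{C}$ is an open subgroupoid of $\mathcal{G}|_{U}$. Now $g|_{C}\in\mathsf{F}(\mathcal{G}|_{C})$ permutes the finitely many parts of $\mathcal{P}$ contained in $C$ by germ-maps of $\mathcal{G}|_{C}$; since $g$ was assumed to lie in the \emph{alternating} full group, the induced permutation of these parts is even, and I would decompose it into $3$-cycles, each realized by a degree-$3$ multisection built from the restricted germ-maps and supported inside $C$, hence lying in $\mathsf{A}(\mathcal{G}|_{C})$. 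I expect the \textbf{main obstacle} to be exactly this locality/rewriting step: converting a finite product of multisections that may protrude from $U$ into a product of multisections genuinely contained in $\mathcal{G}|_{C}$, while tracking the sign (alternating) condition so that the bookkeeping of $3$-cycles and the multisection cocycle identities $F_{i_{1},i_{2}}F_{i_{2},i_{3}}=F_{i_{1},i_{3}}$ close up within $\mathcal{G}|_{C}$. The clopen-support reduction, which uses effectiveness and minimality to discard the setwise-fixed parts, is the secondary technical point.
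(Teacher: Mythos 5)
Your easy direction matches the paper's (it is immediate from the definitions), but the hard direction is where your argument genuinely breaks, and it breaks earlier than the step you flag as the main obstacle. First, the claim that a ``common clopen refinement'' of the bisections occurring in $F^{(1)},\ldots,F^{(m)}$ yields a finite clopen partition $\mathcal{P}$ that $g$ \emph{permutes} is unjustified and in general false: refining makes $g$ locally a single bisection on each part, but the image of a part under $g$ need not be a union of parts (already for $g=c_{1}c_{2}$, the set $c_{1}\left(P\right)$ for a part $P\subseteq F^{(1)}_{i,i}$ scrambles the $F^{(2)}$-pieces of the refinement). Producing a $g$-invariant finite clopen partition would require the join $\bigvee_{n}g^{n}\mathcal{P}_{0}$, which can be infinite; elements of $\mathsf{A}(\mathcal{G})$ may have infinite order and admit no invariant finite clopen partition at all. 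Second, your disposal of the setwise-fixed parts conflates setwise with pointwise fixing: $P\cdot g=P$ does not make $g|_{P}$ an isotropy bisection ($g$ can move points within $P$), so effectiveness gives nothing there; and the appeal to minimality is illegitimate in any case, since the Fact is stated for an \emph{arbitrary} groupoid of germs and arbitrary open $U$ --- no minimality hypothesis is available. Third, as you yourself concede, the last step is unresolved: membership of $g$ in $\mathsf{A}(\mathcal{G})$ does not imply that the induced permutation of the parts of some chosen partition is even, and realizing a decomposition into $3$-cycles requires bisections joining parts lying in different $g$-cycles, bisections which the groupoid need not contain.

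The paper's proof avoids all of this by never decomposing $g$ into a product of generators: it works with the representation $g=F_{\pi}$ for a single multisection $F=\left\{ F_{i,j}\right\} _{i,j=1}^{d}$ and $\pi\in{\rm Alt}_{d}$, passes to the sub-multisection $F'$ indexed by the non-fixed points of $\pi$ (so that $F_{\pi}=F'_{\pi'}$ with $\pi'$ fixed-point-free), and then observes that every point of the domain of $F'$ is moved by $g$; since $g\in R_{\mathsf{A}(\mathcal{G})}(U)$ fixes $X\setminus U$ pointwise, the domain of $F'$ must be contained in $U$, whence $g\in\mathsf{A}(F')\le\mathsf{A}(\mathcal{G}|_{U})$. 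This is the idea your proposal is missing: exploit the multisection form and delete the fixed indices of $\pi$, after which ``support inside $U$'' is automatic --- no clopen-support reduction, no invariant partition, no parity bookkeeping, and no dynamical assumptions on $\mathcal{G}$. (Your instinct to write $g$ as a product does reflect the definition of $\mathsf{A}(\mathcal{G})$ as a group \emph{generated} by the subgroups $\mathsf{A}(F)$; note that the paper's argument is written for elements in the form $F_{\pi}$, a normal form it takes as given from \cite{Nek-simple}. But replacing that reduction by the partition argument you propose does not work, for the reasons above.)
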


\begin{proof}

The inclusion $\mathsf{A}(\mathcal{G}|_{U})\le R_{\mathsf{A}(\mathcal{G})}(U)$
is clear by definitions. We need to verify the other direction. Let
$g=F_{\pi}$ be an element in $R_{\mathsf{A}(\mathcal{G})}(U)$, where
$F=\{F_{i,j}\}_{i,j=1}^{d}$ is a multisection and $\pi\in{\rm Alt}_{d}$.

Given the multisection $F=\left\{ F_{i,j}\right\} _{i,j=1}^{d}$,
one can define a sub-multisection $\{F_{k_{i},k_{j}}\}_{i,j=1}^{\ell}$
by choosing a subset $1\le k_{1}<\ldots<k_{\ell}\le d$ and restrict
to these indices. It is clear by definition that $\{F_{k_{i},k_{j}}\}_{i,j=1}^{\ell}$
is a multisection of degree $\ell$. Let $\pi$ be a permutation of
$\{1,\ldots,d\}$ and denote by ${\rm Fix}(\pi)=\{i:1\le i\le d,\ i=\pi(i)\}$
the set of fixed points of $\pi$. List elements of $Q_{\pi}=\{1,\ldots,d\}\setminus{\rm Fix}(\pi)$
in increasing order as $k_{1},\ldots,k_{\ell}$, $\ell=d-|{\rm Fix}(\pi)|$.
Denote by $F'$ the sub-multisection $\{F_{k_{i},k_{j}}\}_{i,j=1}^{\ell}$.
Let $\pi'$ be the restriction of $\pi$ on $Q_{\pi}$. Then by definition
in (\ref{eq:defF}) we have 
\[
F_{\pi}=F'_{\pi'}.
\]
That is, for the group element $F_{\pi}\in\mathsf{F}(\mathcal{G})$,
one can remove the components $F_{i,i}$ in $F$ with $\pi(i)=\pi$
and consider it as $F'_{\pi'}$ where every component in the domain
is mapped to some other component. 

Restrict to the sub-multisection $F'=\{F_{k_{i},k_{j}}\}_{i,j=1}^{\ell}$,
where $\ell=d-|{\rm Fix}(\pi)|$, then $g=F'_{\pi'}$, $\pi'(k_{i})=\pi(k_{i})\neq k_{i}$.
Since the components of the domain are by definition disjoint, $\pi'(k_{i})\neq k_{i}$
implies that for any point $x\in F_{k_{i},k_{i}}$, $x\cdot F'_{\pi'}\neq x$.
Since $g\in R_{\mathsf{A}(\mathcal{G})}(U)$ fixes $U^{c}$ pointwise,
it follows that $\left(\cup_{i=1}^{\ell}F_{k_{i},k_{i}}\right)\cap U^{c}=\emptyset$.
In other words, the domain of the multisection $F'$ is contained
in $U$. By definition of the restriction $\mathcal{G}|_{U}$ we have
that $\mathsf{A}(F')\le\mathsf{A}(\mathcal{G}|_{U})$, in particular,
$g\in\mathsf{A}(\mathcal{G}|_{U})$. 

\end{proof}

Similar to the alternating group on a finite set, we have the following
property. When $\mathcal{G}$ is a minimal AF-groupoid, this is shown
in \cite[Lemma 3.4]{Dudko-Medynets3}. 

\begin{lemma}\label{condition1}

Suppose $\mathcal{G}$ is a minimal groupoid of germs with unit space
$X$ homeomorphic to the Cantor set. Then if $U$ and $V$ are clopen
sets such that $U\cap V\neq\emptyset$, we have
\[
R_{\mathsf{A}(\mathcal{G})}(U\cup V)=\left\langle R_{\mathsf{A}(\mathcal{G})}(U),R_{\mathsf{A}(\mathcal{G})}(V)\right\rangle .
\]

\end{lemma}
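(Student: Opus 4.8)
The inclusion $\supseteq$ is immediate, since $U,V\subseteq U\cup V$ give $R_{\mathsf{A}(\mathcal{G})}(U),R_{\mathsf{A}(\mathcal{G})}(V)\le R_{\mathsf{A}(\mathcal{G})}(U\cup V)$. The content is the reverse inclusion, and the plan is to verify it on generators. Write $L=\left\langle R_{\mathsf{A}(\mathcal{G})}(U),R_{\mathsf{A}(\mathcal{G})}(V)\right\rangle$. By Fact \ref{rigidstab}, $R_{\mathsf{A}(\mathcal{G})}(U\cup V)=\mathsf{A}(\mathcal{G}|_{U\cup V})$, which is generated by the groups $\mathsf{A}(F)$ for multisections $F$ with domain contained in $U\cup V$; since each $\mathsf{A}(F)\cong{\rm Alt}(d)$ is generated by $3$-cycles, it suffices to show that every element $g=F_{\pi}$ with $\pi$ a $3$-cycle, i.e.\ $g$ cycles three disjoint clopen components $D_1,D_2,D_3\subseteq U\cup V$ along the bisections of $F$, lies in $L$.

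\emph{Refinement.} Let $P_U=U\setminus V$, $P_V=V\setminus U$ and $P_\cap=U\cap V$; these are clopen, $P_\cap\ne\emptyset$ by hypothesis, and $D_1\sqcup D_2\sqcup D_3\subseteq P_U\sqcup P_V\sqcup P_\cap$. Using that $\tau_{F_{1,2}}\colon D_1\to D_2$ and $\tau_{F_{1,3}}\colon D_1\to D_3$ are homeomorphisms between clopen sets, I partition $D_1$ into the finitely many clopen pieces on which the region ($P_U$, $P_V$ or $P_\cap$) of a point, of its $F_{1,2}$-image and of its $F_{1,3}$-image are each constant; transporting this partition by the bisections refines $F$ into finitely many sub-multisections with disjoint supports. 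Consequently $g$ is a product of commuting $3$-cycles $g_k=(A_1^k\,A_2^k\,A_3^k)$, one per piece, in each of which every component lies in a single region. If all three components of a block lie in $U$ (i.e.\ in $P_U\cup P_\cap$) then $g_k\in R_{\mathsf{A}(\mathcal{G})}(U)$, and symmetrically for $V$; so it remains to treat the \emph{crossing} blocks, those with at least one component in $P_U$ and at least one in $P_V$.

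\emph{Routing through the intersection.} Fix a crossing block $g=(A_1\,A_2\,A_3)$ (dropping $k$). The plan is to conjugate its $P_V$-components into $P_\cap\subseteq U$ by an element of $R_{\mathsf{A}(\mathcal{G})}(V)$, after which the conjugate is a $3$-cycle supported in $U$. For each component $A_i\subseteq P_V$ I choose a clopen $Q_i\subseteq P_\cap$ and an auxiliary clopen $R_i\subseteq V$, all pairwise disjoint and disjoint from $A_1,A_2,A_3$, together with bisections making $\{A_i,Q_i,R_i\}$ a degree-$3$ multisection $F^{(i)}$ in $\mathcal{G}|_V$. Letting $\theta$ be the product, over the $P_V$-components, of the $3$-cycles $F^{(i)}_{\pi_i}$ with $\pi_i$ the $3$-cycle cycling $A_i\to Q_i\to R_i$ (cf.\ \eqref{eq:defF}), we have $\theta\in\mathsf{A}(\mathcal{G}|_V)=R_{\mathsf{A}(\mathcal{G})}(V)$; moreover $\theta$ fixes every $P_U$-component and sends each $P_V$-component $A_i$ to $Q_i\subseteq P_\cap$. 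Hence $\theta g\theta^{-1}$ cycles the three images $\theta(A_1),\theta(A_2),\theta(A_3)$, each of which now lies in $P_U\cup P_\cap\subseteq U$, so $\theta g\theta^{-1}\in R_{\mathsf{A}(\mathcal{G})}(U)$ and therefore $g=\theta^{-1}\left(\theta g\theta^{-1}\right)\theta\in L$.

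\emph{Main obstacle.} The step I expect to be most delicate, and the only place where minimality and the Cantor structure are genuinely used, is producing within $P_\cap$ and $V$ the bisections $A_i\to Q_i$ and the disjoint auxiliary room $Q_i,R_i$. Minimality of $\mathcal{G}$ implies that each restriction $\mathcal{G}|_B$ to a nonempty clopen $B$ is again minimal (the $\mathcal{G}|_B$-orbit of $x\in B$ is the dense $\mathcal{G}$-orbit intersected with $B$), and hence that for any two nonempty clopen sets there is a bisection carrying a nonempty clopen subset of one into the other; since $P_\cap$ is a Cantor set it contains infinitely many disjoint clopen subsets, so finitely many disjoint images always fit. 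To map an \emph{entire} component $A_i$ rather than merely a subset into $P_\cap$, one first subdivides $A_i$ into finitely many clopen pieces on each of which a single bisection applies; this only splits the crossing block $g$ into further commuting $3$-cycles, each handled exactly as above, so the subdivision is harmless. Assembling these, every generating $3$-cycle lies in $L$, giving $R_{\mathsf{A}(\mathcal{G})}(U\cup V)\le L$ and completing the proof.
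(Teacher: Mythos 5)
Your reduction to commuting, region-pure $3$-cycle blocks is correct, and your overall route (conjugating the $P_V$-components of a crossing block into $P_\cap$ by an element of $R_{\mathsf{A}(\mathcal{G})}(V)$) is genuinely different from the paper's proof, which instead invokes the refinement statement of Fact \ref{multi-restrict} (i.e.\ \cite[Lemma 3.2, Proposition 3.3]{Nek-simple}) to produce multisections having at least three components in each of $U\setminus V$, $U\cap V$, $V\setminus U$, and then finishes with the elementary fact that ${\rm Alt}(X\cup Y)=\left\langle {\rm Alt}(X),{\rm Alt}(Y)\right\rangle$ when $|X|,|Y|\ge3$ and $X\cap Y\neq\emptyset$. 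In your scheme, all of the real work is concentrated in the step you flag as the "main obstacle," and that is exactly where the proposal has a genuine gap.

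The gap is the claimed existence of the routing data: clopen sets $Q_i\subseteq P_\cap$, $R_i\subseteq V$, pairwise disjoint and disjoint from $A_1,A_2,A_3$, together with bisections carrying the \emph{entire} component $A_i$ onto $Q_i$. Your justification --- "$P_\cap$ contains infinitely many disjoint clopen subsets, so finitely many disjoint images always fit" --- is false as a general principle: a bisection $F$ satisfies $\mu(\mathsf{s}(F))=\mu(\mathsf{r}(F))$ for every $\mathcal{G}$-invariant probability measure $\mu$, so no element of $\mathsf{F}(\mathcal{G})$ can carry a clopen set into a clopen set of smaller measure. For instance, in the dyadic odometer (or its AF groupoid) take $U,V$ with $\mu(P_\cap)=1/8$ and a crossing block whose two $P_V$-components are cylinders of measure $1/8$ each: then disjoint $Q_2,Q_3\subseteq P_\cap$ with bisections from $A_2,A_3$ simply do not exist, and if $\mu(A_i)>\mu(P_\cap)$ even a single $Q_i$ does not exist. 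Your subdivision remark is the correct repair, but it must be orchestrated in an order the proposal never states: first fix pairwise disjoint nonempty clopen targets inside $P_\cap$ (one for each $Q_i$ and, after a further round, for each $R_i$), having first split the block so that its own $P_\cap$-component --- which may a priori equal all of $P_\cap$, another unaddressed corner case --- leaves room; then use minimality plus compactness to partition each $P_V$-component into pieces each entering \emph{its own} target by a single bisection; then pass to the common refinement of these partitions, splitting the block into sub-blocks whose routed images are disjoint because the targets are. Disjointness is then automatic, with no appeal to "fitting." This iterated refine-and-route argument is precisely the content of the Nekrashevych lemma that the paper cites as Fact \ref{multi-restrict}; as written, your proof asserts its conclusion rather than proving it.
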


The proof of Lemma \ref{condition1} relies on the following fact,
which is a direct consequence of \cite[Lemma 3.2, Proposition 3.3]{Nek-simple}.
Let $F=\left\{ F_{i,j}\right\} _{i,j=1}^{d}$ be a multisection and
$U$ be a clopen subset of $F_{k,k}$ for some $k\in\{1,\ldots,d\}$.
Write $U_{i}=\mathsf{r}(UF_{k,i})$ and $F_{i,j}'=U_{i}F_{i,j}$.
Then $F'=\{F_{i,j}'\}_{i,j=1}^{d}$ is a multisection, which is referred
to as the \emph{restriction} of $F$ to $U$. 

\begin{fact}\label{multi-restrict}

Let $\mathcal{G}$ be a minimal groupoid of germs with unit space
$X=\mathcal{G}^{(0)}$ homeomorphic to the Cantor set. Let $F=\left\{ F_{i,j}\right\} _{i,j=1}^{d}$
be a multisection and $\mathcal{P}=\{U_{j}\}_{j=1}^{q}$ an open cover
of the domain of $F$. Then there is a collection of multisections
$F^{(k)}=\left\{ F_{i,j}^{(k)}\right\} _{i,j=1}^{d_{k}}$, $1\le k\le\ell$
such that:
\begin{description}
\item [{(i)}] each component $F_{i,i}^{(k)}$ is contained in $U_{j}$,
for some $j\in\{1,\ldots,q\}$;
\item [{(ii)}] for each $k$ and each set $U_{j}$, $\left|\{i:\ F_{i,i}^{(k)}\subseteq U_{j}\}\right|\ge3$;
\item [{(iii)}] $\mathsf{A}(F)$ is contained in the group generated by
$\cup_{k=1}^{\ell}\mathsf{A}(F^{(k)})$. 
\end{description}
\end{fact}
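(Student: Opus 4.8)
The plan is to first use total disconnectedness of $X$ to split the base of $F$ so that every component of the resulting sub-multisections sits inside a single member of $\mathcal{P}$, and then to use minimality to enlarge each sub-multisection by extra components realized inside every cover set, so that condition (ii) holds. Throughout I use the \emph{restriction} operation of a multisection to a clopen subset of one of its components, recalled just above from \cite[Lemma 3.2, Proposition 3.3]{Nek-simple}.

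First I would fix the reference component $F_{1,1}$ and transport the cover through the bisections: via the homeomorphisms $\tau_{F_{1,i}}\colon F_{1,1}\to F_{i,i}$, pull back each $U_j\cap F_{i,i}$ to $F_{1,1}$ and take the common refinement of the resulting finite open covers over $i=1,\dots,d$. Since $X$ is a Cantor set, this common refinement can be refined to a finite clopen partition $F_{1,1}=\bigsqcup_{m} W_m$ with the property that for every $m$ and every $i$ the set $\tau_{F_{1,i}}(W_m)$ lies in a single member of $\mathcal{P}$. Restricting $F$ to each $W_m$ produces a degree-$d$ multisection $F^{[m]}$ whose components $\tau_{F_{1,i}}(W_m)$ are each contained in some $U_j$; this already secures condition (i). For condition (iii), note that $\mathsf{A}(F)$ is generated by the $3$-cycles $F_\sigma$, and that since the partition $\{W_m\}_m$ of $F_{1,1}$ induces, through the $\tau_{F_{1,i}}$, a partition of each component $F_{i,i}$, each such $F_\sigma$ factors as the commuting product $\prod_m F^{[m]}_\sigma$ over the finitely many $m$, each factor being a $3$-cycle in the corresponding sub-multisection.

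The main obstacle is condition (ii): a given $F^{[m]}$ need not have three components inside each $U_j$, and one cannot enlarge the count by merely subdividing, since disjoint clopen pieces of one component are a priori not $\mathcal{G}$-equivalent. To remedy this I would invoke minimality. Since $\mathcal{G}$ is minimal with $X$ a perfect Cantor set, every orbit is infinite and meets every non-empty clopen set in infinitely many points; hence for any point $x$ and any non-empty clopen $U_j$ one can pick three distinct orbit points of $x$ in $U_j$ and take bisections with a common small clopen source-neighborhood $N$ of $x$ and three disjoint ranges in $U_j$. Intersecting such neighborhoods over the finitely many cover sets, every $x\in W_m$ has a clopen neighborhood that is $\mathcal{G}$-equivalent to three disjoint clopen subsets of each $U_j$, and this property passes to clopen subsets. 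By compactness I would extract a finite clopen partition $W_m=\bigsqcup_s C_{m,s}$ into pieces with this property; keeping the construction finite via compactness is the delicate step.

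Finally, for each pair $(m,s)$ I would take the restriction $F^{(m,s)}$ of $F$ to $C_{m,s}$, a degree-$d$ multisection whose components still respect $\mathcal{P}$, and enlarge it: using the equivalences above, adjoin, for each $U_j$, three new mutually equivalent components realized as disjoint clopen copies of $C_{m,s}$ inside $U_j$ (chosen disjoint from the existing components, which is possible by minimality and total disconnectedness), extending the bisections $F_{i,j}$ by the evident compositions through $C_{m,s}$. The resulting multisections, reindexed as $F^{(k)}$, satisfy (i) and (ii) by construction; and since each original factor $F^{[m]}_\sigma$ is still an even permutation of the components of $F^{(m,s)}$ and $F_\sigma=\prod_{m,s}F^{(m,s)}_\sigma$, condition (iii) persists. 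I expect the only genuinely technical points to be the compactness argument yielding finiteness of the partitions and the bookkeeping that the adjoined copies remain disjoint from the preexisting components.
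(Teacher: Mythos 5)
Your overall strategy is sound, and since the paper never actually proves this Fact --- it is quoted as a direct consequence of \cite[Lemma 3.2, Proposition 3.3]{Nek-simple} --- your argument is necessarily an independent reconstruction. Steps 1--3 are correct: because the components are compact open and $X$ is a Cantor set, the cover pulls back through the homeomorphisms $\tau_{F_{1,i}}$ to a finite clopen partition $F_{1,1}=\bigsqcup_m W_m$ subordinate, component by component, to $\mathcal{P}$; the restrictions $F^{[m]}$ then satisfy (i), and the factorization $F_\sigma=\prod_m F^{[m]}_\sigma$ into commuting factors gives (iii).

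However, there is a genuine gap at the junction of Steps 4 and 5. The local property you extract in Step 4 --- a clopen neighborhood $N$ of $x$ equivalent to three disjoint clopen subsets of each $U_j$ --- is strictly weaker than what Step 5 needs, and the parenthetical claim there (``chosen disjoint from the existing components, which is possible by minimality and total disconnectedness'') is false in general once the piece $C=C_{m,s}$ has been fixed. The obstruction is quantitative, not bookkeeping. Suppose $\mathcal{G}$ carries an invariant probability measure $\mu$ (e.g.\ an odometer or any minimal AF groupoid); equivalent clopen sets then have equal $\mu$-measure. If $U_j$ is clopen, $C\subseteq U_j$ (so $D_1=C$ is itself one of the existing components lying inside $U_j$), and $3\mu(C)\le\mu(U_j)<4\mu(C)$, then $C$ satisfies your Step 4 property for $U_j$, yet any three disjoint copies of $C$ inside $U_j$ avoiding $D_1$ would force $\mu(U_j)\ge 4\mu(C)$: no such copies exist, however they are chosen. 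Nothing in your compactness extraction rules out pieces of this kind, so the implication ``Step 4 property $\Rightarrow$ Step 5 possible'' fails. The repair is to build all the disjointness into the pointwise property \emph{before} compactness is invoked: for $x\in W_m$, its $d$ images $x_i=\tau_{F_{1,i}}(x)$ together with $3q$ orbit points $y_{j,t}\in U_j$ ($t=1,2,3$), chosen pairwise distinct, admit pairwise disjoint clopen neighborhoods $A_i$ and $B_{j,t}\subseteq U_j$; shrinking $N$ so that $\tau_{F_{1,i}}(N)\subseteq A_i$ and $\mathsf{r}(NH_{j,t})\subseteq B_{j,t}$ produces a neighborhood whose $d$ component-images and $3q$ copies are already pairwise disjoint, and this strengthened property still passes to clopen subsets. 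With that replacement your Step 5, and hence the whole proof, goes through; as written it does not.
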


\begin{proof}[Proof of Lemma \ref{condition1}]

Let $F=\{F_{i,j}\}_{i,j=1}^{d}$ be a multisection of degree $d$
with domain contained $U\cup V$. If the domain of $F$ is contained
in $U$ (resp. $V$), then by definitions we have $\mathsf{A}(F)\le R_{\mathsf{A}(\mathcal{G})}(U)$
(resp. $R_{\mathsf{A}(\mathcal{G})}(U)$). We need to consider the
case where domain of $F$ intersects both $U,V$. Partition $U\cup V$
into $U\cap V^{c}$, $U\cap V$ and $V\cap U^{c}$. Let $F^{(k)}$,
$1\le k\le\ell$, be a collection of multisections satisfying the
statement of Fact \ref{multi-restrict}. We show that $\mathsf{A}\left(F^{(k)}\right)$
is contained in $\left\langle R_{\mathsf{A}(\mathcal{G})}(U),R_{\mathsf{A}(\mathcal{G})}(V)\right\rangle $.
To see this, let $I_{1}=\{i:\ F_{i,i}^{(k)}\subseteq U\}$ and let
$F_{U}^{(k)}$ be the sub-multisection $\left\{ F_{i,j}^{(k)}\right\} _{i,j\in I_{1}}$.
Since by definition the domain of $F_{U}^{(k)}$ is contained in $U$,
we have that $\mathsf{A}\left(F_{U}^{(k)}\right)\le R_{\mathsf{A}(\mathcal{G})}(U)$.
Similarly we take the sub-multisection $F_{V}^{(k)}$ whose domain
is contained in $V$. By property (ii) in Lemma \ref{multi-restrict},
there are at least $3$ components of $F^{(k)}$ contained in the
intersection $U\cap V$. Recall the elementary fact that if $X$ and
$Y$ are two finite sets such that $|X|,|Y|\ge3$ and $X\cap Y\neq\emptyset$,
then the alternating group ${\rm Alt}(X\cup Y)$ is generated by ${\rm Alt}(X)$
and ${\rm Alt}(Y)$. It follows that 
\[
\mathsf{A}\left(F^{(k)}\right)=\left\langle \mathsf{A}\left(F_{U}^{(k)}\right),\mathsf{A}\left(F_{V}^{(k)}\right)\right\rangle \le\left\langle R_{\mathsf{A}(\mathcal{G})}(U),R_{\mathsf{A}(\mathcal{G})}(V)\right\rangle .
\]
We conclude that $\mathsf{A}\left(F\right)\le\left\langle R_{\mathsf{A}(\mathcal{G})}(U),R_{\mathsf{A}(\mathcal{G})}(V)\right\rangle $. 

\end{proof}

\section{Applications to classification of IRSs of topological full groups\label{sec:full groups}}

In this section we consider invariant random subgroups of topological
and alternating full groups of a minimal groupoid of germs $\mathcal{G}$. 

\subsection{Consequences of Corollary \ref{simple} \label{subsec:full}}

With the alternating full group $A(\mathcal{G})$ in mind, we derive
the following statement from Corollary \ref{simple}. Recall that
$F(X)$ denotes the space of closed subsets of $X$, equipped with
the Vietoris topology. 

\begin{proposition}\label{corfull}

Let $G$ be a countable group acting faithfully on the Cantor set
$X$ by homeomorphisms. Let $\mathcal{U}$ be the collection of non-empty
clopen subsets of $X$. Suppose there is a collection of infinite
simple groups $\left\{ A_{U}\right\} _{U\in\mathcal{U}}$ such that
each $A_{U}\le R_{G}(U)$ and $A_{U}$ has no fixed point in $U$.

Suppose in addition:
\begin{description}
\item [{(i)}] all finite index subgroups of $R_{G}(U)$ contain $A_{U}$; 
\item [{(ii)}] the collection of groups $A_{U}$ satisfies that for any
$U,V\in\mathcal{U}$ such that $U\cap V\neq\emptyset$, $A_{U\cup V}=\left\langle A_{U},A_{V}\right\rangle $
;
\item [{(iii)}] any ergodic $G$-invariant probability measure on the space
$F(X)$ other than $\delta_{X}$ is supported on the subspace of closed
subsets with empty interior.
\end{description}
Let $\mu$ be an ergodic IRS of $G$, then $\mu$-a.e. $H$ satisfies
that 
\[
H\ge\left\langle \cup\left\{ A_{U}:\ U\in\mathcal{U}\mbox{ and }U\subseteq X\setminus{\rm Fix}(H)\right\} \right\rangle .
\]

\end{proposition}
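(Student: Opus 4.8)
The plan is to combine Corollary \ref{simple} with the covering structure of the Cantor set and hypothesis (ii) to build up $A_U\le H$ from small pieces. Since $X$ is the Cantor set, the family $\mathcal{U}$ of non-empty clopen sets is countable. For each $U\in\mathcal{U}$, Corollary \ref{simple} (which applies because of the standing hypotheses together with (i)) produces a conull set of subgroups for which $H\ni g$ with $U\cap U\cdot g=\emptyset$ implies $A_U\le H$; intersecting these countably many conull sets gives a single conull $\Omega_0\subseteq{\rm Sub}(G)$ on which this implication holds simultaneously for every clopen $U$. Fix $H\in\Omega_0$. Since the right-hand side is by definition generated by the groups $A_U$ with $U\subseteq X\setminus{\rm Fix}(H)$ clopen, it suffices to prove $A_U\le H$ for each such $U$ (note $X\setminus{\rm Fix}(H)$ is open, as ${\rm Fix}(H)$ is an intersection of closed sets).

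First I would carry out a local step. Fix a clopen $U\subseteq X\setminus{\rm Fix}(H)$. For each $x\in U$ the point $x$ is moved by some $h\in H$, so by Hausdorffness and continuity of $h$ there is a clopen neighbourhood $V_x\ni x$ with $V_x\subseteq U$ and $V_x\cap V_x\cdot h=\emptyset$. By the defining property of $\Omega_0$ this gives $A_{V_x}\le H$. Compactness of $U$ yields a finite subcover $U=V_{x_1}\cup\cdots\cup V_{x_n}$ with $A_{V_{x_i}}\le H$ for all $i$.

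The main step, and the place I expect the real difficulty, is to globalize this into $A_U\le H$ using hypothesis (ii). If the nerve of the cover $\{V_{x_i}\}$ is connected, then applying (ii) along a spanning tree gives $A_U=\langle A_{V_{x_1}},\ldots,A_{V_{x_n}}\rangle\le H$. The obstacle is a disconnected nerve, in which case $U$ decomposes as a disjoint union of clopen pieces $U=P_1\sqcup\cdots\sqcup P_r$, one for each connected component; each $P_k$ is a connected union of cover members, so the argument just given yields $A_{P_k}\le H$, but distinct pieces do not overlap and (ii) cannot be applied across them. To bridge two pieces $P_i,P_j$ I would manufacture a single element of $H$ that moves a clopen set off itself: choose $a\in P_i$ and $b\in P_j$; since $A_{P_i}$ has no fixed point in $P_i$ and lies in $H$, pick $\alpha\in A_{P_i}$ with $a\cdot\alpha\neq a$ and a clopen $W_1\ni a$, $W_1\subseteq P_i$, with $W_1\cap W_1\cdot\alpha=\emptyset$, and symmetrically $\beta\in A_{P_j}\le H$, $W_2\ni b$, $W_2\subseteq P_j$, $W_2\cap W_2\cdot\beta=\emptyset$. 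As $\alpha$ is supported in $P_i$ and $\beta$ in $P_j$, the element $g=\alpha\beta\in H$ satisfies $W\cap W\cdot g=\emptyset$ for $W=W_1\cup W_2$, whence $A_W\le H$ by the property of $\Omega_0$. Since $W$ meets both $P_i$ and $P_j$, two applications of (ii) give $A_{P_i\cup W\cup P_j}\le H$. Inserting such bridges between successive pieces connects the whole decomposition and yields $A_U\le H$.

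Taking the union over all clopen $U\subseteq X\setminus{\rm Fix}(H)$ gives the claimed inclusion for every $H\in\Omega_0$, that is, $\mu$-almost surely. The no-fixed-point hypothesis on the groups $A_U$ is exactly what makes the bridging elements available, so I expect the disconnected-nerve case to be the crux. Finally, ergodicity and condition (iii) enter through the law of the fixed-point set: the measurable, equivariant map $H\mapsto{\rm Fix}(H)$ pushes $\mu$ forward to an ergodic $G$-invariant measure on $F(X)$, which by (iii) is either $\delta_X$---forcing ${\rm Fix}(H)=X$ and hence, by faithfulness, $H=\{id\}$ with the inclusion trivially true---or is supported on closed sets with empty interior; this dichotomy is what turns the inclusion into a usable description of $\mu$ in the subsequent corollaries.
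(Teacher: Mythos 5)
Your proof is correct, and at the decisive step it takes a genuinely different route from the paper's. Both arguments open identically: Corollary \ref{simple} is applied across the countable family of clopen sets to fix one conull set of subgroups, and then compactness together with hypothesis (ii) merges overlapping members of a finite clopen cover of a clopen $U\subseteq X\setminus{\rm Fix}(H)$. The divergence is in how disjoint pieces are handled. The paper never bridges them inside $H$: it forms the family $\mathcal{V}_{H}$ of open sets all of whose clopen subsets $W$ satisfy $A_{W}\le H$, takes its pairwise disjoint maximal elements $\mathcal{M}_{H}$, deletes each independently with probability $1/2$ to get a random closed set, and averages over $\mu$ to produce a $G$-invariant measure $\nu$ on $F(X)$; if two or more maximal pieces occurred with positive probability, $\nu$ would charge proper closed sets with nonempty interior, contradicting (iii) after ergodic decomposition, with ergodicity of $\mu$ supplying the zero--one law that finishes the argument. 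Your bridging element $g=\alpha\beta\in H$, supported in $P_{i}\sqcup P_{j}$ and displacing the clopen set $W_{1}\cup W_{2}$, replaces all of that measure theory: Corollary \ref{simple} applies to $W_{1}\cup W_{2}$ because $g$ genuinely lies in $H$, and since $P_{i}\cup W_{1}\cup W_{2}\cup P_{j}=P_{i}\cup P_{j}$, two uses of (ii) merge the pieces and the induction over the pieces closes. The trade-off is that you lean harder on the no-fixed-point hypothesis on the $A_{U}$ (to manufacture $\alpha$ and $\beta$), which the paper uses only to keep the maximal pieces away from ${\rm Fix}(H)$; in return, your argument never invokes hypothesis (iii) and never uses ergodicity of $\mu$ (Corollary \ref{simple} holds for arbitrary IRSs), so it proves a strictly stronger statement: the containment holds for every IRS with (iii) deleted from the hypotheses. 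This vindicates your closing remark that (iii) only matters downstream, where the inclusion is converted into classification results via the law of ${\rm Fix}(H)$; indeed, with your proof, Lemma \ref{condition2} would no longer be needed as an input to Theorem \ref{full-1}.
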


\begin{proof}

Define $\mathcal{V}_{H}$ to be the collection
\[
\mathcal{V}_{H}:=\{V\subseteq X:\ V\mbox{ is open and any clopen subset }W\mbox{ of }V\mbox{ satisfies }A_{W}\le H\}.
\]
In other words, an open set $V$ is in $\mathcal{V}_{H}$ if and only
if it can be written as an increasing union of clopen sets $\cup_{n}W_{n}$
where $A_{W_{n}}\le H$ for each $n$. Equip $\mathcal{V}_{H}$ with
the partial order of set inclusions. By definition it is clear that
if $\{V_{i}\}_{i\in I}$ is a chain in $\mathcal{V}_{H}$, then the
union $\cup_{i\in I}V_{i}$ is in $\mathcal{V}_{H}$. Assumption (ii)
implies that if $V$ and $V'$ are in $\mathcal{V}_{H}$ and $V\cap V'\neq\emptyset$,
then $V\cup V'\in\mathcal{V}_{H}$ as well. 

Let $\mathcal{M}_{H}=\{M_{j}\}_{j\in J}$ be the collection of maximal
elements in $\mathcal{V}_{H}$. Note that any two distinct maximal
subsets $M_{j}$ and $M_{j'}$ must be disjoint, otherwise $M_{j}\cup M_{j'}$
would be in $\mathcal{V}_{H}$ which contradicts the maximality of
$M_{j},M_{j'}$. Since $\mathcal{U}$ is countable, as a consequence
of disjointness, we have that the index set $J$ is countable. For
convenience of notation we index this collection by $J=\mathbb{N}$
(if it is a finite collection formally add empty sets to the sequence).
Thus we have a map 
\[
H\mapsto\left({\rm Fix}(H),\left\{ M_{j}\right\} _{j\in\mathbb{N}}\right).
\]
 Given a countable collection of open sets $\mathcal{M}=\left(M_{j}\right)$,
let $\tau_{\mathcal{M}}:\{0,1\}^{\mathbb{N}}\to F(X)$ be the map
defined by $(x_{j})\mapsto X\setminus\cup\{M_{j}:\ x_{j}=1,j\in\mathbb{N}\}$.
Recall that $F(X)$ is equipped with the Vietoris topology. It is
easy to verify that the map $\tau_{\mathcal{M}}$ is Borel measurable.
Let $p=1/2$, $\nu$ be the Bernoulli distribution on $\{0,1\}$,
$\nu(\{0\})=\nu(\{1\})=1/2$, and $\nu^{\otimes\mathbb{N}}$ the product
measure on $\{0,1\}^{\mathbb{N}}$. Denote by $\eta_{\mathcal{M}}$
the pushforward of $\nu^{\otimes\mathbb{N}}$ under $\tau_{\mathcal{M}}$.
Note that the measure $\eta_{\mathcal{M}}$ does not depend on ordering
of $M_{j}$. It is routine to check measurability:

\begin{lemma}\label{eta}

Let $\mathcal{P}(F(X))$ be the space of Borel probability measures
on $F(X)$, equipped with the topology of weak convergence. The map
${\rm Sub}(G)\to\mathcal{P}(F(X))$ given by $H\mapsto\eta_{\mathcal{M}_{H}}$
is Borel measurable.

\end{lemma}

\begin{proof}[Proof of Lemma \ref{eta}]

Note that the Borel $\sigma$-field on $F(X)$ is generated by sets
$\{C\in F(X):\ C\subseteq U\}$, where $U$ goes over clopen subsets
of $X$. Thus it suffices to check for any clopen set $U$ and $x\in[0,1]$,
the set $\{H:\ \eta_{\mathcal{M}_{H}}(\{C\in F(X):\ C\subseteq U\})\ge x\}$
is a measurable subset of ${\rm Sub}(G)$. 

Consider all finite cover of $U^{c}$ by disjoint clopen sets. This
is a countable list, denote it by $\mathcal{O}_{1},\mathcal{O}_{2},\ldots$.
By definition of $\eta_{\mathcal{M}_{H}}$, we have 
\begin{align*}
\{H & :\ \eta_{\mathcal{M}_{H}}(\{C\in F(X):\ C\subseteq U\}\ge x)\}\\
= & \{H:\ \exists\mbox{ a sub-collection }\{M_{j_{k}}\}_{k=1}^{\ell}\mbox{ of }\mathcal{M}_{H}\mbox{ that covers }U^{c}\mbox{ and }2^{-\ell}\ge x\}\\
= & \cup_{i:\ |\mathcal{O}_{i}|\le-\log_{2}x}\left\{ H:\ A_{V}\le H\mbox{ for every }V\in\mathcal{O}_{i}\right\} ,
\end{align*}
where $\left|\mathcal{O}_{i}\right|$ denotes the number of sets in
the cover $\mathcal{O}_{i}$. Since the set $\{H:\ A_{V}\le H\}$
is measurable, we conclude that $\{H:\ \eta_{\mathcal{M}_{H}}(\{C\in F(X):\ C\subseteq U\}\ge x)\}$
is measurable.

\end{proof}

We continue the proof of the proposition. Take the probability measure
on $F(X)$ given by 
\[
\nu=\int_{{\rm Sub}(G)}\eta_{\mathcal{M}_{H}}d\mu(H).
\]
Note that for $g\in G$, the decomposition associated with the conjugation
$g^{-1}Hg$ is $\left({\rm Fix}(H)\cdot g,\left\{ M_{j}\cdot g\right\} _{j\in\mathbb{N}}\right)$.
Since $\mu$ is invariant under conjugation, it follows that the measure
$\nu$ is a $G$-invariant probability measure on $F(X)$: 
\begin{align*}
\nu\cdot g & =\int_{{\rm Sub}(G)}\left(\eta_{\mathcal{M}_{H}}\cdot g\right)d\mu(H)\\
 & =\int_{{\rm Sub}(G)}\left(\eta_{\mathcal{M}_{H}}\cdot g\right)d\mu(g^{-1}Hg)\\
 & =\int_{{\rm Sub}(G)}\left(\eta_{\mathcal{M}_{g^{-1}Hg}}\right)d\mu(g^{-1}Hg)=\nu.
\end{align*}

By assumption (i) on the infinite simple groups $A_{U}$ and Corollary
\ref{simple}, we have that for $\mu$-a.e. $H$, there exists a cover
$\{U_{i}\}_{i\in I}$ of $X-{\rm Fix}_{H}$ by clopen sets such that
$A_{U_{i}}<H$ for every $i\in I$. In particular each $U_{i}\in\mathcal{V}_{H}$.
The assumption that $A_{U}$ has no fixed point in $U$ implies that
a set in $\mathcal{V}_{H}$ must be disjoint from ${\rm Fix}(H)$.
It follows that for $\mu$-a.e. $H$, $\mathcal{M}_{H}$ forms a partition
of $X\setminus{\rm Fix}(H)$ into disjoint open sets. Next we show
that under assumption (iii), for $\mu$-a.e. $H$, the partition $\mathcal{M}_{H}$
consists of one nonempty set, namely $X\setminus{\rm Fix}(H)$. 

Consider the event $D=\{H:\ H\ge A_{U}\,\mbox{for all clopen }U\subseteq X\setminus{\rm Fix}(H)\}$.
Since $D$ can be expressed as 
\[
D=\cap_{U\in\mathcal{U}}\left(\{H:{\rm Fix}(H)\cap U\neq\emptyset\}\cup\{H:\ A_{U}\le H\}\right),
\]
it is indeed a measurable subset of ${\rm Sub}(G)$. By definition,
it is clear that the set $D$ is invariant under conjugation by $G$.
Ergodicity of $\mu$ implies $\mu(D)\in\{0,1\}$. Note that $\mu(D)=1$
is exactly the statement of the proposition. We need to rule out the
possibility that $\mu(D)=0$. Suppose $\mu(D)=0$, then it means $\mu$-a.e.
the partition $\mathcal{M}_{H}$ has at least two non-empty open sets.
It follows that 
\[
\nu\left(\left\{ C\in F(X):\ C\neq X,\ C\mbox{ contains a nonempty open set}\right\} \right)\ge\frac{1}{4}.
\]
By the ergodic decomposition, this implies that there exist $G$-invariant
ergodic probability measures supported on proper closed subsets of
$X$ with non-empty interior, which contradicts assumption (iii). 

\end{proof}

\subsection{Applications to IRSs of topological full groups and proof of Theorem
\ref{full-1}}

In this subsection we apply Proposition \ref{corfull} to prove Theorem
\ref{full-1}. Let $\mathcal{G}$ be a minimal groupoid of germs with
unit space $X=\mathcal{G}^{(0)}$ homeomorphic to the Cantor set.
Let $\mathcal{U}$ be the collection of clopen subsets of $X$. For
$U\in\mathcal{U}$, take $A_{U}=R_{\mathsf{A}(\mathcal{G})}(U)=\mathsf{A}(\mathcal{G}|_{U})$.
Then by \cite[Theorem 1.1]{Nek-simple}, $A_{U}$ is a simple group
and is contained in every non-trivial normal subgroup of $\mathsf{F}(\mathcal{G}|_{U})$.
Thus assumption (i) in Proposition \ref{corfull} is verified. Assumption
(ii) is satisfied because of Lemma \ref{condition1}. It remain to
verify (iii):

\begin{lemma}\label{condition2}

Suppose $\mathcal{G}$ is a minimal groupoid of germs with unit space
$X$ homeomorphic to the Cantor set. An ergodic $\mathsf{A}(\mathcal{G})$-invariant
probability measure on $F(X)$ that is not $\delta_{X}$ is supported
on the subspace of closed sets with empty interior.

\end{lemma}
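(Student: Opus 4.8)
The plan is to rule out, for an ergodic $\mathsf{A}(\mathcal{G})$-invariant measure $\nu$ on $F(X)$, the possibility that $\nu$-a.e.\ closed set has nonempty interior, unless $\nu=\delta_X$. First I would record that the action $\mathsf{A}(\mathcal{G})\curvearrowright X$ is minimal: since $\mathcal{G}$ is minimal and the rigid stabilizers $R_{\mathsf{A}(\mathcal{G})}(U)=\mathsf{A}(\mathcal{G}|_U)$ are nontrivial for every nonempty clopen $U$ (Fact \ref{rigidstab}), the $\mathsf{A}(\mathcal{G})$-orbits are dense. The singleton $\{X\}$ and the set $\{C:\ \mathrm{int}(C)\neq\emptyset\}$ are both Borel and invariant (the interior is equivariant under homeomorphisms), so by ergodicity each has $\nu$-measure $0$ or $1$. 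If $\nu(\{X\})=1$ we are in the case $\nu=\delta_X$; otherwise $\nu(\{X\})=0$, and it remains to show $\nu(\{C:\ \mathrm{int}(C)\neq\emptyset\})=0$. I assume for contradiction that this measure equals $1$.

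The key object is the occupation function
\[
h(x)=\nu\bigl(\{C:\ x\in\mathrm{int}(C)\}\bigr),\qquad x\in X.
\]
For fixed $C$ the indicator $x\mapsto\mathbf{1}[x\in\mathrm{int}(C)]$ is lower semicontinuous (its support $\mathrm{int}(C)$ is open), so by Fatou's lemma $h$ is lower semicontinuous on $X$; and invariance of $\nu$ gives $h(x\cdot g)=h(x)$ for all $g\in\mathsf{A}(\mathcal{G})$. A lower semicontinuous invariant function attains its minimum on the closed, invariant, nonempty set $\{h=\min h\}$, which by minimality is all of $X$; hence $h\equiv p$ is constant. Two of the three cases are then immediate. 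If $p=0$, then for every nonempty clopen $U$ and $x\in U$ one has $\nu(\{U\subseteq C\})\le h(x)=0$; since $\{\mathrm{int}(C)\neq\emptyset\}=\bigcup_{U}\{U\subseteq C\}$ is a countable union over clopen $U$, it is $\nu$-null, contradicting the assumption. If $p=1$, fix a countable dense set $D\subseteq X$; each point of $D$ lies in $\mathrm{int}(C)\subseteq C$ for $\nu$-a.e.\ $C$, so $\nu$-a.e.\ $D\subseteq C$, whence $C=\overline{D}=X$ and $\nu=\delta_X$, again contradicting $\nu\neq\delta_X$ (recall $\nu(\{X\})=0$).

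It therefore remains to exclude $p\in(0,1)$, which is the heart of the matter. Here I would exploit that $\mathsf{A}(\mathcal{G})$ contains arbitrarily large alternating groups permuting pairwise disjoint clopen translates of a fixed clopen set: given nonempty clopen $V$, minimality produces pairwise disjoint clopen copies $V_1,V_2,\dots$ of $V$, and the multisection calculus of Fact \ref{multi-restrict} embeds $\mathrm{Alt}(n)$ into $\mathsf{A}(\mathcal{G})$ permuting $V_1,\dots,V_n$ for every $n$. Consequently the indicator sequence $(\mathbf{1}[V_j\subseteq C])_{j\ge1}$ is exchangeable under $\nu$; de Finetti's theorem represents it as a mixture of i.i.d.\ Bernoulli sequences whose directing parameter $\Theta=\lim_N N^{-1}\sum_{j\le N}\mathbf{1}[V_j\subseteq C]$ is a tail statistic, and by ergodicity of $\nu$ one expects $\Theta$ to be $\nu$-a.s.\ constant, equal to $\nu(\{V\subseteq C\})$.

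I expect the genuine obstacle to be converting this i.i.d.\ structure into a contradiction, since an i.i.d.\ family of block-occupation events with parameter in $(0,1)$ is not by itself impossible; one must bring in the finer action \emph{within} the blocks. The clean way to do this is a self-similarity: for any clopen $U$ the rigid stabilizer $\mathsf{A}(\mathcal{G}|_U)=R_{\mathsf{A}(\mathcal{G})}(U)$ fixes $X\setminus U$ pointwise, so the law of the trace $C\cap U$ on $F(U)$ is $\mathsf{A}(\mathcal{G}|_U)$-invariant, and $\mathcal{G}|_U$ is again a minimal groupoid of germs on the Cantor set $U$; moreover the occupation constant of this restricted instance is again $p$, since $\mathrm{int}_U(C\cap U)=\mathrm{int}(C)\cap U$. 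Running the dichotomy of the lemma through the ergodic decomposition of this restricted measure (combined with Lemma \ref{condition1}, which lets one pass between clopen sets and their unions) should force $\mathrm{int}(C)\cap U\in\{\emptyset,U\}$ for $\nu$-a.e.\ $C$ and every clopen $U$, i.e.\ $\mathrm{int}(C)\in\{\emptyset,X\}$, ruling out $p\in(0,1)$. Making this self-referential step non-circular — e.g.\ by an induction on a suitable complexity of the support, or by extracting directly from exchangeability together with the within-block action the relation $p^2=p$ — is the main technical point I would need to pin down.
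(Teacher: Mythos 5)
Your reduction via the occupation function $h(x)=\nu\left(\{C:\ x\in\mathrm{int}(C)\}\right)$ is sound as far as it goes: $h$ is lower semicontinuous and invariant, minimality of $\mathsf{A}(\mathcal{G})\curvearrowright X$ makes it constant $\equiv p$, and the cases $p=0$ and $p=1$ are dispatched correctly. But the case $p\in(0,1)$ is the entire content of the lemma, and your proposal leaves it open, as you yourself acknowledge. Neither of the two routes you sketch closes it. In the exchangeability route, the directing parameter $\Theta=\lim_{N}N^{-1}\sum_{j\le N}\mathbf{1}[V_{j}\subseteq C]$ is invariant only under the subgroup of $\mathsf{A}(\mathcal{G})$ that permutes the chosen blocks $V_{1},V_{2},\ldots$ and acts trivially elsewhere; a general $g\in\mathsf{A}(\mathcal{G})$ moves the $V_{j}$ off the chosen sequence, so $\Theta$ is not an $\mathsf{A}(\mathcal{G})$-invariant function and ergodicity of $\nu$ does not force it to be a.s.\ constant. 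Even granting constancy, you concede that i.i.d.\ block events with parameter in $(0,1)$ yield no contradiction by themselves. In the self-similarity route, applying ``the dichotomy of the lemma'' to the trace measure on $F(U)$ for the restricted groupoid $\mathcal{G}|_{U}$ is precisely the statement being proved, for a groupoid of exactly the same kind and with no complexity parameter that decreases; the induction you hope for has no base and no well-founded ordering, so the argument is circular.

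The missing idea, and the way the paper actually argues, is to track \emph{avoidance} of a clopen set simultaneously with containment of another, rather than containment probabilities alone. For disjoint nonempty clopen sets $U,V$ consider $B_{U}^{V}=\{C\in F(X):\ U\subseteq C\subseteq X\setminus V\}$. Using minimality and the multisection machinery of \cite[Lemma 3.2]{Nek-simple}, one recursively constructs elements $g_{n}\in\mathsf{A}(\mathcal{G})$ and pairwise disjoint clopen sets $V_{n}\subseteq V$ with $U\supsetneqq V_{1}\cdot g_{1}\supsetneqq V_{2}\cdot g_{2}\supsetneqq\cdots$ and $\mathrm{supp}\,g_{n}\subseteq\left(V_{n-1}\cdot g_{n-1}\right)\cup V_{n}$. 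The nesting forces the translates $B_{U}^{V}\cdot g_{n}$ to be pairwise disjoint: a set in $B_{U}^{V}\cdot g_{j}$ contains $U\cdot g_{j}$, which meets $V\cdot g_{i}$ for $i<j$, while a set in $B_{U}^{V}\cdot g_{i}$ avoids $V\cdot g_{i}$. Invariance then gives $\sum_{n}\nu(B_{U}^{V})=\sum_{n}\nu(B_{U}^{V}\cdot g_{n})\le1$, hence $\nu(B_{U}^{V})=0$; and the countably many sets $B_{U}^{V}$, taken over pairs of disjoint clopen sets, cover $\{C\in F(X):\ C\neq X,\ \mathrm{int}(C)\neq\emptyset\}$. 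Note that this kills your $p\in(0,1)$ case outright and uses ergodicity only to exclude an atom at $X$; the core of the lemma is this group-theoretic construction of infinitely many disjoint translates, which is what your proposal lacks.
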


\begin{proof}

Let $\nu$ be an ergodic $\mathsf{A}(\mathcal{G})$-invariant probability
measure on $F(X)$ such that $\nu\neq\delta_{X}$. 

Let $U,V$ be two disjoint clopen subsets of $X$. We first show that
there exist an infinite sequence of group elements $\{g_{n}\}$ in
$\mathsf{A}(\mathcal{G})$ and pairwise disjoint clopen subsets $V_{1},V_{2},\ldots$
of $V$ such that 
\begin{align*}
U & \supsetneqq V_{1}\cdot g_{1}\supsetneqq V_{2}\cdot g_{2}\supsetneqq\ldots\\
\mbox{and } & {\rm supp}\ g_{n}\subsetneqq\left(V_{n-1}\cdot g_{n-1}\right)\cup V_{n}.
\end{align*}
Such elements can be chosen recursively by applying \cite[Lemma 3.2]{Nek-simple}.
Indeed, suppose we have chosen $g_{1},\ldots,g_{n-1}$, take a point
$x_{1}\in V\setminus\cup_{j=1}^{n-1}V_{j}$ and two points $x_{2},x_{3}\in V_{n-1}\cdot g_{n-1}$
which are in the orbit of $x_{1}$. Then there is a $\mathcal{G}$-bisection
$H_{2}$ (resp. $H_{3}$) such that $H_{2}$ contains an element $\gamma_{2}$
(resp. $\gamma_{3}$) such that $x_{2}=x_{1}\cdot\gamma_{2}$ (resp.
$x_{3}=x_{1}\cdot\gamma_{3}$). Take a small clopen neighborhood $V'_{n}$
of $x_{1}$ in $V\setminus\cup_{j=1}^{n-1}V_{j}$ and $W_{n}$ of
$\{x_{2},x_{3}\}$ in $V_{n-1}\cdot g_{n-1}$ such that $V\neq V_{1}\cup\ldots\cup V_{n}$
and $W_{n}\neq V_{n-1}\cdot g_{n-1}$. Then one can find a smaller
neighborhood $V_{n}$ of $x_{1}$ such that $V_{n}\subseteq V_{n}'\cap\mathsf{s}(H_{2})\cap\mathsf{s}(H_{3})$,
moreover $\mathsf{r}(V_{n}H_{2})\cup\mathsf{r}(V_{n}H_{3})\subseteq W_{n}$.
Take the multisection 
\[
F_{n}=\left(\begin{array}{ccc}
V_{n} & V_{n}H_{2} & V_{n}H_{3}\\
(V_{n}H_{2})^{-1} & \mathsf{r}(V_{n}H_{2}) & (V_{n}H_{2})^{-1}V_{n}H_{3}\\
(V_{n}H_{3})^{-1} & (V_{n}H_{3})^{-1}(V_{n}H_{2}) & \mathsf{r}(V_{n}H_{3})
\end{array}\right).
\]
Then the $g_{n}\in\mathsf{A}(F_{n})$ which acts as a $3$-cycle permuting
$x_{1},x_{2},x_{3}$ cyclically satisfies the requirements. 

Write
\[
B_{U}^{V}:=\{C\in\mathcal{C}(X):\ U\subseteq C\subseteq X\setminus V\}.
\]
It follows that the translates $\left\{ B_{U}^{V}\cdot g_{n}\right\} _{n\in\mathbb{N}}$
are pairwise disjoint. Indeed, if $C\in B_{U}^{V}\cdot g_{i}$ then
$C\cap(V\cdot g_{i})=\emptyset$. For $j>i$, $U\cdot g_{j}\supseteq U\cap\left({\rm supp}g_{j}\right)^{c}$,
where by choice of $g_{j}$, $U\cap\left({\rm supp}g_{j}\right)^{c}$
contains a non-empty subset of $V_{j-1}\cdot g_{j-1}$. It follows
that 
\[
\left(U\cdot g_{j}\right)\cap\left(V\cdot g_{i}\right)\supseteq\left(U\cdot g_{j}\right)\cap\left(V_{i}\cdot g_{i}\right)\supseteq\left(U\cdot g_{j}\right)\cap\left(V_{j-1}\cdot g_{j-1}\right)\neq\emptyset.
\]
Therefore a set $C\in B_{U}^{V}\cdot g_{j}$ has non-empty intersection
with $V\cdot g_{i}$, thus it is not in $B_{U}^{V}\cdot g_{i}$. 

By invariance of $\nu$, we have $1\ge\sum_{n\in\mathbb{N}}\nu(B_{U}^{V}\cdot g_{n})=\sum_{n\in\mathbb{N}}\nu(B_{U}^{V})$.
Therefore 
\[
\nu(B_{U}^{V})=0.
\]

Let $F^{'}(X)=\{C\in F(X):\ C\neq X,\ C\mbox{ contains a nonempty open subset}\}$.
For each $C\in F'(X)$, one can find clopen sets $U\subseteq C$ and
$V\subseteq X\setminus C$, thus $C\in B_{U}^{V}$. Therefore $\left\{ B_{U}^{V}\right\} $,
where $U,V$ are taken over disjoint clopen subsets of $X$, form
a countable cover of $F'(X)$. Thus $\nu(B_{U}^{V})=0$ for any pair
of disjoint clopen sets $U,V$ implies $\nu\left(F'(X)\right)=0$.
Since $\nu\neq\delta_{X}$, we conclude that $\nu$ is supported on
subsets with empty interior. 

\end{proof}

With these two lemmas we can apply Proposition \ref{corfull} to prove
Theorem \ref{full-1}.

\begin{proof}[Proof of Theorem \ref{full-1}]

Take $A_{U}=R_{\mathsf{A}(\mathcal{G})}(U)$. As explained at the
beginning of this subsection, \cite[Theorem 1.1]{Nek-simple} implies
that $A_{U}$ is simple; moreover $A_{U}$ has no fixed points in
$U$ since $\mathcal{G}$ is minimal. Lemma \ref{condition1} and
\ref{condition2} verify condition (ii) and (iii) respectively. It
follows from Proposition \ref{corfull} that $\mu$-a.e. $H$ contains
the subgroup $T$ that is generated by all $R_{\mathsf{A}(\mathcal{G})}(U)$,
where $U$ ranges over all clopen subsets that are contained in $X\setminus{\rm Fix}(H)$.
It's clear that $T\le R_{\mathsf{A}(\mathcal{G})}\left(X\setminus{\rm Fix}(H)\right)$,
while the other direction $R_{\mathsf{A}(\mathcal{G})}\left(X\setminus{\rm Fix}(H)\right)\le T$
follows from definition of $\mathsf{A}(\mathcal{G})$ (c.f. Fact \ref{rigidstab}). 

\end{proof}

The following corollary of Theorem \ref{full-1} is previously known
as a consequence of \cite[Theorem 2.9]{Dudko-Medynets2} on indecomposable
characters.

\begin{corollary}\label{properlyinfinite}

Let $\mathcal{G}$ be a minimal groupoid of germs whose unit space
$X$ is homeomorphic to the Canter set. Assume that the action of
$\mathsf{A}(\mathcal{G})$ on $X$ is compressible in the sense that
for any clopen sets $U$ such that $U\neq X$, there exists $g_{1},g_{2}\in\mathsf{A}(\mathcal{G})$
such that $\left(U\cdot g_{1}\right)\cup\left(U\cdot g_{2}\right)\subseteq U$
and $\left(U\cdot g_{1}\right)\cap\left(U\cdot g_{2}\right)=\emptyset$.
Then $\mathsf{A}(\mathcal{G})$ does not admit any IRS other than
$\delta_{\{id\}}$ and $\delta_{\mathsf{A}(\mathcal{G})}$. 

\end{corollary}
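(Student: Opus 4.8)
The plan is to reduce the statement, via Theorem \ref{full-1}, to a classification of the possible laws of the fixed-point set, and then to exploit compressibility to eliminate all but the two trivial laws. Let $\mu$ be an ergodic IRS of $\mathsf{A}(\mathcal{G})$; I may assume $\mu\neq\delta_{\{id\}}$ and aim to prove $\mu=\delta_{\mathsf{A}(\mathcal{G})}$. First I would push $\mu$ forward under the Borel equivariant map $H\mapsto{\rm Fix}_X(H)$ to obtain an ergodic $\mathsf{A}(\mathcal{G})$-invariant probability measure $\nu$ on $F(X)$, where $\mathsf{A}(\mathcal{G})$ acts by $C\mapsto C\cdot g$ and the invariance comes from ${\rm Fix}_X(g^{-1}Hg)={\rm Fix}_X(H)\cdot g$. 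Since the action is faithful, $\nu=\delta_X$ holds iff ${\rm Fix}_X(H)=X$ for $\mu$-a.e. $H$, iff $H=\{id\}$ for $\mu$-a.e. $H$, i.e. iff $\mu=\delta_{\{id\}}$; so our assumption gives $\nu\neq\delta_X$. If I can show $\nu=\delta_\emptyset$, then ${\rm Fix}_X(H)=\emptyset$ $\mu$-a.e., and Theorem \ref{full-1} together with Fact \ref{rigidstab} yields $H\ge R_{\mathsf{A}(\mathcal{G})}(X)=\mathsf{A}(\mathcal{G})$ $\mu$-a.e., whence $\mu=\delta_{\mathsf{A}(\mathcal{G})}$.

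The heart of the matter is the claim that, under compressibility, every ergodic $\mathsf{A}(\mathcal{G})$-invariant probability measure $\nu$ on $F(X)$ with $\nu\neq\delta_X$ equals $\delta_\emptyset$. (Notably this does not even require Lemma \ref{condition2}.) Suppose not. The singletons $\{\emptyset\}$ and $\{X\}$ are invariant, so by ergodicity $\nu(\{\emptyset\})=\nu(\{X\})=0$, and $\nu$-a.e. $C$ is a nonempty proper closed set. For each such $C$ one may choose a point of $C$ and a point of its complement and separate them by disjoint clopen sets; running over the countable family of disjoint clopen pairs $(U,V)$ with $U\cup V\neq X$, the events $E_{U,V}:=\{C\in F(X):\ C\cap U\neq\emptyset,\ C\cap V=\emptyset\}$ cover $\{C:\ \emptyset\neq C\neq X\}$ up to the null set above. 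Hence there is one such pair with $\nu(E)>0$, where I write $E:=E_{U,V}$.

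I then intend to derive a contradiction by producing infinitely many pairwise disjoint translates of $E$, each of measure $\nu(E)$. Writing $U_i:=U\cdot h_i$ and $V_i:=V\cdot h_i$, the translate $E\cdot h_i$ equals $\{C:\ C\cap U_i\neq\emptyset,\ C\cap V_i=\emptyset\}$, and $\nu(E\cdot h_i)=\nu(E)$ by invariance. If $U_j\subseteq V_i$, then any set meeting $U_j$ must meet $V_i$, so $E\cdot h_i\cap E\cdot h_j=\emptyset$; thus it suffices to construct $h_0=id,h_1,h_2,\ldots\in\mathsf{A}(\mathcal{G})$ with $U_j\subseteq V_i$ for all $i<j$. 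I would build these recursively so that $V_{i+1}\subseteq V_i$ and $U_{i+1}\subseteq V_i$, equivalently $(U\cup V)\cdot h_{i+1}\subseteq V\cdot h_i$; then $U_j\subseteq V_{j-1}\subseteq\cdots\subseteq V_i$ whenever $i<j$. With these in hand, $\{E\cdot h_n\}_{n\ge 0}$ are pairwise disjoint of measure $\nu(E)$, forcing $\sum_n\nu(E)\le 1$ and hence $\nu(E)=0$, a contradiction that establishes the claim.

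The single nontrivial step, and the main obstacle, is the recursion: at stage $i$ I need $t\in\mathsf{A}(\mathcal{G})$ with $(U_i\cup V_i)\cdot t\subseteq V_i$ (then $h_{i+1}=h_i t$), i.e. I must funnel the clopen set $W_i:=U_i\cup V_i\neq X$ into its proper nonempty clopen subset $V_i$. This is exactly where compressibility is used: the hypothesis forces every nonempty clopen set to be properly infinite, whence any clopen set is subequivalent to any nonempty clopen set, so in particular $W_i$ is subequivalent to $V_i$. The delicate point is to realize such a subequivalence by an element of the \emph{alternating} full group, rather than merely by a partial homeomorphism or an element of $\mathsf{F}(\mathcal{G})$. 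I would carry this out by decomposing $W_i$ along a sufficiently fine clopen partition and transporting the pieces into $V_i$ by disjoint $3$-cycles supported on multisections, using that $\mathcal{G}$ is minimal with Cantor unit space, precisely via the multisection technology of Fact \ref{multi-restrict} and \cite[Lemma 3.2]{Nek-simple} employed in the proof of Lemma \ref{condition2}.
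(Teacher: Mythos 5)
Your reduction via Theorem \ref{full-1} and the disjoint-translates strategy are sound, but there is a genuine gap at the one step you yourself flag as the main obstacle: producing $t\in\mathsf{A}(\mathcal{G})$ with $(U_i\cup V_i)\cdot t\subseteq V_i$. What you need is compression of a clopen set $W\neq X$ into a \emph{prescribed} nonempty clopen subset $V\subseteq W$ by a global element of $\mathsf{A}(\mathcal{G})$, whereas the hypothesis only compresses $W$ into itself, with no control over where the image lands. Your proposed realization --- decomposing $W_i$ into a fine clopen partition and ``transporting the pieces into $V_i$ by disjoint $3$-cycles supported on multisections'' --- provably cannot work: a product of $3$-cycles with pairwise disjoint supports has order dividing $3$, and any finite-order homeomorphism $t$ with $Wt\subseteq W$ satisfies $Wt=W$ (apply $t$ repeatedly), so no such element can map $W_i$ strictly inside $V_i\subsetneq W_i$. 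More tellingly, the tools you invoke (Fact \ref{multi-restrict}, \cite[Lemma 3.2]{Nek-simple}, minimality, Cantor unit space) are available for \emph{every} minimal groupoid of germs --- including AF-groupoids and odometers, which carry $\mathcal{G}$-invariant probability measures on $X$ and therefore admit no compressing elements whatsoever --- so those tools alone can never produce $t$; the compressibility hypothesis must enter the realization step itself, and your sketch does not show how. Making this step rigorous amounts to developing comparison/Schr\"oder--Bernstein theory for properly infinite clopen sets and realizing subequivalences by elements of the alternating full group, i.e.\ essentially the theory of purely infinite minimal groupoids in \cite[Section 4]{Matui2015}, a substantial body of work the proposal does not supply.

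The gap is also avoidable: the paper's proof constructs no new group elements at all. For clopen $U\neq X$ take the $g_1,g_2\in\mathsf{A}(\mathcal{G})$ furnished by the hypothesis and consider $C_U=\{C\in F(X):\ C\neq\emptyset,\ C\subseteq U\}$. Then $C_U\cdot g_i=C_{U\cdot g_i}$, and since $U\cdot g_1, U\cdot g_2$ are disjoint subsets of $U$, the events $C_{U\cdot g_1},C_{U\cdot g_2}$ are disjoint subsets of $C_U$, each of $\nu$-measure $\nu(C_U)$ by invariance; hence $\nu(C_U)\ge 2\nu(C_U)$, so $\nu(C_U)=0$. The countably many events $C_U$, over clopen $U\neq X$, cover $\{C:\ \emptyset\neq C\neq X\}$, so every invariant measure on $F(X)$ is carried by $\{\emptyset,X\}$, and Theorem \ref{full-1} concludes exactly as in your first paragraph. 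The moral is that the event ``$C$ is contained in $U$'' cooperates with the hypothesis as stated, while your event ``$C$ meets $U$ and misses $V$'' forces the much stronger steering lemma. If you wish to keep your route, you must import Matui-type results on purely infinite minimal groupoids rather than the multisection facts you cite.
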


\begin{proof}

By Theorem \ref{full-1}, it suffices to show that the only $\mathsf{A}(\mathcal{G})$-invariant
probability measures on $F(X)$ are $\delta$-masses at $\emptyset$
or $X$. Let $\nu$ be an $\mathsf{A}(\mathcal{G})$-invariant probability
measures on $F(X)$. For a clopen set $U$ such that $U\neq X$, by
the compressibility assumption, we can find $g_{1},g_{2}\in\mathsf{A}(\mathcal{G})$
such that $U\cdot g_{1}\subseteq U$, $U\cdot g_{2}\subseteq U$ and
$\left(U\cdot g_{1}\right)\cap\left(U\cdot g_{2}\right)=\emptyset$.
Denote by $C_{U}=\{C\in F(X):\ C\neq\emptyset,\ C\subseteq U\}$.
Since $\nu$ is an invariant measure, we have that $\nu(C_{U})=\nu(C_{U\cdot g_{1}})=\nu(C_{U\cdot g_{2}})$.
On the other hand since $C_{U\cdot g_{1}}$ and $C_{U\cdot g_{2}}$
are disjoint, we have $\nu(C_{U})\ge\nu(C_{U\cdot g_{1}})+\nu(C_{U\cdot g_{2}})$.
Therefore $\nu(C_{U})=0$ for any clopen set $U\neq X$. The conclusion
follows. 

\end{proof}

\begin{example}

Following \cite[Definition 4.9]{Matui2015}, we say that a clopen
set $A\subseteq\mathcal{G}^{(0)}$ is \emph{properly infinite} if
there exists bisections $F_{1},F_{2}$ such that $\mathsf{s}(F_{1})=\mathsf{s}(F_{2})=A$,
$\mathsf{r}(F_{1})\cup\mathsf{r}(F_{2})\subseteq A$ and $\mathsf{r}(F_{1})\cap\mathsf{r}(F_{2})=\emptyset$.
If every clopen subset of $X$ is properly infinite, then the groupoid
$\mathcal{G}$ is said to be \emph{purely infinite}. Examples of purely
infinite minimal groupoids include the groupoid of germs of $\Gamma\curvearrowright X$
where the action of the countable group $\Gamma$ on the Cantor set
$X$ is $n$-filling in the sense of \cite{Jolissaint-Robertson},
groupoids arising from a one-sided irreducible shift of finite type,
see \cite{Matui2015} for more details. By \cite[Lemma 4.13]{Matui2015}
a minimal purely infinite groupoid $\mathcal{G}$ satisfies the assumption
of Corollary \ref{properlyinfinite}. Note that by \cite[Theorem 4.16]{Matui2015},
in this case $\mathsf{A}(\mathcal{G})=[\mathsf{F}(\mathcal{G}),\mathsf{F}(\mathcal{G})]$.
It follows that if $\mu$ is an ergodic IRS of $\mathsf{F}(\mathcal{G})$,
where $\mathcal{G}$ is a minimal purely infinite groupoid of germs,
then either $\mu=\delta_{\{id\}}$ or $\mu$-a.e. $H$ satisfies $H\ge\left[\mathsf{F}(\mathcal{G}),\mathsf{F}(\mathcal{G})\right]$. 

\end{example}

The rest of the section is devoted to cases opposite to Corollary
\ref{properlyinfinite}, more precisely, examples where invariant
measures on $F(X)$ can be studied through subgroups that are locally
finite.

\subsection{Examples of classifications of IRSs of alternating full groups\label{subsec:fullexamples}}

Combine Theorem \ref{full-1}, Lemma \ref{kset} and Lemma \ref{invariant measure},
we have the following:

\begin{corollary}\label{full}

Let $\mathcal{G}$ be a minimal groupoid of germs with unit space
$X=\mathcal{G}^{(0)}$ homeomorphic to the Cantor set. Assume that
there is an open sub-groupoid $\mathcal{H}\subseteq\mathcal{G}$ such
that $\mathcal{H}$ is a minimal AF-groupoid with unit space $\mathcal{H}^{(0)}=\mathcal{G}^{(0)}$.
Let $\mu$ be an ergodic IRS of $\mathsf{A}(\mathcal{G})$, $\mu\neq\delta_{\{id\}},\delta_{\mathsf{A}(\mathcal{G})}$.
Then there exists an integer $k\in\mathbb{N}$ and an $\mathsf{A}(\mathcal{G})$-invariant
ergodic probability measure $\nu$ on $X^{(k)}$, such that $\mu$
is the pushforward of $\nu$ under the map
\begin{align*}
X^{(k)} & \to{\rm Sub}(\mathsf{A}(\mathcal{G}))\\
\{x_{1},\ldots,x_{k}\} & \mapsto\cap_{i=1}^{k}{\rm St}_{\mathsf{A}(\mathcal{G})}(x_{i}).
\end{align*}

\end{corollary}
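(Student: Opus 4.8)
The plan is to use Theorem \ref{full-1} to reduce the classification of $\mu$ to a classification of the law of the fixed point set $\mathrm{Fix}(H)$, and then to invoke the pointwise ergodic theorem for the LDA subgroup $\mathsf{A}(\mathcal{H})$ to pin this law down. First I would observe that, directly from the definition of the rigid stabilizer, $R_{\mathsf{A}(\mathcal{G})}(X\setminus K)=\mathrm{Fix}_{\mathsf{A}(\mathcal{G})}(K)$ for every closed $K\subseteq X$, where $\mathrm{Fix}_{\mathsf{A}(\mathcal{G})}(K)=\cap_{x\in K}\mathrm{St}_{\mathsf{A}(\mathcal{G})}(x)$ is the pointwise stabilizer of $K$. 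Applying this with $K=\mathrm{Fix}(H)$, Theorem \ref{full-1} yields $H\ge\mathrm{Fix}_{\mathsf{A}(\mathcal{G})}(\mathrm{Fix}(H))$ for $\mu$-a.e.\ $H$, while the reverse inclusion $H\le\mathrm{Fix}_{\mathsf{A}(\mathcal{G})}(\mathrm{Fix}(H))$ holds trivially since every element of $H$ fixes $\mathrm{Fix}(H)$ pointwise. Hence $H=\mathrm{Fix}_{\mathsf{A}(\mathcal{G})}(\mathrm{Fix}(H))$ for $\mu$-a.e.\ $H$, so that $\mu$ is the pushforward of the law $\nu$ of $\mathrm{Fix}(H)$ under the Borel map $K\mapsto\mathrm{Fix}_{\mathsf{A}(\mathcal{G})}(K)$.

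Next I would record the properties of $\nu$. Because $H\mapsto\mathrm{Fix}(H)$ is Borel and equivariant (conjugating $H$ by $g$ sends $\mathrm{Fix}(H)$ to $\mathrm{Fix}(H)\cdot g$), the measure $\nu$ is an $\mathsf{A}(\mathcal{G})$-invariant Borel probability measure on $F(X)$, and it is ergodic, being the image of the ergodic $\mu$ under an equivariant factor map. The two excluded cases translate into $\nu\ne\delta_{X}$ (otherwise $\mathrm{Fix}(H)=X$ almost surely and $\mu=\delta_{\{id\}}$) and $\nu\ne\delta_{\emptyset}$ (otherwise $\mathrm{Fix}(H)=\emptyset$ almost surely and, by the reduction above, $H=\mathsf{A}(\mathcal{G})$ almost surely). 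By Lemma \ref{condition2}, $\nu$ is therefore supported on closed subsets of $X$ with empty interior.

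The heart of the argument is to upgrade ``empty interior'' to ``finite of fixed cardinality''. Here the standing hypothesis enters: the minimal AF-subgroupoid $\mathcal{H}$ gives an LDA subgroup $\mathsf{A}(\mathcal{H})=\varinjlim_{n}\prod_{v\in V_{n}}\mathrm{Alt}(E(v_{0},v))\le\mathsf{A}(\mathcal{G})$ acting minimally on the path space $X=X_{B}$, under which $\nu$ is again invariant. For each level-$n$ vertex $v$ the factor $\mathrm{Alt}(E(v_{0},v))$ permutes the $d(v_{0},v)$ level-$n$ cylinders in its column transitively (once $d(v_{0},v)\ge3$, which holds eventually since $B$ is simple), so invariance of $\nu$ forces all these cylinders to meet $\mathrm{Fix}(H)$ with equal probability. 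Feeding this into the pointwise ergodic theorem for inductive limits of finite groups \cite{Vershik74,Olshanski-Vershik}, in the spirit of \cite{Thomas-Tucker-Drob1,Thomas-Tucker-Drob2}, one controls the number of level-$n$ cylinders meeting $\mathrm{Fix}(H)$ and shows that it converges almost surely to a finite limit equal to $|\mathrm{Fix}(H)|$; since $|\mathrm{Fix}(H)|$ is an $\mathsf{A}(\mathcal{G})$-invariant function of $H$, ergodicity makes this limit an almost sure constant $k\in\mathbb{N}$. This is the content of Lemma \ref{kset} and Lemma \ref{invariant measure}, and it is the step I expect to be the main obstacle: ruling out the possibility that $\mathrm{Fix}(H)$ is an infinite nowhere-dense (e.g.\ Cantor) set is precisely where minimality of the AF-subgroupoid and the averaging supplied by the pointwise ergodic theorem are indispensable.

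Finally I would assemble the pieces. By the previous step $\nu$ is concentrated on the space $X^{(k)}$ of $k$-element subsets of $X$, on which it is an $\mathsf{A}(\mathcal{G})$-invariant ergodic probability measure; and on $X^{(k)}$ the reduction map $K\mapsto\mathrm{Fix}_{\mathsf{A}(\mathcal{G})}(K)$ is exactly $\{x_{1},\ldots,x_{k}\}\mapsto\cap_{i=1}^{k}\mathrm{St}_{\mathsf{A}(\mathcal{G})}(x_{i})$. Since $\mu$ is the pushforward of $\nu$ under this map, the corollary follows.
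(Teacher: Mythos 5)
Your overall route is the paper's own: reduce via Theorem \ref{full-1} to the law of $\mathrm{Fix}(H)$ (using, as you do, that $R_{\mathsf{A}(\mathcal{G})}(X\setminus K)$ is by definition the pointwise stabilizer of $K$, so that a.e. $H=\mathrm{Fix}_{\mathsf{A}(\mathcal{G})}(\mathrm{Fix}(H))$), then use the minimal AF subgroupoid and Lemma \ref{kset} to force $\mathrm{Fix}(H)$ to be almost surely finite of constant cardinality, and finally push forward. Your first, second and last paragraphs are correct; the appeal to Lemma \ref{condition2} is harmless but unnecessary, and Lemma \ref{invariant measure} is not actually needed for this corollary (it is used later, for Corollary \ref{classificationZd}).

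The one step that does not go through as written is the application of Lemma \ref{kset} (equivalently, of the pointwise ergodic theorem) to $\nu$. Lemma \ref{kset} is stated for \emph{ergodic} $\mathsf{A}(\mathcal{H})$-invariant measures, and the pointwise ergodic theorem identifies orbit averages with the space average only under ergodicity. Your $\nu$ is ergodic for $\mathsf{A}(\mathcal{G})$ but only \emph{invariant} for the smaller group $\mathsf{A}(\mathcal{H})$; ergodicity does not pass to subgroups, so neither statement applies directly. In particular, your claim that the number of level-$n$ cylinders meeting $\mathrm{Fix}(H)$ converges a.s.\ to a \emph{finite} limit is not yet justified: the possibility $k=\infty$ (e.g.\ $\mathrm{Fix}(H)$ a Cantor set) is precisely what must be excluded, and nothing in your sketch does so. The paper bridges this with a short extra argument which you should insert: take the $\mathsf{A}(\mathcal{H})$-ergodic decomposition $\nu=\int\eta_{\theta}\,d\lambda(\theta)$; since the cardinality function $|C|$ is $\mathsf{A}(\mathcal{G})$-invariant and Borel, $\mathsf{A}(\mathcal{G})$-ergodicity of $\nu$ gives a single $k\in\mathbb{N}\cup\{0,\infty\}$ with $|C|=k$ for $\nu$-a.e.\ $C$, hence for $\eta_{\theta}$-a.e.\ $C$ for $\lambda$-a.e.\ $\theta$; if $k=\infty$, then Lemma \ref{kset} applied to each ergodic component $\eta_{\theta}$ forces $\eta_{\theta}=\delta_{X}$, hence $\nu=\delta_{X}$, which you have already excluded. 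With this insertion your argument is complete and coincides with the paper's proof.
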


\begin{proof}

Since $\mathcal{H}$ is an open sub-groupoid of $\mathcal{G}$ with
$\mathcal{H}^{(0)}=\mathcal{G}^{(0)}$, we have $\mathsf{A}(\mathcal{H})\le\mathsf{A}(\mathcal{G})$.
Let $\nu$ be an $\mathsf{A}(\mathcal{G})$-invariant ergodic probability
measure on $F(X)$, $\nu\neq\delta_{\emptyset},\delta_{X}$. Then
it follows from Lemma \ref{kset} that there exists a constant $k\in\mathbb{N}$
such that $\nu$ is supported on the subspace $X^{(k)}$ of finite
sets of cardinality $k$. Indeed, since $\nu$ is also $\mathsf{A}(\mathcal{H})$-invariant,
by the ergodic decomposition theorem, we have $\nu=\int\eta_{\theta}d\lambda(\theta)$,
where $\eta_{\theta}$'s are ergodic $\mathsf{A}(\mathcal{H})$-invariant
probability measures. Since the cardinality function $|C|$ is $\mathsf{A}(\mathcal{G})$-invariant,
it follows there exists $k\in\mathbb{N}\cup\{0,\infty\}$ such that
$\nu$ and $\eta_{\theta}$'s are concentrated on the subspace $X^{(k)}$.
Then Lemma \ref{kset} implies that $k\in\mathbb{N}$. 

Let $\nu$ be the distribution of ${\rm Fix}(H)$, where $H$ is an
ergodic IRS of $\mathsf{A}(\mathcal{G})$ with distribution $\mu$.
Since the action of $\mathsf{A}(\mathcal{G})$ on $X$ is faithful,
$\nu=\delta_{X}$ implies $\mu=\delta_{\{id\}}$. By Theorem \ref{full-1},
$\mu$-a.e. $H\ge R_{\mathsf{A}\left(\mathcal{G}\right)}(X\setminus{\rm Fix}(H))$,
thus $\nu=\delta_{\emptyset}$ implies $\mu=\delta_{\mathsf{A}(\mathcal{G})}$.
When $\mu\neq\delta_{\{id\}},\delta_{\mathsf{A}(\mathcal{G})}$, as
explained above, $\nu$ is supported on finite sets of cardinality
$k$, for some $k\in\mathbb{N}$. In this case $R_{\mathsf{A}\left(\mathcal{G}\right)}(X\setminus\{x_{1},\ldots,x_{k}\})=\{g\in\mathsf{A}(\mathcal{G}):\ x_{i}\cdot g=x_{i}\mbox{ for all }1\le i\le k\}$,
the statement follows. 

\end{proof}

Under the assumption of Corollary \ref{full}, IRSs of $\mathsf{A}(\mathcal{G})$
other than $\delta_{\{id\}},\delta_{\mathsf{A}(\mathcal{G})}$ are
in one-to-one correspondence with $\mathsf{A}(\mathcal{G})$-invariant
ergodic probability measures on the space of non-empty finite subsets
of $X$. It seems an interesting question whether the conclusion of
Corollary \ref{full} is true for general minimal groupoid of germs,
that is, without the assumption of containing an open minimal AF sub-groupoid. 

In the rest of this subsection we consider the topological full group
of a minimal $\mathbb{Z}^{d}$ action on the Cantor set, where Corollary
\ref{full} applies.

For minimal Cantor $\mathbb{Z}$-actions, a systematic study of topological
orbit equivalence was completed in the works \cite{HPS,GPS}. In \cite{Forrest}
it is shown for $d\ge2$ that the orbit equivalence relation of a
free minimal $\mathbb{Z}^{d}$-action on the Cantor set contains a
\textquotedbl large\textquotedbl{} AF sub-equivalence relation. In
\cite{GMPS1,GMPS2} it is shown (for $d=2$ and $d\ge3$ resp.) that
every minimal action of $\mathbb{Z}^{d}$ action on the Cantor set
is topologically orbit equivalent to an AF relation. For our purpose,
it suffices to apply results from \cite{Forrest} so that Corollary
\ref{full}, Lemma \ref{kset} and Lemma \ref{invariant measure}
can be used. 

We are now ready to prove the classification result stated in Corollary
\ref{classificationZd}.

\begin{proof}[Proof of Corollary \ref{classificationZd}]

Let $\mathcal{G}$ be the transformation groupoid of the action $\varphi:\mathbb{Z}^{d}\to{\rm Homeo}(X)$.
First assume that the action $\varphi$ is minimal and free. For $d=1$,
by the model theorem (see \cite{HPS}), up to topological conjugacy,
every minimal action of $\mathbb{Z}$ on a Cantor set arises as the
Bratteli-Vershik map of some properly ordered, simple Bratteli diagram.
For $d\ge2$, by \cite[Theorem 1.2]{Forrest} there exists an open
AF-subgroupoid $\mathcal{H}$ of $\mathcal{G}$ with the same unit
space such that $\mathcal{H}$ is associated with a simple Bratteli
diagram $B$, and the set $Y$, which consists of points whose $\mathcal{H}$-orbit
is not equal to its $\mathcal{G}$-orbit, is a set of zero measure
with respect to any $\mathcal{H}$-invariant measure on $X$. The
set $Y$ can be chosen to be $\mathbb{Z}^{d}$-invariant and $\mathcal{H}$-invariant. 

Therefore the conditions in Corollary \ref{full} are satisfied by
$\mathcal{G}$, we have that for $\mathsf{A}(\mathcal{G})$, ergodic
IRSs other than $\delta_{\{id\}},\delta_{\mathsf{A}(\mathcal{G})}$
are in one-to-one correspondence to ergodic invariant probability
measures on $X^{(k)}$, $k\in\mathbb{N}$. It remains to show that
an ergodic invariant measure on $X^{(k)}$ must be of the form in
the statement. Observe that $M\left(X^{(k)},\mathsf{A}(\mathcal{G})\right)=M\left(X^{(k)},\mathsf{A}(\mathcal{H})\right)$
for any $k\in\mathbb{N}$. Indeed, let $\nu$ be an $\mathsf{A}(\mathcal{H})$-invariant
measure on $X^{(k)}$. Since the set $Y$ has measure zero with respect
to any invariant measure, we have that $\nu\left(\left\{ C:C\cap Y=\emptyset\right\} \right)=1$.
Let $W$ be an open set in $X^{(k)}$ and $W'=\{C\in W:C\cap Y=\emptyset\}$,
let $g\in\mathsf{A}(\mathcal{G})$ be a group element. Then there
exists a countable partition of $W'$ into measurable subsets $W'=\cup_{h\in\mathsf{A}(\mathcal{H})}B_{h}$
such that for $C\in B_{h}$, $C\cdot g=C\cdot h$. Then $\nu(W\cdot g)=\nu(W'\cdot g)=\sum_{h}\nu(B_{h}\cdot h)=\sum\nu(B_{h})=\nu(W').$
Thus $\nu$ is $\mathsf{A}(\mathcal{G})$-invariant. Apply Lemma \ref{invariant measure}
to $\mathsf{A}(\mathcal{H})$-invariant measures, we obtain the statement. 

As pointed out in \cite{GMPS1,GMPS2} the freeness assumption on the
$\mathbb{Z}^{d}$ action can be dropped for the following reason.
Given a minimal $\mathbb{Z}^{d}$-action on $X$, it can be seen as
a free action of a quotient group $\mathbb{Z}^{d}/H$. As shown in
\cite[Theorem A.1]{GMPS2}, if the free abelian rank of $\mathbb{Z}^{d}/H$
is $d'$, $d'\ge1$, then there exists a free action of $\mathbb{Z}^{d'}$
on $X$ with the same topological full group. Thus a minimal action
of $\mathbb{Z}^{d}$ can be reduced to the case of a free minimal
action. 

\end{proof}

\begin{example}\label{IET}

An interval exchange map $f$ on the unit interval $I=[0,1)$ is obtained
by cutting $I$ into subintervals and reordering them. The map $f$
is a translation on each sub-interval. Following notations of \cite{JMMS},
we identify the end points $a,b$ and regard $f$ as a right-continuous
permutation of $\mathbb{R}/\mathbb{Z}.$ The set of translation length
(or angle), $\{x\cdot f-x:x\in\mathbb{R}/\mathbb{Z}\}$ is finite.
Dynamic properties of interval exchange maps have attracted a lot
of research in the past decades, see for instance the survey \cite{Viana}
and references therein.

The connection between interval exchange groups and topological full
groups is explained in \cite{JMMS}. Let $\Lambda$ be a finitely
generated infinite subgroup of $\mathbb{R}/\mathbb{Z}$ and $\Sigma=\{x_{1},\ldots,x_{r}\}$
be a finite subset of $\mathbb{R}/\mathbb{Z}$. The group ${\rm IET}(\Lambda,\Sigma)$
is defined as the collection of interval exchange transforms $f$
such that the extrema of the defining intervals of $f$ lie in the
cosets $x_{i}+\Lambda$, $x_{i}\in\Sigma$ and $x\cdot f-x\in\Lambda$
for all $x\in\mathbb{R}/\mathbb{Z}$. In \cite[Proposition 5.11]{JMMS}
it is shown that there is a minimal Cantor $\Lambda$-system $\varphi:\Lambda\to{\rm Homeo}(X)$
such that its topological full group is isomorphic to ${\rm IET}(\Lambda,\Sigma)$.
Thus we can apply Corollary \ref{classificationZd} to ${\rm IET}(\Lambda,\Sigma)$,
regarded as the topological full group of a minimal Cantor $\Lambda$-system.

Assume that $\Sigma+\Lambda$ contains some irrational numbers and
$0\in\Sigma+\Lambda$. Then ${\rm IET}(\Lambda,\Sigma)$ contains
irrational rotations and the only invariant measure on $\mathbb{R}/\mathbb{Z}$
is the Lebesgue measure. Corollary \ref{classificationZd} implies
that when $\Sigma+\Lambda$ contains an irrational number and $0\in\Sigma+\Lambda$,
the list of IRSs of the derived subgroup of ${\rm IET}(\Lambda,\Sigma)$
is
\begin{itemize}
\item (atomic) $\delta_{\{id\}}$, $\delta_{\Gamma'}$;
\item (non-atomic) stabilizer IRSs of diagonal actions on $\left((\mathbb{R}/\mathbb{Z})^{k},m^{k}\right)$,
where $m$ is the Lebesgue measure on $\mathbb{R}/\mathbb{Z}$, $k\in\mathbb{N}$. 
\end{itemize}
\end{example}

In \cite{Nek-palindromic} Nekrashevych constructs first examples
of simple groups of intermediate growth. These groups are obtained
as alternating full groups of fragmentations of certain non-free expansive
actions of dihedral group on the Cantor set. Corollary \ref{full}
can be applied to classify IRSs of some examples of such groups. For
instance, consider the explicit example of a group $F$ which is obtained
by fragmentation of the golden mean dihedral action in \cite[Section 8]{Nek-palindromic}.
It contains an LDA subgroup which acts minimally on the Cantor set
(denoted by $\mathsf{A}_{\omega}$ in \cite[Subsection 8.2]{Nek-palindromic}).
Similar to Corollary \ref{classificationZd} one can classify IRSs
of $F'$, we don't spell out the details here.

\section{Applications to weakly branch groups\label{sec:rootedtree}}

Let $\mathsf{T}$ be a rooted spherically symmetric tree and $\Gamma$
is a countable subgroup of ${\rm Aut}(\mathsf{T})$. In this case,
$\Gamma$ is residually finite and let $\bar{\Gamma}$ be its profinite
completion. Equip $X=\partial\mathsf{T}$ with distance $d(x,y)=2^{-\left|x\wedge y\right|}$,
where $x\wedge y$ denotes the longest common prefix of $x$ and $y$.
Then $\Gamma$ acts faithfully on $\left(\partial\mathsf{T},d\right)$
by isometry. 

In this section we use the notations for groups acting on trees introduced
before the statement of Corollary \ref{branch} in the Introduction.
Recall that given a closed subset $K$ of $\partial\mathsf{T}$, we
associate to it an index set $I_{K}\subseteq\mathsf{T}$. One can
visualize the set $I_{K}$ on the tree as follows, the same kind of
coloring is considered earlier in \cite{Bencs-Toth}. 

\begin{definition}[Coloring of the tree according to $K$]\label{color}

Let $K\subseteq\partial\mathsf{T}$ be a closed subset. Color the
vertices of the tree $\mathsf{T}$ according to $K$ as follows:
\begin{itemize}
\item a vertex $v$ is colored red if $H$ has no fixed point in the cylinder
$C_{v}$, that is, $C_{v}\cap{\rm Fix}_{\partial\mathsf{T}}(H)=\emptyset$;
\item $v$ is colored green if the cylinder $C_{v}$ is contained in ${\rm Fix}_{\partial\mathsf{T}}(H)$;
\item otherwise $v$ is colored blue. 
\end{itemize}
It is clear that children of a red (green resp.) vertex must be red
(green resp.). We say $v=x_{1}\ldots x_{k}$ is a \emph{maximal red
}vertex if its parent $x_{1}\ldots x_{k-1}$ is not red. With the
coloring interpretation of $K$ as above, the index set $I_{K}$ is
exactly the set of maximal red vertices in $\mathsf{T}$ colored according
to $K$. 

\end{definition}

We now deduce Corollary \ref{branch} stated in the Introduction from
Theorem \ref{inclu-rigid}. 

\begin{proof}[Proof of Corollary \ref{branch}]

Let $\Gamma$ be a weakly branch group acting on the rooted tree $\mathsf{T}.$
Recall that by definition of weakly branching, it means $\Gamma$
acts level transitively on $\mathsf{T}$ and the rigid stabilizer
${\rm Rist}_{\Gamma}(u)=R_{\Gamma}(C_{u})$ is nontrivial for every
vertex $u\in\mathsf{T}.$ 

First note that ${\rm Rist}_{\Gamma}(u)$ does not have any fixed
point in the cylinder set $C_{u}$. Indeed the stabilizer of the vertex
$u$, ${\rm St}_{\Gamma}(u)$, is in the normalizer of ${\rm Rist}_{\Gamma}(u)$
in $\Gamma$. By level transitivity of action of $\Gamma$, we have
that ${\rm St}_{\Gamma}(u)$ acts level transitively on the subtree
$\mathsf{T}_{u}$ rooted at $u$. Thus the orbit $x\cdot{\rm St}_{\Gamma}(u)$
is dense in $C_{u}$ for any $x\in C_{u}$. By Remark \ref{nofixed},
${\rm Rist}_{\Gamma}(u)$ cannot have any fixed point in $C_{u}$. 

Given a subgroup $H$, write $K={\rm Fix}(H)$ and $\partial\mathsf{T}-K=\cup_{x\in I_{K}}C_{x}$
as the decomposition into disjoint union of maximal cylinders. Given
a length $\ell\in\{|x|:\ x\in I_{K}\}$, consider all vertices $x$
in the index set $I_{K}$ such that $|x|=\ell$. Define $m_{\ell}(H)$
as the smallest integer $m$ such that 
\[
H\ge\prod_{x\in I_{K},|x|=\ell}\left[{\rm Rist}_{m}^{\Gamma}(\mathsf{T}_{x}),{\rm Rist}_{m}^{\Gamma}(\mathsf{T}_{x})\right].
\]
Note that $m_{\ell}(H)$ is invariant under conjugation by $\Gamma$:
$m_{\ell}(H)=m_{\ell}(g^{-1}Hg)$. Indeed, ${\rm Fix}(g^{-1}Hg)={\rm Fix}(H)\cdot g$
and the set $I_{K\cdot g}=I_{K}\cdot g$. Thus we have that $g^{-1}Hg\ge\prod_{x\in I_{K\cdot g},|x|=\ell}\left[{\rm Rist}_{m}(\mathsf{T}_{x}),{\rm Rist}_{m}(\mathsf{T}_{x})\right]$,
with $m=m_{\ell}(H)$. 

Since the set $\{|x|,x\in I_{K}\},$ $K={\rm Fix}(H)$ is invariant
under conjugation by $\Gamma$, it follows from ergodicity of $\mu$
that it is a.e. a fixed set $P$. By Theorem \ref{inclu-rigid}, we
have that for $\mu$-a.e. $H$, there is a countable open cover $\{V_{i}\}_{i\in I}$
of $\partial\mathsf{T}-{\rm Fix}(H)$ such that $\left[R_{\Gamma}(V_{i}),R_{\Gamma}(V_{i})\right]\le H$
for each $i\in I$. For each cylinder $C_{x}$ in the decomposition
of $\partial\mathsf{T}-{\rm Fix}(H)$, since $C_{x}$ is compact we
can take a finite sub-cover of $C_{x}$ from $\{V_{i}\}$ and for
each $V_{i}$ in the sub-cover, shrink it to $V_{i}\cap C_{x}$. It
follows that for each $C_{x}$, there exists a finite level $m_{x}$
such that $H\ge\left[{\rm Rist}_{m_{x}}(\mathsf{T}_{x}),{\rm Rist}_{m_{x}}(\mathsf{T}_{x})\right]$.
It is clear by definition of $m_{\ell}(H)$ that for each $\ell\in P$,
$m_{\ell}(H)\le\max_{x\in I_{K},|x|=\ell}m_{x}$, thus $m_{\ell}(H)$
is a finite integer. Finally since $m_{\ell}(H)$ is a conjugation
invariant function, ergodicity of $\mu$ implies that $m_{\ell}(H)$
is $\mu$-a.e. a finite constant. We have proved the statement. 

\end{proof}

Based on Corollary \ref{branch}, we introduce the following notation. 

\begin{notation}\label{branch-notation}

For a subgroup $H\in{\rm Sub}(\Gamma)$, write ${\rm Fix}(H)=\{x\in\partial\mathsf{T}:\ x\cdot h=x\mbox{ for all }h\in H\}$
for its fixed point set on $\partial\mathsf{T}$. Given a closed set
$K\subseteq\partial\mathsf{T}$, write ${\rm Fix}_{\Gamma}(K):=\{g\in\Gamma:\ x\cdot g=x\mbox{ for all }x\in K\}$
for its pointwise stabilizer in $\Gamma$. Give a sequence of positive
integers ${\bf m}=(m_{i})$, write 
\begin{align*}
R_{K,{\bf m}} & :=\bigoplus_{x\in I_{K}}{\rm Rist}_{m_{|x|}}^{\Gamma}(\mathsf{T}_{x}),\\
\bar{\Gamma}_{K,{\bf m}} & :={\rm Fix}_{\Gamma}(K)/\left[R_{K,{\bf m}},R_{K,{\bf m}}\right],
\end{align*}
where the index set $I_{K}$ is the collection of maximal red vertices
in $\mathsf{T}$ colored according to $K$ as in Definition \ref{color}. 

\end{notation}

The distribution of ${\rm Fix}(H)$ in the space $F(\partial\mathsf{T})$
of closed subsets of $\partial\mathsf{T}$ is known. In our setting
$\Gamma$ is a group acting faithfully and level transitively on a
rooted tree $\mathsf{T}$. Then $\Gamma$ is residually finite, denote
by $\bar{\Gamma}$ its profinite completion and $\eta$ the Haar measure
on the profinite group $\bar{\Gamma}$. Then by \cite[Lemma 2.4]{Bencs-Toth},
for an ergodic IRS $\mu$ of $\Gamma$, there exists a closed set
$K\subseteq\partial\mathsf{T}$ such that 
\[
\mu({\rm Fix}(H)\in B)=\eta\left(\left\{ g\in\bar{\Gamma}:\ K\cdot g\in B\right\} \right)\mbox{ for any }B\in\mathcal{B}(F(\partial\mathsf{T})).
\]
That is, the distribution of ${\rm Fix}(H)$ is the pushforward of
the Haar measure on $\bar{\Gamma}$ under $\bar{g}\mapsto K\cdot\bar{g}$,
where $K$ is a deterministic closed set in $\partial\mathsf{T}$.
When $K$ is not clopen, the orbit of $K$ under $\bar{\Gamma}$ is
infinite. In this case as considered in \cite{Bencs-Toth}, one can
take the IRS of $\Gamma$ defined by $\left\{ {\rm Fix}_{\Gamma}(K\cdot\bar{g})\right\} $
where the distribution of $\bar{g}$ is the Haar measure $\eta$.
When $\Gamma$ is weakly branch, this construction gives a continuum
of atomless IRSs when $K$ varies over closed but not clopen subsets
of $\partial\mathsf{T}$. 

Denote by $\mathbb{P}_{\mu}:F(\partial\mathsf{T})\times\mathcal{B}({\rm Sub}(\Gamma))\to[0,1]$
the regular conditional distribution of $H$ given its fixed point
set ${\rm Fix}(H)$. Then Theorem \ref{branch} states that there
exists a sequence of positive integers ${\bf m}=(m_{i})$ such that
for $\mu$-a.e. $H$, $\mathbb{P}_{\mu}({\rm Fix}(H),\cdot)$ is pulled
back from an IRS of the quotient group $\bar{\Gamma}_{{\rm Fix}(H),{\bf m}}$.

Recall that a group $\Gamma$ acting on the rooted tree $\mathsf{T}$
is said to be a branch group if it acts level transitively and the
level rigid stabilizers ${\rm Rist}_{m}(\mathsf{T})$ have finite
index in $\Gamma$ for all $m\in\mathbb{N}$. A group is just infinite
if it is infinite and all of its proper quotient groups are finite.
By the characterization in \cite{Gri-justinfinite}, $\Gamma$ is
just infinite if and only if ${\rm Rist}_{m}^{\Gamma}(\mathsf{T}_{x})$
has finite abelianization for any $x\in\mathsf{T}$ and $m\in\mathbb{N}$. 

\begin{corollary}\label{branch-clopen}

Let $\mu$ be an ergodic IRS of a branch group $\Gamma$ such that
$\mu$-a.e. ${\rm Fix}(H)$ is clopen (possibly empty). Then $\mu$
is atomic. Suppose in addition $\Gamma$ is just infinite, then $\mu$-a.e.
$H$ is a finite index subgroup of ${\rm Fix}_{\Gamma}(C)$, where
$C={\rm Fix}(H)$ (which is assumed to be clopen a.e.). 

\end{corollary}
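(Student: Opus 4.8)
The plan is to reduce the statement to the conditional law of $H$ given its fixed-point set and then to exploit the (virtually abelian) structure of the rigid-stabilizer quotient, with the ergodic-theoretic fact recalled at the beginning of Section~\ref{sec:rigidstabilizer} (a countable orbit carrying an invariant probability measure is finite) doing the final work.

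\emph{Reducing the fixed set to a finite orbit.} Write $C={\rm Fix}(H)$. By \cite[Lemma 2.4]{Bencs-Toth} the law of $C$ under $\mu$ is the image of the Haar measure on $\bar\Gamma$ under $\bar g\mapsto K\cdot\bar g$ for a fixed closed $K\subseteq\partial\mathsf{T}$; since $\bar\Gamma$ acts by homeomorphisms, $K\cdot\bar g$ is clopen if and only if $K$ is, so the hypothesis forces $K$ clopen. A clopen set is a finite union of level-$n$ cylinders for some $n$, and $\bar\Gamma$ acts on $\mathsf{L}_{n}$ through a finite quotient; hence the $\bar\Gamma$-orbit of $K$ is finite and coincides with the $\Gamma$-orbit, so the law of $C$ is the uniform measure on a single finite $\Gamma$-orbit $\{C_{1},\dots,C_{r}\}$. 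By the ergodic decomposition over a finite orbit, for each $i$ the conditional measure $\mu_{C_{i}}:=\mu(\,\cdot\mid {\rm Fix}(H)=C_{i})$ is an ergodic IRS of the setwise stabilizer $S_{i}:={\rm Stab}_{\Gamma}(C_{i})$, and $\mu=\tfrac1r\sum_{i}\mu_{C_{i}}$; it therefore suffices to prove each $\mu_{C_{i}}$ atomic. Fix $C:=C_{1}$ and $S:=S_{1}$.

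\emph{Structure from Corollary \ref{branch}.} Since $C$ is clopen the index set $I_{C}$ is finite, and Corollary \ref{branch} furnishes a sequence $\mathbf m$ with $[R_{C,\mathbf m},R_{C,\mathbf m}]\le H\le{\rm Fix}_{\Gamma}(C)$ for $\mu$-a.e.\ such $H$ (the upper bound holding because ${\rm Fix}(H)=C$). Using that $\Gamma$ is branch (so each ${\rm Rist}_{k}(\mathsf{T})$ has finite index in $\Gamma$), I would check that $R_{C,\mathbf m}=\bigoplus_{x\in I_{C}}{\rm Rist}^{\Gamma}_{m_{|x|}}(\mathsf{T}_{x})$ has finite index in ${\rm Fix}_{\Gamma}(C)$: intersecting ${\rm Fix}_{\Gamma}(C)$ with a suitable ${\rm Rist}_{k}(\mathsf{T})$ realizes $\bigoplus_{x\in I_{C}}{\rm Rist}_{\Gamma}(x)$ up to finite index, while each ${\rm Rist}^{\Gamma}_{m_{|x|}}(\mathsf{T}_{x})$ has finite index in ${\rm Rist}_{\Gamma}(x)$ for the same reason.

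\emph{The main obstacle.} One is tempted to push $\mu_{C}$ to the virtually abelian quotient $\bar\Gamma_{C,\mathbf m}={\rm Fix}_{\Gamma}(C)/[R_{C,\mathbf m},R_{C,\mathbf m}]$ and invoke that ergodic IRSs of virtually abelian groups are atomic. The difficulty, and the crux of the argument, is that $\mu_{C}$ is ergodic only under the large group $S$, whereas ${\rm Fix}_{\Gamma}(C)$ has infinite index in $S$, so ergodicity does not descend. To repair this I would instead study the $S$-equivariant map $\Phi(H)=\overline{H\cap R_{C,\mathbf m}}\in{\rm Sub}(\bar R)$, where $\bar R:=R_{C,\mathbf m}/[R_{C,\mathbf m},R_{C,\mathbf m}]$ and $S$ acts on the abelian group $\bar R$ through ${\rm Aut}(\bar R)$. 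The point is that this action factors through a finite quotient: $R_{C,\mathbf m}$ acts by inner automorphisms, hence trivially on its abelianization $\bar R$; the rigid stabilizer $R_{\Gamma}(C)$ of the fixed set $C$ centralizes $R_{C,\mathbf m}$ (disjoint supports), so acts trivially as well; and $\langle R_{C,\mathbf m},R_{\Gamma}(C)\rangle$ contains a finite-index subgroup of ${\rm Rist}_{n}(\mathsf{T})$, hence has finite index in $S$. Consequently $\Phi_{*}\mu_{C}$ is ergodic and invariant under a finite group, so it is supported on a finite orbit; that is, $H\cap R_{C,\mathbf m}$ takes only finitely many values.

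\emph{Concluding, and part (ii).} It remains to pin down $H$ itself. The subgroup $HR_{C,\mathbf m}$ lies between $R_{C,\mathbf m}$ and ${\rm Fix}_{\Gamma}(C)$ and so takes finitely many values by finite index. Given $H\cap R_{C,\mathbf m}=M$ and $HR_{C,\mathbf m}=B$, the subgroup $H/M$ complements $R_{C,\mathbf m}/M$ in $B/M$ and embeds into the finite group ${\rm Fix}_{\Gamma}(C)/R_{C,\mathbf m}$; such complements form a torsor under a countable group of $1$-cocycles, hence a countable set. Conditioning on the finitely-many-valued data $(M,B)$ and using ergodicity of $\mu_{C}$, the conditional law is an invariant probability measure on this countable set, so by ergodicity it is carried by a single orbit, which must be finite by the fact recalled in Section~\ref{sec:rigidstabilizer}; thus $\mu_{C}$, and hence $\mu$, is atomic. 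Finally, if $\Gamma$ is just infinite, then by the characterization in \cite{Gri-justinfinite} each ${\rm Rist}^{\Gamma}_{m}(\mathsf{T}_{x})$ has finite abelianization, so $[R_{C,\mathbf m},R_{C,\mathbf m}]$ has finite index in $R_{C,\mathbf m}$ and therefore in ${\rm Fix}_{\Gamma}(C)$; since $[R_{C,\mathbf m},R_{C,\mathbf m}]\le H\le{\rm Fix}_{\Gamma}(C)$, the subgroup $H$ is of finite index in ${\rm Fix}_{\Gamma}(C)$, where $C={\rm Fix}(H)$. I expect the finite-quotient argument in the third paragraph to be the delicate step; everything else is the branch bookkeeping of the second and fourth paragraphs.
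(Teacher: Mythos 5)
Your proof is correct, and although its outer skeleton coincides with the paper's --- both arguments reduce to the fiber over a single clopen set $C$ in a finite orbit, both invoke Corollary \ref{branch} to squeeze $H$ between $\left[R_{C,\mathbf{m}},R_{C,\mathbf{m}}\right]$ and ${\rm Fix}_{\Gamma}(C)$, both use the branch property to see that $R_{C,\mathbf{m}}$ has finite index in ${\rm Fix}_{\Gamma}(C)$, and your part (ii) is verbatim the paper's --- the mechanism at the decisive step is genuinely different. The paper finishes in one line: the quotient $\bar{\Gamma}_{C,\mathbf{m}}$ is virtually abelian, hence ``has only countably many subgroups,'' so the conditional law of $H$ is carried by a countable set and is automatically purely atomic; note that this route needs no invariance or ergodicity at all, so the ``main obstacle'' you identify (ergodicity not descending from the setwise stabilizer $S$ to ${\rm Fix}_{\Gamma}(C)$) is not actually an obstruction there --- a probability measure on a countable set has atoms regardless of any group action. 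What your longer route buys is robustness of exactly that countability claim: ``virtually abelian implies countably many subgroups'' is false without finite generation (the abelian group $\oplus_{\mathbb{N}}\mathbb{Z}/2\mathbb{Z}$ has uncountably many subgroups), and since the corollary assumes only that $\Gamma$ is a countable branch group --- non-finitely-generated examples such as ${\rm Alt}_{f}(\mathsf{T}_{\mathbf{d}})$ are within its scope --- the paper's one-liner implicitly requires the abelianization $\bar{R}=R_{C,\mathbf{m}}/\left[R_{C,\mathbf{m}},R_{C,\mathbf{m}}\right]$ to be finitely generated, which is automatic when $\Gamma$ is finitely generated but not in general. Your argument circumvents this entirely: the $S$-action on ${\rm Sub}(\bar{R})$ factors through a finite quotient because $R_{C,\mathbf{m}}$ acts by inner automorphisms and $R_{\Gamma}(C)$ centralizes it, while together they contain a level rigid stabilizer and hence have finite index in $S$; ergodicity of $\mu_{C}$ then forces $H\cap R_{C,\mathbf{m}}$ to take finitely many values, and the complement/cocycle count gives countability of the possible $H$ with no hypothesis on $\bar{R}$. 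One simplification is available at the end: your final appeal to invariance on the countable set of complements is superfluous, since once $H$ is confined almost surely to a countable set, any probability measure there is purely atomic, and ergodicity of $\mu$ itself then makes $\mu$ uniform on a single finite conjugation orbit.
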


\begin{proof}

Since $\Gamma$ is assumed to be a branch group, we have that ${\rm Rist}_{\Gamma}(x)$
has no fixed point in the cylinder $C_{x}$, indeed the orbit $z\cdot{\rm Rist}_{\Gamma}(x)$
for infinite for any $z\in C_{x}$. Then Corollary \ref{branch} applies.
Since $\mu$-a.e. ${\rm Fix}(H)$ is clopen, it follows that the distribution
of ${\rm Fix}(H)$ is uniform over translates of a clopen set $C$
and the index set $I_{C}$ is finite. Let ${\bf m}$ be the sequence
of integers provided by Corollary \ref{branch}. Since $I_{C}$ is
finite and $\Gamma$ is branch, we have that the quotient group $\bar{\Gamma}_{C,{\bf m}}$
is virtually abelian. Since a virtually abelian group has only countably
many subgroups, by Corollary \ref{branch} we conclude that $\mu$
is atomic.

Suppose in addition $\Gamma$ is just infinite, then by the characterization
in \cite{Gri-justinfinite} we have that in this case $\bar{\Gamma}_{C,{\bf m}}$
is finite. Since $\mu$-a.e. $H$ contains $\bigoplus_{x\in I_{{\rm Fix}(H)}}{\rm Rist}_{m_{|x|}}(\mathsf{T}_{x})'$,
the statement follows. 

\end{proof}

Recall that according to \cite{Gri-justinfinite}, a group acting
on a rooted tree $\mathsf{T}$ is said to have \emph{the congruence
property} if any finite index subgroup contains a level stabilizer
${\rm St}_{\Gamma}(n)$ for some $n\in\mathbb{N}$. This definition
is in analogy to the classical congruence property for arithmetic
groups, where level stabilizers replace congruence modulo ideals.
By Corollary \ref{branch-clopen} we \textcolor{black}{have that if
$G$ is }a just infinite\textcolor{black}{{} branch group which satisfies
the congruence property, then an ergodic fixed point free IRS of $G$
is atomic and contains a level stabilizer (which is a congruence subgroup
in this context) almost surely.}

The most well-known example of just infinite branch groups is the
first Grigorchuk group $\mathfrak{G}$, which is constructed in \cite{Grigorchuk1980}.
The group $\mathfrak{G}$ is generated by (for notation wreath recursion
see \cite[Chapter 1]{handbook})
\[
a=\varepsilon,\ b=(a,c),\ c=(a,d),\ d=(1,b),
\]
where $\varepsilon$ is the root permutation which permutes the two
subtrees of the root. The group $\mathfrak{G}$ is branch, just infinite
and has the congruence property (see \cite{Gri-justinfinite}). Therefore
by Corollary \ref{branch-clopen}, for an ergodic fixed point free
IRS $\mu$ of $\mathfrak{G}$, there is a level $n$ such that $\mu$-a.e.
$H$ contains the level stabilizer ${\rm St}_{\mathfrak{G}}(n)$. 

The phenomenon that ergodic IRSs with clopen fixed point sets must
be atomic occurs in some examples of weakly branch groups as well.
We illustrate it on the Basilica group, which is an example of weakly
branch groups that are not branch. The Basilica group $\mathfrak{B}$
is introduced in \cite{GrigorchukZuk}. It is generated by two automorphisms

\[
a=(1,b),\ b=(1,a)\varepsilon.
\]
It is shown in \cite{GrigorchukZuk} that $\mathfrak{B}$ weakly branches
over its commutator subgroup $[\mathfrak{B},\mathfrak{B}]$, and $\mathfrak{B}$
is of exponential growth, non-elementary amenable. 

\begin{corollary}\label{basilica}

Suppose $\mu$ is an ergodic IRS of the Basilica group $\mathfrak{B}$
such that $\mu$-a.e. ${\rm Fix}(H)$ is clopen (possibly empty).
Then $\mu$ is atomic.

\end{corollary}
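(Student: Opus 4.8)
The plan is to follow the proof of Corollary \ref{branch-clopen}, isolating the single place where the branch hypothesis is used---the assertion that the quotient $\bar{\mathfrak{B}}_{C,\mathbf{m}}$ of Notation \ref{branch-notation} is virtually abelian---and to replace it by an argument tailored to $\mathfrak{B}$. Since $\mathfrak{B}$ is weakly branch over $[\mathfrak{B},\mathfrak{B}]$ \cite{GrigorchukZuk}, Corollary \ref{branch} applies and furnishes a sequence $\mathbf{m}=(m_i)$ with
$$H \ge \bigoplus_{x \in I_{{\rm Fix}(H)}}\big[{\rm Rist}_{m_{|x|}}^{\mathfrak{B}}(\mathsf{T}_x),{\rm Rist}_{m_{|x|}}^{\mathfrak{B}}(\mathsf{T}_x)\big] = \big[R_{{\rm Fix}(H),\mathbf{m}},R_{{\rm Fix}(H),\mathbf{m}}\big]$$
for $\mu$-a.e. $H$, the last equality holding because commutators of a direct sum over disjoint cylinders stay inside the factors. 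When ${\rm Fix}(H)$ is $\mu$-a.e.\ clopen, the distribution of ${\rm Fix}(H)$ is, by \cite[Lemma 2.4]{Bencs-Toth}, the Haar pushforward of a single clopen set $C$ under $\bar g\mapsto C\cdot\bar g$; as $C$ is determined at a finite level its $\bar{\mathfrak{B}}$-orbit is finite, so this distribution is atomic and $I_C$ is finite. It therefore remains to show that, for each of the finitely many $C$ in the support, the conditional law of $H$ given ${\rm Fix}(H)=C$---an IRS of $\bar{\mathfrak{B}}_{C,\mathbf{m}}={\rm Fix}_{\mathfrak{B}}(C)/[R_{C,\mathbf{m}},R_{C,\mathbf{m}}]$, as recorded in the discussion preceding Corollary \ref{branch-clopen}---is atomic.

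As in the branch case it is enough to prove that $\bar{\mathfrak{B}}_{C,\mathbf{m}}$ has only countably many subgroups, since every probability measure on a countable set is purely atomic; and for this it would suffice to show $\bar{\mathfrak{B}}_{C,\mathbf{m}}$ is finitely generated and virtually abelian, hence virtually polycyclic, hence Noetherian. The natural route is to analyze the extension
$$1 \longrightarrow R_{C,\mathbf{m}}/[R_{C,\mathbf{m}},R_{C,\mathbf{m}}] \longrightarrow \bar{\mathfrak{B}}_{C,\mathbf{m}} \longrightarrow {\rm Fix}_{\mathfrak{B}}(C)/R_{C,\mathbf{m}} \longrightarrow 1$$
through the wreath recursion $a=(1,b)$, $b=(1,a)\varepsilon$. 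An element of ${\rm Fix}_{\mathfrak{B}}(C)$ fixes the finitely many ancestor paths of the vertices of $I_C$, permutes the finitely many complementary cylinders $\{C_x\}_{x\in I_C}$, and restricts on each $\mathsf{T}_x$ to sections lying in $\mathfrak{B}$. Reducing these sections modulo $[\mathfrak{B},\mathfrak{B}]$ and using $\mathfrak{B}^{\mathrm{ab}}\cong\mathbb{Z}^2$ together with weak regular branchness (so that the geometric copies $[\mathfrak{B},\mathfrak{B}]_{xu}$ sit inside $R_{C,\mathbf{m}}$), one expects ${\rm Fix}_{\mathfrak{B}}(C)/R_{C,\mathbf{m}}$ to be virtually abelian, controlled by finitely many copies of $\mathbb{Z}^2$ coming from the level-$m_{|x|}$ sections.

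The main obstacle is that, unlike in the branch case, the quotient ${\rm Fix}_{\mathfrak{B}}(C)/R_{C,\mathbf{m}}$ is genuinely \emph{infinite}: the rigid stabilizers have infinite index, and already $b_x\in{\rm Rist}_{\mathfrak{B}}(x)$ has a nontrivial section modulo $[\mathfrak{B},\mathfrak{B}]$ yet lies outside $R_{C,\mathbf{m}}$. Consequently the extension above is, a priori, only virtually metabelian, and finitely generated virtually metabelian groups can carry uncountably many subgroups (the lamplighter is the cautionary example), so virtual metabelianness does \emph{not} suffice. The crux is thus to pin down the abelian kernel $R_{C,\mathbf{m}}/[R_{C,\mathbf{m}},R_{C,\mathbf{m}}]=\bigoplus_{x,u}{\rm Rist}_{\mathfrak{B}}(xu)^{\mathrm{ab}}$ precisely enough to rule this out. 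I expect this to come from an explicit description of ${\rm Rist}_{\mathfrak{B}}(v)$ via the recursion---identifying it as an extension of $\mathbb{Z}$ by $[\mathfrak{B},\mathfrak{B}]$ and computing that ${\rm Rist}_{\mathfrak{B}}(v)^{\mathrm{ab}}$ is finitely generated---so that $R_{C,\mathbf{m}}^{\mathrm{ab}}$ is finitely generated abelian and the finite top-level part acts on it through finitely many integral representations, yielding that $\bar{\mathfrak{B}}_{C,\mathbf{m}}$ is actually finitely generated virtually abelian. Granting this, $\bar{\mathfrak{B}}_{C,\mathbf{m}}$ has countably many subgroups, the conditional IRS is atomic, and combined with the atomicity of the distribution of ${\rm Fix}(H)$ this shows $\mu$ is atomic.
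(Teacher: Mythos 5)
Your reduction is the same as the paper's: apply Corollary \ref{branch}, note that a clopen fixed point set forces $I_C$ to be finite and the law of ${\rm Fix}(H)$ to be atomic, and reduce everything to showing that the quotient $\bar{\mathfrak{B}}_{C,{\bf m}}$ of Notation \ref{branch-notation} has only countably many subgroups. You also correctly diagnose why the branch-case argument breaks (the rigid stabilizers now have infinite index, so ${\rm Fix}_{\mathfrak{B}}(C)/R_{C,{\bf m}}$ is infinite) and why ``virtually metabelian'' is not enough (the lamplighter caution is exactly right). But at the decisive step your proposal stops being a proof: the claim that $\bar{\mathfrak{B}}_{C,{\bf m}}$ is finitely generated virtually \emph{abelian} is only announced (``I expect\dots'', ``Granting this\dots''), not established, so there is a genuine gap. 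Worse, the target you chose is almost certainly unreachable: since $\mathfrak{B}^{\rm ab}\cong\mathbb{Z}^{2}$ and $\gamma_{2}(\mathfrak{B})/\gamma_{3}(\mathfrak{B})$ is infinite cyclic, the quotient $\mathfrak{B}/\gamma_{3}(\mathfrak{B})$ is a discrete Heisenberg group, and quotients such as $\mathfrak{B}/\mathfrak{B}''$ contain Heisenberg subgroups, so they are nilpotent but \emph{not} virtually abelian; the same obstruction propagates into $\bar{\mathfrak{B}}_{C,{\bf m}}$ through the sections below the vertices of $I_{C}$. Likewise, your hoped-for finite generation of ${\rm Rist}_{\mathfrak{B}}(v)^{\rm ab}$ is doubtful, because $\mathfrak{B}$ is weakly branch but not branch and already $\mathfrak{B}'$ is not finitely generated.

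The fix is to aim for nilpotent rather than abelian, which still yields the Noetherian property you need. The paper's proof invokes the structural fact from \cite{GrigorchukZuk} that $\mathfrak{B}''=\gamma_{3}(\mathfrak{B})\times\gamma_{3}(\mathfrak{B})$ (the product taken geometrically via the level-one recursion), whence $\mathfrak{B}/\mathfrak{B}''$ is virtually nilpotent of class $2$. Setting $n=\max\{|x|+m_{|x|}:x\in I_{C}\}$ and $Q_{n}=\prod_{u\in\mathsf{L}_{n}}([\mathfrak{B},\mathfrak{B}])_{u}$, the group $\mathfrak{B}/Q_{n}'$ is finitely generated and virtually step-$2$ nilpotent, and by the choice of $n$ the group $\bar{\mathfrak{B}}_{C,{\bf m}}$ is a sub-quotient of it. Sub-quotients of finitely generated virtually nilpotent groups are again finitely generated virtually nilpotent, hence have only countably many subgroups, so every IRS of $\bar{\mathfrak{B}}_{C,{\bf m}}$ is atomic and Corollary \ref{branch} finishes the proof. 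In short: your scaffolding is correct and matches the paper, but the one step you leave open is exactly the step that requires the specific algebraic input about $\mathfrak{B}$, and the weaker conclusion (virtual nilpotence via $\mathfrak{B}''=\gamma_{3}\times\gamma_{3}$), not the stronger one you propose, is what closes the argument.
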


\begin{proof}

We will use the following algebraic fact about $\mathfrak{B}$: $\mathfrak{B}''=\gamma_{3}(\mathfrak{B})\times\gamma_{3}(\mathfrak{B})$,
see \cite{GrigorchukZuk}. It means that the one step recursion $g=(g_{0},g_{1})$
of an element $g\in\mathfrak{B}''$ satisfies that $g_{0},g_{1}\in\gamma_{3}(\mathfrak{B})$.
It follows that $\mathfrak{B}/\mathfrak{B}''$ is virtually step $2$-nilpotent. 

We first verify that for $K$ clopen, the group $\bar{\Gamma}_{K,{\bf m}}={\rm Fix}_{\mathfrak{B}}(K)/\oplus_{x\in I_{K}}{\rm Rist}_{m_{|x|}}(\mathsf{T}_{x})'$
is a sub-quotient of a finitely generated virtually nilpotent group.
Recall the notation of $I_{K}$, $I_{K}$ is finite in this case.
Let $n=\max\{|x|+m_{|x|}:x\in I_{K}\}$ and $Q_{n}$ be the subgroup
$\prod_{u\in\mathsf{L}_{n}}([\mathfrak{B},\mathfrak{B}])_{u}$. Then
the quotient group $\mathfrak{B}/Q_{n}'$ is virtually step-$2$ nilpotent.
It is clear by the choice of $n$ that $\bar{\Gamma}_{K,{\bf m}}$
is a quotient group of ${\rm Fix}_{\mathfrak{B}}(K)/\left({\rm Fix}_{\mathfrak{B}}(K)\cap Q'_{n}\right)$,
thus a sub-quotient of a virtually step-$2$ nilpotent group $\mathfrak{B}/Q_{n}'$.

It follows then $\bar{\Gamma}_{K,{\bf m}}$ is a finitely generated
virtually nilpotent group. Thus it only has countably many subgroups
and all its IRSs are atomic. Corollary \ref{branch} then implies
the statement.

\end{proof}

We now discuss the case where ${\rm Fix}(H)$ is a general closed
subset of $\partial\mathsf{T}$. Let $\Gamma$ be a branch group acting
faithfully on $\mathsf{T}$ and $\mu$ an ergodic IRS of $\Gamma$.
The quotient group $\bar{\Gamma}_{K,{\bf m}}$ may admit a continuum
of non-atomic IRSs. For example, consider a branch group acting on
the rooted binary tree and an IRS where ${\rm Fix}(H)$ is a single
point. In this case the distribution of ${\rm Fix}(H)$ is a uniform
point on $\partial\mathsf{T}$ with respect to Hausdorff measure.
According to Theorem \ref{branch}, there is a sequence of integers
${\bf m}=(m_{i})$ such that the conditional distribution of $H$
given ${\rm Fix}(H)=\{x\}$ is pulled back from an IRS of the quotient
$\bar{\Gamma}_{x,{\bf m}}$. Note that by definitions, the group $\bar{\Gamma}_{x,{\bf m}}$
contains the infinite direct sum of finite groups
\[
\bigoplus_{i=1}^{\infty}{\rm Rist}_{\mathfrak{G}}(x_{1}\ldots x_{i-1}\check{x}_{i})/\left[{\rm Rist}_{m_{i}}(\mathsf{T}_{x_{1}\ldots x_{i-1}\check{x}_{i}}),{\rm Rist}_{m}(\mathsf{T}_{x_{1}\ldots x_{i-1}\check{x}_{i}})\right]
\]
as a normal subgroup. For the first Grigorchuk group $\mathfrak{G}$,
the parabolic subgroup ${\rm St}_{\mathfrak{G}}(1^{\infty})$ is described
explicitly in \cite[Theorem 4.4]{BG02}, it has the structure of an
iterated semi-direct product. To further classify IRSs of such quotients
one would need more algebraic information on the groups under consideration,
which is beyond the scope of this work. 

\subsection{A sufficient condition for co-soficity\protect\footnote{In the previous version of this paper, the proof of Theorem \ref{P-stable}
had a gap. In the current version the proof is fixed with the additional
bounded activity assumption, see Theorem \ref{cosofic-1}.}}

In this subsection we formulate a sufficient condition IRSs of a branch
group to be co-sofic. To state the assumptions, we introduce necessary
notations for the wreath recursion in ${\rm Aut}(\mathsf{T})$. A
permutation wreath product $A\wr_{X}G$, where $G\curvearrowright X$,
is the semi-direct product $\oplus_{X}A\rtimes G$, where $G$ acts
on $\oplus_{X}L$ by permuting coordinates. Elements in $A\wr_{X}G$
are recorded as pairs $(f,g)$ where $f:X\to A$ is a function of
finite support and $g\in G$. Multiplication in $A\wr_{X}G$ is given
by 
\[
(f_{1},g_{1})(f_{2},g_{2})=\left(f_{1}\left(g_{1}\cdot f_{2}\right),g_{1}g_{2}\right),\ \mbox{where }g\cdot f(x)=f(x\cdot g).
\]
Let $\mathsf{T}_{{\bf d}}$ be a spherically symmetric rooted tree
with valency sequence ${\bf d}=(d_{i})_{i=0}^{\infty}$. Denote by
$s$ the shift, $s{\bf d}=(d_{i+1})_{i=0}^{\infty}$. Denote by $\varphi_{n}$
the canonical wreath recursion in ${\rm Aut}(\mathsf{T}),$ 
\[
\varphi_{n}:{\rm Aut}(\mathsf{T}_{{\bf d}})\to{\rm Aut}\left(\mathsf{T}_{s^{n}{\bf d}}\right)\wr_{\mathsf{L}_{n}}{\rm Aut}(\mathsf{T}_{{\bf d}}^{n}).
\]
We identify $g$ with its image under the wreath recursion $\varphi_{n}$
and write 
\[
g=\varphi_{n}(g)=\left(\left(g_{v}\right)_{v\in\mathsf{L}_{n}},\sigma_{g}\right),
\]
where $g_{v}$ is called the \emph{section} of $g$ at vertex $v$
and $\sigma_{g}$ indicates how $g$ permutes in the finite tree $\mathsf{T}_{{\bf d}}^{n}$.
For more detailed exposition on the wreath recursion see e.g. \cite{handbook}. 

An important family of groups acting on rooted trees consists of \emph{contracting
self-similar groups}, see the monograph \cite{Nek-book}. The action
of $\Gamma$ on a regular rooted tree $\mathsf{T}$ is called \emph{self-similar
}if for any $g\in\Gamma$ and $v\in\mathsf{T},$ the section $g_{v}$
is also in $\Gamma$. A self-similar group action $\Gamma\curvearrowright\mathsf{T}$
is called \emph{contracting} if there exists a finite set $\mathcal{N}\subseteq\Gamma$
such that for every $g\in\Gamma$, there exists $n$ such that $g_{v}\in\mathcal{N}$
for all vertices $v$ of length at least $n$. The smallest set $\mathcal{N}$
satisfying this condition is called the \emph{nucleus} of the contracting
group, see \cite[Section 2.11]{Nek-book}. We need a condition that
is a generalized version of a contracting self-similar action.

\begin{definition}[Contracting in a generalized sense]\label{C}

We say the action of $\Gamma$ on a rooted spherically symmetric tree
$\mathsf{T}_{{\bf d}}$ satisfies Assumption (C) if there exists a
sequence of subsets $\mathcal{N}_{i}\subseteq{\rm Aut}(\mathsf{T}_{s^{i}{\bf d}})$
such that:
\begin{enumerate}
\item for every $g\in\Gamma$, there exists $n_{g}\in\mathbb{N}$ such that
$g_{v}\in\mathcal{N}_{|v|}$ for all vertices $v$ of length at least
$n_{g}$,
\item there exists constants $c_{0},i_{0}\in\mathbb{N}$ such that for any
$i\ge i_{0}$ and any $\gamma\in\mathcal{N}_{i}\setminus\{id\}$,
we have $\pi_{i,c_{0}}(\gamma)\neq id_{{\rm Aut}\left(\mathsf{T}_{s^{i}{\bf d}}^{c_{0}}\right)}$,
where $\pi_{i,c_{0}}$ is the projection map ${\rm Aut}(\mathsf{T}_{s^{i}{\bf d}})\to{\rm Aut}\left(\mathsf{T}_{s^{i}{\bf d}}^{c_{0}}\right)$. 
\end{enumerate}
\end{definition}

Clearly a contracting self-similar action satisfies Assumption (C).
There are many interesting examples satisfying (C) that are not contracting
self-similar actions, see Subsection \ref{subsec:C-examples}.

For an element $g\in{\rm Aut}(\mathsf{T})$, we say a vertex $v\in\mathsf{L}_{n}$
is \emph{active} if the section $g_{v}\neq id$. Denote by $A_{g}(n)$
the set of active vertices for $g$ on level $n$, that is, $A_{g}(n)=\left\{ v\in\mathsf{L}_{n}:g_{v}\neq id\right\} $.
We say $g$ is an automorphism of \emph{bounded activity} if $\sup_{n\in\mathbb{N}}\left|A_{g}(n)\right|<\infty$. 

\begin{theorem}\label{cosofic-1}

Let $\Gamma$ be a group acting faithfully and level transitively
on a spherically symmetric rooted tree $\mathsf{T}_{{\bf d}}$ of
bounded valency. Suppose:
\begin{description}
\item [{(i)}] $\Gamma$ is a just-infinite branch group (equivalently,
$\left[{\rm Rist}_{m}^{\Gamma}(\mathsf{T}_{{\bf d}}),{\rm Rist}_{m}^{\Gamma}(\mathsf{T}_{{\bf d}})\right]$
is of finite index in $\Gamma$ for all $m\in\mathbb{N}$),
\item [{(ii)}] $\Gamma\curvearrowright\mathsf{T}_{{\bf d}}$ satisfies
Assumption (C) in Definition \ref{C}, 
\item [{(iii)}] any element $g\in\Gamma$ is of bounded activity.
\end{description}
Then all IRSs of $\Gamma$ are co-sofic.

\end{theorem}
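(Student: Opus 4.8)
The plan is to reduce the assertion to an almost-sure congruence-closedness statement for $\mu$-a.e.\ subgroup, and then to extract that closedness from Corollary \ref{branch} together with hypotheses (ii) and (iii).

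I would first reduce to ergodic $\mu$: the finite-index-supported IRSs form a convex family, so the co-sofic IRSs form a weak-$*$ closed convex set, and by the ergodic decomposition $\mu$ is the barycentre of a measure supported on ergodic IRSs; hence it suffices to treat ergodic $\mu$. The degenerate cases are disposed of at once: $\delta_{\{id\}}$ is co-sofic, and if $\mu$ is fixed point free then Corollary \ref{branch-clopen} applied with $C=\emptyset$, using just-infiniteness (hypothesis (i)), shows $\mu$-a.e.\ $H$ is a finite index subgroup of ${\rm Fix}_\Gamma(\emptyset)=\Gamma$, so $\mu$ is already supported on finite index subgroups.

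For a general ergodic $\mu$ I would build explicit approximants. Each level stabilizer ${\rm St}_\Gamma(n)$ is normal of finite index, so the pushforward $\hat\mu_n$ of $\mu$ under the $\Gamma$-equivariant map $H\mapsto H\,{\rm St}_\Gamma(n)$ is an IRS supported on finite index subgroups. As $n\to\infty$ the indicators $\mathbf 1[g\in H\,{\rm St}_\Gamma(n)]$ decrease to $\mathbf 1[g\in\bigcap_n H\,{\rm St}_\Gamma(n)]$; applying dominated convergence to the cylinder events $\{H\cap F=S\}$ (finite $F\subseteq\Gamma$, $S\subseteq F$) that generate the Chabauty topology shows $\hat\mu_n$ converges weak-$*$ to the law of the congruence closure $\overline H:=\bigcap_n H\,{\rm St}_\Gamma(n)$. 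Thus everything reduces to the single claim that $\overline H=H$ for $\mu$-a.e.\ $H$.

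The heart of the proof, and the step where I expect the real difficulty, is this congruence-closedness. Here I would use that by Corollary \ref{branch} a.e.\ $H$ contains $\bigoplus_{x\in I_K}[{\rm Rist}_{m_{|x|}}^\Gamma(\mathsf{T}_x),{\rm Rist}_{m_{|x|}}^\Gamma(\mathsf{T}_x)]$ with $K={\rm Fix}(H)$, while $H\le{\rm Fix}_\Gamma(K)$. Given $g\in\overline H$, pick $h_n\in H$ agreeing with $g$ up to level $n$; then $g$ fixes $K$ and the corrections $e_n:=h_n^{-1}g\in{\rm St}_\Gamma(n)$ are supported inside the red region $\bigsqcup_{x\in I_K}C_x$. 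The contracting hypothesis (C) together with bounded activity (iii) make elements \emph{finitely determined}: an element of activity at most $B$ whose deep sections lie in the sets $\mathcal N_i$ is, by condition (2) of Definition \ref{C}, pinned down by its action up to a level depending only on $B$, and there are only finitely many elements of a given activity bound agreeing with $g$ to any fixed depth. This should force $e_n$ to stabilise into the product of local rigid stabilizers over $I_K$; invoking just-infiniteness to see that each ${\rm Rist}_{m_{|x|}}^\Gamma(\mathsf{T}_x)'$ is finite index in the corresponding local rigid stabilizer, a finiteness-of-compatible-corrections argument then places $e_n$ inside $\bigoplus_{x}[{\rm Rist}_{m_{|x|}}^\Gamma(\mathsf{T}_x),{\rm Rist}_{m_{|x|}}^\Gamma(\mathsf{T}_x)]\le H$, whence $g=h_ne_n\in H$. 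The crux is the \emph{uniform} control of these corrections: without bounded activity the support of $e_n$ may spread over infinitely many cylinders and escape the restricted direct sum sitting inside $H$, which is precisely the gap that hypothesis (iii) is introduced to close.
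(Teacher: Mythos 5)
Your reduction machinery is sound: the ergodic decomposition step, the treatment of the degenerate cases, and the observation that the pushforwards $\hat\mu_n$ of $\mu$ under $H\mapsto H\,{\rm St}_\Gamma(n)$ are IRSs supported on finite index subgroups which converge weak-$*$ to the law of the congruence closure $\overline H=\bigcap_n H\,{\rm St}_\Gamma(n)$ are all correct. The genuine gap is the claim you reduce everything to, namely that $\mu$-a.e. $H$ equals $\overline H$. Your sketch does not prove it, and the claim is in fact false, e.g.\ for $\Gamma=\mathfrak{G}$, so the approach cannot be repaired. The weak point is exactly the passage from ``$e_n$ stabilises'' to ``$e_n$ lies in $\bigoplus_{x}\left[{\rm Rist}_{m_{|x|}}^{\Gamma}(\mathsf{T}_{x}),{\rm Rist}_{m_{|x|}}^{\Gamma}(\mathsf{T}_{x})\right]$'': Corollary \ref{branch} places only the \emph{derived} subgroups of rigid stabilizers inside $H$, and nothing pins the corrections $e_n$ into them; their images in the abelian quotients ${\rm Rist}_{\Gamma}(x)/{\rm Rist}_{\Gamma}(x)'$ can be nontrivial and pushed to arbitrarily deep $x\in I_K$, which is precisely how congruence closures grow. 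Concretely, for $\mathfrak{G}$: each ${\rm Rist}_{\mathfrak{G}}(x)$ is a nontrivial, finitely generated, residually finite torsion $2$-group, so its mod-$2$ abelianization is nonzero; the conjugation action of ${\rm St}_{\mathfrak{G}}(x)$ on this abelianization has image a finite group all of whose elements have $2$-power order, hence a finite $2$-group, which therefore fixes a nonzero functional $\phi_{x}$ to $\mathbb{Z}/2$. Choosing the $\phi_{x}$ equivariantly and setting, for $\xi\in\partial\mathsf{T}$, $H_{\xi}:=\ker\bigl(\sum_{x\in I_{\{\xi\}}}\phi_{x}\bigr)\le\bigoplus_{x\in I_{\{\xi\}}}{\rm Rist}_{\mathfrak{G}}(x)$, one obtains a $\Gamma$-equivariant measurable family, so the pushforward of the uniform measure on $\partial\mathsf{T}$ under $\xi\mapsto H_{\xi}$ is an IRS; yet $\overline{H_{\xi}}\supseteq\bigoplus_{x}{\rm Rist}_{\mathfrak{G}}(x)\supsetneq H_{\xi}$, because any parity-one element $g$ becomes parity-zero after multiplying by an element $r_n\in{\rm Rist}_{\mathfrak{G}}(x)$ with $\phi_x(r_n)=1$ and $|x|>n$, and such $r_n$ lies in ${\rm St}_{\mathfrak{G}}(n)$. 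So an IRS can be carried entirely by non-congruence-closed subgroups. Note also that your key step never uses conjugation-invariance of $\mu$: it attempts a deterministic statement about every subgroup satisfying the conclusion of Corollary \ref{branch}, and that deterministic statement is what this example refutes.

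This is also exactly where your route departs from the paper. The paper never approximates $H$ from above by closures; its approximants $K_{i}(H)=\left(H\cap A\left(\mathsf{B}_{i}(H)\right)\right)\prod_{v\in\mathsf{G}_{i}(H)\cup\mathsf{B}_{i}(H)}{\rm Rist}_{c_{0}}^{\Gamma}(\mathsf{T}_{v})$ neither contain nor are contained in $H$, and the convergence $\mu_{i}\to\mu$ is proved in measure rather than almost surely: one bounds $\mu\left(g\in H\bigtriangleup K_{i}(H)\right)$ by averaging over conjugates of $g$ (this is where invariance of $\mu$ enters, Lemma \ref{BB2}); Assumption (C) is used to rule out the case $g^{\gamma}\in K_{i}(H)\setminus H$ (Lemma \ref{BB}); and a combinatorial argument shows the density of bad blue vertices tends to $0$ because each one forces at least two red descendants $c_{0}$ levels down (Lemma \ref{proportion}). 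Bounded activity enters only to bound the union-bound factor $\left|A_{g}(i)\right|$ uniformly in $i$, not to make elements ``finitely determined'' as in your sketch.
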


\begin{remark}

Although amenability of $\Gamma$ is not explicitly assumed in Theorem
\ref{cosofic-1}, we expect that groups satisfying Assumption (C)
and the bounded activity assumption (iii) are amenable, for the following
reasons. If assumption (ii) is strengthened to that ${\bf d}$ is
constant and $\Gamma\curvearrowright\mathsf{T}$ is a contracting
self-similar action, then the strengthened assumption (ii) together
with (iii) implies that $\Gamma$ is a subgroup of bounded automatic
automorphisms. In this case \cite[Theorem 1.2]{BKN} implies that
$\Gamma$ is amenable.

More generally, by \cite[Theorem 4.8]{JNS}, if $\Gamma<{\rm Aut}(\mathsf{T}_{{\bf d}})$
is such that any element $g\in\Gamma$ is of bounded activity, then
$\Gamma$ is amenable if and only if the \emph{isotropy groups} on
the tree boundary $\partial\mathsf{T}_{{\bf d}}$ (also called the
groups of germs) are amenable. For the definition of isotropy groups
see e.g., \cite[Section 3.1]{JNS}. 

\end{remark}

\begin{example}

Let $\mathsf{T}_{{\bf d}}$ be a rooted spherically symmetric tree
with bounded valency sequence. Denote by ${\rm Alt}_{f}\left(\mathsf{T}_{{\bf d}}\right)$
the group generated of finitary automorphisms whose vertex permutations
are all even permutations. The group ${\rm Alt}_{f}\left(\mathsf{T}_{d}\right)$
is just-infinite and branching. It is locally finite, thus amenable.
Let $\mathcal{N}_{i}={\rm Alt}\left(\left\{ 0,1,\ldots,d_{i}-1\right\} \right)$,
then ${\rm Alt}_{f}\left(\mathsf{T}_{{\bf d}}\right)$ satisfies Assumption
(C) with $c_{0}=1$, $i_{0}=1$ and the sequence $(\mathcal{N}_{i})$.
For any finitary automorphism $\gamma$, there exists a finite level
$n$ such that $A_{i}(\gamma)=\emptyset$ for all $i\ge n$. Therefore
elements of ${\rm Alt}_{f}\left(\mathsf{T}_{{\bf d}}\right)$ are
of bounded activity. Theorem \ref{cosofic-1} implies that all IRSs
of ${\rm Alt}_{f}\left(\mathsf{T}_{{\bf d}}\right)$ are co-sofic. 

\end{example}

Finitely generated examples of groups that satisfy the assumptions
of Theorem \ref{cosofic-1} are discussed in the next subsection.
By the definition of activity, if elements of a generating set of
$\Gamma$ are of bounded activity, then all elements of $\Gamma$
are of bounded activity. 

Throughout the rest of this subsection, let $\Gamma$ be a group satisfying
the assumptions of Theorem \ref{cosofic-1} and $\mu$ be an IRS of
$\Gamma$. We take the following steps to prove Theorem \ref{cosofic-1}.

\textbf{Step I }\textcolor{black}{(Choice of approximations)}\textbf{.}
Given the fixed point set ${\rm Fix}(H)$, color the tree $\mathsf{T}$
according to ${\rm Fix}(H)$ as in Definition \ref{color}. On level
$i$, denote by $\mathsf{R}_{i}(H)$ ($\mathsf{G}_{i}(H)$, $\mathsf{B}_{i}(H)$
resp.) the set of vertices in $\mathsf{L}_{i}$ that are colored red
(green, blue resp.). Note the following equivariance property which
can be verified directly by the definition of coloring:

\begin{fact}

Let $\gamma\in\Gamma$, then $\mathsf{R}_{i}(\gamma^{-1}H\gamma)=\mathsf{R}_{i}(H)\cdot\gamma$,
$\mathsf{G}_{i}(\gamma^{-1}H\gamma)=\mathsf{G}_{i}(H)\cdot\gamma$,
$\mathsf{B}_{i}(\gamma^{-1}H\gamma)=\mathsf{B}_{i}(H)\cdot\gamma$. 

\end{fact}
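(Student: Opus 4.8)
The plan is to reduce the statement to two elementary transformation laws and then perform routine set-theoretic bookkeeping. The first ingredient is the identity ${\rm Fix}(\gamma^{-1}H\gamma)={\rm Fix}(H)\cdot\gamma$, which is already used in the proof of Corollary~\ref{branch}. First I would verify it directly from the fact that the action is on the right: for $x\in\partial\mathsf{T}$ and $h\in H$ one has $x\cdot(\gamma^{-1}h\gamma)=\bigl((x\cdot\gamma^{-1})\cdot h\bigr)\cdot\gamma$, so $x\cdot(\gamma^{-1}h\gamma)=x$ holds for every $h\in H$ if and only if $(x\cdot\gamma^{-1})\cdot h=x\cdot\gamma^{-1}$ for every $h\in H$, i.e.\ if and only if $x\cdot\gamma^{-1}\in{\rm Fix}(H)$, which is exactly $x\in{\rm Fix}(H)\cdot\gamma$. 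The second ingredient is that a tree automorphism $\gamma$ permutes each level $\mathsf{L}_i$ and carries cylinders to cylinders: since $\gamma$ respects the prefix relation, $C_v\cdot\gamma=C_{v\cdot\gamma}$ for every vertex $v$, and $v\cdot\gamma\in\mathsf{L}_i$ whenever $v\in\mathsf{L}_i$.

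Combining these, and using that $\gamma$ acts as a bijection of $\partial\mathsf{T}$ (so it commutes with intersections and preserves both emptiness and inclusions), I would treat the red case as follows: $v\cdot\gamma$ is red for $\gamma^{-1}H\gamma$ iff $C_{v\cdot\gamma}\cap{\rm Fix}(\gamma^{-1}H\gamma)=\emptyset$, and since $C_{v\cdot\gamma}\cap{\rm Fix}(\gamma^{-1}H\gamma)=(C_v\cdot\gamma)\cap({\rm Fix}(H)\cdot\gamma)=\bigl(C_v\cap{\rm Fix}(H)\bigr)\cdot\gamma$, this set is empty iff $C_v\cap{\rm Fix}(H)=\emptyset$, i.e.\ iff $v$ is red for $H$. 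This gives $\mathsf{R}_i(\gamma^{-1}H\gamma)=\mathsf{R}_i(H)\cdot\gamma$. The green case is verbatim the same with ``$\cap=\emptyset$'' replaced by ``$\subseteq$'': $C_{v\cdot\gamma}\subseteq{\rm Fix}(\gamma^{-1}H\gamma)$ iff $C_v\cdot\gamma\subseteq{\rm Fix}(H)\cdot\gamma$ iff $C_v\subseteq{\rm Fix}(H)$, yielding $\mathsf{G}_i(\gamma^{-1}H\gamma)=\mathsf{G}_i(H)\cdot\gamma$.

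Finally, for the blue case I would invoke that blue is by definition the complement of red and green inside a level, $\mathsf{B}_i(H)=\mathsf{L}_i\setminus\bigl(\mathsf{R}_i(H)\cup\mathsf{G}_i(H)\bigr)$. Because $\gamma$ restricts to a bijection of $\mathsf{L}_i$ onto itself, it commutes with taking complements within $\mathsf{L}_i$, so the two identities just established give $\mathsf{B}_i(\gamma^{-1}H\gamma)=\mathsf{L}_i\setminus\bigl((\mathsf{R}_i(H)\cup\mathsf{G}_i(H))\cdot\gamma\bigr)=\bigl(\mathsf{L}_i\setminus(\mathsf{R}_i(H)\cup\mathsf{G}_i(H))\bigr)\cdot\gamma=\mathsf{B}_i(H)\cdot\gamma$. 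There is no genuine obstacle here; the single point requiring care is the bookkeeping of the right action in the fixed-point identity, where the placement of $\gamma$ versus $\gamma^{-1}$ must be tracked so that one lands on ${\rm Fix}(H)\cdot\gamma$ and not ${\rm Fix}(H)\cdot\gamma^{-1}$.
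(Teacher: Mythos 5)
Your proof is correct and is exactly the direct verification the paper has in mind: the paper states the Fact with no written proof, remarking only that it "can be verified directly by the definition of coloring," and your argument (the conjugation identity ${\rm Fix}(\gamma^{-1}H\gamma)={\rm Fix}(H)\cdot\gamma$, the cylinder identity $C_v\cdot\gamma=C_{v\cdot\gamma}$ with levels preserved, then the red/green cases by equivariance of intersection and inclusion and the blue case by complementation within $\mathsf{L}_i$) is precisely that verification, with the right-action bookkeeping handled correctly.
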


Given a subset $\mathsf{B}\subseteq\mathsf{L}_{i}$, consider the
subgroup $A(\mathsf{B})$ of $\Gamma$ defined as 
\[
A\left(\mathsf{B}\right):=\{g\in\Gamma:\ v.g=v\mbox{ for }v\in\mathsf{B}\mbox{ and }g_{v}=id\mbox{ for all }v\in\mathsf{B}\}.
\]
In words, $A\left(\mathsf{B}\right)$ consists of elements of $\Gamma$
which fix the subset $\mathsf{B}$ pointwise, and moreover, the sections
at vertices in $\mathsf{B}$ are all trivial. 

Now we choose a sequence of subgroups depending on $H$. Recall the
constant $c_{0}$ as in the second item of Assumption (C) and that
${\rm Rist}_{c_{0}}^{\Gamma}(\mathsf{T}_{v})$ is the level $c_{0}$
rigid stabilizer of the subtree rooted at $v$. Define $K_{i}(H)$
to be
\begin{equation}
K_{i}(H):=\left(H\cap A\left(\mathsf{B}_{i}(H)\right)\right)\prod_{v\in\mathsf{G}_{i}(H)\cup\mathsf{B}_{i}(H)}{\rm Rist}_{c_{0}}^{\Gamma}(\mathsf{T}_{v}).\label{eq:K_i}
\end{equation}
It is routine to check that the map $H\mapsto K_{i}(H)$ is measurable. 

\textbf{Step II }\textcolor{black}{(IRSs supported on finite index
subgroups)}\textbf{.} In what follows the group $\Gamma$ is fixed
and we drop reference to $\Gamma$ in rigid stabilizers. Apply Corollary
\ref{branch} to $\mu$, we have that for $\mu$-a.e. subgroup $H$,
there exist a sequence of positive integers ${\bf m}=(m_{i})$, $m_{i}\in\mathbb{N}$,
which only depends on the ergodic component that $H$ is in, such
that 
\begin{equation}
H\ge\left[R_{{\rm Fix}(H),{\bf m}},R_{{\rm Fix}(H),{\bf m}}\right],\label{eq:containK}
\end{equation}
where ${\rm Fix}(H)$ is the set of fixed points of $H$ in $\partial\mathsf{T}$,
and the subgroup $R_{K,{\bf m}}$ is defined in Notation \ref{branch-notation}.
The containment (\ref{eq:containK}) guarantees that $K_{i}(H)$ is
a finite index subgroup of $\Gamma$:

\begin{lemma}\label{transversal}

Denote by $\mu_{i}$ the pushforward of $\mu$ under the map $H\mapsto K_{i}(H)$.
Then $\mu_{i}$ is an IRS supported on finite index subgroups of $\Gamma$. 

\end{lemma}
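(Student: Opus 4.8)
The plan is to verify two things: that $H\mapsto K_i(H)$ is $\Gamma$-equivariant, so the pushforward $\mu_i$ is conjugation-invariant, and that $K_i(H)$ has finite index in $\Gamma$ for $\mu$-a.e. $H$. First I would record that $K_i(H)$ is genuinely a subgroup. The factor $H\cap A(\mathsf{B}_i(H))$ acts trivially on $\cup_{v\in\mathsf{G}_i(H)\cup\mathsf{B}_i(H)}C_v$: trivially on green cylinders because $H$ fixes them pointwise, and trivially on blue cylinders because elements of $A(\mathsf{B}_i(H))$ have trivial sections at $\mathsf{B}_i(H)$. The remaining factor $\prod_{v\in\mathsf{G}_i(H)\cup\mathsf{B}_i(H)}{\rm Rist}_{c_0}^{\Gamma}(\mathsf{T}_v)$ is supported on that same union, so the two factors commute and their product is a group.

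For equivariance I would combine the equivariance of the coloring recorded above with the identities $\gamma^{-1}A(\mathsf{B})\gamma=A(\mathsf{B}\cdot\gamma)$ and $\gamma^{-1}{\rm Rist}_{c_0}^{\Gamma}(\mathsf{T}_v)\gamma={\rm Rist}_{c_0}^{\Gamma}(\mathsf{T}_{v\cdot\gamma})$, both of which follow from the section chain rule $(ab)_w=a_w\,b_{w\cdot a}$ (for instance, if $g_v={\rm id}$ and $v\cdot g=v$ then $(\gamma^{-1}g\gamma)_{v\cdot\gamma}=(\gamma_v)^{-1}g_v\gamma_v={\rm id}$). This gives $K_i(\gamma^{-1}H\gamma)=\gamma^{-1}K_i(H)\gamma$. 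Since $\mu$ is conjugation-invariant and $H\mapsto K_i(H)$ is measurable, $\mu_i$ is an IRS.

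For finite index the key input is the containment (\ref{eq:containK}) from Corollary \ref{branch}: for $\mu$-a.e. $H$ there is a sequence ${\bf m}$ with $H\ge\bigoplus_{x\in I_{{\rm Fix}(H)}}[{\rm Rist}_{m_{|x|}}^{\Gamma}(\mathsf{T}_x),{\rm Rist}_{m_{|x|}}^{\Gamma}(\mathsf{T}_x)]$. I would first check that for each maximal red vertex $x$ with $|x|\le i$ and each descendant $xw$ of $x$ at level $|x|+m_{|x|}$, the subgroup $[{\rm Rist}_{\Gamma}(xw),{\rm Rist}_{\Gamma}(xw)]$ lies in $A(\mathsf{B}_i(H))$: its support $C_{xw}$ sits inside the red region, hence is disjoint from every blue level-$i$ cylinder, so these elements fix $\mathsf{B}_i(H)$ pointwise with trivial sections there. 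Since every red level-$i$ vertex descends from such an $x$, and distinct maximal red cylinders are disjoint, this yields $H\cap A(\mathsf{B}_i(H))\ge\prod_{x\in I_{{\rm Fix}(H)},\,|x|\le i}[{\rm Rist}_{m_{|x|}}^{\Gamma}(\mathsf{T}_x),{\rm Rist}_{m_{|x|}}^{\Gamma}(\mathsf{T}_x)]$.

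Finally, set $N=\max\bigl(\{|x|+m_{|x|}:x\in I_{{\rm Fix}(H)},\,|x|\le i\}\cup\{i+c_0\}\bigr)$, which is finite because bounded valency leaves only finitely many vertices of level at most $i$. Using that commutators distribute over the internal direct product ${\rm Rist}_N^{\Gamma}(\mathsf{T})=\prod_{u\in\mathsf{L}_N}{\rm Rist}_{\Gamma}(u)$, I would show $[{\rm Rist}_N^{\Gamma}(\mathsf{T}),{\rm Rist}_N^{\Gamma}(\mathsf{T})]=\prod_{u\in\mathsf{L}_N}[{\rm Rist}_{\Gamma}(u),{\rm Rist}_{\Gamma}(u)]\le K_i(H)$ by examining the level-$i$ ancestor $v$ of each $u\in\mathsf{L}_N$: if $v$ is green or blue, then $N\ge i+c_0$ forces ${\rm Rist}_{\Gamma}(u)\le{\rm Rist}_{c_0}^{\Gamma}(\mathsf{T}_v)\le K_i(H)$; if $v$ is red, then $u$ lies below some $xw$ as above, whence $[{\rm Rist}_{\Gamma}(u),{\rm Rist}_{\Gamma}(u)]\le[{\rm Rist}_{\Gamma}(xw),{\rm Rist}_{\Gamma}(xw)]\le K_i(H)$. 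Since $\Gamma$ is branch, ${\rm Rist}_N^{\Gamma}(\mathsf{T})$ has finite index in $\Gamma$, and since $\Gamma$ is just-infinite its commutator subgroup has finite index in ${\rm Rist}_N^{\Gamma}(\mathsf{T})$; thus $K_i(H)$ contains a finite-index subgroup of $\Gamma$ and is itself of finite index. The main obstacle is exactly this level bookkeeping in the finite-index step: verifying that the contributions of $H$ supplied by Corollary \ref{branch} land inside $A(\mathsf{B}_i(H))$ and, together with the rigid-stabilizer factors over green and blue vertices, cover a full level-$N$ commutator rigid stabilizer.
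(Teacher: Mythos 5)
Your proof is correct and follows essentially the same route as the paper's: equivariance of the coloring, of $A(\cdot)$, and of rigid stabilizers gives $K_{i}(\gamma^{-1}H\gamma)=\gamma^{-1}K_{i}(H)\gamma$ and hence conjugation invariance of $\mu_{i}$, while the containment (\ref{eq:containK}) from Corollary \ref{branch} places derived subgroups of level rigid stabilizers below red vertices inside $H\cap A(\mathsf{B}_{i}(H))$, which together with the ${\rm Rist}_{c_{0}}^{\Gamma}(\mathsf{T}_{v})$ factors at green and blue vertices and the just-infinite branch assumption yields finite index. The only difference is presentational: you spell out the subgroup verification for $K_{i}(H)$ and the level-$N$ bookkeeping via $\left[{\rm Rist}_{N}^{\Gamma}(\mathsf{T}),{\rm Rist}_{N}^{\Gamma}(\mathsf{T})\right]=\prod_{u\in\mathsf{L}_{N}}\left[{\rm Rist}_{\Gamma}(u),{\rm Rist}_{\Gamma}(u)\right]$, steps the paper leaves implicit in its closing sentence.
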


\begin{proof}[Proof of Lemma \ref{transversal}]

We first show that $\mu$-a.e. $K_{i}(H)$ is of finite index in $\Gamma$.
Since by its definition, $K_{i}(H)$ contains the level rigid stabilizers
of the subtrees rooted at green and blue vertices, it suffices to
show that if $v$ is a red vertex, then ${\rm Rist}_{n_{i}}\left(\mathsf{T}_{v}\right)'<H\cap A_{i}$,
for some finite $n_{i}$. Suppose $H$ satisfies (\ref{eq:containK}).
Let $u$ be the shortest prefix of $v$ that is colored red. Then
${\rm Rist}_{m_{|u|}}(\mathsf{T}_{u})<R_{{\rm Fix}_{\partial\mathsf{T}}(H),{\bf m}}$,
therefore (\ref{eq:containK}) implies that 
\[
H>{\rm Rist}_{m_{|u|}}(\mathsf{T}_{u})'.
\]
Elements of ${\rm Rist}_{m_{|u|}}(\mathsf{T}_{u})$ only permutes
red vertices, thus ${\rm Rist}_{m_{|u|}}(\mathsf{T}_{u})<A_{i}$.
It follows that 
\[
{\rm Rist}_{n_{i}}\left(\mathsf{T}_{v}\right)'<{\rm Rist}_{m_{|u|}}(\mathsf{T}_{u})'<H\cap A_{i},
\]
where $n_{i}=\max\left\{ 0,m_{|u|}-|v|+|u|\right\} $. Therefore Corollary
\ref{branch} implies that for $\mu$-a.e. $H$, $K_{i}(H)$ is of
finite index in $\Gamma$. 

Next we verify that $\mu_{i}$ is invariant under conjugation by $\Gamma$.
Let $f:{\rm Sub}(\Gamma)\to\mathbb{R}$ be a bounded measurable function
and $\gamma\in\Gamma$. Then by the definition of the subgroup $A\left(\mathsf{B}\right)$,
we have 
\[
A\left(\mathsf{B}_{i}\left(\gamma^{-1}H\gamma\right)\right)=\gamma^{-1}A\left(\mathsf{B}_{i}\left(H\right)\right)\gamma.
\]
It follows that 
\begin{align*}
\gamma^{-1}K_{i}(H)\gamma & =\left(\gamma^{-1}H\gamma\cap\gamma^{-1}A\left(\mathsf{B}_{i}(H)\right)\gamma\right)\prod_{v\in\mathsf{G}_{i}(H)\cup\mathsf{B}_{i}(H)}\gamma^{-1}{\rm Rist}_{c_{0}}(\mathsf{T}_{v})\gamma.\\
 & =\left(\gamma^{-1}H\gamma\cap A\left(\mathsf{B}_{i}(\gamma^{-1}H\gamma)\right)\right)\prod_{v\in\mathsf{G}_{i}(H)\cup\mathsf{B}_{i}(H)}{\rm Rist}_{c_{0}}(\mathsf{T}_{v\cdot\gamma})\\
 & =\left(\gamma^{-1}H\gamma\cap A\left(\mathsf{B}_{i}(\gamma^{-1}H\gamma)\right)\right)\prod_{u\in\mathsf{G}_{i}(\gamma^{-1}H\gamma)\cup\mathsf{B}_{i}(\gamma^{-1}H\gamma)}{\rm Rist}_{c_{0}}(\mathsf{T}_{u})\\
 & =K_{i}(\gamma^{-1}H\gamma).
\end{align*}
We conclude that 
\begin{align*}
\mathbb{E}_{\mu_{i}}\left[f\left(\gamma^{-1}K\gamma\right)\right] & =\mathbb{E}_{\mu}\left[f\left(\gamma^{-1}K_{i}(H)\gamma\right)\right]=\mathbb{E}_{\mu}\left[f\left(K_{i}(\gamma^{-1}H\gamma)\right)\right]\\
 & =\mathbb{E}_{\mu}\left[f\left(K_{i}(H)\right)\right]=\mathbb{E}_{\mu_{i}}\left[f\left(K\right)\right].
\end{align*}

\end{proof}

\textbf{\textcolor{black}{Step III}} (Reduction to counting bad blue
vertices). In this step we reduce the problem of showing convergence
of $(\mu_{i})$ to $\mu$ in the weak$^{\ast}$-topology to counting
what we call bad blue vertices. The map $H\mapsto K_{i}(H)$ provides
a natural coupling between $\mu$ and $\mu_{i}$. By the definition
of the weak$^{\ast}$-topology, showing convergence $\mu_{i}\to\mu$
is equivalent to showing that for any pair of finite sets $E\subseteq F$
in $\Gamma$, 
\[
\left|\mu\left(H\cap F=E\right)-\mu_{i}\left(K\cap F=E\right)\right|\to0\mbox{ when }i\to\infty.
\]
Using the coupling $(H,K_{i}(H))$, the difference is bounded by 
\begin{align*}
\left|\mu\left(H\cap F=E\right)-\mu_{i}\left(K\cap F=E\right)\right| & \le\mu\left(H:\left(H\bigtriangleup K_{i}(H)\right)\cap F\neq\emptyset\right)\\
 & \le\sum_{g\in F}\mu\left(g\in H\bigtriangleup K_{i}(H)\right),
\end{align*}
where in the second line we take the union bound over elements of
$F$. Therefore the task is reduced to estimate $\mu\left(g\in H\bigtriangleup K_{i}(H)\right)$
for a group element $g$. 

\begin{definition}[Bad blue vertices]

Let $g\in\Gamma$ be a given element. We say that a vertex $v\in\mathsf{B}_{i}(H)$
is a \emph{bad blue} vertex for $g$, if there exists $\gamma\in\Gamma$
such that the conjugate $g^{\gamma}=\gamma^{-1}g\gamma$ is in $K_{i}(H)\bigtriangleup H$
and moreover the section $(g^{\gamma})_{v}$ is not $id$. Denote
by $\mathsf{BB}_{i}(H,g)$ the set of bad blue vertices for $g$ on
level $i$ with respect to $H$. 

\end{definition}

Note that if $g^{\gamma}\in K_{i}(H)\bigtriangleup H$, then a vertex
$v\in\mathsf{B}_{i}(H)\cup\mathsf{G}_{i}(H)$ is necessarily fixed
by $g^{\gamma}$. The reason for marking the bad blue vertices is
stated in the following lemma. Recall that by Assumption (C) item
1, there exists an integer $n_{g}$ such that $g_{v}\in\mathcal{N}_{|v|}$
for all vertices $v$ of length at least $n_{g}$.

\begin{lemma}\label{BB}

Let $g\in\Gamma$ and $i\ge\max\{n_{g},i_{0}\}$. If $\gamma\in\Gamma$
is such that $g^{\gamma}\in K_{i}\bigtriangleup H$, then $g^{\gamma}\in H$
and there exists a vertex $v\in\mathsf{BB}_{i}(H,g)$ such that $v\cdot\gamma^{-1}\in A_{g}(i)$. 

\end{lemma}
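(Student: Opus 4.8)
The plan is to split according to the two possibilities permitted by $g^{\gamma}\in K_{i}\bigtriangleup H$, namely $g^{\gamma}\in K_{i}\setminus H$ and $g^{\gamma}\in H\setminus K_{i}$, showing the first is impossible and that the second forces the asserted bad blue vertex. Two preliminary facts will be used throughout. First, the transformation of sections under conjugation: writing $w=v\cdot\gamma^{-1}$, one has $(g^{\gamma})_{v}=(\gamma_{w})^{-1}g_{w}\gamma_{w\cdot g}$, so that whenever $g^{\gamma}$ fixes the vertex $v$ (equivalently $g$ fixes $w$, since $v\cdot g^{\gamma}=v$ is equivalent to $w\cdot g=w$) this reduces to $(g^{\gamma})_{v}=(g_{w})^{\gamma_{w}}$, a conjugate of $g_{w}$; in particular, for such $v$, $(g^{\gamma})_{v}\neq id$ if and only if $g_{w}\neq id$, i.e. if and only if $w=v\cdot\gamma^{-1}\in A_{g}(i)$. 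Second, every green or blue level-$i$ vertex is fixed (as a vertex) by every element of $H$ and by every element of $K_{i}$: such a $v$ satisfies $C_{v}\cap{\rm Fix}(H)\neq\emptyset$, and any automorphism fixing a point of $C_{v}$ cannot move the level-$i$ cylinder $C_{v}$, while for $K_{i}$ one checks this directly on the two factors in its definition.

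First I would rule out $g^{\gamma}\in K_{i}\setminus H$; this is the step I expect to be the main obstacle, and it is where Assumption (C) together with $i\ge\max\{n_{g},i_{0}\}$ enters. Write $g^{\gamma}=h\,r$ with $h\in H\cap A(\mathsf{B}_{i}(H))$ and $r\in\prod_{v\in\mathsf{G}_{i}(H)\cup\mathsf{B}_{i}(H)}{\rm Rist}_{c_{0}}^{\Gamma}(\mathsf{T}_{v})$. For a green or blue vertex $v$ at level $i$, $h$ fixes $v$ and has trivial section $h_{v}=id$ there (from $H$ fixing green cylinders and from the definition of $A(\mathsf{B}_{i}(H))$), while the factors of $r$ supported away from $C_{v}$ do not touch it; hence $(g^{\gamma})_{v}=r_{v}\in{\rm Rist}_{c_{0}}^{\Gamma}(\mathsf{T}_{v})$, so $\pi_{i,c_{0}}\big((g^{\gamma})_{v}\big)=id$. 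On the other hand $g^{\gamma}$ fixes $v$, so $(g^{\gamma})_{v}=(g_{w})^{\gamma_{w}}$ with $w=v\cdot\gamma^{-1}$; since $\pi_{i,c_{0}}$ is a homomorphism it intertwines the level-preserving conjugation by $\gamma_{w}$, giving $\pi_{i,c_{0}}(g_{w})=id$. But $i\ge n_{g}$ forces $g_{w}\in\mathcal{N}_{i}$, and then the second clause of Assumption (C) (using $i\ge i_{0}$) yields $g_{w}=id$, so $(g^{\gamma})_{v}=r_{v}=id$ for every green and blue $v$. As the factors $r_{v}$ have disjoint supports this means $r=id$, whence $g^{\gamma}=h\in H$, contradicting $g^{\gamma}\notin H$. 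Therefore $g^{\gamma}\in H$.

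It remains to produce the bad blue vertex, now knowing $g^{\gamma}\in H\setminus K_{i}$. Since $g^{\gamma}\in H$ it fixes ${\rm Fix}(H)$ pointwise, so $(g^{\gamma})_{v}=id$ at every green $v$, and by the second preliminary fact $g^{\gamma}$ fixes every blue vertex as a vertex. Because $K_{i}\supseteq H\cap A(\mathsf{B}_{i}(H))$ and $g^{\gamma}\in H\setminus K_{i}$, we have $g^{\gamma}\notin A(\mathsf{B}_{i}(H))$; as $g^{\gamma}$ already satisfies the first defining condition of $A(\mathsf{B}_{i}(H))$ (fixing every blue vertex), it must violate the second, i.e. there is a blue vertex $v\in\mathsf{B}_{i}(H)$ with $(g^{\gamma})_{v}\neq id$. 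This $v$ is a bad blue vertex by definition, and since $g^{\gamma}$ fixes $v$ the first preliminary fact gives $v\cdot\gamma^{-1}\in A_{g}(i)$, completing the argument. The only genuinely delicate point is the section bookkeeping in the $K_{i}\setminus H$ case, where one must track that the rigid-stabilizer factors contribute sections trivial on the first $c_{0}$ levels while the nucleus hypothesis rigidifies the remaining section into triviality; note that the bounded-activity hypothesis is not needed for this lemma itself, but will be what guarantees, in the subsequent counting step, that the set $\mathsf{BB}_{i}(H,g)$ is uniformly finite.
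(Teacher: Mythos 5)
Your proof is correct and takes essentially the same route as the paper's: Case I ($g^{\gamma}\in K_{i}\setminus H$) is ruled out via the decomposition $g^{\gamma}=hr$, the triviality of sections of $h\in H\cap A(\mathsf{B}_{i}(H))$ at green and blue vertices, and item 2 of Assumption (C) applied to $g_{w}\in\mathcal{N}_{i}$; Case II ($g^{\gamma}\in H\setminus K_{i}$) then yields a blue vertex with nontrivial section, which is bad by definition and pulls back to an active vertex of $g$. Your section bookkeeping (tracking $\gamma_{w\cdot g}$ versus $\gamma_{w}$ in the conjugation formula, and transferring triviality through the homomorphism $\pi_{i,c_{0}}$) is if anything slightly more careful than the paper's own write-up, but it is not a different argument.
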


\begin{proof}[Proof of Lemma \ref{BB}]

Throughout the proof $H$ is fixed and we omit reference to $H$ in
the notations. Let $\gamma\in\Gamma$ be an element such that $g^{\gamma}\in K_{i}\bigtriangleup H$.
Under the canonical wreath recursion $\varphi_{i}:{\rm Aut}(\mathsf{T})\to{\rm Aut}\left(\mathsf{T}\right)\wr_{\mathsf{L}_{i}}{\rm Aut}(\mathsf{T}_{i})$,
we write $\gamma=\left(\left(\gamma_{v}\right)_{v\in\mathsf{L}_{i}},\sigma_{f}\right)$
and $g=\left(\left(g_{v}\right)_{v\in\mathsf{L}_{i}},\sigma_{g}\right)$.
Note that by the definitions, any element in $K_{i}\cup H$ fixes
red and blue vertices in $\mathsf{L}_{i}$. It follows that if $g^{\gamma}\in K_{i}\bigtriangleup H$,
then its projection to ${\rm Aut}(\mathsf{T}_{i})$, $\sigma_{\gamma}^{-1}\sigma_{g}\sigma_{\gamma}$,
fixes the set $\mathsf{G}_{i}\cup\mathsf{R}_{i}$ pointwise; and moreover,
at a vertex $v\in\mathsf{G}_{i}\cup\mathsf{R}_{i}$, the section of
$g^{\gamma}$ is
\[
\left(g^{\gamma}\right)_{v}=\gamma_{u}^{-1}g_{u}\gamma_{u},\mbox{ where }u=v\cdot\gamma^{-1}.
\]
We will use a few times the fact that ${\rm Rist}_{c_{0}}(\mathsf{T}_{v})=\gamma^{-1}{\rm Rist}_{c_{0}}(\mathsf{T}_{u})\gamma$,
where $u=v\cdot\gamma^{-1}$. 

We proceed by discussing two cases in the symmetric difference $K_{i}\bigtriangleup H$.
Write $A_{i}=A(\mathsf{B}_{i}(H))$.

Case I: $g^{\gamma}\in K_{i}$ and $g^{\gamma}\notin H$. We need
to show that this case is impossible. By the definition of $K_{i}$,
we have that in this case 
\[
g^{\gamma}=h_{i}r_{i},
\]
where $h_{i}\in H\cap A_{i}$ and $r_{i}$ can be written as a (commuting)
product of $r_{i}^{v}$ over vertices green and blue, where $r_{i}^{u}\in{\rm Rist}_{c_{0}}(\mathsf{T}_{u})$.
Recall that any element in $H$ has trivial sections at green vertices,
and any element in $A_{i}$ has trivial sections at blue vertices.
Thus for $h_{i}\in H\cap A_{i}$, $(h_{i})_{u}=id$ for $u\in\mathsf{B}_{i}\cup\mathsf{G}_{i}$.
It follows that at blue and green vertices, $\left(g^{\gamma}\right)_{u}=r_{i}^{u}\in{\rm Rist}_{c_{0}}(\mathsf{T}_{u})$.
It follows that $g_{v}\in{\rm Rist}_{c_{0}}(\mathsf{T}_{v})$ for
$v=u\cdot\gamma$. Recall that $i\ge n_{g}$ implies that $g_{v}\in\mathcal{N}_{i}$
for all $v\in\mathsf{L}_{i}$. By the choice of $c_{0}$ in Assumption
(C) item 2, we have $\mathcal{N}_{i}\cap{\rm St}_{c_{0}}^{{\rm Aut}(\mathsf{T}_{v})}(\mathsf{T}_{v})=\{id\}$.
It follows that $g_{v}\in\mathcal{N}_{i}\cap{\rm Rist}_{c_{0}}(\mathsf{T}_{w})$
implies $g_{v}=id.$ Then after conjugating by $\gamma$, we have
that $g_{v}=id$ implies $\left(g^{\gamma}\right)_{u}=id$ for $u=v\cdot\gamma^{-1}$.
We conclude that $(g^{\gamma})_{u}=r_{i}^{u}=id$ for all $u\in\mathsf{B}_{i}\cup\mathsf{G}_{i}$,
in particular $r_{i}=id$ and $g^{\gamma}\in H\cap A_{i}$, which
contradicts the assumption that $g^{\gamma}\notin H$. 

Case II: $g^{\gamma}\in H$ and $g^{\gamma}\notin K_{i}$. In this
case, since $H$ fixes the cylinder $C_{w}$ for a green vertex $w$,
we have $(g^{\gamma})_{w}=id$ for $w\in\mathsf{G}_{i}$. If all sections
$(g^{\gamma})_{v}$ at blue vertices are $id$, then by definition
of $A_{i}$ we would have $g^{\gamma}\in A_{i}\cap H$, contradicting
with the assumption that $g^{\gamma}\notin K_{i}$. Therefore there
is a blue vertex $v\in\mathsf{B}_{i}$ with $(g^{\gamma})_{v}\neq id$.
Such a vertex $v$ is a bad blue vertex for $g$. Let $u=v\cdot\gamma^{-1}$,
then $g$ fixes $u$ and the section $g_{u}=\gamma_{u}\left(g^{\gamma}\right)_{v}\gamma_{u}^{-1}$
is nontrivial, that is $u\in A_{g}(i)$. 

\end{proof}

Next we show that the proportion of bad blue vertices controls the
probability $\mathbb{P}_{\mu}\left(g\in H\bigtriangleup K_{i}(H)\right)$. 

\begin{lemma}\label{BB2}

We have 
\[
\mathbb{P}_{\mu}\left(g\in H\bigtriangleup K_{i}(H)\right)\le\left|A_{g}(i)\right|\mathbb{E}_{\mu}\left(\frac{\left|\mathsf{BB}_{i}(H,g)\right|}{\left|\mathsf{L}_{i}\right|}\right).
\]

\end{lemma}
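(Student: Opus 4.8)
The plan is to combine the structural input of Lemma \ref{BB} with the conjugation invariance of $\mu$ through an averaging argument over the finite permutation action on level $i$. Write $p=\mathbb{P}_{\mu}\left(g\in H\bigtriangleup K_{i}(H)\right)$. The first step is to observe that $p$ is unchanged when $g$ is replaced by a conjugate: since $\mu$ is conjugation invariant and, as computed in the proof of Lemma \ref{transversal}, $K_{i}(\gamma^{-1}H\gamma)=\gamma^{-1}K_{i}(H)\gamma$, the event $\{g\in H\bigtriangleup K_{i}(H)\}$ has the same $\mu$-measure as $\{g\in(\gamma^{-1}H\gamma)\bigtriangleup K_{i}(\gamma^{-1}H\gamma)\}=\{g\in\gamma^{-1}(H\bigtriangleup K_{i}(H))\gamma\}$. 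The latter membership is equivalent to $\gamma g\gamma^{-1}\in H\bigtriangleup K_{i}(H)$; as $\gamma$ ranges over $\Gamma$ this yields, for every $\gamma$,
\[
p=\mathbb{P}_{\mu}\left(g^{\gamma}\in H\bigtriangleup K_{i}(H)\right).
\]

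The second step turns Lemma \ref{BB} into a pointwise bound on the conjugated indicator. Fix $i\ge\max\{n_{g},i_{0}\}$ so that Lemma \ref{BB} applies. For a fixed $\gamma$, whenever $g^{\gamma}\in K_{i}(H)\bigtriangleup H$ the lemma produces a bad blue vertex $v\in\mathsf{BB}_{i}(H,g)$ with $v\cdot\gamma^{-1}\in A_{g}(i)$, so
\[
\mathbf{1}\left[g^{\gamma}\in H\bigtriangleup K_{i}(H)\right]\le\sum_{v\in\mathsf{BB}_{i}(H,g)}\mathbf{1}\left[v\cdot\gamma^{-1}\in A_{g}(i)\right].
\]
Taking $\mathbb{E}_{\mu}$ and using the first step gives, for every fixed $\gamma$,
\[
p\le\mathbb{E}_{\mu}\left[\sum_{v\in\mathsf{BB}_{i}(H,g)}\mathbf{1}\left[v\cdot\gamma^{-1}\in A_{g}(i)\right]\right].
\]

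The final step is the averaging that produces the factor $|A_{g}(i)|/|\mathsf{L}_{i}|$. Let $G_{i}$ be the image of $\Gamma$ in $\mathrm{Sym}(\mathsf{L}_{i})$; by level transitivity $G_{i}$ acts transitively on $\mathsf{L}_{i}$. Choose a finite set $S\subseteq\Gamma$ mapping bijectively onto $G_{i}$ under $\gamma\mapsto\sigma_{\gamma}$, average the displayed inequality over $\gamma\in S$, and interchange the average with $\mathbb{E}_{\mu}$ and the (finite, $H$-dependent) sum over $v$. For each fixed $v$, transitivity gives $|\{\sigma\in G_{i}:v\sigma^{-1}=u\}|=|G_{i}|/|\mathsf{L}_{i}|$ for every $u\in\mathsf{L}_{i}$, so
\[
\frac{1}{|G_{i}|}\sum_{\gamma\in S}\mathbf{1}\left[v\cdot\gamma^{-1}\in A_{g}(i)\right]=\frac{|A_{g}(i)|}{|\mathsf{L}_{i}|}.
\]
Substituting yields $p\le\mathbb{E}_{\mu}\left[|\mathsf{BB}_{i}(H,g)|\cdot|A_{g}(i)|/|\mathsf{L}_{i}|\right]=|A_{g}(i)|\,\mathbb{E}_{\mu}\left(|\mathsf{BB}_{i}(H,g)|/|\mathsf{L}_{i}|\right)$, which is the claim.

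I expect the main obstacle to be getting the averaging bookkeeping exactly right: one must verify that the combinatorial average of $\mathbf{1}[v\cdot\gamma^{-1}\in A_{g}(i)]$ over the finite transitive group $G_{i}$ collapses to the uniform density $|A_{g}(i)|/|\mathsf{L}_{i}|$ (this is precisely where level transitivity is essential), and that this average legitimately commutes with the expectation over $\mu$ and with the $H$-dependent summation range $\mathsf{BB}_{i}(H,g)$. One should also keep track of the standing restriction $i\ge\max\{n_{g},i_{0}\}$ inherited from Lemma \ref{BB}, which is exactly the range in which the estimate is applied when letting $i\to\infty$ in the co-soficity argument.
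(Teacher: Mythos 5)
Your proof is correct and follows essentially the same route as the paper: conjugation invariance of $\mu$ together with the equivariance $K_{i}(\gamma^{-1}H\gamma)=\gamma^{-1}K_{i}(H)\gamma$, Lemma \ref{BB} to locate a bad blue vertex over an active site, a union bound over $A_{g}(i)$, and averaging over the transitive level-$i$ action to produce the density $|A_{g}(i)|/|\mathsf{L}_{i}|$. The only differences are cosmetic (you apply the union bound at the level of indicators before integrating and average over conjugates at the end, while the paper averages the probabilities first), and your explicit tracking of the restriction $i\ge\max\{n_{g},i_{0}\}$ from Lemma \ref{BB} is a sound point that the paper leaves implicit.
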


\begin{proof}

Recall that $\Gamma_{i}=\pi_{i}(\Gamma)$, where $\pi_{i}$ is the
projection ${\rm Aut}(\mathsf{T})\to{\rm Aut}(\mathsf{T}^{i})$. Fix
a choice section $s:\Gamma_{i}\to\Gamma$, $\pi_{i}\circ s(\gamma)=\gamma$.
We write $\tilde{\gamma}=s(\gamma)$, $\gamma\in\Gamma_{i}$. By the
conjugation invariance of $\mu$, we have that 
\[
\mathbb{P}_{\mu}\left(g\in H\bigtriangleup K_{i}(H)\right)=\frac{1}{\left|\Gamma_{n}\right|}\sum_{\gamma\in\Gamma_{n}}\mathbb{P}_{\mu}\left(g^{\tilde{\gamma}}\in H\bigtriangleup K_{i}(H)\right).
\]
By Lemma \ref{BB}, we have the containment of events:
\[
\left\{ H:g^{\tilde{\gamma}}\in H\bigtriangleup K_{i}(H)\right\} \subseteq\left\{ H:\mathsf{BB}_{i}(H,g)\cap A_{g}(i)\cdot\tilde{\gamma}\neq\emptyset\right\} .
\]
Note that $A_{g}(i)$ is a subset in level $i$, therefore $A_{g}(i)\cdot\tilde{\gamma}=A_{g}(i)\cdot\gamma$,
that is, it does not depend on the choice of the section. We have
then 
\[
\mathbb{P}_{\mu}\left(g\in H\bigtriangleup K_{i}(H)\right)\le\frac{1}{\left|\Gamma_{n}\right|}\sum_{\gamma\in\Gamma_{n}}\mathbb{P}_{\mu}\left(\mathsf{BB}_{i}(H,g)\cap A_{g}(i)\cdot\gamma\neq\emptyset\right).
\]
Now we take the union bound over active sites:
\begin{align*}
\mathbb{P}_{\mu}\left(\mathsf{BB}_{i}(H,g)\cap A_{g}(i)\cdot\gamma\neq\emptyset\right) & =\mathbb{P}_{\mu}\left(\bigcup_{x\in A_{g}(i)}\left\{ \mathsf{BB}_{i}(H,g)\ni x\cdot\gamma\right\} \right)\\
 & \le\sum_{x\in A_{g}(i)}\mathbb{P}_{\mu}\left(\mathsf{BB}_{i}(H,g)\ni x\cdot\gamma\right).
\end{align*}
Since $\Gamma_{i}$ acts transitively on $\mathsf{L}_{i}$, 
\[
\frac{1}{\left|\Gamma_{n}\right|}\sum_{\gamma\in\Gamma_{n}}\mathbf{1}_{\left\{ \mathsf{BB}_{i}(H,g)\ni x\cdot\gamma\right\} }=\frac{\left|\mathsf{BB}_{i}(H,g)\right|}{\left|\mathsf{L}_{i}\right|}.
\]
Combining these calculations, we obtain 
\begin{align*}
\mathbb{P}_{\mu}\left(g\in H\bigtriangleup K_{i}(H)\right) & \le\frac{1}{\left|\Gamma_{n}\right|}\sum_{\gamma\in\Gamma_{n}}\sum_{x\in A_{g}(i)}\mathbb{P}_{\mu}\left(\mathsf{BB}_{i}(H,g)\ni x\cdot\gamma\right)\\
 & =\left|A_{g}(i)\right|\mathbb{E}_{\mu}\left[\frac{\left|\mathsf{BB}_{i}(H,g)\right|}{\left|\mathsf{L}_{i}\right|}\right].
\end{align*}

\end{proof}

Lemma \ref{BB2} reduces the estimate to counting on each level $i$,
the proportion of bad blue vertices for $g$ with respect to $H$. 

\textbf{\textcolor{black}{Step IV}} (Vanishing proportion of bad blue
vertices). Now we show that Assumption (C) implies that the proportion
of bad blue vertices is vanishing.

\begin{lemma}\label{proportion}

Let $H<\Gamma$ be a subgroup and $g\in\Gamma$. Then the ratios of
bad blue points satisfy
\[
\frac{\left|\mathsf{BB}_{i}(H,g)\right|}{\left|\mathsf{L}_{i}\right|}\to0\mbox{ when }i\to\infty.
\]

\end{lemma}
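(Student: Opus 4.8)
The plan is to reduce the statement, via Lemma \ref{BB}, to a counting problem about the \emph{activity subtrees} of conjugates of $g$ lying in $H$, and then to exploit the bounded activity hypothesis (iii) to confine the bad blue vertices to a set of size $o(|\mathsf{L}_i|)$. First I would fix $g$ and $H$, set $C=\sup_n|A_g(n)|<\infty$, and work at levels $i\ge\max\{n_g,i_0\}$. Recall from Lemma \ref{BB} that every bad blue vertex $v\in\mathsf{BB}_i(H,g)$ comes with a witness $\gamma$ such that $g^{\gamma}\in H$, $(g^{\gamma})_v\ne id$, and $u:=v\cdot\gamma^{-1}\in A_g(i)$; equivalently $v$ is blue and is an active vertex at level $i$ of some conjugate $h=g^{\gamma}\in H$. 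The first structural input I would record is that the set $T_h=\{w:h_w\ne id\}$ is an ancestor-closed subtree, that $T_{g^{\gamma}}=T_g\cdot\gamma$, and that conjugation by $\gamma\in\Gamma\le{\rm Aut}(\mathsf{T})$ permutes each level, so $|A_{g^{\gamma}}(i)|=|A_g(i)|\le C$ for \emph{every} conjugate. Thus each contributing conjugate has an activity subtree of width at most $C$, hence at most $C$ infinite ``activity rays''.

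Next I would extract the local structure forced by the two assumptions. Since $v$ is blue, $C_v$ meets ${\rm Fix}(H)$, so $v$ is the length-$i$ prefix of some $H$-fixed point $x$, and $h\in H$ fixes $x$ together with all of ${\rm Fix}(H)\cap C_v$. On the other hand $(g^{\gamma})_v=\gamma_u^{-1}g_u\gamma_u$ with $g_u\in\mathcal{N}_i\setminus\{id\}$ (here $i\ge n_g$), so by Assumption (C)(ii) we have $\pi_{i,c_0}(g_u)\ne id$; conjugation by the tree automorphism $\gamma_u$ preserves levels, so $(g^{\gamma})_v$ moves some vertex $vw$ with $|w|\le c_0$. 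Because $g^{\gamma}\in H$ fixes every point of ${\rm Fix}(H)$, any vertex it moves has cylinder disjoint from ${\rm Fix}(H)$, i.e.\ $vw$ is \emph{red}. Hence each bad blue vertex $v$ sits exactly at the boundary between ${\rm Fix}(H)$ and its complement, decorated by one of the at most $C$ activity strands of a conjugate and carrying a red descendant within the fixed depth $c_0$. The disjointness of the cylinders $C_v$ over $v\in\mathsf{L}_i$ makes the assignment of such red descendants injective, which is the starting point for the count.

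The main step is then the estimate $|\mathsf{BB}_i(H,g)|=o(|\mathsf{L}_i|)$. The plan is to charge every bad blue vertex at level $i$ to one of the finitely many ($\le C$) activity rays of a conjugate in $H$ that \emph{accumulate on} ${\rm Fix}(H)$: a ray leaving ${\rm Fix}(H)$ has red, hence non-blue, prefixes from some level on, so only rays that stay near the fixed set can produce blue active prefixes, together with the finitely-many dead-end active vertices at level $i$ (of which there are at most $C$ per conjugate). Using the width bound $\le C$ on the activity subtrees $T_g\cdot\gamma$, I would bound the number of distinct level-$i$ prefixes arising this way by a quantity growing subexponentially in $i$, while $|\mathsf{L}_i|=\prod_{j\le i}d_j\ge 2^{i}\to\infty$; dividing gives the claimed convergence to $0$. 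Once the pointwise statement is in hand, the bounded convergence theorem (the ratio is at most $1$) upgrades it to $\mathbb{E}_{\mu}\big[|\mathsf{BB}_i(H,g)|/|\mathsf{L}_i|\big]\to0$, which feeds into Lemma \ref{BB2}.

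The hard part will be controlling the \emph{a priori unbounded} family of conjugates of $g$ inside $H$. This is genuinely necessary: blue vertices alone need not form a vanishing proportion — if ${\rm Fix}(H)$ is a fat Cantor set (positive measure, empty interior) then every vertex meeting ${\rm Fix}(H)$ is blue, so $|\mathsf{B}_i(H)|\ge\mu({\rm Fix}(H))\,|\mathsf{L}_i|$ is a positive proportion. Thus the estimate cannot come from counting blue vertices, and must use that the additional constraint ``active for a conjugate in $H$'' together with Assumption (iii) pins the relevant vertices onto boundedly many activity strands per conjugate. The delicate point is that distinct conjugates may share the fixed set ${\rm Fix}(H)$ and contribute overlapping strands; the role of the bounded activity hypothesis is precisely to cap the width of each activity subtree by $C$, so that these strands — and hence the bad blue vertices they can create at level $i$ — are forced to be asymptotically negligible against the full level $\mathsf{L}_i$. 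Verifying that the accumulated contribution over all such conjugates stays $o(|\mathsf{L}_i|)$, rather than merely bounded per conjugate, is where the argument requires the most care, and is exactly the place where dropping assumption (iii) would reopen the gap noted in the footnote.
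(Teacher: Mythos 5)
Your local analysis is correct and matches the paper's first step: for $i\ge\max\{n_g,i_0\}$, Lemma \ref{BB} supplies a witness $\gamma$ with $g^{\gamma}\in H$ and $g_u\in\mathcal{N}_i\setminus\{id\}$ at $u=v\cdot\gamma^{-1}$, and Assumption (C)(ii) then forces $(g^{\gamma})_v$ to move some vertex within $c_0$ levels below $v$; since $g^{\gamma}\in H$, every vertex it moves is red. But your global counting step --- charging bad blue vertices to ``activity rays'' of conjugates and asserting that the union of these strands has subexponential size --- is a genuine gap, and you essentially concede it (``Verifying that the accumulated contribution over all such conjugates stays $o(|\mathsf{L}_i|)$ \dots is where the argument requires the most care''). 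The per-conjugate bound $|A_{g^{\gamma}}(i)|=|A_g(i)|\le C$ buys nothing by itself: the number of conjugates $\gamma$ with $g^{\gamma}\in H\setminus K_i(H)$ is not controlled in $i$, and since $A_{g^{\gamma}}(i)=A_g(i)\cdot\gamma$ and $\Gamma$ acts transitively on levels, the union of activity sets over unrestricted $\gamma$ is all of $\mathsf{L}_i$. Nothing in your sketch pins down which conjugates can actually occur, so there is no route to the claimed subexponential bound, and there is no reason to expect it to hold.

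The paper closes exactly this gap by a different and much simpler mechanism that never counts conjugates: the red-descendant fact is fed into a monotonicity argument for the coloring itself. Since children of red (resp.\ green) vertices are red (resp.\ green), the blue proportion $q_b(i)=\left|\mathsf{B}_i(H)\right|/\left|\mathsf{L}_i\right|$ is non-increasing; and since every bad blue vertex at level $i$ has at least two red descendants at level $i+c_0$, one gets
\[
q_b(i+c_0)\le q_b(i)-\frac{2}{d^{c_0}}\,q_{bb}(i),
\]
where $d$ bounds the valencies and $q_{bb}(i)=\left|\mathsf{BB}_i(H,g)\right|/\left|\mathsf{L}_i\right|$. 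As $q_b$ is non-increasing and bounded in $[0,1]$, the differences $q_b(i)-q_b(i+c_0)$ tend to $0$ along every residue class mod $c_0$, forcing $q_{bb}(i)\to0$. Note that this uses only Assumption (C): the bounded activity hypothesis (iii) plays no role in Lemma \ref{proportion}; it is needed only afterwards, in Lemma \ref{BB2} and the proof of Theorem \ref{cosofic-1}, to control the prefactor $\left|A_g(i)\right|$. Your proposal misplaces where (iii) enters, and the counting you hoped it would provide is in fact supplied by this depletion-of-blue-vertices argument, not by any bound on activity widths.
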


\begin{proof}[Proof of Lemma \ref{proportion}]

Recall that $c_{0}$ is the integer provided by Assumption (C). Suppose
$v\in\mathsf{BB}_{i}(H,g)$ is a bad blue vertex for $g$. We claim
that further down $c_{0}$ levels, the number of red vertices among
the descendants of $v$ on level $\mathsf{L}_{i+c_{0}}$ is at least
$2$. Indeed, by definition of a bad blue vertex, there exists $\gamma\in\Gamma$
such that $(g^{\gamma})_{v}\in K_{i}(H)\bigtriangleup H$ and $(g^{\gamma})_{v}\neq id$.
Let $u=v\cdot\gamma^{-1}$, then $\left(g^{\gamma}\right)_{v}=\gamma_{u}^{-1}g_{u}\gamma_{u}$
as explained in the proof of the previous lemma. By Lemma \ref{BB},
for $i\ge\max\{i_{0},n_{g}\},$we have that $g^{\gamma}\in H$ and
$g_{u}\in\mathcal{N}_{i}\setminus\{id\}$. By the second item in Assumption
(C), we have that $g_{u}$ cannot fix all descendants of $u$ on the
level $i+c_{0}$ (otherwise $g_{u}$ would project to $id$). Thus
there exist at least two vertices of the form $uz$, where $|z|=c_{0}$,
such that $uz\cdot g_{u}\neq uz$. After conjugation by $\gamma$,
we have that there exists at least two vertices of the form $vz_{1}$
and $vz_{2}$, $|z_{i}|=c_{0}$, such that $vz_{i}\cdot g^{\gamma}\neq vz_{i}$,
$i=1,2$. Since $g^{\gamma}\in H$, it follows that $H$ has no fixed
point in the cylinders $C_{vz_{i}}$, if $vz\cdot g^{f}\neq vz$.
In other words, the two vertices $vz_{1}$ and $vz_{2}$ are colored
red. 

Recall that descendants of red (green resp.) vertices are red (green
resp.). Denote by $q_{b}(i)=\left|\mathsf{B}_{i}(H)\right|/\left|\mathsf{L}_{i}\right|$
the proportion of blue vertices on level $i$, similarly $q_{bb}(i)=\left|\mathsf{BB}_{i}(H,g)\right|/\left|\mathsf{L}_{i}\right|$
the proportion of bad blue vertices. Then we have that the sequence
$(q_{b}(i))_{i=1}^{\infty}$ is non-increasing. By the reasoning in
the previous paragraph, because of red descendants of a bad blue vertex
down $c_{0}$ levels, we have
\[
q_{b}(i+c_{0})\le q_{b}(i)-\frac{2}{d^{c_{0}}}q_{bb}(i),
\]
where $d$ is the maximal valency of the tree $\mathsf{T}$. It follows
that $q_{bb}(i)\to0$ when $i\to\infty$. 

\end{proof}

After these four steps we are now ready to summarize and finish the
proof of Theorem \ref{cosofic-1}]. 

\begin{proof}[Proof of Theorem \ref{cosofic-1}]

Let $\Gamma$ be a group satisfying the assumptions of Theorem \ref{cosofic-1}
and $\mu$ be an IRS of $\Gamma$. 

Let $\mu_{i}$ be the pushforward of $\mu$ under the map $H\mapsto K_{i}(H)$,
where $K_{i}(H)$ is defined in (\ref{eq:K_i}). By Lemma \ref{transversal},
$\mu_{i}$ is an IRS supported on finite index subgroups of $\Gamma$.
As explained at the beginning of Step III, to show that $\mu_{i}$
converges to $\mu$ in the weak$^{\ast}$-topology, it suffices to
show that for any $g\in\Gamma$, $\lim_{i\to\infty}\mathbb{P}_{\mu}\left(g\in H\bigtriangleup K_{i}(H)\right)=0$.
By Lemma \ref{BB2}, we have 
\[
\limsup_{i\to\infty}\mathbb{P}_{\mu}\left(g\in H\bigtriangleup K_{i}(H)\right)\le\sup_{i\in\mathbb{N}}\left|A_{g}(i)\right|\limsup_{i\to\infty}\mathbb{E}_{\mu}\left[\frac{\left|\mathsf{BB}_{i}(H,g)\right|}{\left|\mathsf{L}_{i}\right|}\right].
\]
The bounded activity assumption (iii) exactly means $\sup_{i\in\mathbb{N}}\left|A_{g}(i)\right|<\infty$
for all $g$. By Lemma \ref{proportion}, we have 
\[
\frac{\left|\mathsf{BB}_{i}(H,g)\right|}{\left|\mathsf{L}_{i}\right|}\to0\mbox{ when }i\to\infty\mbox{ for all }H.
\]
Therefore by the bounded convergence theorem, we have 
\[
\lim_{i\to\infty}\mathbb{E}_{\mu}\left[\frac{\left|\mathsf{BB}_{i}(H,g)\right|}{\left|\mathsf{L}_{i}\right|}\right]=0.
\]
It follows that $\mu_{i}$ converges to $\mu$ in the weak{*} topology
when $i\to\infty$. 

\end{proof}

\subsection{Examples to which Theorem \ref{cosofic-1} is applicable\label{subsec:C-examples}}

A large class of groups acting on a regular rooted tree $\mathsf{T}$
satisfying assumptions (ii) and (iii) of Theorem \ref{cosofic-1})
consists of finitely generated subgroups of $\mathfrak{BA}(\mathsf{T})$,
where $\mathfrak{BA}(\mathsf{T})$ is the group of all \emph{bounded
automatic automorphisms} of $\mathsf{T}.$ For the definition of $\mathfrak{BA}(\mathsf{T})$
see \cite[Section 1.C]{BKN}. The main theorem of \cite{BKN} states
that the group $\mathfrak{BA}(\mathsf{T})$ is amenable. An equivalent
description of a finitely generated subgroup $\Gamma$ of $\mathfrak{BA}(\mathsf{T})$
is that $\Gamma$ is finitely generated and acts faithfully on a regular
rooted tree $\mathsf{T}$ such that the action is contracting, self-similar
and elements of $\Gamma$ have bounded activity. 

Among finitely generated subgroups of $\mathfrak{BA}(\mathsf{T})$
there are examples of just-infinite branch groups, see \cite[Section 1.D]{BKN}
or \cite[Section 1.8]{Nek-book}. We mention a few well-known examples:
the first Grigorchuk group $\mathfrak{G}$ (constructed in \cite{Grigorchuk1980});
the Gupta-Sidki $p$-groups (constructed in \cite{Gupta-Sidki});
and the group constructed by P. Neumann in \cite[Section 5]{Neumann}
which is just infinite and regularly branching over itself (the group
is called Example C in \cite{Neumann}, see also \cite[Section 1.6.6]{handbook}). 

It follows from the previous paragraph that Theorem \ref{cosofic-1}
covers the Grigorchuk group and the Gupta-Sidki $p$-group:

\begin{proof}[Proof of Theorem \ref{P-stable}]

The Grigorchuk group $\mathfrak{G}$ is a just-infinite, branch, contracting
self-similar group, see e.g., \cite[Section 1.6]{Nek-book}. Its nucleus
is $\mathcal{N}=\{id,a,b,c,d\}.$ The generators have bounded activity,
indeed, $|A_{i}(a)|=0$ and $\sup_{i}|A_{i}(b)|=\sup_{i}|A_{i}(c)|=\sup_{i}|A_{i}(d)|=2$. 

The Gupta-Sidki $p$-group $G_{p}$ acts on the $p$-regular rooted
tree, where $p$ is a prime at least $3$, it is generated by two
elements $a$ and $t$ with recursion
\[
a=\sigma\mbox{ and }t=\left(a,a^{-1},1,\ldots,1,t\right),
\]
where $a$ is the cyclic permutation of $\{0,1,\ldots,p-1\}$. The
group $G_{p}$ is a just-infinite branch group, see e.g., \cite{Garrido},
and it is a contracting self-similar group with nucleus $\mathcal{N}=\left\{ id,a,\ldots,a^{p-1},t,\ldots,t^{p-1}\right\} $.
The generators have bounded activity, indeed, $|A_{i}(a)|=0$ and
$|A_{i}(t)|=3$ for all $i\in\mathbb{N}$. 

Therefore $\mathfrak{G}$ and $G_{p}$ satisfy the three assumptions
of Theorem \ref{cosofic-1}, the statement follows.

\end{proof}

\begin{remark}

A group $\Gamma$ is \emph{LERF} (also called \emph{subgroup separable})
if each of its finitely generated subgroups is an intersection of
subgroups of finite index; or equivalently, subgroups of finite index
are dense in ${\rm Sub}(\Gamma)$. In the proof of Theorem \ref{P-stable}
we do not invoke the result of Grigorchuk and Wilson \cite{Grigorchuk-Wilson}
that the group $\mathfrak{G}$ is LERF. The Gupta-Sidki group $G_{p}$
is known to be LERF when $p=3$ by Garrido \cite{Garrido}; and to
our knowledge open for other $p$. For $\mathfrak{G}$ and $G_{3}$,
it is not clear how to use the LERF property towards showing co-soficity
of IRSs, as asked in \cite[Question 8.4]{BLT19}. 

\end{remark}

Theorem \ref{cosofic-1} applies to examples beyond just-infinite
branch groups generated by finitely many elements of $\mathfrak{BA}(\mathsf{T})$.
We consider the following sub-collection of Grigorchuk groups. The
family of Grigorchuk groups, indexed by $\omega=\omega_{0}\omega_{1}\ldots\in\{0,1,2\}^{\infty}$,
is constructed in \cite{Girgorchuk1984}. We briefly recall the definition.
Let $\{0,1,2\}$ be the three non-trivial homomorphisms from $\mathbb{Z}/2\mathbb{Z}\times\mathbb{Z}/2\mathbb{Z}=\{id,b,c,d\}$
to $\mathbb{Z}/2\mathbb{Z}=\{id,a\}$. The group $G_{\omega}$ is
generated by $a,b_{\omega},c_{\omega},d_{\omega}$, which are defined
recursively as 
\[
a=\varepsilon,\ b_{\omega}=\left(\omega_{0}(b),b_{\mathfrak{s}\omega}\right),\ c_{\omega}=\left(\omega_{0}(c),c_{\mathfrak{s}\omega}\right),\ d_{\omega}=\left(\omega_{0}(d),d_{\mathfrak{s}\omega}\right),
\]
where $\varepsilon$ is the root permutation which permutes the two
subtrees of the root and $\mathfrak{s}$ is the shift. Let $\Omega'$
be the collection of strings in $\{0,1,2\}^{\infty}$ such that $\omega\in\Omega'$
if and only if there exists $k\in\mathbb{N}$ such that in every $k$-block
$\omega_{k(i-1)}\ldots\omega_{ki-1}$, $i\in\mathbb{N}$, all three
letters $0,1,2$ appear. The set $\Omega'$ contains all periodic
sequences where all three letters $0,1,2$ appear; it also contains,
for example, strings that are concatenations of $0122$ and $0012$. 

\begin{corollary}\label{omega}

For $\omega\in\Omega'$, all IRSs of the Grigorchuk group $G_{\omega}$
are co-sofic. 

\end{corollary}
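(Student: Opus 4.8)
The plan is to verify that $G_\omega$ satisfies the three hypotheses of Theorem \ref{cosofic-1} and then invoke that theorem directly. The group $G_\omega$ acts faithfully and level transitively on the rooted binary tree $\mathsf{T}$, which has constant valency $2$, so the bounded valency requirement is immediate. Since $\omega\in\Omega'$ forces each of the three letters $0,1,2$ to occur infinitely often in $\omega$, the classical results of Grigorchuk (see \cite{Girgorchuk1984,Gri-justinfinite,handbook}) give that $G_\omega$ is a just-infinite branch group, which establishes hypothesis (i).

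For hypothesis (ii), I would take $\mathcal{N}_i:=\{id,a,b_{\mathfrak{s}^i\omega},c_{\mathfrak{s}^i\omega},d_{\mathfrak{s}^i\omega}\}\subseteq{\rm Aut}(\mathsf{T}_{s^i\mathbf{d}})$, using that under the wreath recursion the sections of elements of $G_\omega$ at level-$i$ vertices lie in $G_{\mathfrak{s}^i\omega}$, whose elements of word length $\le 1$ are exactly the members of $\mathcal{N}_i$. The first item of Assumption (C) is then the standard contraction property of Grigorchuk groups: the word length of sections decreases geometrically, so for every $g$ there is $n_g$ with $|g_v|\le 1$, hence $g_v\in\mathcal{N}_{|v|}$, for all $|v|\ge n_g$ (see \cite{Girgorchuk1984,handbook}); this holds for every $\omega$. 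The real content of Assumption (C) is the second item, and this is where $\omega\in\Omega'$ is used. Writing $k$ for the block length in the definition of $\Omega'$, I would observe that any window of $2k-1$ consecutive coordinates of $\omega$ contains a complete aligned $k$-block and hence all three letters. Since each of $b,c,d$ is mapped to $a$ by two of the three homomorphisms $0,1,2$, the section of $b_{\mathfrak{s}^i\omega}$ at some vertex $1^j0$ with $j\le 2k-2$ must equal $a=\varepsilon$, so $b_{\mathfrak{s}^i\omega}$ moves a vertex at level $j+2\le 2k$; the same holds for $c_{\mathfrak{s}^i\omega}$ and $d_{\mathfrak{s}^i\omega}$. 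As $a$ already acts nontrivially on level $1$, this gives $\pi_{i,2k}(\gamma)\neq id$ for every $\gamma\in\mathcal{N}_i\setminus\{id\}$ and every $i$, so Assumption (C) holds with $c_0=2k$ and $i_0=1$.

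Finally, for hypothesis (iii) I would check bounded activity on the generating set: $a$ has no active vertices below the root, while on level $i$ the generator $b_\omega$ is active only at $1^i$ (section $b_{\mathfrak{s}^i\omega}$) and possibly at $1^{i-1}0$ (section $\omega_{i-1}(b)\in\{id,a\}$), so $|A_{b_\omega}(i)|\le 2$ for all $i$, and likewise for $c_\omega,d_\omega$. By the remark following Theorem \ref{cosofic-1} that bounded activity on a generating set propagates to the whole group, every element of $G_\omega$ has bounded activity. With (i), (ii), (iii) in hand, Theorem \ref{cosofic-1} yields that all IRSs of $G_\omega$ are co-sofic. The main obstacle is the second item of Assumption (C): one must produce a \emph{uniform} constant $c_0$ valid at every level $i$, and it is precisely the uniform recurrence built into $\Omega'$ (as opposed to the weaker ``each letter infinitely often'' that suffices for branchness) that supplies it; some care is also needed because $G_\omega$ is not self-similar, so one must track the family $\{G_{\mathfrak{s}^i\omega}\}$ rather than a single group.
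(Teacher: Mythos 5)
Your proposal is correct and takes essentially the same route as the paper: both verify hypotheses (i)--(iii) of Theorem \ref{cosofic-1}, with the same nucleus sequence $\mathcal{N}_i=\{id,a,b_{\mathfrak{s}^i\omega},c_{\mathfrak{s}^i\omega},d_{\mathfrak{s}^i\omega}\}$, the same constant $c_0=2k$ coming from the $k$-block condition defining $\Omega'$, and the same bounded-activity check on the generators. If anything, your verification of item 2 of Assumption (C) (locating an aligned $k$-block inside any window of $2k-1$ consecutive letters, so that each of $b_{\mathfrak{s}^i\omega},c_{\mathfrak{s}^i\omega},d_{\mathfrak{s}^i\omega}$ has a section equal to $a$ within $2k-2$ levels) spells out details that the paper's proof leaves implicit.
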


\begin{proof}

We verify that for $\omega\in\Omega'$, $G_{\omega}$ satisfies the
assumptions of Theorem \ref{cosofic-1}. 

(i) If $\omega$ is a string such that all three letters $0,1,2$
appear infinitely often, then it is shown in \cite{Girgorchuk1984}
that $G_{\omega}$ is an infinite torsion group. The group $G_{\omega}$
is branching, see \cite[Proposition 2.3]{BE2}. Since in a branch
group rigid stabilizers are finitely generated, and a finitely generated
torsion abelian group is finite, it follows that $G_{\omega}$ is
just-infinite.

(ii) Let $\mathcal{N}_{i}=\left\{ id,a,b_{\mathfrak{s}^{i}\omega},c_{\mathfrak{s}^{i}\omega},d_{\mathfrak{s}^{i}\omega}\right\} $.
Then item 1 of Assumption (C) is satisfied. Since $\omega$ is assumed
to be in $\Omega'$, let $k$ be an integer that all three letters
appear in all $k$-blocks. Then item 2 of Assumption (C) is satisfied
with $c_{0}=2k$. 

(iii) By the definition, the generators of $G_{\omega}$ are of bounded
activity for any string $\omega$. 

\end{proof}

In Brieussel \cite{Bri09}, uncountably many pairwise non-isomorphic
amenable groups of non-uniform exponential growth are given. The first
examples of groups of non-uniform exponential growth are constructed
by Wilson in \cite{Wilson}. We refer to \cite{Wilson,Bri09} for
background on non-uniform exponential growth. The groups constructed
in \cite{Wilson} are non-amenable. Similar to Grigorchuk groups considered
in Corollary \ref{omega}, a subcollection of amenable examples in
\cite{Bri09} fit in the setting of Theorem \ref{cosofic-1} and we
deduce:

\begin{corollary}[Corollary to the main theorem of \cite{Bri09} and Theorem \ref{cosofic-1}]\label{non-uni}

There exists an uncountable collection of pairwise non-isomorphic
amenable groups of non-uniform exponential growth such that for any
group $\Gamma$ in the collection, all IRSs of $\Gamma$ are co-sofic. 

\end{corollary}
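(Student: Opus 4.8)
The plan is to follow the template of the proof of Corollary \ref{omega} essentially verbatim: from Brieussel's construction in \cite{Bri09} I would extract an uncountable subfamily of groups and check, one hypothesis at a time, that each member satisfies the three assumptions of Theorem \ref{cosofic-1}. Brieussel's amenable examples of non-uniform exponential growth are directed (spinal) groups $G_\omega$ acting by automorphisms on a rooted tree of bounded valency, indexed by sequences $\omega$ over a finite alphabet, in direct analogy with the Grigorchuk groups $G_\omega$ of Corollary \ref{omega}. The assertions that the index set can be taken uncountable, that the resulting groups are pairwise non-isomorphic, and that they are amenable of non-uniform exponential growth are precisely the content of the main theorem of \cite{Bri09}; so the only new work is to isolate a combinatorial condition on $\omega$—the analogue of the condition $\omega\in\Omega'$—that forces hypotheses (i)--(iii), while keeping the surviving index set uncountable.

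First I would write out the canonical wreath recursions for the finite generating set of $G_\omega$: one generator is the root permutation, and each remaining generator is defined recursively with sections that are again generators of the shifted group $G_{\mathfrak{s}\omega}$. This makes the action self-similar in the generalized sense and furnishes the nucleus sequence $\mathcal{N}_i=\{\mathrm{id}\}\cup\{\text{generators of }G_{\mathfrak{s}^i\omega}\}$, so that item~1 of Assumption~(C) holds because every element contracts into this sequence after finitely many levels. Exactly as $\sup_i|A_i(b)|=2$ in the Grigorchuk case, the recursions are arranged so that each generator is active at a uniformly bounded number of vertices per level; hence every element of $G_\omega$ has bounded activity, giving hypothesis~(iii). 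For Assumption~(C) item~2 I would impose a block condition: there is a $k$ such that within every block of $k$ consecutive letters of $\omega$ the prescribed data are rich enough that no nontrivial generator of $G_{\mathfrak{s}^i\omega}$ can project trivially onto the subtree of depth $c_0$, where $c_0$ is taken to be a multiple of $k$. This is the precise analogue of demanding that all letters occur in each $k$-block, and it guarantees that a nontrivial nucleus element has nontrivial image on level $c_0$.

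For hypothesis~(i) I would invoke the structural results of \cite{Bri09} together with the branch-group criteria of \cite{Gri-justinfinite} (as used for $G_\omega$ via \cite{BE2,Girgorchuk1984}): under the block condition the group is branch, and its rigid stabilizers have finite abelianization (e.g.\ because the group is torsion, or directly from finite generation of rigid stabilizers), which yields just-infiniteness. Once the three hypotheses are in place, Theorem \ref{cosofic-1} applies to each group of the subfamily and shows that all of its IRSs are co-sofic.

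The main obstacle is the tension in choosing the block condition: it must be strong enough to deliver Assumption~(C) and just-infiniteness, yet weak enough that the set of admissible sequences $\omega$ remains uncountable and that Brieussel's non-isomorphism and non-uniform-growth arguments still apply to this restricted set. Concretely, I would need to confirm that the combinatorial freedom in $\omega$ that drives the variation of growth in \cite{Bri09} persists after imposing the block condition—so that uncountably many pairwise non-isomorphic groups of non-uniform exponential growth survive. Verifying that Brieussel's invariants distinguishing the groups are insensitive to the block restriction is the crux of the argument; granting this, the corollary follows immediately from Theorem \ref{cosofic-1}.
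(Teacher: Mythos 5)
Your overall template is the same as the paper's: restrict Brieussel's family to an uncountable subfamily satisfying a block-type condition and check hypotheses (i)--(iii) of Theorem \ref{cosofic-1}. But the proposal defers exactly the steps that constitute the proof, and at two points the fallback arguments you sketch would fail. First, the ``crux'' you leave open --- that the block restriction is compatible with uncountability, pairwise non-isomorphism, and non-uniform growth --- is not a delicate tension at all once one uses the right input from \cite{Bri09}: by \cite[Corollary 8.2]{Bri09} \emph{any} two distinct valency sequences yield non-isomorphic groups, and the main theorem of \cite{Bri09} gives amenability and non-uniform exponential growth for every group $G\left(\mathcal{A}_{d_{0}},A_{\mathbf{d}}\right)$ in the family, so no invariant needs to be shown ``insensitive'' to the restriction. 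What remains is an explicit construction: the paper notes that Brieussel's groups are parametrized by the valency sequence $\mathbf{d}$ of the tree itself (the tree varies; this is not a fixed tree with varying actions, as your analogy with $G_{\omega}$ suggests), formulates Assumption (C) item 2 as a syndetic recurrence condition on consecutive-valency pairs ($E_{k,j}\cap[Q_{0}m,Q_{0}(m+1)]\neq\emptyset$ for every pair $(k,j)$ occurring in $\mathbf{d}$), and exhibits uncountably many admissible sequences as concatenations of the two blocks $abbaa$ and $aabba$ with $a=29$, $b=30$ (giving $c_{0}=2Q_{0}$ with $Q_{0}=5$). Without producing such a condition and such a family, the corollary is not proved.

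Second, your verification of hypothesis (i) rests on arguments that do not go through. Brieussel's groups are not handled by the torsion argument used for the Grigorchuk groups $G_{\omega}$ in Corollary \ref{omega}, and ``finite generation of rigid stabilizers'' alone never yields finite abelianization (a finitely generated abelian group can be infinite), so neither of your two suggested routes establishes just-infiniteness. The paper instead quotes \cite[Proposition 7.2]{Bri09}: $G\left(\mathcal{A}_{d_{0}},A_{\mathbf{d}}\right)$ is perfect and satisfies ${\rm Rist}_{G}(n)'={\rm St}_{G}(n)$ for all $n$, which immediately gives that it is a just-infinite branch group, i.e.\ exactly hypothesis (i) of Theorem \ref{cosofic-1}. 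Your treatment of (ii) and (iii) (rooted generators inactive below level one, directed generators of activity bounded by the valency bound, nucleus sequence $\mathcal{N}_{i}=\mathcal{A}_{d_{i-1}}\cup A_{s^{i}\mathbf{d}}$) is consistent with the paper's, but as written the proposal is an outline whose decisive ingredients --- the precise recurrence condition, the explicit uncountable family, the citation-level facts replacing your incorrect just-infiniteness argument --- are missing.
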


\begin{proof}

We do not recall the details of the construction, but cite a few key
properties of these groups given in \cite{Bri09} that are sufficient
to verify the conditions in Theorem \ref{cosofic-1}. 

Let ${\bf d}=(d_{i})_{i=0}^{\infty}$ be a bounded valency sequence
satisfying $29\le d_{i}\le D$. Associated to the valency sequence
${\bf d}$, a group $G\left(\mathcal{A}_{d_{0}},\mathcal{A}_{{\bf d}}\right)$
acting on the rooted tree $\mathsf{T}_{d}$ is defined as in \cite[Subsection 7.2]{Bri09}.
By \cite[Proposition 7.2]{Bri09}, $G_{0}=G\left(\mathcal{A}_{d_{0}},A_{{\bf d}}\right)$
is perfect and ${\rm Rist}_{G_{0}}(n)'={\rm St}_{G_{0}}(n)$, therefore
it is a just-infinite branch group. 

By its construction, $G\left(\mathcal{A}_{d_{0}},A_{{\bf d}}\right)$
is generated by rooted automorphisms $\mathcal{A}_{d_{0}}$ and directed
automorphisms $A_{{\bf d}}$. By the definitions of these automorphisms,
we have that for a rooted automorphism $\sigma\in\mathcal{A}_{d_{0}}$,
its sections are trivial for all $v$ with $|v|\ge1$, and for a directed
automorphism $h\in A_{{\bf d}}$, its activity is bounded uniformly
by $D$. Thus $G\left(\mathcal{A}_{d_{0}},A_{{\bf d}}\right)$ is
of bounded activity. 

It remains to ensure that Assumption (C) is satisfied. Denote by $\mathcal{N}_{i}=\mathcal{A}_{d_{i-1}}\cup A_{s^{i}{\bf d}}$,
then for any element $g\in G\left(\mathcal{A}_{d_{0}},A_{{\bf d}}\right)$,
its sections in level $i$ are in $\mathcal{N}_{i}$ for sufficiently
large $i$. We impose a restriction on ${\bf d}$ to guarantee the
second item in Assumption (C). Given ${\bf d}$, denote by $E_{k,j}=\{i\ge1:d_{i}=k,\ d_{i+1}=j\}$,
where $29\le k,j\le D$. Let $B_{{\bf d}}=\{(k,j):E_{k,j}\neq\emptyset\}$
be the pairs which appear as consecutive entries in the valency sequence
${\bf d}$. Then by definition of $A_{{\bf d}}$ in \cite[Subsection 7.2]{Bri09},
we have that if there exists a constant $Q_{0}$ such that for any
pair $(k,j)\in B_{{\bf d}}$, $E_{k,j}\cap[Q_{0}m,Q_{0}(m+1)]\neq\emptyset$
for all $m\in\mathbb{N}$, then condition 2 in Assumption (C) is satisfied
with constant $c_{0}=2Q_{0}$. 

Denote by $\mathcal{D}$ the set of bounded valency sequences ${\bf d}$
with $d_{i}\ge29$, satisfying that there exists a constant $Q_{0}$
such that for any pair $(k,j)\in B_{{\bf d}}$, $E_{k,j}\cap[Q_{0}m,Q_{0}(m+1)]\neq\emptyset$
for all $m\in\mathbb{N}$. For $d\in\mathcal{D}$, we have verified
that its associated group $G\left(\mathcal{A}_{d_{0}},A_{{\bf d}}\right)$
satisfies the three conditions of Theorem \ref{cosofic-1}. By \cite[Corollary 8.2]{Bri09},
for two difference valency sequences, the associated groups are non-isomorphic. 

The set $\mathcal{D}$ is uncountable. For example, take valency sequences
that are concatenations of $abbaa$ and $aabba$, where $a=29$, $b=30$.
Such sequences satisfies the constraint to be in $\mathcal{D}$ with
$Q_{0}=5$. The group $G\left(\mathcal{A}_{d_{0}},A_{{\bf d}}\right)$
is amenable and of non-uniform exponential growth by the main theorem
of \cite{Bri09}. The statement is given by taking the groups $G\left(\mathcal{A}_{d_{0}},A_{{\bf d}}\right)$
associated with ${\bf d}\in\mathcal{D}$. 

\end{proof}

\subsection{Connection to P-stability following \cite{BLT19}}

A finitely generated group $\Gamma$ is permutation stable (P-stable)
if every almost homomorphism $\rho_{k}:\Gamma\to{\rm Sym}(n_{k})$
is close to an actual homomorphism $\varphi_{k}:\Gamma\to{\rm Sym}(n_{k})$.
More precisely, equip the symmetric group ${\rm Sym}(n)$ with the
normalized Hamming distance: 
\[
d_{n}(\sigma,\tau):=\frac{1}{n}\left|\left\{ i\in[n]:\ \sigma(i)\neq\tau(i)\right\} \right|.
\]

\begin{definition}\label{def-P-stable}

A sequence of maps $\sigma_{k}:\Gamma\to{\rm Sym}(n_{k})$, where
$\lim_{k\to\infty}n_{k}=\infty$, is called an \emph{asymptotic homomorphism}
if $\lim_{k\to\infty}d_{n_{k}}\left(\sigma_{k}(gh),\sigma_{k}(g)\sigma_{k}(h)\right)=0$
for any $g,h\in\Gamma$. A group $\Gamma$ is said to be \emph{P-stable}
if for any asymptotic homomorphism $\sigma_{k}:\Gamma\to{\rm Sym}(n_{k})$,
there exists a sequence of homomorphisms $\tau_{k}:\Gamma\to{\rm Sym}(n_{k})$
such that $\lim_{k\to\infty}d_{n_{k}}\left(\sigma_{n}(g),\tau_{n}(g)\right)=0$
for every $g\in\Gamma$. 

\end{definition}

By \cite[Theorem 1.3]{BLT19}, if $\Gamma$ is a finitely generated
amenable group, then $\Gamma$ is P-stable if and only if every IRS
of $G$ is co-sofic. Therefore finitely generated amenable groups
satisfying the assumptions Theorem \ref{cosofic-1} are P-stable. 

\appendix

\section{Invariant measures on the space of closed subsets under the action
of an LDA-group}

Let $\mathcal{H}$ be an AF-groupoid associated with a simple Bratteli
diagram $B$. The unit space of $\mathcal{H}$ can be identified with
the space $X_{B}$ of infinite paths of the diagram $B$. In the appendix
we apply the pointwise ergodic theorem to understand $\mathsf{A}(\mathcal{H})$-invariant
measures on $F(X_{B})$. The alternating full group $\mathsf{A}(\mathcal{H})$
is isomorphic to the direct limit of direct product $\Gamma_{n}=\prod_{v\in V_{n}}{\rm Alt}(E(v_{0},v))$.
Apply the pointwise ergodic theorem from \cite{Vershik74,Olshanski-Vershik}
(or the general pointwise ergodic theorem for amenable groups in \cite{Lindenstrauss})
to the locally finite group $\mathsf{A}(\mathcal{H})=\cup_{n=1}^{\infty}\Gamma_{n}$,
we have that if $\nu$ is an ergodic $\mathsf{A}(\mathcal{H})$-invariant
probability measure on $F(X_{B})$, then for any measurable function
$f:F(X_{B})\to\mathbb{R}$, 
\[
\int fd\nu=\lim_{n\to\infty}\frac{1}{|\Gamma_{n}|}\sum_{g\in\Gamma_{n}}f(C\cdot g),
\]
for $\nu$-a.e. $C$. 

\begin{lemma}\label{kset}

Let $\mathcal{H}$ be a minimal AF groupoid with unit space $X=\mathcal{H}^{(0)}$
homeomorphic to the Cantor set. Let $\nu$ be an $\mathsf{A}(\mathcal{H})$-invariant
ergodic probability measure on $F(X)$, $\nu\neq\delta_{\emptyset},\delta_{X}$.
Then there exists a constant $k\in\mathbb{N}$ such that $\nu$ is
supported on the subspace $X^{(k)}$ of finite sets of cardinality
$k$. 

\end{lemma}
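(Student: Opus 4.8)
The plan is to reduce the statement to ruling out the possibility that $\nu$-almost every $C$ is infinite, and then to contradict the hypothesis $\nu\neq\delta_X$. First I would observe that the cardinality map $C\mapsto|C|\in\{0,1,2,\ldots,\infty\}$ is Borel on $F(X)$: writing $N_m(C)$ for the number of level-$m$ cylinders meeting $C$, each $\{N_m(C)\ge j\}$ is a Boolean combination of the generating sets $\{C:C\cap U\neq\emptyset\}$, and $\{|C|\ge j\}=\bigcup_m\{N_m(C)\ge j\}$. This map is $\mathsf{A}(\mathcal{H})$-invariant, since elements of $\mathsf{A}(\mathcal{H})$ act by homeomorphisms, so by ergodicity $|C|$ is $\nu$-a.e. equal to a constant $k\in\{0,1,\ldots,\infty\}$; as $\nu\neq\delta_\emptyset$ we have $k\ge1$, and it remains only to show $k\neq\infty$. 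Here I would use that $N_m(C)$ is nondecreasing in $m$ and that $|C|=\lim_m N_m(C)$ (a nested sequence of cylinders meeting the closed set $C$ shrinks to a point of $C$), so $k=\infty$ is exactly the event that $N_m(C)\to\infty$ a.e.

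Assume $k=\infty$; the goal is then to derive $\nu=\delta_X$. Since a closed set equals $X$ iff it is dense iff it meets every cylinder, the sets $\{C:C\cap U=\emptyset\}$, with $U$ ranging over the countably many basic cylinders, cover $\{C\neq X\}$, and it suffices to prove $\nu(C\cap U=\emptyset)=0$ for each fixed $U$. I would compute this with the pointwise ergodic theorem of the appendix applied to $f=\mathbf 1_{\{C\cap U=\emptyset\}}$: for $\nu$-a.e. $C$,
\[
\nu(C\cap U=\emptyset)=\lim_{n\to\infty}\frac{1}{|\Gamma_n|}\#\{g\in\Gamma_n:\ C\cap Ug^{-1}=\emptyset\}.
\]
After telescoping the (simple) Bratteli diagram — which changes neither the groupoid, nor $\mathsf{A}(\mathcal{H})$, $F(X)$, or $\nu$ — I may assume every $d(v_0,v)$ is as large as needed and the multiplicity-matrix products are primitive. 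Writing $U=U(p)$ with $p$ ending at a level-$m$ vertex $w$, the set $Ug^{-1}$ consists, at each level-$n$ vertex $v$, of $c(w,v)$ cylinders (the number of paths from $w$ to $v$), permuted inside the $v$-block by the independent coordinate $g_v\in{\rm Alt}(E(v_0,v))$. Hence the event factorizes over $v$ and
\[
\frac{1}{|\Gamma_n|}\#\{g:\ C\cap Ug^{-1}=\emptyset\}=\prod_{v\in V_n}q_v,\qquad q_v=\frac{\binom{d_v-a_v(C)}{c(w,v)}}{\binom{d_v}{c(w,v)}},
\]
where $a_v(C)$ is the number of level-$n$ cylinders at $v$ meeting $C$ and $d_v=d(v_0,v)$; the hypergeometric form holds because for $d_v\ge3$ and $c(w,v)\le d_v-2$ the alternating group is transitive on $c(w,v)$-subsets, so the image subset is uniform.

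The main estimate, which I expect to be the crux, is that $\prod_v q_v\to0$ whenever $N_n(C)=\sum_v a_v(C)\to\infty$. Using $\binom{d_v-a_v}{c}/\binom{d_v}{c}\le(1-a_v/d_v)^{c}\le\exp(-c(w,v)\,a_v(C)/d_v)$ gives
\[
\prod_{v\in V_n}q_v\ \le\ \exp\Bigl(-\sum_{v\in V_n}\tfrac{c(w,v)}{d_v}\,a_v(C)\Bigr),
\]
so it remains to show the exponent tends to $+\infty$. The ratio $c(w,v)/d_v$ — of the number of paths from $w$ to $v$ to the number of all paths from the root to $v$ — is, by primitivity of the telescoped diagram, bounded below by some $\delta_w>0$ uniformly in $v$ for all large $n$ (weak ergodicity / Birkhoff contraction for primitive products of nonnegative integer matrices). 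Then the exponent is at least $\delta_w N_n(C)\to\infty$, giving $\nu(C\cap U=\emptyset)=0$. Summing over the countably many cylinders $U$ yields that $\nu$-a.e. $C$ is dense, hence equal to $X$, i.e. $\nu=\delta_X$, contradicting the hypothesis; therefore $k<\infty$ and $\nu$ is supported on $X^{(k)}$ with $k\in\mathbb{N}$. The technical point to watch is exactly this uniform lower bound on $c(w,v)/d_v$: it is where simplicity of the Bratteli diagram (primitivity after telescoping) is essential, and where the alternating-group combinatorics and the edge cases $c(w,v)\in\{d_v-1,d_v\}$ must be handled honestly.
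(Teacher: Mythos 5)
Your proposal is correct and follows essentially the same route as the paper's proof: ergodicity makes $|C|$ an a.e.\ constant, the pointwise ergodic theorem converts $\nu(\{C:\ C\cap U=\emptyset\})$ into a product over level-$n$ vertices of hypergeometric ratios, and the resulting bound $e^{-\delta_w N_n(C)}$ forces $\nu=\delta_X$ (a contradiction) when that constant is infinite. The one difference is cosmetic: where you appeal to weak ergodicity/Birkhoff contraction of primitive nonnegative matrix products for the uniform lower bound on $c(w,v)/d_v$, the paper establishes exactly this bound (its Fact \ref{markov}) by the elementary Markov property of path counting in a simple Bratteli diagram, which also settles the edge cases you flagged.
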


\begin{proof}

Since $\mathcal{H}$ is assumed to be a minimal AF-groupoid, it is
associated with a simple Bratteli diagram $B=(V,E)$ with path space
$X_{B}$ homeomorphic to $\mathcal{H}^{(0)}$. We use notations and
terminologies as reviewed in Example \ref{AF}. Telescoping if necessary,
we may assume $\left|E(v_{0},v)\right|\ge5$ for all $v\in V_{n}$,
$n\ge1$. let $\nu$ be a $\mathsf{A}(\mathcal{H})$-invariant ergodic
probability measure on $F(X_{B})$, $\nu\neq\delta_{\emptyset},\delta_{X}$. 

We first show that if $\nu$ is supported on infinite subsets of $X_{B}$,
then $\nu(C\subseteq U)=0$ for any clopen set $U\neq X$. To see
this, let $f=f_{U}:F(X_{B})\to\mathbb{R}$ be the indicator function
$f(C)=\mathbf{1}_{\{C\subseteq U\}}$. Then by the pointwise ergodic
theorem, for $\nu$-a.e. $C$, 
\begin{align*}
\int fd\nu & =\lim_{n\to\infty}\frac{1}{|\Gamma_{n}|}\sum_{g\in\Gamma_{n}}f(C\cdot g)\\
 & =\lim_{n\to\infty}\frac{\left|\left\{ g\in\Gamma_{n}:\ C\cdot g\subseteq U\right\} \right|}{\left|\Gamma_{n}\right|}.
\end{align*}

We introduce the following notations. For a given clopen set $U$,
let $n_{0}=n_{0}(U)$ be the minimal number such that $U$ can be
expressed as a disjoint union of cylinder sets of the form $U(e_{1},\ldots,e_{n_{0}})$.
For $v\in V_{n}$ and a set $A\subseteq X_{B}$, let $E(v_{0},v:A)$
be the collection of $n$-paths
\begin{equation}
E(v_{0},v:A)=\left\{ \left(e_{1},\ldots,e_{n}\right):\ r(e_{n})=v\mbox{ and }U(e_{1},\ldots,e_{n})\cap A\neq\emptyset\right\} .\label{eq:EA}
\end{equation}
Given a closed set $C$ and level $n\ge n_{0}$, write
\[
\tilde{C}_{n}=\cup\left\{ U(e_{1},\ldots,e_{n}):\ (e_{1},\ldots,e_{n})\in E(v_{0},v:A)\right\} .
\]

Recall that $\Gamma_{n}=\prod_{v\in V_{n}}{\rm Alt}(E(v_{0},v))$.
In particular, the action of $\Gamma_{n}$ on $X_{B}$ is by permuting
the $n$-prefix of the infinite paths. Since for $n\ge n_{0}(U)$,
$U$ can be expressed as a disjoint union of cylinder sets indexed
by $n$-paths, it is clear that for $g\in\Gamma_{n}$, $C\cdot g\subseteq U$
if and only if $\tilde{C}_{n}\cdot g\subseteq U$. Let $\tilde{P}_{n}=\{g\in\Gamma_{n}:\ \tilde{C}_{n}\cdot g=\tilde{C}_{n}\}$
be the setwise stabilizer of $\tilde{C}_{n}$ in $\Gamma_{n}$. Then
\begin{align}
\frac{\left|\left\{ g\in\Gamma_{n}:\ C\cdot g\subseteq U\right\} \right|}{\left|\Gamma_{n}\right|} & =\frac{\left|\left\{ g\in\Gamma_{n}:\ \tilde{C}_{n}\cdot g\subseteq U\right\} \right|}{\left|\Gamma_{n}\right|}\nonumber \\
 & =\frac{\left|\{K\in\tilde{P}_{n}\setminus\Gamma_{n}:\ K\subseteq U\}\right|}{\left|\Gamma_{n}:\tilde{P}_{n}\right|},\label{eq:coset1}
\end{align}
where we identity the coset space $\tilde{P}_{n}\setminus\Gamma_{n}$
as translates of the set $\tilde{C}_{n}$ of the form $\tilde{C}_{n}\cdot g$. 

Recall that the action of the alternating group ${\rm Alt}(m)$ is
transitive on the collection of subsets of fixed size $k$, where
$1\le k\le m$, $m\ge3$. Since $\Gamma_{n}=\prod_{v\in V_{n}}{\rm Alt}(E(v_{0},v))$,
the ratio in (\ref{eq:coset1}) can be computed as 

\begin{align}
\frac{\left|\{K\in\tilde{P}_{n}\setminus\Gamma_{n}:\ K\subseteq U\}\right|}{|\Gamma_{n}:\tilde{P}_{n}|} & =\prod_{v\in V_{n}}\frac{\left(\begin{array}{c}
\left|E(v_{0},v:U)\right|\\
\left|E(v_{0},v:C)\right|
\end{array}\right)}{\left(\begin{array}{c}
\left|E(v_{0},v)\right|\\
\left|E(v_{0},v:C)\right|
\end{array}\right)}.\nonumber \\
 & \le\prod_{v\in V_{n}}\left(\frac{\left|E(v_{0},v:U)\right|}{\left|E(v_{0},v)\right|}\right)^{\left|E(v_{0},v:C)\right|}\label{eq:ratio}
\end{align}
To estimate such ratios we use the following elementary fact about
path counting in a simple Bratteli diagram $B$:

\begin{fact}\label{markov}

Let $U(e_{1},\ldots,e_{k})$ be the cylinder set indexed by finite
path $\left(e_{1}\ldots e_{k}\right)$ and $u=r(e_{k})$ be the end
point of the path. Suppose that $m>k$ is a level such that all vertices
in $V_{k}$ and $V_{m}$ are connected. Then for any $n\ge m$ and
$v\in V_{n}$,
\[
\frac{N(v;e_{1},\ldots,e_{k})}{\left|E(v_{0},v)\right|}\ge\frac{1}{\max_{w\in V_{m}}|E(v_{0},w)|},
\]
where $N(v;e_{1},\ldots,e_{k}):=\left|\left\{ (f_{1},\ldots f_{n}):\ f_{i}=e_{i}\ \mbox{for }1\le i\le k\mbox{ and }r(f_{n})=v\right\} \right|$. 

\end{fact}

\begin{proof}[Proof of Fact \ref{markov}]

Recall that $E(w,v)$ denotes the set of finite paths with start point
$w$ and end point $v$. Path counting in the Bratteli diagram has
the Markov property:
\begin{align*}
\left|E(v_{0},v)\right| & =\sum_{w\in V_{m}}\left|E(v_{0},w)\right|\cdot\left|E(w,v)\right|,\\
N(v;e_{1},\ldots,e_{k}) & =\left|\left\{ (f_{1},\ldots f_{n}):\ f_{i}=e_{i}\ \mbox{for }1\le i\le k\mbox{ and }r(f_{n})=v\right\} \right|\\
 & =\sum_{w\in V_{m}}N(w;e_{1},\ldots,e_{k})\left|E(w,v)\right|.
\end{align*}
By assumption of the lemma we have $N(w;e_{1},\ldots,e_{k})\ge1$
for any $w\in V_{m}$ and finite path $(e_{1},\ldots,e_{k})$. Comparing
the two equations we have
\[
\frac{N(v;e_{1},\ldots,e_{k})}{|E(v_{0},v)|}\ge\frac{1}{\max_{w\in V_{m}}\left|E(v_{0},w)\right|}.
\]

\end{proof}

Now suppose the clopen set $U$ has non-empty compliment $U^{c}=X_{B}\setminus U$.
Take a cylinder set $W=U(f_{1},\ldots,f_{\ell})$ contained in $U^{c}$.
Let $m>\ell$ be a level such that any vertex in $V_{\ell}$ is connected
to any vertex in $V_{m}$, such a level $m$ exists because of the
minimality assumption on $\mathcal{H}$. It follows from the Fact
that for all $n\ge m$, for $v\in V_{n}$, 
\begin{align*}
\frac{\left|E(v_{0},v:U)\right|}{\left|E(v_{0},v)\right|} & \le1-\frac{N(v;f_{1},\ldots,f_{k})}{\left|E(v_{0},v)\right|}.\\
 & \le1-\frac{1}{\max_{w\in V_{m}}\left|E(v_{0},w)\right|}.
\end{align*}
Let $c$ be the constant that $e^{-c}=1-\frac{1}{\max_{w\in V_{m}}\left|E(v_{0},w)\right|}$.
Plug into (\ref{eq:ratio}), we conclude that for $n\ge m$,
\[
\frac{\left|\left\{ g\in\Gamma_{n}:\ C\cdot g\subseteq U\right\} \right|}{\left|\Gamma_{n}\right|}\le e^{-c\sum_{v\in V_{n}}\left|E(v_{0},v:C)\right|}.
\]
If $C$ is an infinite closed set, then $\sum_{v\in V_{n}}\left|E(v_{0},v:C)\right|\to\infty$
as $n\to\infty$. Therefore by the pointwise ergodic theorem, if $\nu(\{C:\ |C|=\infty\})=1$,
then for any clopen set $U$ such that $U\neq X$, $\nu$ a.e. $C$,
we have
\begin{align*}
\nu(\left\{ C:\ C\subseteq U\right\} ) & =\lim_{n\to\infty}\frac{\left|\left\{ g\in\Gamma_{n}:\ C\cdot g\subseteq U\right\} \right|}{\left|\Gamma_{n}\right|}\\
 & \le\lim_{n\to\infty}e^{-c\sum_{v\in V_{n}}\left|E(v_{0},v:C)\right|}=0.
\end{align*}

Since the sets $\left\{ C\in F(X_{B}):\ C\subseteq U\right\} $ where
$U$ goes over all clopen proper subset of $X$ form a countable cover
of $\{C\in F(X_{B}):C\neq X,\ |C|=\infty\}$, we conclude that $\nu(\{C:\ |C|=\infty,\ C\neq X\})=0$.
This means that $\nu$ is $\delta$-mass at $X$, which is a contradiction.
By ergodicity of $\nu$ we conclude that it must be supported on subsets
of fixed size $k$, for some $k\in\mathbb{N}$.

\end{proof}

As in the statement of the previous lemma, let $X^{(k)}$ be the space
of subsets of size $k$ of $X$. We apply the pointwise ergodic theorem
again to obtain further information on $\mathsf{A}(\mathcal{H})$-invariant
probability measures on $X^{(k)}$. Recall that $M(X,\mathcal{H})$
denotes $\mathcal{H}$-invariant probability measures on $X$. 

\begin{lemma}\label{invariant measure}

Let $\mathcal{H}$ be a minimal AF groupoid with unit space $X$ homeomorphic
to the Cantor set. Let $\nu$ be an ergodic $\mathsf{A}(\mathcal{H})$-invariant
probability measure on $X^{(k)}$. Then there exists an $k$-tuple
of ergodic measures $(\mu_{1},\ldots,\mu_{k})\in M(X,\mathcal{H})^{k}$
such that for any Borel set $U\subseteq X$,
\[
\nu(\{C\in X^{(k)}:C\subseteq U\})=\mu_{1}(U)\ldots\mu_{k}(U).
\]

\end{lemma}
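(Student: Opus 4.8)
The plan is to apply the pointwise ergodic theorem recalled at the start of the appendix to the indicator $f_U(C)=\mathbf 1_{\{C\subseteq U\}}$, for each clopen $U$, reusing the coset computation from the proof of Lemma \ref{kset}, and then to factor the resulting limit into a product over the $k$ points of $C$. Telescoping the diagram as in Lemma \ref{kset}, I may assume $\min_{v\in V_n}|E(v_0,v)|\to\infty$. For $\nu$-a.e.\ $C$ of cardinality $k$ and all $n\ge n_0(U)$ large enough that distinct points of $C$ lie in distinct level-$n$ cylinders, the same computation leading to \eqref{eq:ratio} gives
\[
\frac{\left|\{g\in\Gamma_n:\ C\cdot g\subseteq U\}\right|}{\left|\Gamma_n\right|}=\prod_{v\in V_n}\frac{\binom{|E(v_0,v:U)|}{m_v}}{\binom{|E(v_0,v)|}{m_v}},\qquad m_v:=|E(v_0,v:C)|,
\]
where $\sum_v m_v=k$ and at most $k$ vertices have $m_v\ge1$. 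Writing $p(n,x)=|E(v_0,v_n(x):U)|/|E(v_0,v_n(x))|$ for the fraction of paths through the level-$n$ vertex $v_n(x)$ of $x$ lying in $U$, the elementary bounds $0\le\binom am/\binom bm\le(a/b)^m$ and $(a/b)^m-\binom am/\binom bm\le m^2/(b-m)$ show that this product differs from $\prod_{i=1}^k p(n,x_i)$, where $C=\{x_1,\dots,x_k\}$, by at most $O\!\left(k^3/\min_v|E(v_0,v)|\right)\to0$. Hence, by the pointwise ergodic theorem,
\[
\nu(\{C'\subseteq U\})=\lim_{n\to\infty}\prod_{i=1}^k p(n,x_i)\qquad\text{for }\nu\text{-a.e.\ }C.
\]

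Next I would show that each single-point limit $\lim_n p(n,x)$ exists and evaluates an ergodic $\mathcal H$-invariant measure at $U$. For any $\mathcal H$-invariant probability measure $\lambda$, the defining Markov/uniformity property of invariant measures on a Bratteli diagram gives $p(n,\cdot)=\mathbb E_\lambda[\mathbf 1_U\mid\mathcal T_n]$ for $n\ge n_0(U)$, where $\mathcal T_n=\sigma(e_{n+1},e_{n+2},\dots)$ is the decreasing tail filtration and $v_n(x)=s(e_{n+1})$ is $\mathcal T_n$-measurable. By the reverse martingale convergence theorem $p(n,x)\to\mathbb E_\lambda[\mathbf 1_U\mid\mathcal T_\infty](x)$ for $\lambda$-a.e.\ $x$; since the tail $\sigma$-algebra $\mathcal T_\infty$ agrees mod $\lambda$ with the $\sigma$-algebra of $\mathcal H$-invariant sets, this conditional expectation realizes the ergodic decomposition, so the limit equals $\mu_x(U)$ for the ergodic component $\mu_x$ of $x$. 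I apply this to the intensity measure $\bar\mu(U):=\tfrac1k\,\mathbb E_\nu\big[|C\cap U|\big]$, which is $\mathcal H$-invariant because $\nu$ is. Intersecting over the countably many clopen $U$ yields a $\bar\mu$-conull set $G$ on which $\lim_n p(n,x)$ exists for every clopen $U$ and defines an ergodic $\mathcal H$-invariant measure $\mu_x\in M(X,\mathcal H)$. As $\mathbb E_\nu[|C\cap G^c|]=k\,\bar\mu(G^c)=0$, $\nu$-a.e.\ $C$ has all points in $G$, so $\mu^{(i)}:=\mu_{x_i}$ is defined and ergodic, and $\lim_n\prod_i p(n,x_i)=\prod_i\mu^{(i)}(U)$.

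Combining the two steps gives $\nu(\{C'\subseteq U\})=\prod_{i=1}^k\mu^{(i)}(U)$ for $\nu$-a.e.\ $C$ and every clopen $U$, and it remains to replace the random components by a fixed tuple and to pass to Borel sets. Since invariant measures are fixed by every $g\in\mathsf A(\mathcal H)$, the ergodic component of $x_i\cdot g$ equals that of $x_i$, so the unordered multiset $\{\mu^{(1)},\dots,\mu^{(k)}\}$ is a measurable $\mathsf A(\mathcal H)$-invariant function of $C$; ergodicity of $\nu$ forces it to be $\nu$-a.e.\ a constant multiset $\{\mu_1,\dots,\mu_k\}$ of ergodic measures in $M(X,\mathcal H)$, whence $\nu(\{C'\subseteq U\})=\mu_1(U)\cdots\mu_k(U)$ for all clopen $U$. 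For open $U$ the event $\{C\subseteq U\}$ is the increasing union over clopen $V\subseteq U$ of $\{C\subseteq V\}$ (as $C$ is finite and $X$ is Cantor), so both sides are suprema over such $V$ and the identity persists for open $U$, and then for all Borel $U$ by a monotone-class argument.

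The crux, and the step I expect to be the main obstacle, is the single-point statement that $\lim_n p(n,x)$ exists $\bar\mu$-a.e.\ and returns the ergodic component of $x$: this rests on identifying the tail $\sigma$-algebra with the invariant $\sigma$-algebra and on the reverse martingale interpretation $p(n,\cdot)=\mathbb E_\lambda[\mathbf 1_U\mid\mathcal T_n]$. The factorization of the binomial product and the upgrade from a random to a constant tuple via ergodicity of $\nu$ are then comparatively routine.
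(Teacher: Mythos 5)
Your proposal is correct, and it reaches the product formula by a genuinely different route at the crucial analytic step. Both arguments share the same skeleton: apply the pointwise ergodic theorem to $f_{U}(C)=\mathbf{1}_{\{C\subseteq U\}}$ under the ergodic measure $\nu$, reuse the binomial-coefficient computation (\ref{eq:coset1})--(\ref{eq:ratio}) from the proof of Lemma \ref{kset}, and pass through the intensity measure (your $\bar{\mu}$ is exactly the paper's $\eta=\int_{X^{(k)}}\mathbf{u}_{C}\,d\nu(C)$). The divergence is in how one proves that the single-point frequencies $p(n,x_{i})$ converge to values of ergodic invariant measures. The paper first shows, by a pigeonhole argument with invariant sets that uses ergodicity of $\nu$, that $\eta$ has at most $k$ ergodic components, and then applies the pointwise ergodic theorem to each of these finitely many components $\mu_{j}$ on conull sets $\Omega_{j}$. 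You avoid that finiteness lemma entirely: you note that invariance of $\lambda$ forces heads to be uniform given tails, so that $p(n,\cdot)=\mathbb{E}_{\lambda}[\mathbf{1}_{U}\mid\mathcal{T}_{n}]$ is a reverse martingale, identify $\mathcal{T}_{\infty}$ with the invariant $\sigma$-algebra (valid because the alternating factors act transitively on heads once $|E(v_{0},v)|\ge3$), and invoke reverse martingale convergence together with the ergodic decomposition; ergodicity of $\nu$ then enters only to fix the tuple $(\mu_{1},\ldots,\mu_{k})$, though even this is optional since the left-hand side $\nu(\{C'\subseteq U\})$ is already a constant, so one may simply evaluate at a single good $C$, which is what the paper does. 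Your route is sound and somewhat more robust, as it would work verbatim even if $\bar{\mu}$ had a continuous ergodic decomposition, and you also make explicit the clopen-to-Borel extension that the paper leaves implicit (this can be done in one stroke by the monotone class theorem applied to the clopen algebra, without passing through open sets). One small imprecision: the divergence $\min_{v\in V_{n}}|E(v_{0},v)|\to\infty$ is not something one arranges by telescoping; since this quantity is non-decreasing in $n$, telescoping cannot change whether it diverges, but it is automatic for a simple diagram whose path space is a Cantor set, exactly as the paper argues via Fact \ref{markov}, so this does not affect correctness.
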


\begin{proof}

Let $\nu$ be an ergodic $\mathsf{A}(\mathcal{H})$-invariant probability
measure on $X^{(k)}$ as given and $\eta$ be the measure on $X$
defined by 
\[
\eta=\int_{X^{(k)}}{\bf u}_{C}d\nu(C),
\]
where ${\bf u}_{C}$ is the uniform measure on the finite set $C$.
It's clear by definition that $\eta$ is an $\mathsf{A}(\mathcal{H})$-invariant
probability measure on $X$. Write $\eta=\int_{Y}\mu_{y}d\theta(y)$
for its ergodic decomposition. Note that $\eta$ has at most $k$
ergodic components. Indeed, if this claim is not true, then we can
find $k+1$ disjoint measurable sets $B_{1},\ldots,B_{k+1}$ in $Y$
such that $\theta(B_{i})>0$ for all $1\le i\le k+1$. Let $\eta_{i}=\frac{1}{\theta(B_{i})}\int_{B_{i}}\mu d\theta(\mu)$.
Since the sets $B_{i}$'s are disjoint, we have that the collection
of measures $\eta_{i}$, $1\le i\le k+1$ are mutually singular. Let
$A_{1},A_{2},\ldots,A_{k+1}$ be a choice of disjoint sets in the
$\mathsf{A}(\mathcal{H})$-invariant $\sigma$-field $\mathcal{I}$
on $X$ such that $\eta_{i}(A_{i})=1$ for each $i$. Now consider
the function $\psi:X^{(k)}\to\{0,1\}^{k+1}$ defined as $(\psi(C))_{i}={\bf 1}_{\{C\cap A_{i}\neq\emptyset\}}$.
Since each $A_{i}\in\mathcal{I}$, we have that $\psi$ is a measurable
function invariant under $\mathsf{A}(\mathcal{H})$. By ergodicity
of $\nu$, it follows that $\nu$-a.e. $\psi$ is a constant. Because
$C$ only contains $k$ points, there must exists a coordinate $j$
such that $(\psi(C))_{j}=0$. It follows that there is $j\in\{1,\ldots,k+1\}$
such that $\nu(\{C:C\cap A_{j}\neq\emptyset\})=0$. However since
$\eta=\sum_{i=1}^{k+1}\theta(B_{i})\eta_{i}$, by relation of $\eta$
to $\nu$, we have $\nu(\{C:C\cap A_{j}\neq\emptyset\})\ge\frac{1}{k}\theta(B_{j})>0$,
which is a contradiction.

We have seen that there exists $\ell\in\{1,\ldots,k\}$ and ergodic
$\mathsf{A}(\mathcal{H})$-invariant probability measures $\mu_{1},\ldots,\mu_{\ell}$
on $X$ such that the ergodic decomposition of $\eta$ is given by
$\eta=\sum_{j=1}^{\ell}c_{j}\mu_{j}$, $c_{j}>0$. Let $\Omega_{j}$
be a subset of $X$ provided by the pointwise ergodic theorem such
that $\mu_{j}(\Omega_{j})=1$ and for any $x\in\Omega_{j}$ and any
clopen set $U\subseteq X$, we have
\[
\mu_{j}(U)=\lim_{n\to\infty}\frac{\left|\left\{ g\in\Gamma_{n}:x\cdot g\in U\right\} \right|}{|\Gamma_{n}|}.
\]
Let $\Omega$ be the subset of $X^{(k)}$ defined as $\Omega=\left\{ C\in X^{(k)}:\ C\subseteq\cup_{j=1}^{\ell}\Omega_{j}\right\} $.
Note that $\nu(\Omega^{c})\le k\eta\left(\left(\cup_{j=1}^{\ell}\Omega_{j}\right)^{c}\right)=0$. 

Let $\Omega'$ be the subset of $X^{(k)}$ of full $\nu$-measure
provided by the pointwise ergodic theorem that is for any $K\in\Omega'$
and any clopen set $U\subseteq X$,
\[
\nu(\{C:C\subseteq U\})=\lim_{n\to\infty}\frac{\left|\left\{ g\in\Gamma_{n}:\ K\cdot g\subseteq U\right\} \right|}{\left|\Gamma_{n}\right|}.
\]

Now take a set $K=\{z_{1},\ldots,z_{k}\}\in\Omega\cap\Omega'$. Let
$U$ be a non-empty clopen set. Let $n\ge n_{0}(U)$ be sufficiently
large such that all elements of $K$ have distinct $n$-prefix. We
use the notation $E(v_{0},v:A)$ defined in (\ref{eq:EA}) in the
proof of the previous lemma. As calculated in (\ref{eq:coset1}) and
(\ref{eq:ratio}), we have 

\[
\frac{\left|\left\{ g\in\Gamma_{n}:\ K\cdot g\subseteq U\right\} \right|}{\left|\Gamma_{n}\right|}=\prod_{v\in V_{n}}\frac{\left(\begin{array}{c}
\left|E(v_{0},v:U)\right|\\
\left|E(v_{0},v:K)\right|
\end{array}\right)}{\left(\begin{array}{c}
\left|E(v_{0},v)\right|\\
\left|E(v_{0},v:K)\right|
\end{array}\right)},
\]
and 
\[
\prod_{i=1}^{k}\frac{\left|\left\{ g\in\Gamma_{n}:\ z_{i}\cdot g\subseteq U\right\} \right|}{\left|\Gamma_{n}\right|}=\prod_{v\in V_{n}}\prod_{i=1}^{k}\frac{\left(\begin{array}{c}
\left|E(v_{0},v:U)\right|\\
\left|E(v_{0},v:\{z_{i}\})\right|
\end{array}\right)}{\left(\begin{array}{c}
\left|E(v_{0},v_{i})\right|\\
\left|E(v_{0},v:\{z_{i}\})\right|
\end{array}\right)}.
\]
Note that $\left|E(v_{0},v:\{z_{i}\})\right|$ takes value in $\{0,1\}$
and $\left|E(v_{0},v:K)\right|=\sum_{i=1}^{k}\left|E(v_{0},v:\{z_{i}\})\right|$.
Thus the ratio between the two is
\begin{align*}
\frac{\left|\left\{ g\in\Gamma_{n}:\ K\cdot g\subseteq U\right\} \right|/|\Gamma_{n}|}{\prod_{i=1}^{k}\left(\left|g\in\Gamma_{n}:z_{i}\cdot g\in U\right|/|\Gamma_{n}|\right)} & =\prod_{v\in V_{n}}\frac{\left(\begin{array}{c}
\left|E(v_{0},v:U)\right|\\
\left|E(v_{0},v:K)\right|
\end{array}\right)\left|E(v_{0},v:U)\right|^{-\left|E(v_{0},v:K)\right|}}{\left(\begin{array}{c}
\left|E(v_{0},v)\right|\\
\left|E(v_{0},v:K)\right|
\end{array}\right)\left|E(v_{0},v)\right|^{-\left|E(v_{0},v:K)\right|}}\\
 & =\prod_{v\in V_{n}}\frac{\prod_{j=1}^{\left|E(v_{0},v:K)\right|}\left(1-\frac{j-1}{\left|E(v_{0},v:U)\right|}\right)}{\prod_{j=1}^{\left|E(v_{0},v:K)\right|}\left(1-\frac{j-1}{\left|E(v_{0},v)\right|}\right)}\\
 & \ge\prod_{v\in V_{n}}\left(1-\frac{\left|E(v_{0},v:K)\right|\left(\left|E(v_{0},v:K)\right|-1\right)}{2\left|E(v_{0},v:U)\right|}\right)\\
 & \ge1-\frac{k^{3}}{\min_{v\in V_{n}}\left|E(v_{0},v:U)\right|}.
\end{align*}
It is also clear from the formula that this ratio is bounded from
above by $1$. 

Note that $\min_{v\in V_{n}}\left|E(v_{0},v:U)\right|\to\infty$ as
$n\to\infty$ since $B$ is a simple Bratteli diagram. Indeed, the
sequences $\min_{v\in V_{n}}\left|E(v_{0},v)\right|$, $\max_{v\in V_{n}}\left|E(v_{0},v)\right|$
are non-decreasing in $n$. If every vertex in level $n$ is connected
to every vertex in level $m$ for some $m>n$, then it follows $\max_{v\in V_{n}}\left|E(v_{0},v)\right|\le\min_{w\in V_{m}}|E(v_{0},w)|$.
Therefore since $B$ is simple, we have that $\lim_{n\to\infty}\min_{v\in V_{n}}\left|E(v_{0},v)\right|<\infty$
is equivalent to $\lim_{n\to\infty}\max_{v\in V_{n}}\left|E(v_{0},v)\right|<\infty$.
In the case that $\max_{v\in V_{n}}\left|E(v_{0},v)\right|$ is bounded,
the path space $X_{B}$ is either finite (if $|V_{n}|$ is bounded)
or countably discrete (if $|V_{n}|$ is unbounded). By Fact \ref{markov}
we conclude that $\min_{v\in V_{n}}\left|E(v_{0},v:U)\right|\to\infty$
as $n\to\infty$ if $X_{B}$ is homeomorphic to the Cantor set. Therefore
\[
\lim_{n\to\infty}\frac{\left|\left\{ g\in\Gamma_{n}:\ K\cdot g\subseteq U\right\} \right|/|\Gamma_{n}|}{\prod_{i=1}^{k}\left(\left|g\in\Gamma_{n}:z_{i}\cdot g\in U\right|/|\Gamma_{n}|\right)}=1.
\]

Since $K\in\Omega\cap\Omega'$, from the way $\Omega$ and $\Omega'$
are chosen, we have for each $1\le i\le k$, there is an index $j_{i}\in\{1,2,\ldots,\ell\}$
such that for any clopen set $U$,
\begin{align*}
\lim_{n\to\infty}\left|\left\{ g\in\Gamma_{n}:\ K\cdot g\subseteq U\right\} \right|/|\Gamma_{n}| & =\nu(C:C\subseteq U),\\
\lim_{n\to\infty}\left|g\in\Gamma_{n}:z_{i}\cdot g\in U\right|/|\Gamma_{n}| & =\mu_{j_{i}}(U).
\end{align*}
We conclude that 
\[
\nu(C:C\subseteq U)=\mu_{j_{1}}(U)\ldots\mu_{j_{k}}(U).
\]

\end{proof}

\begin{remark}

Lemma \ref{kset} and \ref{invariant measure} imply that an ergodic
$\mathsf{A}(\mathcal{H})$-invariant measure on $F(X)$ other than
$\delta_{\emptyset}$ or $\delta_{X}$ must be pushforward of a product
measure $\mu_{1}\times\ldots\times\mu_{k}$ on $X^{k}$ under $(x_{1},\ldots,x_{k})\mapsto\{x_{1},\ldots,x_{k}\}$
for some $k$-tuple of ergodic invariant measures. 
\begin{description}
\item [{(i)}] Since the group $\mathsf{A}(\mathcal{H})$ is an infinite
simple locally finite group, by \cite[Theorem 2.4]{Thomas-Tucker-Drob1}
the ergodic measures in $M(X,\mathcal{H})$ are weakly mixing. Therefore
the converse direction of Lemma \ref{invariant measure} is true:
for any tuple $(\mu_{1},\ldots,\mu_{k})\in M(X,\mathcal{H})^{k}$
of ergodic invariant measures, the measure $\nu$ defined by $\nu(\{C\in X^{(k)}:C\subseteq U\})=\mu_{1}(U)\ldots\mu_{k}(U)$
is an ergodic $\mathsf{A}(\mathcal{H})$-invariant measure on $X^{(k)}$. 
\item [{(ii)}] When there are only finitely many ergodic invariant measures
in $M(X,\mathcal{H})$, one can also deduce Lemma \ref{kset} and
\ref{invariant measure} as a consequence of the classification of
IRSs of $\mathsf{A}(\mathcal{H})$ in \cite{Dudko-Medynets3}. 
\item [{(iii)}] Invariant measures on Bratteli diagrams have been investigated
extensively in \cite{BKMS10,BKMS13}. For instance, sufficient conditions
for unique ergodicity or admitting finitely many ergodic invariant
measures are provided there.
\end{description}
\end{remark}

\bibliographystyle{alpha}
\bibliography{IRS}

\newcommand{\etalchar}[1]{$^{#1}$}
\begin{thebibliography}{JMBMdlS18}

\bibitem[ABB{\etalchar{+}}17]{7S}
Miklos Abert, Nicolas Bergeron, Ian Biringer, Tsachik Gelander, Nikolay
  Nikolov, Jean Raimbault, and Iddo Samet.
\newblock On the growth of {$L^2$}-invariants for sequences of lattices in
  {L}ie groups.
\newblock {\em Ann. of Math. (2)}, 185(3):711--790, 2017.

\bibitem[AGV14]{AGV}
Mikl\'{o}s Ab\'{e}rt, Yair Glasner, and B\'{a}lint Vir\'{a}g.
\newblock Kesten's theorem for invariant random subgroups.
\newblock {\em Duke Math. J.}, 163(3):465--488, 2014.

\bibitem[AM66]{AM66}
Louis Auslander and Calvin~C. Moore.
\newblock Unitary representations of solvable {L}ie groups.
\newblock {\em Mem. Amer. Math. Soc. No.}, 62:199, 1966.

\bibitem[BE14]{BE2}
Laurent Bartholdi and Anna Erschler.
\newblock Groups of given intermediate word growth.
\newblock {\em Ann. Inst. Fourier (Grenoble)}, 64(5):2003--2036, 2014.

\bibitem[BG02]{BG02}
Laurent Bartholdi and Rostislav~I. Grigorchuk.
\newblock On parabolic subgroups and {H}ecke algebras of some fractal groups.
\newblock {\em Serdica Math. J.}, 28(1):47--90, 2002.

\bibitem[BG04]{BergeronGaboriau}
N.~Bergeron and D.~Gaboriau.
\newblock Asymptotique des nombres de {B}etti, invariants {$l^2$} et
  laminations.
\newblock {\em Comment. Math. Helv.}, 79(2):362--395, 2004.

\bibitem[BGK15]{BGR}
Lewis Bowen, Rostislav Grigorchuk, and Rostyslav Kravchenko.
\newblock Invariant random subgroups of lamplighter groups.
\newblock {\em Israel J. Math.}, 207(2):763--782, 2015.

\bibitem[BGS03]{handbook}
Laurent Bartholdi, Rostislav~I. Grigorchuk, and Zoran Suni\'{k}.
\newblock Branch groups.
\newblock In {\em Handbook of algebra, {V}ol. 3}, volume~3 of {\em Handb.
  Algebr.}, pages 989--1112. Elsevier/North-Holland, Amsterdam, 2003.

\bibitem[BKMS10]{BKMS10}
S.~Bezuglyi, J.~Kwiatkowski, K.~Medynets, and B.~Solomyak.
\newblock Invariant measures on stationary {B}ratteli diagrams.
\newblock {\em Ergodic Theory Dynam. Systems}, 30(4):973--1007, 2010.

\bibitem[BKMS13]{BKMS13}
S.~Bezuglyi, J.~Kwiatkowski, K.~Medynets, and B.~Solomyak.
\newblock Finite rank {B}ratteli diagrams: structure of invariant measures.
\newblock {\em Trans. Amer. Math. Soc.}, 365(5):2637--2679, 2013.

\bibitem[BKN10]{BKN}
Laurent Bartholdi, Vadim~A. Kaimanovich, and Volodymyr~V. Nekrashevych.
\newblock On amenability of automata groups.
\newblock {\em Duke Math. J.}, 154(3):575--598, 2010.

\bibitem[BLT19]{BLT19}
Oren Becker, Alexander Lubotzky, and Andreas Thom.
\newblock Stability and invariant random subgroups.
\newblock {\em Duke Math. J.}, 168(12):2207--2234, 2019.

\bibitem[Bow14]{Bowen}
Lewis Bowen.
\newblock Random walks on random coset spaces with applications to
  {F}urstenberg entropy.
\newblock {\em Invent. Math.}, 196(2):485--510, 2014.

\bibitem[Bow15]{Bowen2}
Lewis Bowen.
\newblock Invariant random subgroups of the free group.
\newblock {\em Groups Geom. Dyn.}, 9(3):891--916, 2015.

\bibitem[Bri09]{Bri09}
J\'{e}r\'{e}mie Brieussel.
\newblock Amenability and non-uniform growth of some directed automorphism
  groups of a rooted tree.
\newblock {\em Math. Z.}, 263(2):265--293, 2009.

\bibitem[BT18]{Bencs-Toth}
F.~Bencs and L.M. Toth.
\newblock Invariant random subgroups of groups acting on rooted trees.
\newblock {\em arXiv preprint arXiv:1801.05801}, 2018.

\bibitem[CJN16]{CKN}
M.~Chornyi, K.~Juschenko, and V.~Nekrashevych.
\newblock On topological full groups of {$\mathbb{Z}^ d$}-actions.
\newblock {\em arXiv preprint arXiv:1602.04255}, 2016.

\bibitem[CP17]{CreutzPeterson}
Darren Creutz and Jesse Peterson.
\newblock Stabilizers of ergodic actions of lattices and commensurators.
\newblock {\em Trans. Amer. Math. Soc.}, 369(6):4119--4166, 2017.

\bibitem[DG18]{Dudko-Grigorchuk2018}
Artem Dudko and Rostislav Grigorchuk.
\newblock On diagonal actions of branch groups and the corresponding
  characters.
\newblock {\em J. Funct. Anal.}, 274(11):3033--3055, 2018.

\bibitem[DM13]{Dudko-Medynets1}
Artem Dudko and Konstantin Medynets.
\newblock On characters of inductive limits of symmetric groups.
\newblock {\em J. Funct. Anal.}, 264(7):1565--1598, 2013.

\bibitem[DM14]{Dudko-Medynets2}
Artem Dudko and Konstantin Medynets.
\newblock Finite factor representations of {H}igman-{T}hompson groups.
\newblock {\em Groups Geom. Dyn.}, 8(2):375--389, 2014.

\bibitem[DM17]{Dudko-Medynets3}
Artem Dudko and Konstantin Medynets.
\newblock On invariant random subgroups of block-diagonal limits of symmetric
  groups.
\newblock {\em Arxiv preprint arXiv:1711.01653}, 2017.

\bibitem[EM13]{ElekMonod}
G\'{a}bor Elek and Nicolas Monod.
\newblock On the topological full group of a minimal {C}antor
  {$\bold{Z}^2$}-system.
\newblock {\em Proc. Amer. Math. Soc.}, 141(10):3549--3552, 2013.

\bibitem[For00]{Forrest}
Alan Forrest.
\newblock A {B}ratteli diagram for commuting homeomorphisms of the {C}antor
  set.
\newblock {\em Internat. J. Math.}, 11(2):177--200, 2000.

\bibitem[Gar16]{Garrido}
Alejandra Garrido.
\newblock Abstract commensurability and the {G}upta-{S}idki group.
\newblock {\em Groups Geom. Dyn.}, 10(2):523--543, 2016.

\bibitem[Gel18]{Gelander1}
Tsachik Gelander.
\newblock A lecture on invariant random subgroups.
\newblock In {\em New directions in locally compact groups}, volume 447 of {\em
  London Math. Soc. Lecture Note Ser.}, pages 186--204. Cambridge Univ. Press,
  Cambridge, 2018.

\bibitem[GM14]{Grigorchuk-Medynets}
R.~I. Grigorchuk and K.~S. Medinets.
\newblock On the algebraic properties of topological full groups.
\newblock {\em Mat. Sb.}, 205(6):87--108, 2014.

\bibitem[GMPS08]{GMPS1}
Thierry Giordano, Hiroki Matui, Ian~F. Putnam, and Christian~F. Skau.
\newblock Orbit equivalence for {C}antor minimal {$\Bbb Z^2$}-systems.
\newblock {\em J. Amer. Math. Soc.}, 21(3):863--892, 2008.

\bibitem[GMPS10]{GMPS2}
Thierry Giordano, Hiroki Matui, Ian~F. Putnam, and Christian~F. Skau.
\newblock Orbit equivalence for {C}antor minimal {$\Bbb Z^d$}-systems.
\newblock {\em Invent. Math.}, 179(1):119--158, 2010.

\bibitem[GPS95]{GPS}
Thierry Giordano, Ian~F. Putnam, and Christian~F. Skau.
\newblock Topological orbit equivalence and {$C^*$}-crossed products.
\newblock {\em J. Reine Angew. Math.}, 469:51--111, 1995.

\bibitem[Gri80]{Grigorchuk1980}
R.~I. Grigor\v{c}uk.
\newblock On {B}urnside's problem on periodic groups.
\newblock {\em Funktsional. Anal. i Prilozhen.}, 14(1):53--54, 1980.

\bibitem[Gri84]{Girgorchuk1984}
R.~I. Grigorchuk.
\newblock Degrees of growth of finitely generated groups and the theory of
  invariant means.
\newblock {\em Izv. Akad. Nauk SSSR Ser. Mat.}, 48(5):939--985, 1984.

\bibitem[Gri00]{Gri-justinfinite}
R.~I. Grigorchuk.
\newblock Just infinite branch groups.
\newblock In {\em New horizons in pro-{$p$} groups}, volume 184 of {\em Progr.
  Math.}, pages 121--179. Birkh\"{a}user Boston, Boston, MA, 2000.

\bibitem[Gri11]{Grigorchuk2011}
R.~I. Grigorchuk.
\newblock Some topics in the dynamics of group actions on rooted trees.
\newblock {\em Proceedings of the Steklov Institute of Mathematics},
  273:64--175, 07 2011.

\bibitem[GS83]{Gupta-Sidki}
Narain Gupta and Sa\"{\i}d Sidki.
\newblock On the {B}urnside problem for periodic groups.
\newblock {\em Math. Z.}, 182(3):385--388, 1983.

\bibitem[GW03]{Grigorchuk-Wilson}
R.~I. Grigorchuk and J.~S. Wilson.
\newblock A structural property concerning abstract commensurability of
  subgroups.
\newblock {\em J. London Math. Soc. (2)}, 68(3):671--682, 2003.

\bibitem[GW15]{GlasnerWeiss}
Eli Glasner and Benjamin Weiss.
\newblock Uniformly recurrent subgroups.
\newblock In {\em Recent trends in ergodic theory and dynamical systems},
  volume 631 of {\em Contemp. Math.}, pages 63--75. Amer. Math. Soc.,
  Providence, RI, 2015.

\bibitem[GZ02]{GrigorchukZuk}
Rostislav~I. Grigorchuk and Andrzej \.{Z}uk.
\newblock On a torsion-free weakly branch group defined by a three state
  automaton.
\newblock {\em Internat. J. Algebra Comput.}, 12(1-2):223--246, 2002.
\newblock International Conference on Geometric and Combinatorial Methods in
  Group Theory and Semigroup Theory (Lincoln, NE, 2000).

\bibitem[HPS92]{HPS}
Richard~H. Herman, Ian~F. Putnam, and Christian~F. Skau.
\newblock Ordered {B}ratteli diagrams, dimension groups and topological
  dynamics.
\newblock {\em Internat. J. Math.}, 3(6):827--864, 1992.

\bibitem[JG]{Juschenko-Golan}
Kate Juschenko and Gili Golan.
\newblock Non-amenable simple groups with non-atomic invariant random
  subgroups.
\newblock {\em Unpublished manuscript}.

\bibitem[JM13]{JuschenkoMonod}
Kate Juschenko and Nicolas Monod.
\newblock Cantor systems, piecewise translations and simple amenable groups.
\newblock {\em Ann. of Math. (2)}, 178(2):775--787, 2013.

\bibitem[JMBMdlS18]{JMMS}
Kate Juschenko, Nicol\'{a}s Matte~Bon, Nicolas Monod, and Mikael de~la Salle.
\newblock Extensive amenability and an application to interval exchanges.
\newblock {\em Ergodic Theory Dynam. Systems}, 38(1):195--219, 2018.

\bibitem[JNdlS16]{JNS}
Kate Juschenko, Volodymyr Nekrashevych, and Mikael de~la Salle.
\newblock Extensions of amenable groups by recurrent groupoids.
\newblock {\em Invent. Math.}, 206(3):837--867, 2016.

\bibitem[JR00]{Jolissaint-Robertson}
Paul Jolissaint and Guyan Robertson.
\newblock Simple purely infinite {$C^\ast$}-algebras and {$n$}-filling actions.
\newblock {\em J. Funct. Anal.}, 175(1):197--213, 2000.

\bibitem[LBMB18]{LBMB}
Adrien Le~Boudec and Nicol\'{a}s Matte~Bon.
\newblock Subgroup dynamics and {$C^*$}-simplicity of groups of homeomorphisms.
\newblock {\em Ann. Sci. \'{E}c. Norm. Sup\'{e}r. (4)}, 51(3):557--602, 2018.

\bibitem[Lin01]{Lindenstrauss}
Elon Lindenstrauss.
\newblock Pointwise theorems for amenable groups.
\newblock {\em Invent. Math.}, 146(2):259--295, 2001.

\bibitem[LL19]{LL2}
Arie Levit and Alexander Lubotzky.
\newblock Uncountably many permutation stable groups.
\newblock {\em arXiv preprint arXiv:1910.11722}, 2019.

\bibitem[LN07]{LN}
Y.~Lavrenyuk and V.~Nekrashevych.
\newblock On classification of inductive limits of direct products of
  alternating groups.
\newblock {\em J. Lond. Math. Soc. (2)}, 75(1):146--162, 2007.

\bibitem[LP05]{LeinenPuglisi}
Felix Leinen and Orazio Puglisi.
\newblock Some results concerning simple locally finite groups of 1-type.
\newblock {\em J. Algebra}, 287(1):32--51, 2005.

\bibitem[Mat06]{MatuiRemarkI}
Hiroki Matui.
\newblock Some remarks on topological full groups of {C}antor minimal systems.
\newblock {\em Internat. J. Math.}, 17(2):231--251, 2006.

\bibitem[Mat12]{Matui2012}
Hiroki Matui.
\newblock Homology and topological full groups of \'{e}tale groupoids on
  totally disconnected spaces.
\newblock {\em Proc. Lond. Math. Soc. (3)}, 104(1):27--56, 2012.

\bibitem[Mat15]{Matui2015}
Hiroki Matui.
\newblock Topological full groups of one-sided shifts of finite type.
\newblock {\em J. Reine Angew. Math.}, 705:35--84, 2015.

\bibitem[Mat16]{MatuiSurvey}
Hiroki Matui.
\newblock Topological full groups of \'{e}tale groupoids.
\newblock In {\em Operator algebras and applications}, pages 197--224.
  Springer, Cham, 2016.

\bibitem[MB18]{MatteBon}
Nicol{\'a}s Matte~Bon.
\newblock Rigidity properties of full groups of pseudogroups over the cantor
  set.
\newblock {\em arXiv preprint arXiv:1801.10133}, 2018.

\bibitem[Nek05]{Nek-book}
Volodymyr Nekrashevych.
\newblock {\em Self-similar groups}, volume 117 of {\em Mathematical Surveys
  and Monographs}.
\newblock American Mathematical Society, Providence, RI, 2005.

\bibitem[Nek17]{Nek-simple}
Volodymyr Nekrashevych.
\newblock Simple groups of dynamical origin.
\newblock {\em Ergodic Theory and Dynamical Systems}, pages 1--26, 2017.

\bibitem[Nek18a]{Nek-finitepresent}
Volodymyr Nekrashevych.
\newblock Finitely presented groups associated with expanding maps.
\newblock In {\em Geometric and cohomological group theory}, volume 444 of {\em
  London Math. Soc. Lecture Note Ser.}, pages 115--171. Cambridge Univ. Press,
  Cambridge, 2018.

\bibitem[Nek18b]{Nek-palindromic}
Volodymyr Nekrashevych.
\newblock Palindromic subshifts and simple periodic groups of intermediate
  growth.
\newblock {\em Ann. of Math. (2)}, 187(3):667--719, 2018.

\bibitem[Neu86]{Neumann}
Peter~M. Neumann.
\newblock Some questions of {E}djvet and {P}ride about infinite groups.
\newblock {\em Illinois J. Math.}, 30(2):301--316, 1986.

\bibitem[OV96]{Olshanski-Vershik}
Grigori Olshanski and Anatoli Vershik.
\newblock Ergodic unitarily invariant measures on the space of infinite
  {H}ermitian matrices.
\newblock In {\em Contemporary mathematical physics}, volume 175 of {\em Amer.
  Math. Soc. Transl. Ser. 2}, pages 137--175. Amer. Math. Soc., Providence, RI,
  1996.

\bibitem[Par67]{Parthasarathy}
K.~R. Parthasarathy.
\newblock {\em Probability measures on metric spaces}.
\newblock Probability and Mathematical Statistics, No. 3. Academic Press, Inc.,
  New York-London, 1967.

\bibitem[Put18]{Putnam}
Ian~F. Putnam.
\newblock {\em Cantor minimal systems}, volume~70 of {\em University Lecture
  Series}.
\newblock American Mathematical Society, Providence, RI, 2018.

\bibitem[TTD14]{Thomas-Tucker-Drob1}
Simon Thomas and Robin Tucker-Drob.
\newblock Invariant random subgroups of strictly diagonal limits of finite
  symmetric groups.
\newblock {\em Bull. Lond. Math. Soc.}, 46(5):1007--1020, 2014.

\bibitem[TTD18]{Thomas-Tucker-Drob2}
Simon Thomas and Robin Tucker-Drob.
\newblock Invariant random subgroups of inductive limits of finite alternating
  groups.
\newblock {\em J. Algebra}, 503:474--533, 2018.

\bibitem[Ver74]{Vershik74}
A.~M. Ver\v{s}ik.
\newblock A description of invariant measures for actions of certain
  infinite-dimensional groups.
\newblock {\em Dokl. Akad. Nauk SSSR}, 218:749--752, 1974.

\bibitem[Ver10]{Vershik1}
A.~M. Vershik.
\newblock Nonfree actions of countable groups and their characters.
\newblock {\em Zap. Nauchn. Sem. S.-Peterburg. Otdel. Mat. Inst. Steklov.
  (POMI)}, 378(Teoriya Predstavleni\u{\i}, Dinamicheskie Sistemy, Kombinatornye
  Metody. XVIII):5--16, 228, 2010.

\bibitem[Ver12]{Vershik2}
A.~M. Vershik.
\newblock Totally nonfree actions and the infinite symmetric group.
\newblock {\em Mosc. Math. J.}, 12(1):193--212, 216, 2012.

\bibitem[Via06]{Viana}
Marcelo Viana.
\newblock Ergodic theory of interval exchange maps.
\newblock {\em Rev. Mat. Complut.}, 19(1):7--100, 2006.

\bibitem[Wil04]{Wilson}
John~S. Wilson.
\newblock On exponential growth and uniformly exponential growth for groups.
\newblock {\em Invent. Math.}, 155(2):287--303, 2004.

\end{thebibliography}

\end{document}